\newtheorem{theo}{Theorem}[section]
\newtheorem{ex}[theo]{Example}
\newtheorem{fact}[theo]{Fact}
\newtheorem{facts}[theo]{Facts}
\newtheorem{exs}[theo]{Examples}
\newtheorem{prop}[theo]{Proposition}
\newtheorem{lem}[theo]{Lemma}
\newtheorem{cor}[theo]{Corollary}
\newtheorem{cons}[theo]{Construction}
\newtheorem{defi}[theo]{Definition}
\newtheorem{rema}[theo]{Remark}
\newtheorem{remas}[theo]{Remarks}
\renewcommand{\theprop} {\arabic{section}.\arabic{prop}}
\renewcommand{\thedefi} {\arabic{section}.\arabic{defi}}
\renewcommand{\therema} {\arabic{section}.\arabic{rema}}
\renewcommand{\theex} {\arabic{section}.\arabic{ex}}
\renewcommand{\theexs} {\arabic{section}.\arabic{exs}}
\renewcommand{\theconj} {\arabic{section}.\arabic{conj}}
\renewcommand{\thetheo} {\arabic{section}.\arabic{theo}}
\renewcommand{\thecor} {\arabic{section}.\arabic{cor}}
\renewcommand{\thelem} {\arabic{section}.\arabic{lem}}
\renewcommand{\thesection}{\arabic{section}}
\renewcommand{\thesubsection}{\arabic{section}.\arabic{subsection}}
\def \Romannumeral #1 {\expandafter\uppercase\expandafter {\romannumeral #1} }
\def \calo {{\mathcal O}}
\def \calu {{\mathcal U}}
\def \calv {{\mathcal V}}
\def \calf {{\mathcal F}}
\def \Z {{\Bbb Z}}
\def \Q {{\Bbb Q}}
\def \F {{\Bbb F}}
\def \C {{\Bbb C}}
\def \cok {{\rm{coker\,}}}
\def \im {{\rm {Im\,}}}
\def \G {{\bf G}_m}
\def \H {{\mathcal H}}
\def \A {{\bf A}}
\def \Het {H_{\mbox{\scriptsize\'et}}}
\def \Bdr {B_{\rm dR}}
\def \Bht {B_{\rm HT}}
\def \Ainf {A_{\rm inf}}
\def \Adr {A_{\rm dR}}
\def \Acris {A_{\rm cris}}
\def \limit{\mathop{\lim}}
\def \limproj{\limit\limits_{\leftarrow}}
\def \limind{\limit\limits_{\to}}
\def\smallsquare{\vbox{\hrule\hbox{\vrule height 1 ex\kern 1 ex\vrule}\hrule}}
\def\enddem{\hfill \smallsquare\vskip 3mm}
\def \Xan {X^{\rm an}}
\def \hyp {{\Bbb H}}
\newcommand{\Ker}{\mathrm{Ker}}
\newcommand{\Fil}{\mathrm{Fil}}
\newcommand{\Pic}{\mathrm{Pic}}
\newcommand{\Ext}{\mathrm{Ext}}
\newcommand{\Hom}{\mathrm{Hom}}
\newcommand{\Tor}{\mathrm{Tor}}
\newcommand{\Gal}{\mathrm{Gal}}
\newcommand{\id}{\mathrm{id}}
\newcommand{\Spec}{\mathrm{Spec\,}}
\newcommand{\OK}{\mathcal{O}_K}
\newcommand{\OKB}{{\mathcal{O}_{\overline{K}}}}
\newcommand{\OCK}{{\mathcal{O}_{\mathbb{C}_K}}}
\newcommand{\CO}{\mathcal{O}}
\newcommand{\Zp}{\mathbb{Z}_p}
\newcommand{\Fp}{\mathbb{F}_p}
\newcommand{\OL}{\mathcal{O}_L}
\DeclareFontFamily{U}{wncy}{}
\DeclareFontShape{U}{wncy}{m}{n}{%
   <5>wncyr5%
   <6>wncyr6%
   <7>wncyr7%
   <8>wncyr8%
   <9>wncyr9%
   <10>wncyr10%
   <11>wncyr10%
   <12>wncyr6%
   <14>wncyr7%
   <17>wncyr8%
   <20>wncyr10%
   <25>wncyr10}{}
\DeclareMathAlphabet{\cyrille}{U}{wncy}{m}{n}
\title{The $p$-adic Hodge decomposition according to Beilinson}
\author{Tam\'as Szamuely and Gergely Z\'abr\'adi}
\date{}
\begin{document}
\maketitle

\tableofcontents

\section{Introduction}

The Hodge decomposition of the cohomology of smooth projective complex varieties is a fundamental tool in the study of their geometry. Over an arbitrary base field of characteristic zero, \'etale cohomology with $\Q_p$-coefficients is a good substitute for singular cohomology with complex coefficients, but in general no analogue of the Hodge decomposition is known. However, owing to a fundamental insight of Tate \cite{tate}, we know that  over a $p$-adic base field a version of Hodge decomposition can indeed be constructed. Moreover, the cohomology groups involved carry an action of the Galois group of the base field, whose interaction with the Hodge decomposition can be analyzed by methods inspired by the study of the monodromy action in the complex case. This has deep consequences for the study of varieties of arithmetic interest, and can even be used to prove some purely geometric statements.

The first proof of the $p$-adic Hodge decomposition is due to Faltings \cite{faltings1}; several other proofs have been given since. One of the most recent is a wonderful proof by Beilinson \cite{Bei} which is the closest to geometry. It can be hoped that its groundbreaking new ideas will lead to important applications; some of them already appear in the recent construction of $p$-adic realizations of mixed motives by D\'eglise and Niziol \cite{degni}. Moreover, one of the key tools in Beilinson's approach is Illusie's theory \cite{I2} of the derived de Rham complex which has also reappeared during the recent development of derived algebraic geometry. Beilinson's work may thus also be viewed as a first bridge between this emerging field and $p$-adic Hodge theory.

In the present text we give a detailed presentation of Beilinson's approach, complemented by some further advances due to Bhatt \cite{Bh2}. Let us start by reviewing the complex situation which will serve as a guide to $p$-adic analogues.

\subsection{The Hodge decomposition over $\C$}

We begin by recalling some basic facts from complex Hodge theory; standard references are \cite{voisin} and \cite{demailly}.
Let $X$ be a smooth projective variety over $\C$ (or more generally a K\"ahler manifold). The Hodge decomposition is a direct sum decomposition for all $n\geq 0$
$$
H^n(\Xan,\C) =\bigoplus_{p+q=n} H^{p,q}
$$
where on the left hand side we have singular cohomology of the complex analytic manifold $\Xan$ and
$$
H^{p,q}\cong H^q(\Xan, \Omega_{\Xan}^p)
$$
with $\Omega_{\Xan}^p$ denoting the sheaf of holomorphic $p$-forms.

Furthermore, complex conjugation acts on $H^n(\Xan,\C)={H^n(\Xan,\Q)\otimes\C}$ via its action on $\C$, and we have
$$
H^{p,q}=\overline{ H^{q,p}}.
$$
These results are proven via identifying $H^{p,q}$ with Dolbeault cohomology groups and using the (deep) theory of harmonic forms on a K\"ahler manifold. However, part of the theory can be understood purely algebraically.

First, observe that $H^n(\Xan,\C)$ is also the cohomology of the constant sheaf $\C$ for the complex topology of $\Xan$. Consider the de Rham complex
$$
\Omega_{\Xan}^\bullet:=\calo_{\Xan}\stackrel d\to \Omega_{\Xan}^1\stackrel d\to \Omega_{\Xan}^2\to\dots
$$
Here the first $d$ is the usual derivation and the higher $d$'s are the unique ones satisfying
$$
d(\omega_1\wedge \omega_2)=d\omega_1\wedge \omega_2+(-1)^p\omega_1\wedge d\omega_2
$$
for $\omega_1\in \Omega^p_{\Xan}$ and $\omega_2\in \Omega^{p'}_{\Xan}$.

The holomorphic Poincar\'e lemma implies that $\Omega^\bullet_{\Xan}$ has trivial cohomologies over contractible open subsets except on the left where the kernel is $\C$. In other words, the augmented complex of analytic sheaves
$$
0\to\C\to\calo_{\Xan}\stackrel d\to \Omega_{\Xan}^1\stackrel d\to \Omega_{\Xan}^2\to\dots
$$
is exact. Thus we have an isomorphism of (hyper)cohomology groups
\begin{equation}\label{aniso}
H^n(\Xan,\C)\cong \hyp^n(\Xan, \Omega_{\Xan}^\bullet)=: H^n_{\rm dR}(\Xan).
\end{equation}

Now $\Omega_{\Xan}^\bullet$ has a descending filtration by subcomplexes
$$
\Omega_{\Xan}^{\geq p}:= 0\to\dots\to 0 \to\Omega_{\Xan}^p\stackrel d\to \Omega_{\Xan}^{p+1}\to\dots
$$
The $p$-th graded quotient is isomorphic to $ \Omega^p_{\Xan}$
(shifted by $p$), whence a spectral sequence (the Hodge to de Rham spectral sequence)
$$
E_1^{p,q}=H^q(\Xan,\Omega_{\Xan}^p)\Rightarrow H^{p+q}_{\rm dR}(\Xan)
$$
inducing a descending filtration
$$
H^n_{\rm dR}(\Xan)=F^0\supset F^1\supset\cdots\supset F^n\supset F^{n+1}=0
$$
on $H^n_{\rm dR}(\Xan)$, the Hodge filtration.\medskip

The first fundamental fact is that the Hodge to de Rham spectral sequence {\em degenerates at $E_1$}, giving rise to isomorphisms
$$
F^p/F^{p+1}\cong E_1^{p,q}=H^q(\Xan,\Omega_{\Xan}^p).
$$
Via the isomorphism (\ref{aniso}) the conjugation action on $\C$ induces an action on $H^n_{\rm dR}(\Xan)$. Setting
$$
\H^{p,q}:=F^p\cap\overline{F^q}
$$
we have obviously
$
\H^{p,q}=\overline{ \H^{q,p}}.
$

The second nontrivial fact is that the natural map
$$
\H^{p,q}\to F^p/F^{p+1}
$$
is an isomorphism and hence $\H^{p.q}\cong H^q(\Xan,\Omega_{\Xan}^p)$. In other words, $\H^{p,q}$ is a complement of $F^{p+1}$ in $F^p$, so {\em complex conjugation splits the Hodge filtration.}

However, the only known proof of this uses the Hodge decomposition we started with. Namely, one proves
$$
F^i=\bigoplus_{p+q=n, p\geq i}H^{p,q}
$$
whence of course we also get
$
H^{p,q}=\H^{p,q}.
$

\subsection{Algebraization}

On a complex algebraic variety $X$ one may also consider sheaves of algebraic differential forms $\Omega^p_X$ and the algebraic de Rham complex
$$
\Omega_{X}^\bullet:=\calo_{X}\stackrel d\to \Omega_{X}^1\stackrel d\to \Omega_{X}^2\to\dots
$$
which is a complex of coherent sheaves on $X$; for $X$ smooth they are moreover locally free. There is a Hodge to de Rham spectral sequence
$$
E_1^{p,q}=H^q(X,\Omega_{X}^p)\Rightarrow H^{p+q}_{\rm dR}(X).
$$
defined in the same way. Here we are using cohomology of coherent sheaves in the Zariski topology.

There are natural maps
$$
H^q(X,\Omega_{X}^p)\to H^q(\Xan,\Omega_{\Xan}^p), \qquad H^{p+q}_{\rm dR}(X)\to H^{p+q}_{\rm dR}(\Xan)
$$
compatible with the maps in the spectral sequence. By the GAGA theorem of Serre, for $X$ projective the first maps are isomorphisms, and hence so are the second ones (in fact, the maps on de Rham cohomology are isomorphisms for general smooth $X$ by a result of Grothendieck \cite{grothdr}). Thus degeneration for the analytic spectral sequence is equivalent to that of the algebraic spectral sequence. Indeed, there is a purely algebraic proof of the degeneration of the algebraic Hodge to de Rham spectral sequence due to Deligne and Illusie \cite{deligneillusie}.

However, there is no algebraic Poincar\'e lemma (so the algebraic de Rham complex is not a resolution of the constant sheaf $\C$), and anyway the Zariski cohomology of the constant sheaf $\C$ is trivial. However, comparison with the analytic results imply that the singular cohomology $H^n(\Xan,\C)$ has a Hodge decomposition involving {\em algebraic differential forms.}

The singular cohomology of $\Xan$ can also be defined algebraically for certain coefficients by means of \'etale cohomology. Indeed for $m>1$ we have a comparison isomorphism
$$
H^n(\Xan,\Z/m\Z)\cong \Het^n(X,\Z/m\Z)
$$
due to M. Artin, whence for a prime $p$
$$
H^n(\Xan,\Q_p)\cong \Het^n(X,\Q_p)
$$
where $\Het^n(X,\Q_p):=\limproj \Het^n(X,\Z/p^r\Z)\otimes_{\Z_p}\Q_p$.

But in general it does not compare with $H^{n}_{\rm dR}(X)$. The situation is better, however, over $p$-adic base fields.
\medskip

\subsection{The case of a $p$-adic base field}

Recall that $\C_p$ is the completion of an algebraic closure $\overline\Q_p$  of $\Q_p$. The Galois group $G:=\Gal(\overline \Q_p|\Q_p)$ acts on $\C_p$ by continuity. Similarly, if $K$ is a finite extension of $\Q_p$, by completing an algebraic closure of $K$ we obtain a complete valued field $\C_K$ with an action of $G_K:=\Gal(\overline \Q_p|K)$. Of course, as a field it is the same as $\C_p$ but it carries the action of a subgroup of $G$.

The $G_K$-action on $\overline \Q_p^\times$ induces a $G_K$-action on
$$
\Z_p(1):=\limproj \mu_{p^r}
$$
and hence on the tensor powers
$$
\Z_p(i):=\Z_p(1)^{\otimes i}.
$$
This can be extended to negative $i$ by setting $\Z_p(-i)$ to be the $\Z_p$-linear dual of $\Z_p(i)$ with its natural $G_K$-action. Finally, we have $G_K$-modules
$$
\C_K(i):=\C_K\otimes_{\Z_p}\Z_p(i)
$$
with $G_K$ acting via $\sigma(\lambda\otimes\omega)=\sigma(\lambda)\otimes\sigma(\omega)$. A famous theorem of Tate \cite{tate}, which was the starting point of $p$-adic Hodge theory, states that
\begin{equation}\label{tatetheo}
\C_K(i)^{G_K}=\begin{cases}
K & i=0 \\
0 & i\neq 0.
\end{cases}
\end{equation}

Now assume $X$ is a smooth projective $K$-variety. The Hodge--Tate decomposition, conjectured by Tate and first proven by Faltings \cite{faltings1}, is the following analogue of the Hodge decomposition over $\C$.

\begin{theo}\label{ht}
There is a canonical isomorphism
$$
\Het^n(X_{\overline K}, \Q_p)\otimes_{\Q_p}\C_K\cong \bigoplus_q H^{q}(X, \Omega_X^{n-q})\otimes_K \C_K(q-n)
$$
of $G_K$-modules.\end{theo}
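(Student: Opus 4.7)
The plan is to deduce Theorem~\ref{ht} from a finer, filtered comparison isomorphism with coefficients in Fontaine's de Rham period ring $\Bdr$. Recall that $\Bdr$ is a complete discrete valuation field carrying a continuous $G_K$-action together with a $G_K$-stable exhaustive descending filtration $\Fil^\bullet\Bdr$, whose associated graded ring is the Hodge-Tate ring $\Bht = \bigoplus_{i\in\Z}\C_K(i)$. In particular, Tate's theorem~(\ref{tatetheo}) gives $(\Bdr)^{G_K}=K$ and identifies $\mathrm{gr}^i\Bdr$ with $\C_K(i)$.

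Following Beilinson, the central step is to construct, functorially in the smooth projective $K$-variety $X$, a $G_K$-equivariant filtered comparison isomorphism
$$
\alpha_X\colon \Het^n(X_{\ov K},\Q_p)\otimes_{\Q_p}\Bdr \;\xrightarrow{\;\sim\;}\; H^n_{\rm dR}(X)\otimes_K\Bdr,
$$
where the left side carries the filtration induced from $\Bdr$ and the right side is endowed with the convolution of the Hodge filtration on $H^n_{\rm dR}(X)$ with the filtration on $\Bdr$. This is where I expect the real obstacle to lie. Its construction uses Voevodsky's h-topology on schemes, which admits enough smooth covers even without resolution of singularities (thanks to de Jong's alterations), together with Illusie's derived de Rham complex, which compensates for the absence of a naive p-adic analogue of the Poincar\'e lemma. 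The key technical input is a p-adic Poincar\'e lemma which, after appropriate p-adic completion, identifies the derived de Rham complex of an h-local model of $X_{\ov K}$ with a sheaf computing p-adic \'etale cohomology tensored with $\Bdr$. From this local comparison one then assembles $\alpha_X$ globally by h-descent, using properness of $X$ to control the cohomology.

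Granted $\alpha_X$, the Hodge-Tate decomposition follows by passing to associated graded. Since the filtration on the \'etale side comes solely from $\Bdr$,
$$
\mathrm{gr}^0\bigl(\Het^n(X_{\ov K},\Q_p)\otimes_{\Q_p}\Bdr\bigr) \;=\; \Het^n(X_{\ov K},\Q_p)\otimes_{\Q_p}\C_K.
$$
On the right hand side, the degeneration of the Hodge to de Rham spectral sequence over $K$---applicable by standard algebraic arguments since $K$ has characteristic zero---gives $\mathrm{gr}^p H^n_{\rm dR}(X)\cong H^{n-p}(X,\Omega_X^p)$, whence
$$
\mathrm{gr}^0\bigl(H^n_{\rm dR}(X)\otimes_K\Bdr\bigr) \;=\; \bigoplus_p H^{n-p}(X,\Omega_X^p)\otimes_K\C_K(-p).
$$
Substituting $q=n-p$ recovers the right hand side of Theorem~\ref{ht}, and the $G_K$-equivariance is inherited from that of $\alpha_X$. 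Taking $\mathrm{gr}^i$ for any other $i$ would yield the same statement up to an overall Tate twist.
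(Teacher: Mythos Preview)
Your proposal is correct and follows exactly the route the paper takes: deduce the Hodge--Tate decomposition from the filtered de Rham comparison isomorphism (Theorem~\ref{cdr}) by passing to associated gradeds, using $\mathrm{gr}^\bullet\Bdr\cong\Bht$ together with the degeneration of the Hodge-to-de Rham spectral sequence in characteristic~0. Your sketch of how $\alpha_X$ is built (derived de Rham complex, $h$-topology, $p$-adic Poincar\'e lemma, $h$-descent) accurately summarizes Beilinson's construction as presented in the paper.
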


Here $G_K$ acts on the left by the tensor product of its actions on $\Het^n(X_{\overline K}, \Q_p)$ and on $\C_K$ and on the right via its actions on the $\C_K(q-n)$ (so the $H^{q}(X, \Omega_X^{n-q})$ are equipped with the trivial $G_K$-action).

\begin{rema}\rm
The Hodge--Tate decomposition holds more generally for smooth varieties having a smooth projective normal crossing compactification, provided that one uses the de Rham complex with logarithmic poles along the divisor at infinity (see Subsection \ref{geomside} for definitions). It is in this generality that the theorem will be proven in the present text. The existence of the smooth projective normal crossing compactification is guaranteed by Hironaka's theorem for smooth quasi-projective $X$. It is also possible to extend the theorem to a statement about arbitrary varieties using hypercoverings.
\end{rema}

\begin{ex}\rm In the case $n=1$ we get
$$
\Het^1(X_{\overline K}, \Q_p)\otimes \C_K\cong (H^0(X, \Omega^1_X)\otimes \C_K(-1))\oplus (H^1(X, \calo_X)\otimes\C_K)
$$
or else
$$
\Het^1(X_{\overline K}, \Q_p(1))\otimes \C_K\cong (H^0(X, \Omega^1_X)\otimes \C_K)\oplus (H^1(X, \calo_X)\otimes\C_K(1)).
$$
Here
$$
\Het^1(X_{\overline K}, \Q_p(1))\cong T_p(\Pic X_{\overline K})\otimes_{\Z_p}\Q_p.
$$
In the case of an abelian variety this was first proven by Tate \cite{tate} in the good reduction case and by Raynaud in general, and then by Fontaine \cite{F}  by a different method. For abelian varieties this implies the Hodge-Tate decomposition for all $H^n$, as the (\'etale, Hodge or coherent) cohomology algebra of an abelian variety is the exterior algebra on $H^1$.\end{ex}

Theorem \ref{ht} can be reformulated as follows. Introduce the $\C_K$-algebra
$$
B_{\rm HT}:=\bigoplus_{i\in\Z} \C_K(i)
$$
where multiplication is given by the natural maps $\C_K(i)\otimes\C_K(j)\to \C_K(i+j)$. It carries a natural $G_K$-action. Also, define the $K$-algebra
$$
H^n_{\rm Hdg}(X):=\bigoplus_{q=0}^n H^{q}(X, \Omega_X^{n-q}).
$$
Both are graded algebras, so there is a grading on the tensor product ${H^n_{\rm Hdg}(X)\otimes_KB_{\rm HT}}$ given by the sum of grades. Thus tensoring the Hodge--Tate decomposition of Theorem \ref{ht} by $B_{HT}$ yields a $G_K$-equivariant isomorphism of graded $\C_K$-algebras
$$
\Het^n(X_{\overline K},\Q_p)\otimes_{\Q_p}B_{\rm HT}\cong H^n_{\rm Hdg}(X)\otimes_KB_{\rm HT}.
$$
Moreover, Tate's theorem (\ref{tatetheo}) implies
$$
(\Het^n(X_{\overline K},\Q_p)\otimes_{\Q_p}B_{\rm HT})^{G_K}\cong H^n_{\rm Hdg}(X).
$$
So we indeed recover the Hodge cohomology of $X$ from the \'etale cohomology using the Galois action. But what about the de Rham cohomology?

In his groundbreaking paper \cite{Fannals}, Fontaine defined a complete discrete valued field $\Bdr$ containing $\overline K$ that is equipped with a $G_K$-action and has a {\em non-split} decreasing $G_K$-equivariant valuation filtration $\Fil^i$ such that there are $G_K$-equivariant isomorphisms
$$
\Fil^i/\Fil^{i+1}\cong \C_K(i)
$$
for all $i\in\Z$. So the associated graded ring of $\Bdr$ with respect to $\Fil^i$ is $\Bht$, and by Tate's theorem we have
$
\Bdr^{G_K}=K.
$

We then have the following stronger statement, from which Theorem \ref{ht} results after passing to associated graded rings.

\begin{theo}\label{cdr} For all $n\geq 0$ there is a
$G_K$-equivariant isomorphism of filtered $\overline K$-algebras
$$
\Het^n(X_{\overline K},\Q_p)\otimes_{\Q_p}\Bdr\cong H^n_{\rm dR}(X)\otimes_K\Bdr.
$$
Here the filtration on the right hand side is the tensor product of the Hodge filtration $F^i$ and the filtration $\Fil^j$ on $\Bdr$.
\end{theo}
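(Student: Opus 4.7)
The strategy is to construct a canonical $G_K$-equivariant, filtration-preserving morphism
$$
\alpha_n: \Het^n(X_{\overline K},\Q_p)\otimes_{\Q_p}\Bdr \lra H^n_{\rm dR}(X)\otimes_K\Bdr
$$
and prove that it is an isomorphism of filtered $\overline K$-algebras. Theorem \ref{ht} then drops out by passing to associated graded pieces, since $\gr\Bdr\cong\Bht$ and the filtration on the left-hand side comes entirely from $\Bdr$. Following Beilinson, the bridge between the two sides is furnished by Illusie's \emph{derived de Rham complex} $L\Omega^\bullet$, which interpolates naturally between \'etale and de Rham cohomology in mixed characteristic.

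The plan has three main steps. First, I would establish a \textbf{$p$-adic Poincar\'e lemma} for $\Bdr^+$: namely, the Hodge-completed derived de Rham complex of $\OKB$ over $\OK$ is canonically quasi-isomorphic, as a filtered $G_K$-module, to $\Bdr^+$ placed in degree zero, with the filtration given by powers of the augmentation ideal. This is a computation with the cotangent complex $L_{\OKB/\OK}$ following Beilinson and refined by Bhatt; it furnishes an intrinsic construction both of the filtration $\Fil^i$ on $\Bdr$ and of the $G_K$-action, and it is the source of all the ``period'' information on the right-hand side. Second, on a smooth affine open $\Spec A\subset X$ with a chosen smooth $\OK$-model $\Spec\mathcal A$, I would construct $\alpha_n$ locally by identifying the $p$-adic \'etale cohomology of $\Spec A_{\overline K}$ (tensored with $\Bdr$) with the $p$-adically completed relative derived de Rham cohomology of $\mathcal A\otimes_{\OK}\OKB$ over $\mathcal A$; applying the Poincar\'e lemma of step one fibrewise, together with Fontaine's almost purity theorem to translate \'etale cohomology into differentials, yields the local isomorphism. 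Third, I would \textbf{globalize} via Beilinson's $h$-topology on the category of $K$-varieties: both sides satisfy $h$-descent and every $K$-variety admits an $h$-cover by smooth proper ones with simple normal crossing complements at infinity (Hironaka or de Jong alterations), so the local comparison of step two extends uniquely to the claimed global isomorphism $\alpha_n$.

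The principal obstacle is the \emph{global descent} of step three. Proving that the $h$-sheafified, Hodge-completed derived de Rham complex of a smooth variety really computes its ordinary de Rham cohomology is a deep theorem of Beilinson and Bhatt; it requires cdh-descent together with a careful analysis of blow-up descent for $L\Omega^\bullet$, and this is where most of the technical effort lies. By contrast the $p$-adic Poincar\'e lemma of step one is concrete and purely arithmetic, independent of $X$, and step two is essentially a local calculation once the two other inputs are in hand. A secondary subtlety is the compatibility with filtrations: the Hodge filtration on $H^n_{\rm dR}(X)$ and the filtration $\Fil^\bullet$ on $\Bdr$ must both be produced from the filtration on $L\Omega^\bullet$, and one must check that the filtration on $\alpha_n$ obtained via $h$-descent matches the prescribed tensor-product filtration on the right-hand side.
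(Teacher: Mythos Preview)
Your proposal contains the right buzzwords but garbles the architecture of Beilinson's argument in several essential ways.

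First, what you call the ``$p$-adic Poincar\'e lemma'' in step one is not the Poincar\'e lemma of this paper. The computation $L\widehat\Omega^\bullet_{\OKB/\OK}\widehat\otimes\Q_p\simeq\Bdr^+$ is merely the \emph{definition} of $\Bdr^+$ in Beilinson's setup. The actual Poincar\'e lemma (Theorem~\ref{poincare}) is a statement about $h$-sheaves: the natural map from the \emph{constant} $h$-sheaf $(A_{\rm dR}/F^i)\widehat\otimes\Z_p$ to the $h$-sheafified derived log de Rham complex $({\mathcal A}_{\rm dR}^\natural/F^i)\widehat\otimes\Z_p$ is a quasi-isomorphism. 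This is the heart of the matter and is proved via Bhatt's $p$-divisibility theorem for coherent cohomology under proper surjections, not via any fibrewise calculation.

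Second, your step two is not Beilinson's method at all. You invoke ``Fontaine's almost purity theorem'' and a local construction on smooth affine opens with integral models; this is the Faltings (and later Scholze) approach. Beilinson's construction of the comparison map is \emph{global from the start}: one has $R\Gamma_{\rm dR}(X_{\overline K})^\wedge\simeq R\Gamma_h(X_{\overline K},{\mathcal A}_{\rm dR}^\natural)\otimes\Q$ on the geometric side, and the Poincar\'e lemma identifies $R\Gamma_h(X_{\overline K},{\mathcal A}_{\rm dR}^\natural)\widehat\otimes\Q_p$ with $R\Gamma_{\mbox{\scriptsize\'et}}(X_{\overline K},\Z_p)\otimes\Bdr^+$ on the arithmetic side. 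Composing with the natural map $R\Gamma_h\to R\Gamma_h\widehat\otimes\Z_p$ gives ${\rm comp}_n$. No almost mathematics, no local models, no gluing.

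Third, and most seriously, you omit entirely the verification that the comparison map is an \emph{isomorphism}. The construction above only produces a filtered $G_K$-equivariant map; it does not come with an inverse. The paper proves bijectivity by an independent argument: an explicit computation for $X=\Gm$ identifying the image of ${\rm dlog}(x)$ with the Kummer class, compatibility with Gysin maps in codimension one (established via deformation to the normal bundle), Poincar\'e duality for smooth projective $X$, and finally localization sequences to handle the open case. None of this appears in your plan.

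Finally, your assessment of difficulty is inverted. The $h$-descent for de Rham cohomology (your step three) is proved by reduction to $\C$ and classical Hodge theory, and is comparatively soft. The hard input is the Poincar\'e lemma for ${\mathcal A}_{\rm dR}^\natural$, which you have misidentified.
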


The equality $\Bdr^{G_K}=K$ implies that we indeed recover de Rham cohomology from \'etale cohomology:

\begin{cor}
For all $n\geq 0$ there is an isomorphism of filtered $K$-algebras
$$
(\Het^n(X_{\overline K},\Q_p)\otimes_{\Q_p}\Bdr)^{G_K}\cong H^n_{\rm dR}(X).
$$
\end{cor}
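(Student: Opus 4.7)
The plan is to deduce the corollary from Theorem \ref{cdr} simply by passing to $G_K$-invariants on both sides of the comparison isomorphism
$$
\Het^n(X_{\overline K},\Q_p)\otimes_{\Q_p}\Bdr\cong H^n_{\rm dR}(X)\otimes_K\Bdr.
$$
Since this is $G_K$-equivariant and filtered, it suffices to identify the $G_K$-invariants of the right hand side (as a filtered $K$-module) with $H^n_{\rm dR}(X)$ equipped with its Hodge filtration.

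For the underlying $K$-module, I would pick any $K$-basis $\{e_i\}$ of the finite-dimensional $K$-vector space $H^n_{\rm dR}(X)$; then every element of $H^n_{\rm dR}(X)\otimes_K\Bdr$ is uniquely of the form $\sum_i e_i\otimes b_i$ with $b_i\in\Bdr$, and since $G_K$ acts only on the second factor, such an element is Galois-fixed if and only if each $b_i$ lies in $\Bdr^{G_K}=K$. This yields
$$
(H^n_{\rm dR}(X)\otimes_K\Bdr)^{G_K}=H^n_{\rm dR}(X)\otimes_K\Bdr^{G_K}=H^n_{\rm dR}(X),
$$
which is the underlying $K$-module statement.

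For the filtration, I would first choose a $K$-linear splitting $H^n_{\rm dR}(X)=\bigoplus_i H^i$ of the Hodge filtration, so that $F^p=\bigoplus_{i\geq p}H^i$. The tensor product filtration then reads
$$
F^m(H^n_{\rm dR}(X)\otimes_K\Bdr)=\bigoplus_i H^i\otimes_K\Fil^{m-i}\Bdr,
$$
and taking $G_K$-invariants term by term reduces the question to computing $(\Fil^j\Bdr)^{G_K}$ for each integer $j$. The key input here is an iterated application of Tate's theorem (\ref{tatetheo}) to the graded pieces $\Fil^i/\Fil^{i+1}\cong\C_K(i)$: for $j>0$ one gets $(\Fil^j\Bdr)^{G_K}=0$ (no nonzero Galois-fixed element can have positive valuation, as all higher graded pieces $\C_K(i)$ with $i\geq j>0$ have no $G_K$-invariants), while for $j\leq 0$ one gets $(\Fil^j\Bdr)^{G_K}=\Bdr^{G_K}=K$. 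Plugging this in recovers exactly $F^m H^n_{\rm dR}(X)=\bigoplus_{i\geq m}H^i$, as required.

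The argument is essentially a formal consequence of Theorem \ref{cdr} together with Tate's theorem, so the only genuine point to be careful about is the left exactness of Galois invariants in the filtered setting; this is handled by the splitting argument above, which reduces everything to the scalar computation of $(\Fil^j\Bdr)^{G_K}$. The main conceptual obstacle lies entirely in Theorem \ref{cdr} itself, not in this corollary.
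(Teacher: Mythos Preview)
Your argument is correct and follows the same route the paper indicates: the paper simply remarks that the equality $\Bdr^{G_K}=K$ yields the corollary from Theorem~\ref{cdr}, without spelling out details. Your proof supplies those details, including the verification for the filtration via Tate's theorem, which the paper leaves implicit.
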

This was Fontaine's $C_{\rm dR}$ conjecture, again first proven by Faltings in his paper \cite{faltings2}; see also Illusie's Bourbaki report \cite{illbourbaki}. Fontaine has also made finer conjectures for smooth proper varieties with good reduction (the $C_{\rm cris}$ conjecture) and with semistable reduction (the $C_{\rm st}$ conjecture), involving other period rings $B_{\rm cris}$ and $B_{\rm st}$. Both conjectures imply the $C_{\rm dR}$ conjecture but in addition the groups in the comparison theorems carry extra structure. In the semi-stable case these are a semi-linear Frobenius and a monodromy operator, which together allow one to recover \'etale cohomnology from de Rham cohomology, not just the other way round. The $C_{\rm st}$ conjecture together with de Jong's alteration theorem also implies a $p$-adic analogue of Grothendieck's local monodromy theorem.

All these conjectures are now theorems thanks to work of Fontaine--Messing, Faltings, Hyodo, Kato, Tsuji, Niziol, Scholze and others; for the situation in 2009 complete with references and an in-depth account of Faltings's method, see Olsson's report \cite{olssonseattle}. We are concerned here with a recent beautiful approach by Beilinson \cite{Bei} that closely resembles the complex setting. We shall only discuss the de Rham comparison theorem, but Beilinson's method also yields proofs of the $C_{\rm cris}$ and $C_{\rm st}$ conjectures, as shown in work by Beilinson himself \cite{Bei2} as well as Bhatt \cite{Bh2}.

\subsection{Beilinson's method}

The first innovative ingredient in Beilinson's approach is a new construction of Fontaine's period ring that immediately shows its relation to de Rham theory. Namely, Beilinson considers
$$
A_{{\rm dR},K}:=L\widehat\Omega^\bullet_{\OKB/\OK}
$$
where on the right hand side we have the Hodge-completed derived de Rham algebra of Illusie \cite{I2}. The de Rham algebra $L\Omega^\bullet_{\OKB/\OK}$ itself is represented by a complex of $\OK$-modules equipped with a multiplicative structure and a descending filtration, i.e. a filtered differential graded algebra. It is constructed by choosing a free resolution of the $\OK$-algebra $\OKB$ and considering the de Rham complexes associated with each term in the resolution. Note that since $\OK$-algebras do not form an abelian category, the usual methods of homological algebra for constructing resolutions do not apply, and one has to use simplicial methods instead. The filtration is then induced by the Hodge filtration on the de Rham complexes.

Next, Beilinson considers the derived $p$-adic completion
$$
A_{{\rm dR},K}\widehat\otimes\Z_p:=R\limproj (A_{{\rm dR},K}\otimes^L\Z/p^r\Z).
$$
It turns out that the homology of this object is concentrated in degree 0, so it is a genuine filtered $\OK$-algebra.  Moreover, after tensoring with $\Q$ one obtains a complete discrete valuation ring that does not depend on $K$ any more and can be identified with Fontaine's ring $\Bdr^+$ which is the valuation ring of  $\Bdr$. The key point in this identification is Fontaine's calculation of the module of differentials $\Omega^1_{\OKB/\OK}$ in \cite{F}: it yields in particular a $G_K$-equivariant isomorphism $T_p(\Omega^1_{\OKB/\OK})\otimes\Q\cong\C_K(1).$

Beilinson's second main idea is to introduce a sheafification ${\mathcal A}^\natural_{\rm dR}$ of $\Adr:=A_{{\rm dR},K}$ for a certain Grothendieck topology that is fine enough to hope for an analogue of the Poincar\'e lemma. This is Voevodsky's $h$-topology \cite{sv} in which coverings are generated by \'etale surjective maps and {proper} surjective maps. The consideration of proper surjections is justified by an ingenious use of a theorem of Bhatt \cite{bhatt}. According to Bhatt's theorem, on a smooth variety every higher Zariski cohomology class of a coherent sheaf becomes $p$-divisible after passing to a suitable proper surjective covering; in particular, it vanishes after tensoring with $\Z/p^r\Z$. As a result, if one sheafifies the construction of the complexes $\Adr\otimes^L\Z/p^r\Z$ for the $h$-topology, they will have no higher cohomology over `small open sets'. This is Beilinson's $p$-adic version of the Poincar\'e lemma: the natural maps
$$
\Adr\otimes^L\Z/p^r\Z\to {\mathcal A}^\natural_{\rm dR}\otimes^L\Z/p^r\Z
$$
are filtered quasi-isomorphisms, where on the left hand side we have a constant $h$-sheaf. As a result, for a smooth $K$-variety $X$ we have  filtered isomorphisms
$$
H^n_{\mbox{\rm\scriptsize\'et}}(X_{\overline K},\Z_p)\otimes_{\Z_p}B_{\rm dR}^+\stackrel\sim\to H^n_h(X_{\overline K}, {\mathcal A}_{\rm dR}^\natural)\widehat\otimes\Q_p
$$
for all $n\geq 0$ that may be viewed as $p$-adic analogues of (\ref{aniso}). We call these the arithmetic side of the comparison isomorphism.

On the geometric side, one has to relate the right hand side of the above isomorphism to de Rham cohomology. This is accomplished by showing that  ${\mathcal A}^\natural_{\rm dR}\otimes\Q$ is none but the $h$-sheafification of the Hodge-completed (but non-derived) logarithmic de Rham complex.

There are several technical issues to be settled in order to make these ideas precise. First, the de Rham complexes under consideration only behave well for smooth schemes $U$ having a smooth normal crossing compactification $\overline U$. For these one has to work with logarithmic de Rham complexes and, in the arithmetic situation, log de Rham complexes of log schemes. Afterwards, $h$-sheafification causes a problem as the Zariski presheaves we want to sheafify are only defined for pairs $(U, \overline U)$ as above. Beilinson overcomes this difficulty by refining a general sheaf-theoretic result of Verdier that becomes applicable in our situation thanks to de Jong's alteration theorems. Finally, there is a complication of homological nature caused by the fact that we want to sheafify filtered objects in a derived category. Beilinson handles it by using the theory of $E_\infty$-algebras; here we follow the more pedestrian approach of Illusie \cite{illsurvey} that uses canonical Godement resolutions.

Once the comparison map between $\Het^n(X_{\overline K},\Q_p)\otimes_{\Q_p}\Bdr$ and $H^n_{\rm dR}(X)\otimes_K\Bdr$ has been constructed, the key computation is to verify that it is an isomorphism in the case $X={\Bbb G}_m$. Afterwards, the general case follows by formal cohomological arguments already present in the work of Faltings and Fontaine--Messing.

\subsection{Overview of the present text}

In the first chapter we give a reasonably complete introduction to Illusie's theory \cite{I1} of the cotangent complex and the derived de Rham algebra. The construction of these objects relies on simplicial methods which are usually not part of the toolkit of algebraic geometers and number theorists (such as yours truly). We have therefore summarized the results we need in an appendix.

Next, we present Fontaine's computation of the module of differentials for the $p$-adic ring extension $\OKB|\OK$ with simplifications due to Beilinson. This then serves for the computation of the $p$-completed derived de Rham algebra of the above ring extension, for which we use techniques from Bhatt's paper \cite{Bh2}. We emphasize throughout the role played by deformation problems in these constructions, culminating in a description of the $p$-completed derived de Rham algebra of $\OKB|\OK$ as a solution of a certain universal deformation problem. This ties in with Fontaine's approach in \cite{F2} to period rings via deformation problems, with the notable difference that he constructs universal deformation rings `by hand', whereas here, to use a somewhat dangerous formulation, we derive them from derived de Rham theory. This approach also makes it possible to prove directly that $\Bdr^+$ as constructed via Beilinson's method is a complete discrete valuation ring with the required properties, whereas he himself proceeds by comparison with Fontaine's constructions. A subtle point deserves to be mentioned here: as already noticed by Illusie in his thesis \cite{I2}, the $p$-completed derived de Rham algebras under consideration come equipped with a divided power structure. This structure enters calculations in a crucial way but then gets killed when one inverts the prime $p$ to obtain the ring $\Bdr^+$. This indicates that the $p$-completed derived de Rham algebra is `really' related to the crystalline theory, as confirmed by Bhatt's construction of the period ring $A_{\rm cris}$ that we also briefly review in the text.

The following chapter presents Beilinson's construction of the comparison map. We have separated the geometric side of the construction from the arithmetic side, as already outlined in the survey above. The geometric side does not use the derived de Rham complex or logarithmic geometry but the $h$-sheafification process already enters the game, in a somewhat simpler setting than in the arithmetic situation. It should be pointed out here that the comparison with classical de Rham cohomology uses complex Hodge theory. Presented this way, the arithmetic side of the construction becomes a logarithmic variant of the geometric one over a $p$-adic integral base, relying heavily on the Olsson--Gabber theory \cite{olsson} of the logarithmic cotangent complex. We only give a brief summary of the results of \cite{olsson}, but we hope that the reader will take on faith that the exact analogues of non-logarithmic results hold in this setting.

Of course, the arithmetic side of the comparison map has another non-trivial input besides those just mentioned: Beilinson's $p$-adic Poincar\'e lemma. We give its proof in the last chapter. However,  the key geometric result inspired by Bhatt \cite{bhatt} is only presented in a special case (due to Bhatt himself) where the argument is more transparent. The last chapter also contains a verification that the comparison map is an isomorphism.

This text grew out of a study seminar organized by the authors at the R\'enyi Institute during the academic year 2014/15, and was the basis of seminars at Universit\"at Duisburg-Essen and Oxford University in 2016. We thank all participants for their contribution. We are also indebted to Bhargav Bhatt, Luc Illusie and Marc Levine for enlightening discussions and to Alexander Beilinson for his kind comments on a preliminary version. We are grateful to the editors of the 2015 AMS Summer Institute proceedings for their kind interest in our text and to the referee whose suggestions have considerably improved it. The first author was partially supported by NKFI grant No. K112735, the second author by NKFI grant No. K100291 and by a Bolyai Scholarship of the Hungarian Academy of Sciences.

\section{The cotangent complex and the derived de Rham algebra}

\subsection{The cotangent complex of a ring homomorphism}

In this section and the next we give a quick introduction to Illusie's cotangent complex in the affine case. To begin with, we summarize basic properties of differential forms for the sake of reference.

\begin{facts}\label{diff}\rm Let $A \to B$ be a homomorphism of rings, and $M$ a $B$-module.
An {\em $A$-derivation} of $B$ in $M$ is an $A$-linear map  $D: B \to M$ satisfying the Leibniz rule $D(b_1b_2)={b_1D(b_2)+ b_1 D(b_2)}$ for all $b_1,b_2 \in B$. We denote the set of $A$-derivations $B\to M$ by ${\rm Der_A}(B,M)$; it carries a natural $B$-module structure with scalar multiplication given by $(bD)(x)=b\cdot D(x)$ for all $b\in B$.

The functor $M\mapsto {\rm Der}_A(B,M)$ on the category of $B$-modules is representable by a $B$-module $\Omega^1_{B/A}$, the {\em module of relative differentials}. A presentation of $\Omega^1_{B/A}$ is given by generators $db$ for each $b\in B$ subject to the relations $d(a_1b_1+a_2b_2)-a_1db_1-a_2db_2$ and $d(b_1b_2)-b_1db_2-b_2db_1$ for $a_i\in A$ and $b_i\in B$. It satisfies the following basic properties:
\begin{enumerate}
\item (Base change) For an $A$-algebra $A'$ one has ${\Omega^1_{B \otimes_A A'/A'}\cong \Omega^1_{B/A} \otimes_A A'}$.
\item (Localization) Given
 a multiplicative subset $S$ of $B$, one has
 $${\Omega^1_{B_S/A}\cong \Omega^1_{B/A} \otimes_B B_S}.$$
\item (First exact sequence) A sequence of ring homomorphisms $A\to B\to C$ gives rise to an exact sequence of $C$-modules
\begin{equation*}\label{firstexact}
C\otimes_B\Omega^1_{B/A}\to \Omega^1_{C/A}\to\Omega^1_{C/B}\to 0.
\end{equation*}
\item (Second exact sequence) A surjective morphism $B\to C$ of $A$-algebras with kernel $I$ gives rise to an exact sequence
\begin{equation*}\label{secondexact}
I/I^2\stackrel\delta\to C\otimes_B\Omega^1_{B/A}\to\Omega^1_{C/A}\to 0
\end{equation*}
of $C$-modules, where the map $\delta$ sends a class $x$ mod $I^2$ to $1\otimes dx$. (Note that the $B$-module structure on $I/I^2$ induces a $C$-module structure.)
\end{enumerate}
For all these facts, see e.g. \cite{M}, \S 25.
\end{facts}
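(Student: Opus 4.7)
The unifying strategy is to build everything from the universal property of $\Omega^1_{B/A}$ as the object representing the functor $M\mapsto\mathrm{Der}_A(B,M)$ on $B$-modules, so that each of the four properties reduces to a Yoneda-style manipulation after the module itself has been constructed.

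For existence and the explicit presentation, I would take the free $B$-module $F$ on symbols $\{db:b\in B\}$, quotient by the submodule $R$ generated by the listed relations, and set $\Omega^1_{B/A}:=F/R$ with canonical map $d\colon B\to\Omega^1_{B/A}$, $b\mapsto\overline{db}$. The chosen relations encode exactly $A$-linearity and the Leibniz rule, so $d$ is an $A$-derivation; conversely, any $A$-derivation $D\colon B\to M$ extends by $B$-linearity to $F$ and kills $R$ by the same token, yielding the unique $B$-linear map $\Omega^1_{B/A}\to M$ with $\overline{db}\mapsto D(b)$.

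For base change, an $A'$-derivation $D\colon B\otimes_A A'\to M$ is determined by its restriction to $B$ via pure tensors $b\otimes a'$ together with Leibniz and $A'$-linearity, and any $A$-derivation $B\to M$ extends uniquely to an $A'$-derivation of $B\otimes_A A'$; hence $\mathrm{Der}_{A'}(B\otimes_A A',M)\cong\mathrm{Der}_A(B,M)\cong\mathrm{Hom}_{B\otimes_A A'}(\Omega^1_{B/A}\otimes_A A',M)$, and Yoneda concludes. Localization is analogous: an $A$-derivation of $B$ extends uniquely to $B_S$ via the forced quotient rule $D(b/s)=(sD(b)-bD(s))/s^2$, giving the required bijection between representing objects.

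For the exact sequences I would apply $\mathrm{Hom}_C(-,M)$ for an arbitrary $C$-module $M$ and translate to derivations. The first sequence becomes $0\to\mathrm{Der}_B(C,M)\to\mathrm{Der}_A(C,M)\to\mathrm{Der}_A(B,M)$, whose exactness is immediate since $B$-linearity of an $A$-derivation on $C$ is precisely the vanishing of its restriction to $B$. For the second sequence the only subtle point, which I expect to be the main place to be careful, is the well-definedness of $\delta\colon I/I^2\to C\otimes_B\Omega^1_{B/A}$: the formula $x\mapsto 1\otimes dx$ kills $I^2$ because $d(i_1i_2)=i_1\,di_2+i_2\,di_1$, and in $C\otimes_B\Omega^1_{B/A}$ the scalars $i_j\in B$ can be moved across the tensor product to land in $C$, where they vanish. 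The dual sequence $0\to\mathrm{Der}_A(C,M)\to\mathrm{Der}_A(B,M)\to\mathrm{Hom}_C(I/I^2,M)$ is then exact, since an $A$-derivation of $B$ into a $C$-module $M$ descends to $C$ iff it annihilates $I$.
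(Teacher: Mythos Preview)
Your proposal is correct and follows the standard Yoneda-style argument for these foundational facts. Note, however, that the paper does not actually prove this statement: it is recorded as a \texttt{Facts} environment and simply refers the reader to Matsumura, \S 25. So there is no ``paper's own proof'' to compare against; you have supplied what the authors deliberately omitted as standard background.
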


Exact sequence (3) above can be extended by 0 on the left under a smoothness assumption on the map $B\to C$. However, in general exactness on the left fails. One of the main motivations for introducing the cotangent complex $L_{B/A}$ is to remedy this defect. To construct $L_{B/A}$, we use the simplicial techniques from Subsection \ref{simpmeth} of the Appendix.

\begin{defi}\rm
Let $A$ be a ring. We call an augmented simplicial object $Q_\bullet\to B$ in the category of $A$-algebras a {\em simplicial resolution} if it induces a simplicial resolution on underlying $A$-modules in the sense of Definition \ref{simpres}.
\end{defi}

Note that the category of $A$-algebras is {\em not} an abelian category, and therefore Definition \ref{simpres} does not apply directly.

\begin{cons}\label{standardres}\rm
We define the {\em standard simplicial resolution} $P_\bullet=P_\bullet(B)$ of the $A$-algebra $B$ as follows. Set $P_0:=A[B]$, the  free $A$-algebra on generators $x_b$ indexed by the elements of $B$; then define inductively $$P_{i+1}:=A[P_i]$$ for $i\geq 0$.

We turn the sequence of the $P_i$ into a simplicial $A$-algebra as follows. Note first that given an $A$-algebra $B$, its identity map induces an $A$-algebra homomorphism $\kappa_B:\,A[B]\to B$, and also a map of sets $\tau_B:\,B\to A[B]$ in the other direction. Whence for $0\leq j\leq i$ face maps
$$
\partial^j_i:\,P_i=\underbrace{A[A[\dots [B]]\dots ]}_i \to P_{i-1}=\underbrace{A[A[\dots [B]]\dots ]}_{i-1}
$$
induced by applying $\kappa_{A[P_j]}$, and degeneracy maps
$$
\sigma^j_i:\,P_{i-1}=\underbrace{A[A[\dots [B]]\dots ]}_{i-1} \to P_{i}=\underbrace{A[A[\dots [B]]\dots ]}_{i}
$$
induced by applying $\tau_{A[P_j]}$. Direct computation shows that this defines a simplicial resolution of the $A$-algebra $B$; this fact may also be deduced from the general categorical result of (\cite{weibel}, Proposition 8.6.8).

For later use, note that in a similar fashion we obtain a standard simplicial resolution for an $A$-module $M$, by iterating the functor associating with $M$ the free $A$-module with basis the underlying set of $M$. Finally, the construction may be carried out for simplicial algebras $B_\bullet$ (or modules $M_\bullet$) over a simplicial ring $A_\bullet$: it yields a bisimplicial object whose associated double complex gives a free resolution in each column.
\end{cons}

The standard resolution has the following important property.

\begin{lem}\label{standardtriv} Assume $B=A[X]$ is a free algebra on the generating set $X$.
Then the standard simplicial resolution $P_\bullet(A[X])\to A[X]_\bullet$ defined above is a homotopy equivalence.
\end{lem}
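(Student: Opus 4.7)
The plan is to exhibit an explicit extra degeneracy for the augmented simplicial $A$-algebra $P_\bullet \to B$, which by a standard argument from simplicial homotopy theory (as recorded in the appendix on simplicial methods) immediately produces a simplicial homotopy equivalence $P_\bullet \simeq B$. Concretely, setting $P_{-1}:=B$, I would construct $A$-algebra homomorphisms $s_n\colon P_n \to P_{n+1}$ for every $n\geq -1$ satisfying the usual extra-degeneracy identities $\partial^0_{n+1}s_n = \mathrm{id}_{P_n}$ and $\partial^{j+1}_{n+1}s_n = s_{n-1}\partial^j_n$ for $0\leq j\leq n$, together with the corresponding relations involving the $\sigma^j_i$.

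The maps $s_n$ for $n\geq 0$ require no hypothesis on $B$: since $P_{n+1}=A[P_n]$ is the free $A$-algebra on the underlying set of $P_n$, I would define $s_n$ as the unique $A$-algebra map sending the free generator $x_p\in P_n$ (indexed by $p\in P_{n-1}$) to the generator $x_{x_p}\in P_{n+1}$. The critical step is the bottom map $s_{-1}\colon B\to P_0=A[B]$. For a general $A$-algebra $B$ the only natural candidate $\tau_B\colon B\to A[B]$ is merely a map of sets and not a ring homomorphism (for instance, $\tau_B(bb')=x_{bb'}\neq x_bx_{b'}=\tau_B(b)\tau_B(b')$), and this is precisely why the lemma fails outside the free case. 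When $B=A[X]$ is free, however, the universal property of the free $A$-algebra lets me extend the set map $X\to A[B]$, $x\mapsto x_x$, to a bona fide $A$-algebra homomorphism $s_{-1}\colon A[X]\to A[A[X]]$, and one immediately checks that $\kappa_B\circ s_{-1}=\mathrm{id}_B$.

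What remains is to verify the simplicial identities relating the $s_n$ with the face maps $\partial^j_i$ (given by $\kappa$ applied in the $j$-th slot) and the degeneracies $\sigma^j_i$ (given by $\tau$ applied in the $j$-th slot) of Construction \ref{standardres}. I expect this bookkeeping to be the main technical obstacle, though not a deep one: each identity reduces to the naturality of $\kappa$ and $\tau$ combined with the triangle identities of the adjunction $A[-]\dashv U$ between sets and $A$-algebras. Once established, the extra degeneracy furnishes a simplicial $A$-algebra homotopy from $\mathrm{id}_{P_\bullet}$ to the composite $P_\bullet\to B\to P_\bullet$, finishing the proof. Structurally, this is an instance of the general fact that the comonadic bar resolution attached to an adjunction is simplicially contractible on every object lying in the essential image of the left adjoint.
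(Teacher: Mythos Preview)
Your strategy via an extra degeneracy is the right abstract argument, and your final sentence correctly identifies it as the general comonadic fact. But the explicit formula you give for $s_n$ when $n\geq 0$ is not merely unchecked bookkeeping; it is actually wrong. The map $x_p\mapsto x_{x_p}$ on $P_n=A[P_{n-1}]$ is precisely the comultiplication $F\eta_{UP_{n-1}}$ of the comonad $FU$, hence one of the \emph{existing} degeneracies of $P_\bullet$, not an extra one. Concretely, test your identity $\partial^{1}_{1}s_0=s_{-1}\partial^{0}_0$ on the generator $x_b\in P_0$ with $b=yz$ a product of two elements $y,z\in X$: either face map $P_1\to P_0$ collapses $s_0(x_b)=x_{x_b}$ back to $x_b$, whereas $s_{-1}(\kappa_B(x_b))=s_{-1}(yz)=x_y\,x_z\neq x_{yz}=x_b$. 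So $s_0$ and $s_{-1}$ are incompatible.

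The fix is to propagate $s_{-1}$ inward rather than reuse $\tau$: define $s_n$ recursively as the $A$-algebra map $A[P_{n-1}]\to A[P_n]$ sending $x_p\mapsto x_{s_{n-1}(p)}$; equivalently $s_n=(FU)^{n+1}(s_{-1})$, inserting the new slot at the innermost position. Then the triangle identity $\kappa_B\circ s_{-1}=\mathrm{id}_B$ shows that the innermost face kills $s_n$, and naturality of $\kappa$ gives commutation of $s_n$ with the remaining faces, which is a genuine extra degeneracy. With this correction your argument goes through and in fact yields a homotopy equivalence of simplicial $A$-algebras. The paper instead writes down the homotopy inverse $f_n=\tau^{\,n+1}$ and an explicit simplicial homotopy built from iterates of $\kappa$ and $\tau$; since $\tau$ is only a set map, that argument lives at the level of underlying simplicial sets.
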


Here $A[X]_\bullet$ denotes the constant simplicial object associated with $A[X]$, as in Definition \ref{constsimp} of the Appendix.

\begin{proof} Define $f_\bullet:\, A[X]_\bullet\to P_\bullet(A[X])$ and $g_\bullet:\, P_\bullet(A[X])\to A[X]_\bullet$ by iterating the operations $\tau_B$ and $\kappa_B$:
\begin{eqnarray*}
f_n:= &\underbrace{\tau\circ\dots\circ\tau}_{n+1},\\
g_n:= &\underbrace{\kappa\circ\dots\circ\kappa}_{n+1}.
\end{eqnarray*}
These indeed define morphisms of simplicial objects, and by construction we have $g_\bullet\circ f_\bullet={\id}_{A[X]_\bullet}$. We define a simplicial homotopy between $f_\bullet\circ g_\bullet$ and ${\id}_{P_\bullet(A[X])}$ as follows. For $\alpha_i\colon [n]\to [1]$ ($i=-1,0,\dots,n$) with $\alpha_i^{-1}(0)=\{0,\dots,i\}$ we put
\begin{equation*}
\xymatrixcolsep{8pc}\xymatrix{H_{\alpha_i}\colon P_n \ar[r]^{\underbrace{\tau\circ\dots\circ\tau}_{n-i}} & P_i\ar[r]^{\underbrace{\kappa\circ\dots\circ\kappa}_{n-i}} &  P_n.
}
\end{equation*}
Taking the sum of these maps over all $\alpha_i$ defines a simplicial homotopy $$H:\, {P_\bullet(A[X])\times \Delta[1]_\bullet}\to P_\bullet(A[X])$$ between $f_\bullet\circ g_\bullet$ and ${\id}_{P_\bullet(A[X])}$.
\end{proof}

Now we come to the fundamental definition of Illusie \cite{I1}.

\begin{defi}\rm
Consider an $A$-algebra $B$, and take the standard resolution $P_\bullet\to B$. The \emph{cotangent complex} $L_{B/A}$ of the $A$-algebra $B$ is defined as the complex of $B$-modules $$L_{B/A}:=C(B_\bullet\otimes_{P_\bullet}\Omega^1_{P_\bullet/A}).$$
\end{defi}

Like in the previous lemma, here $B_\bullet$ stands for the constant simplicial ring associated with $B$ (see Definition \ref{constsimp}). It is a simplicial $P_\bullet$-algebra via the augmentation map $P_\bullet\to B_\bullet$ (see Definition \ref{augment}). The simplicial $A$-module $\Omega^1_{P_\bullet/A}$ is obtained by applying the functor $B\to \Omega^1_{B/A}$ to the terms of the resolution $P_\bullet$, and $C$ denotes the associated chain complex.

The cotangent complex is related to the module of differentials as follows.

\begin{prop}\label{h0cotang}
We have a natural isomorphism of $B$-modules $$H_0(L_{B/A})\cong \Omega^1_{B/A}.$$
\end{prop}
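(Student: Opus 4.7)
The plan is to apply Yoneda: I aim to show that $H_0(L_{B/A})$ represents the functor $M \mapsto \mathrm{Der}_A(B,M)$ on $B$-modules, which is the defining property of $\Omega^1_{B/A}$.

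By definition $L_{B/A}$ is the chain complex associated to the simplicial $B$-module $B_\bullet \otimes_{P_\bullet} \Omega^1_{P_\bullet/A}$, so $H_0(L_{B/A})$ is the cokernel of $\partial^0_1 - \partial^1_1 \colon B\otimes_{P_1} \Omega^1_{P_1/A} \to B\otimes_{P_0}\Omega^1_{P_0/A}$. For any $B$-module $M$, the tensor-hom adjunction together with the universal property of $\Omega^1_{P_i/A}$ yields natural isomorphisms
$$
\Hom_B(B\otimes_{P_i}\Omega^1_{P_i/A}, M) \;\cong\; \mathrm{Der}_A(P_i, M),
$$
where $M$ is viewed as a $P_i$-module through the augmentation $P_i \to B$. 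Thus $\Hom_B(H_0(L_{B/A}), M)$ is identified with the equaliser of the two maps $\mathrm{Der}_A(P_0, M) \rightrightarrows \mathrm{Der}_A(P_1, M)$ induced by the face maps $\partial^0_1, \partial^1_1 \colon P_1 \to P_0$.

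The core calculation is to identify this equaliser with $\mathrm{Der}_A(B,M)$. Since $P_0 = A[B]$ is free, any $A$-derivation $D\in\mathrm{Der}_A(P_0,M)$ is determined by the set map $d \colon B \to M$, $b\mapsto d(b) := D(x_b)$, and every such set map extends uniquely. Now $\partial^0_1 = \kappa_{P_0}$ sends the generator $x_y$ (for $y\in P_0$) to $y$, while $\partial^1_1 = A[\kappa_B]$ sends $x_y$ to $x_{\kappa_B(y)}$. Hence the condition $D\circ\partial^0_1 = D\circ\partial^1_1$ evaluated on $x_y$ reads $D(y) = d(\kappa_B(y))$. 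Testing this for $y = ax_b$, $y = x_{b_1}+x_{b_2}$ and $y = x_{b_1}x_{b_2}$, and using that the $P_0$-action on $M$ factors through $\kappa_B$, the three equations translate respectively into $A$-linearity $d(ab) = a\,d(b)$, additivity $d(b_1+b_2) = d(b_1)+d(b_2)$, and the Leibniz rule $d(b_1b_2) = b_1 d(b_2) + b_2 d(b_1)$. Conversely, if $d$ satisfies these, one checks by linearity and induction on the polynomial length of $y$ that $D(y) = d(\kappa_B(y))$ holds for every $y\in A[B]$.

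Putting the steps together yields a $B$-linear natural isomorphism $\Hom_B(H_0(L_{B/A}),M) \cong \mathrm{Der}_A(B,M) \cong \Hom_B(\Omega^1_{B/A},M)$, so Yoneda produces the desired isomorphism $H_0(L_{B/A}) \cong \Omega^1_{B/A}$. The argument is essentially bookkeeping; the only point that demands care is tracking the explicit form of the face maps $\partial^0_1,\partial^1_1$ through the iterated free-algebra construction of $P_\bullet$.
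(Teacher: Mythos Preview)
Your proof is correct and takes a genuinely different route from the paper. The paper constructs the map $H_0(L_{B/A})\to\Omega^1_{B/A}$ directly from the augmentation $P_\bullet\to B$, checks surjectivity using surjectivity of $\epsilon_0\colon P_0\to B$, and then obtains injectivity from the conormal exact sequence $I/I^2\to B\otimes_{P_0}\Omega^1_{P_0/A}\to\Omega^1_{B/A}\to 0$ together with the fact that $I=\im(d_1)$ because $P_\bullet$ is a resolution. Your argument instead identifies both sides as representing the same functor $M\mapsto\mathrm{Der}_A(B,M)$ by an explicit analysis of the two face maps $P_1\rightrightarrows P_0$ in the standard resolution. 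The trade-off: the paper's approach is shorter and uses only general properties of $\Omega^1$ (in particular the second exact sequence, which is already recorded in Facts~\ref{diff}), while your approach avoids invoking that sequence but pays for it by unwinding the bar construction. Your route has the pleasant side effect that naturality in $B$ and in $M$ comes for free from Yoneda, and the converse direction of your equaliser computation can be streamlined: once $d$ is an $A$-derivation, $d\circ\kappa_B$ is itself an $A$-derivation on $P_0$ agreeing with $D$ on the generators $x_b$, hence equal to $D$ everywhere, so no induction on polynomial length is needed.
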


\begin{proof} Since $\epsilon_\bullet:\, P_\bullet\to B_\bullet$ is an augmentation for the simplicial object $P_\bullet$, we have $\epsilon_0 d_0=\epsilon_0 d_1$. Therefore the composed map in the associated chain complex
$$B\otimes_{P_1}\Omega^1_{P_1/A}\to B\otimes_{P_0}\Omega^1_{P_0/A}\to \Omega^1_{B/A}$$
is the zero map. Thus we have a morphism of complexes $L_{B/A}\to \Omega^1_{B/A}$, with $\Omega^1_{B/A}$ considered as a complex concentrated in degree 0. The induced map $H_0 (L_{B/A})\to \Omega^1_{B/A}$ is surjective because so is the ring homomorphism $P_0\to B$. By Fact \ref{diff} (4) we have an exact sequence $$I/I^2\to B\otimes_{P_0}\Omega^1_{P_0/A}\to \Omega^1_{B/A},$$  where $I$ is the kernel of the augmentation map $\epsilon_0\colon P_0\to B$. But since $P_\bullet\to B$ is a resolution, here $I$ is also the image of the map $d_0-d_1\colon P_1\to P_0$, and therefore the image of $I/I^2$ in $B\otimes_{P_0}\Omega^1_{P_0/A}$ is covered by $B\otimes_{P_1}\Omega^1_{P_1/A}$, as desired.
\end{proof}

We now show that the cotangent complex may be calculated by other free resolutions as well.

\begin{theo}\label{freeresolcotang}
Let $Q_\bullet\to B$ be a simplicial resolution of the $A$-algebra $B$ whose terms are free $A$-algebras. We have a quasi-isomorphism $$L_{B/A}\cong C(B_\bullet\otimes_{Q_\bullet}\Omega^1_{Q_\bullet/A})$$ of complexes of $B$-modules.
\end{theo}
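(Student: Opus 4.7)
The natural approach is to introduce a common bisimplicial refinement of the two resolutions and compare both via it. Let $P_\bullet$ denote the standard resolution functor of Construction \ref{standardres}, and form the bisimplicial $A$-algebra $R_{m,n} := P_m(Q_n)$. It carries two augmentations: in the first index, $P_\bullet(Q_n)\to Q_n$ applied levelwise; in the second, $P_m(Q_\bullet)\to P_m(B)$ obtained by applying $P_m$ to the given augmentation $Q_\bullet\to B$. Applying $\Omega^1_{-/A}$ and base-changing to $B$ yields a bisimplicial $B$-module
$$
K_{m,n} := B \otimes_{R_{m,n}} \Omega^1_{R_{m,n}/A},
$$
whose associated total chain complex $T$ receives natural comparison maps
$$
C\bigl(B\otimes_{Q_\bullet}\Omega^1_{Q_\bullet/A}\bigr) \longleftarrow T \longrightarrow L_{B/A} = C\bigl(B\otimes_{P_\bullet(B)}\Omega^1_{P_\bullet(B)/A}\bigr).
$$
The theorem will follow once both arrows are shown to be quasi-isomorphisms.

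For the leftward arrow, fix $n$ and use that $Q_n$ is a free $A$-algebra by hypothesis. Lemma \ref{standardtriv} then provides a simplicial homotopy equivalence $P_\bullet(Q_n)\to Q_n$ implemented entirely by $A$-algebra morphisms, namely the iterated $\tau$ and $\kappa$ maps of the adjunction between free algebras and sets. Since $\Omega^1_{-/A}$ is a functor on $A$-algebras, applying it and then base-changing to $B$ carries these maps together with the explicit homotopy $H$ of Lemma \ref{standardtriv} to a simplicial homotopy equivalence of simplicial $B$-modules between $K_{\bullet,n}$ and the constant simplicial object $B\otimes_{Q_n}\Omega^1_{Q_n/A}$. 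Letting $n$ vary and running the spectral sequence of the double complex $C(K_{\bullet,\bullet})$ whose $P$-direction collapses at $E^2$ then gives $T\simeq C(B\otimes_{Q_\bullet}\Omega^1_{Q_\bullet/A})$.

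For the rightward arrow, fix $m$ and aim to show that $K_{m,\bullet}\to B\otimes_{P_m(B)}\Omega^1_{P_m(B)/A}$ is a quasi-isomorphism, after which the analogous spectral sequence (now with the other direction collapsing) yields $T\simeq L_{B/A}$. This is where the main obstacle lies: one needs the free-algebra functor $A[-]$, and hence its iterate $P_m$, to carry the simplicial resolution $Q_\bullet\to B$ to a simplicial resolution $P_m(Q_\bullet)\to P_m(B)$ on underlying $A$-modules. Concretely, since $P_m(Q_n)$ is a polynomial ring, $\Omega^1_{P_m(Q_n)/A}$ is a free $P_m(Q_n)$-module on generators varying functorially with $Q_n$; after base-change to $B$ the question reduces to a combinatorial statement about free $A$-modules on simplicial sets built from $Q_\bullet$, which can be established via the Eilenberg--Zilber and Dold--Kan machinery summarized in the Appendix.
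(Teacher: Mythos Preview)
Your overall architecture coincides with the paper's: form the bisimplicial $A$-algebra $P_\bullet(Q_\bullet)$ and compare its total complex of differentials to $C\Omega^1_{Q_\bullet/A}$ in one direction and to $C\Omega^1_{P_\bullet(B)/A}$ in the other. Two points deserve comment.

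First, a factual slip in your leftward arrow: the homotopy of Lemma~\ref{standardtriv} is \emph{not} implemented by $A$-algebra morphisms. The map $\tau_B\colon b\mapsto x_b$, and hence $f_\bullet$ and the homotopy maps $H_{\alpha_i}$, are only set maps, so one cannot literally apply the functor $\Omega^1_{-/A}$ to them. This direction still works, but the justification needs more care.

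Second, and more substantively, your rightward arrow is left unresolved. You correctly flag it as the main obstacle and then defer to ``Eilenberg--Zilber and Dold--Kan machinery''; that is not an argument. The paper's key idea (Lemma~\ref{totcompiso}) is clean and worth knowing: since $Q_\bullet\to B$ is a simplicial resolution of abelian groups, Propositions~\ref{kanabelian} and~\ref{kanhomotopy} say that the underlying map of simplicial \emph{sets} is a trivial fibration and hence a homotopy equivalence. Now $P_m$ is a functor from \emph{sets} to $A$-algebras, and functors carry simplicial homotopies to simplicial homotopies; thus $P_m(Q_\bullet)\to P_m(B)_\bullet$ is a homotopy equivalence of simplicial \emph{$A$-algebras}, every map in the homotopy now being an honest algebra morphism as the image under $P_m$ of a set map. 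At that point applying $\Omega^1_{-/A}$ is legitimate and yields the required quasi-isomorphism. The same mechanism, incidentally, repairs the leftward arrow once phrased correctly. This passage through simplicial sets via the free--forgetful adjunction is the substantive input your sketch omits.
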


The proof will be in several steps. We begin with a general lemma that will serve in other contexts as well.

\begin{lem}\label{flattensorquasi}
Let $A_\bullet$ be a simplicial ring, and let  $ E_\bullet\to F_\bullet$ be a morphism of $A_\bullet$-modules that induces a quasi-isomorphism on associated chain complexes. Tensoring by an $A_\bullet$-module $L_\bullet$ that is termwise flat over $A_\bullet$ yields a map $E_\bullet\otimes_{A_\bullet} L_\bullet\to F_\bullet\otimes_{A_\bullet} L_\bullet$ that also induces a quasi-isomorphism.
\end{lem}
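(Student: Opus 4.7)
The lemma is the simplicial analogue of the classical fact that tensoring chain complexes with a termwise flat complex preserves quasi-isomorphisms, and my plan is to mirror that argument in the simplicial setting. The crucial input is that termwise $A_\bullet$-flatness of $L_\bullet$ makes $-\otimes_{A_n}L_n$ exact at each level $n$, so that $-\otimes_{A_\bullet}L_\bullet$ sends levelwise short exact sequences of simplicial $A_\bullet$-modules to levelwise short exact sequences, and $C$ (the un-normalized Moore complex) turns these into short exact sequences of chain complexes with the attendant long exact homology sequences.

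The first step is to reduce to the following acyclicity assertion: if $C(G_\bullet)$ is acyclic for some simplicial $A_\bullet$-module $G_\bullet$, then so is $C(G_\bullet\otimes_{A_\bullet}L_\bullet)$. For this I would factor $f_\bullet\colon E_\bullet\to F_\bullet$ levelwise through its image to obtain two levelwise short exact sequences $0\to K_\bullet\to E_\bullet\to I_\bullet\to 0$ and $0\to I_\bullet\to F_\bullet\to Q_\bullet\to 0$, which remain exact after $\otimes_{A_\bullet}L_\bullet$ by flatness. Equivalently, thinking of $[E_\bullet\to F_\bullet]$ as a two-term complex of simplicial $A_\bullet$-modules whose chain-complex mapping cone $Z_\bullet$ satisfies $C(Z_\bullet)\simeq 0$ (by the hypothesis that $C(f_\bullet)$ is a quasi-iso), a diagram chase through the two long exact sequences before and after tensoring with $L_\bullet$ shows the original claim follows from the acyclicity assertion applied to $Z_\bullet$.

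To prove the acyclicity assertion, I would filter $L_\bullet$ by its simplicial skeleta $\mathrm{sk}_n L_\bullet$: these are sub-simplicial $A_\bullet$-modules with $L_\bullet=\varinjlim_n\mathrm{sk}_n L_\bullet$, and termwise flatness descends to them and to their successive subquotients $\mathrm{sk}_n L_\bullet/\mathrm{sk}_{n-1}L_\bullet$, which are essentially freely generated by a single $A_n$-module placed in degree $n$. The induced filtration on $G_\bullet\otimes_{A_\bullet}L_\bullet$ gives a spectral sequence whose $E^1$-terms are computed from $C(G_\bullet\otimes_{A_\bullet}(\mathrm{sk}_n L_\bullet/\mathrm{sk}_{n-1}L_\bullet))$; for each such subquotient, the tensor product reduces — via the single-degree structure of the generator — to $C(G_\bullet)$ tensored levelwise with a flat $A_n$-module, hence is acyclic because $C(G_\bullet)$ is. Passing to the limit over the filtration (using that $C$ and homology commute with filtered colimits) yields acyclicity of $C(G_\bullet\otimes_{A_\bullet}L_\bullet)$.

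The main obstacle is the careful analysis of the skeletal subquotients $\mathrm{sk}_n L_\bullet/\mathrm{sk}_{n-1}L_\bullet$: one has to track how the single non-degenerate generator in degree $n$ gives rise to the higher-degeneracy simplices, how this propagates through $G_\bullet\otimes_{A_\bullet}-$, and why termwise flatness is inherited, in order to reduce the $E^1$-computation to a genuine level-wise flat-tensor statement. A conceptually cleaner alternative would be to invoke a generalized Eilenberg--Zilber quasi-isomorphism of the form $C(X_\bullet\otimes_{A_\bullet}Y_\bullet)\simeq C(X_\bullet)\otimes^L_{C(A_\bullet)}C(Y_\bullet)$, relating the simplicial tensor product to a derived tensor product over the DG-algebra $C(A_\bullet)$ and thereby reducing the lemma to the classical chain-complex flatness statement; but this requires Dold--Kan type machinery beyond what the text develops, so I expect the authors' argument to be essentially the hands-on simplicial one sketched above.
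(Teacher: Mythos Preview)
Your proposal has a genuine gap in the main step. The skeletal filtration $\mathrm{sk}_n L_\bullet$ you invoke is a filtration by simplicial \emph{abelian groups}, not by simplicial $A_\bullet$-submodules when $A_\bullet$ is non-constant: for $m>n$ the subgroup $(\mathrm{sk}_n L)_m\subset L_m$ generated by degenerate simplices is not stable under the $A_m$-action, because a degeneracy map $L_k\to L_m$ is only linear over the image of $A_k$ in $A_m$, not over all of $A_m$. (Already for $L_\bullet=A_\bullet$ a non-constant simplicial ring, $\sigma_0(A_0)\subset A_1$ is typically not an $A_1$-submodule, and hence not $A_1$-flat.) So neither the skeleta nor their subquotients are simplicial $A_\bullet$-modules in general, your flatness claim for them has no meaning, and the ``single $A_n$-module placed in degree $n$'' picture of the graded pieces --- which is correct over a constant base --- breaks down precisely where the lemma has content. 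A related vagueness afflicts your reduction step: you never say what simplicial $A_\bullet$-module $Z_\bullet$ realizes the mapping cone in a way that commutes with $-\otimes_{A_\bullet}L_\bullet$.

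The paper's argument is organized around exactly this difficulty. It first treats the constant case $A_\bullet=A$ directly via the K\"unneth formula. For general $A_\bullet$ it does \emph{not} filter $L_\bullet$; instead it replaces $L_\bullet$ by the bisimplicial object $F(L_\bullet)_\bullet$ built from the standard free resolutions $F(L_n)_\bullet\to L_n$ over each $A_n$. Since each $F(L_n)_m$ is free over $A_n$ on a set $X_{n,m}$, one has $E_\bullet\otimes_{A_\bullet}F(L_\bullet)_m\cong E_\bullet\otimes_{\Z}\Z^{(X_{\bullet,m})}$, which reduces the question, one bisimplicial row at a time, to the constant base ring $\Z$. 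The termwise flatness of $L_\bullet$ enters only to guarantee that replacing $L_n$ by its free resolution is innocuous after tensoring with $E_n$ or $F_n$. This bisimplicial free replacement is the missing idea; it is in fact closer in spirit to the ``generalized Eilenberg--Zilber'' alternative you mention at the end than to the skeletal argument you attempt.
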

\begin{proof} Assume first that  $A_\bullet$ is a constant simplicial ring defined by a ring $A$. In this case the lemma is a consequence of the K\"unneth formula applied to the tensor products of the associated complexes of $A$-modules.
In the general case consider the standard simplicial resolution  ${F(L_n)}_\bullet\to L_n$ of each $A_n$-module $L_n$. These assemble to a bisimplicial object ${F(L_\bullet)}_\bullet$ equipped with a map ${F(L_\bullet)}_\bullet\to L_\bullet$. Moreover, we have a commutative square of bisimplicial objects
\begin{equation*}
\xymatrix{
E_\bullet \otimes_{A_\bullet} F(L_\bullet)_\bullet  \ar[r]\ar[d] & F_\bullet \otimes_{A_\bullet} F(L_\bullet)_\bullet\ar[d] \\
E_\bullet\otimes_{A_\bullet} L_\bullet\ar[r] & F_\bullet\otimes_{A_\bullet} L_\bullet
}
\end{equation*}
viewing the simplicial objects in the lower row as `constant bisimplicial objects'. For fixed $n\geq 0$ the vertical maps $E_n\otimes_{A_n} F(L_n)_\bullet\to E_n\otimes_{A_n}L_n$ and $F_n\otimes_{A_n} F(L_n)_\bullet\to F_n\otimes_{A_n}L_n$ are quasi-isomorphisms because $F(L_n)_\bullet\to L_n$ is a flat resolution of the flat $A_n$-module $L_n$.  It follows that both vertical arrows induce quasi-isomorphisms on total chain complexes, and therefore it suffices to verify the same for the upper horizontal arrow.
By construction of the standard resolution, for fixed $m,n\geq 0$ the $A_m$-module $F(L_n)_m$ is isomorphic to the  free $A_n$-module $A_n^{(X_{n,m})}$ with basis a set $X_{n,m}$. Denoting by $\Z^{(X_{n,m})}$ the similarly constructed free $\Z$-module, we thus have isomorphisms of simplicial modules  $E_\bullet\otimes_{A_\bullet}F(L_\bullet)_m\cong E_\bullet\otimes_{\Z_\bullet}\Z^{(X_{\bullet,m})}$ for each $m$, where $\Z_\bullet$ is the constant simplicial ring defined by $\Z$. The same holds for $F_\bullet$ in place of $E_\bullet$, and therefore by the case of a constant base ring the maps $E_\bullet\otimes_{A_\bullet}F(L_\bullet)_m\to F_\bullet\otimes_{A_\bullet}F(L_\bullet)_m$ induce quasi-isomorphisms for all $m$. This gives a quasi-isomorphism on total complexes, as required.
\end{proof}

\begin{cor}\label{corflattens}
If $Q_\bullet\to B_\bullet$ is a simplicial resolution of the $A$-algebra $B$ with free terms, we have a quasi-isomorphism of complexes
$$
C\Omega^1_{Q_\bullet/A}\cong C(B_\bullet\otimes_{Q_\bullet}\Omega^1_{Q_\bullet/A}).
$$
\end{cor}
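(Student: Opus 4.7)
The corollary follows by a direct application of Lemma \ref{flattensorquasi}. The natural map we want to show is a quasi-isomorphism is the canonical morphism $\Omega^1_{Q_\bullet/A}\to B_\bullet\otimes_{Q_\bullet}\Omega^1_{Q_\bullet/A}$ induced by the augmentation $Q_\bullet\to B_\bullet$, which we may rewrite as
$$
Q_\bullet\otimes_{Q_\bullet}\Omega^1_{Q_\bullet/A}\to B_\bullet\otimes_{Q_\bullet}\Omega^1_{Q_\bullet/A}.
$$
Taking $A_\bullet=Q_\bullet$, the morphism $E_\bullet\to F_\bullet$ to be the augmentation $Q_\bullet\to B_\bullet$ (a morphism of $Q_\bullet$-modules with $B_\bullet$ viewed as a $Q_\bullet$-module via the augmentation), and $L_\bullet=\Omega^1_{Q_\bullet/A}$, this is precisely the map obtained by tensoring as in the lemma.

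The plan is therefore to verify the two hypotheses of Lemma \ref{flattensorquasi} in this setting. First, by the definition of a simplicial resolution, the augmentation $Q_\bullet\to B_\bullet$ induces a quasi-isomorphism on associated chain complexes of underlying $A$-modules (hence also on the chain complexes viewed as $Q_\bullet$-modules, which is the statement needed). Second, $\Omega^1_{Q_\bullet/A}$ is termwise flat over $Q_\bullet$: indeed, by assumption each $Q_n$ is a free $A$-algebra $A[X_n]$, and the standard presentation of Fact \ref{diff} shows that $\Omega^1_{A[X_n]/A}$ is the free $A[X_n]$-module on the symbols $\{dx: x\in X_n\}$, hence in particular flat.

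Granted these two points, Lemma \ref{flattensorquasi} immediately yields that the map above is a quasi-isomorphism on associated chain complexes, which is exactly the claimed quasi-isomorphism $C\Omega^1_{Q_\bullet/A}\cong C(B_\bullet\otimes_{Q_\bullet}\Omega^1_{Q_\bullet/A})$. There is no real obstacle here; the only mild subtlety is to notice that the hypothesis of the lemma with $A_\bullet$ a (non-constant) simplicial ring is exactly what allows us to take $A_\bullet=Q_\bullet$ rather than some constant base ring, which is essential since $Q_\bullet$ varies in simplicial degree.
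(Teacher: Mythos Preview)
Your proof is correct and follows exactly the paper's approach: apply Lemma \ref{flattensorquasi} with $A_\bullet=E_\bullet=Q_\bullet$, $F_\bullet=B_\bullet$, and $L_\bullet=\Omega^1_{Q_\bullet/A}$. You have simply spelled out the verification of the hypotheses (that the augmentation is a quasi-isomorphism and that $\Omega^1_{Q_n/A}$ is free over $Q_n$) which the paper leaves implicit.
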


\begin{proof}
Apply the lemma with $A_\bullet=E_\bullet=Q_\bullet$, $F_\bullet=B_\bullet$ and $L_\bullet=\Omega^1_{Q_\bullet/A}$.
\end{proof}

Next, assume given a simplicial $A$-algebra $B_\bullet$. The standard resolutions $P_\bullet(B_n)\to B_n$ for each $n$ assemble to a bisimplicial $A$-algebra $P_\bullet(B_\bullet)$. Applying the functor $\Omega^1_{\cdot/A}$ yields a bisimplicial $A$-module $\Omega^1_{P_\bullet(B_\bullet)/A}$, whence an associated double complex $C\Omega^1_{P_\bullet(B_\bullet)/A}$ and finally a total complex ${\rm Tot}(C\Omega^1_{P_\bullet(B_\bullet)/A})$, taken with the direct sum convention.

\begin{lem}\label{totcompiso}
 Let $C_\bullet\to B$ be a simplicial resolution of $A$-algebras. The induced map
$$
{\rm Tot}(C\Omega^1_{P_\bullet(C_\bullet)/A})\to C\Omega^1_{P_\bullet(B)/A}
$$
is a quasi-isomorphism.
\end{lem}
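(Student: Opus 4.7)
The plan is to prove this via a spectral sequence argument on the double complex $CC\Omega^1_{P_\bullet(C_\bullet)/A}$. Write $M_{i,j}:=\Omega^1_{P_i(C_j)/A}$ and filter $\mathrm{Tot}(CCM_{\bullet,\bullet})$ by the $P_\bullet$-direction (the $i$-index). The resulting $E_1$-page is
$$
E_1^{i,j}=H_j\bigl(\Omega^1_{P_i(C_\bullet)/A}\bigr),
$$
with homology taken in the $C_\bullet$-direction. The crucial assertion to be established for each $i\geq 0$ is the quasi-isomorphism of simplicial $A$-modules
$$
(\star)\qquad \Omega^1_{P_i(C_\bullet)/A}\xrightarrow{\,\sim\,}\Omega^1_{P_i(B)/A}\qquad(\text{target viewed as constant}).
$$
Granting $(\star)$, the $E_1$-page collapses to the row $j=0$, the $d_1$-differential is identified with the simplicial differential of $\Omega^1_{P_\bullet(B)/A}$, and the spectral sequence degenerates to produce the claimed quasi-isomorphism $\mathrm{Tot}(CCM_{\bullet,\bullet})\simeq C\Omega^1_{P_\bullet(B)/A}$.

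\textbf{Reduction of $(\star)$.} Since $P_i(X)=A[P_{i-1}(X)]$ is the polynomial $A$-algebra on the underlying set of $P_{i-1}(X)$, its module of K\"ahler differentials admits the presentation
$$
\Omega^1_{P_i(X)/A}\;\cong\;P_i(X)\otimes_A A^{(P_{i-1}(X))}
$$
as $A$-modules, where $A^{(Y)}$ denotes the free $A$-module on a set $Y$; both tensor factors are termwise free over $A$, hence flat. Applied levelwise to $X_\bullet=C_\bullet$ and invoking Lemma \ref{flattensorquasi} twice (once to move each factor, with the other serving as flat coefficients over the constant simplicial ring $A$), one reduces $(\star)$ to showing that the two maps
$$
P_i(C_\bullet)\to P_i(B)\qquad\text{and}\qquad A^{(P_{i-1}(C_\bullet))}\to A^{(P_{i-1}(B))}
$$
are quasi-isomorphisms of simplicial $A$-modules.

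\textbf{Main obstacle.} These two assertions would be proved by induction on $i$, starting from the hypothesis that $C_\bullet\to B$ is a simplicial resolution on underlying $A$-modules. The base case rests on two observations: first, any simplicial $A$-module is a Kan complex with $\pi_n$ equal to $H_n$ of its associated chain complex, so the resolution $C_\bullet\to B$ is in particular a weak equivalence of underlying simplicial sets; second, the free $A$-module functor $Y_\bullet\mapsto A^{(Y_\bullet)}$ takes weak equivalences of simplicial sets to quasi-isomorphisms of simplicial $A$-modules (a Dold-Thom-type fact). For the inductive step one exploits the decomposition $A[Y]=\bigoplus_k A^{(S^k Y)}$ of the polynomial algebra through the free $A$-module on the $k$-th symmetric powers of $Y$, which reduces matters further to the preservation of weak equivalences by the symmetric-power functors $S^k$. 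This last step is the hard part: the homotopical behaviour of symmetric powers in the simplicial setting is delicate and must be handled through Dold-Kan / Eilenberg-Zilber analysis, and it constitutes the genuine technical obstacle of the proof—by contrast, once this is in hand, the spectral sequence argument and the reduction via Lemma \ref{flattensorquasi} are straightforward.
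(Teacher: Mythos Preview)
Your overall architecture matches the paper's: analyze the double complex column by column and establish the quasi-isomorphism $(\star)$ for each fixed $i$. The divergence is entirely in how $(\star)$ is proved, and here you have made the problem much harder than it is.

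You reduce $(\star)$ to the homotopical behaviour of symmetric powers of simplicial sets, and correctly flag this as delicate. The paper bypasses this completely. The point you are missing is that a simplicial resolution $C_\bullet\to B$ of $A$-algebras is, on underlying simplicial sets, not merely a weak equivalence but a \emph{simplicial homotopy equivalence}. Indeed, by Proposition~\ref{kanabelian} the underlying map of simplicial sets is a trivial Kan fibration, and by Proposition~\ref{kanhomotopy} every trivial Kan fibration admits a simplicial homotopy inverse. Now simplicial homotopies are defined by maps $X_\bullet\times\Delta[1]_\bullet\to Y_\bullet$, i.e.\ by data in each degree satisfying identities, and are therefore preserved by \emph{any} functor applied degreewise. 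Applying the set-valued functor $X\mapsto P_{i-1}(A[X])$ one gets a simplicial homotopy equivalence $P_i(C_\bullet)\to P_i(B_\bullet)$ of simplicial $A$-algebras, and then applying $\Omega^1_{-/A}$ yields a simplicial homotopy equivalence $\Omega^1_{P_i(C_\bullet)/A}\to\Omega^1_{P_i(B_\bullet)/A}$ of simplicial $A$-modules. Since the target is constant, this is exactly $(\star)$.

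So your reduction via Lemma~\ref{flattensorquasi} and the decomposition $A[Y]=\bigoplus_k A^{(S^kY)}$ is unnecessary: no inductive argument, no Dold--Thom input, and no analysis of symmetric powers is required. The moral is that upgrading ``weak equivalence'' to ``simplicial homotopy equivalence'' (at the cost of working with underlying simplicial \emph{sets}) buys you preservation under arbitrary functors, which is exactly what the iterated free-algebra and differential constructions need.
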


\begin{proof}
By Propositions \ref{kanhomotopy} and \ref{kanabelian} the underlying morphism $C_\bullet\to B_\bullet$ of simplicial sets induces a homotopy equivalence. Applying the functor $X\mapsto P_{n-1}(A[X])$ for fixed $n\geq 0$ (with the convention $P_{-1}=\id$) we obtain a homotopy equivalence $P_n(C_\bullet)\to P_n(B_\bullet)$ of simplicial $A$-algebras, whence a homotopy equivalence $\Omega^1_{P_n(C_\bullet)/A}\to \Omega^1_{P_n(B_\bullet)/A}$ of simplicial $A$-modules. As the latter is a constant simplicial module, it follows that $\Omega^1_{P_n(C_\bullet)/A}\to \Omega^1_{P_n(B)/A}$ is a simplicial resolution for each $n$. Thus the columns of the double complex $C\Omega^1_{P_\bullet(C_\bullet)/A}$ give free resolutions of the terms of the complex $C\Omega^1_{P_\bullet(B)/A}$, and therefore the total complex is indeed quasi-isomorphic to $C\Omega^1_{P_\bullet(B)/A}$.
\end{proof}

\noindent{\em Proof of Theorem \ref{freeresolcotang}.}
Applying the previous lemma to the simplicial resolution $Q_\bullet\to B$ yields a quasi-isomorphism
$$
{\rm Tot}(C\Omega^1_{P_\bullet(Q_\bullet)/A})\simeq C\Omega^1_{P_\bullet(B)/A}\simeq L_{B/A}
$$
using Corollary \ref{corflattens}.

On the other hand, for each fixed $n$ the simplicial map $P_\bullet(Q_n)\to {(Q_n)}_\bullet$ is a homotopy equivalence by Lemma \ref{standardtriv}, and therefore so is $\Omega^1_{P_\bullet(Q_n)/A}\to {(\Omega^1_{Q_n/A})}_\bullet$, so that $C\Omega^1_{P_\bullet(Q_n)/A}$ is an acyclic resolution of $\Omega^1_{Q_n/A}$. It follows that we have a quasi-isomorphism
$$
{\rm Tot}(C\Omega^1_{P_\bullet(Q_\bullet)/A})\simeq C\Omega^1_{Q_\bullet/A}
$$
which concludes the proof, again taking Corollary \ref{corflattens} into account.
\enddem

\begin{rema}\rm
In the model category of simplicial modules defined by Quillen  \cite{HA} the cofibrant replacements of an object correspond to projective resolutions of modules. In the model category structure on simplicial algebras (see \cite{HA} or \cite{quillennotes}) the simplicial resolutions considered in Theorem \ref{freeresolcotang} will not necessarily be cofibrant replacements. However, one may obtain cofibrant replacements by imposing an extra simplicial coherence condition. The resulting simplicial resolutions will be homotopy equivalent as simplicial algebras, whereas the ones in \ref{freeresolcotang} are only homotopy equivalent as simplicial sets. See also \cite{goerss} on these issues.
\end{rema}

If $A'$ is another $A$-algebra, we have a natural base change morphism
$$A'\otimes^L_A L_{B/A}\to L_{A'\otimes_AB/A'},$$
noting that the map $B\to A'\otimes_AB$ naturally extends to an $A_\bullet$-algebra map of corresponding standard simplicial resolutions.

Before stating the next lemma, recall that two $A$-algebras $A'$ and $B$ are called $\Tor$-independent if $\Tor_i^A(A',B)=0$ for $i>0$. If $A'$ is flat over $A$, then $A'$ and any $B$ are $\Tor$-independent.

\begin{lem}\label{cotangflatbasechange}
If  $A'$ and $B$ are $\Tor$-independent $A$-algebras,  the base change map induces a quasi-isomorphism $$A'\otimes_A^{L} L_{B/A}\overset{\sim}{\to}L_{A'\otimes_A B/A'}$$ of complexes of $A'\otimes B$-modules.
\end{lem}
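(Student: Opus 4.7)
The plan is to represent $L_{B/A}$ by a complex of $A$-flat modules so that the derived tensor product on the left may be computed naively, and then to show that the standard free simplicial resolution of $B$ base-changes to a free simplicial resolution of $A'\otimes_A B$, so that the right-hand side admits the same naive description.

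Concretely, let $P_\bullet = P_\bullet(B) \to B$ be the standard simplicial resolution. Each $P_n$ is a polynomial $A$-algebra, so $\Omega^1_{P_n/A}$ is a free $P_n$-module and in particular a free (hence flat) $A$-module. Corollary \ref{corflattens} yields a quasi-isomorphism $L_{B/A} \simeq C\Omega^1_{P_\bullet/A}$, and since the right-hand complex has flat $A$-module terms, we get
$$
A'\otimes_A^L L_{B/A} \;\simeq\; A'\otimes_A C\Omega^1_{P_\bullet/A} \;\cong\; C\Omega^1_{A'\otimes_A P_\bullet/A'},
$$
where the last isomorphism uses base change for differentials (Fact \ref{diff} (1)) applied termwise and the fact that tensoring commutes with the associated chain complex functor. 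The key verification now is that $A'\otimes_A P_\bullet \to A'\otimes_A B$ is itself a simplicial resolution by free $A'$-algebras: the terms $A'\otimes_A P_n = A'[X_n]$ are free $A'$-algebras on the same generating sets, and the underlying $A$-module resolution $CP_\bullet \to B$ becomes, after applying $A'\otimes_A -$, a complex computing $\Tor_*^A(A',B)$, which vanishes in positive degrees by $\Tor$-independence while its $H_0$ is $A'\otimes_A B$. A final appeal to Corollary \ref{corflattens} for this new free resolution gives
$$
L_{A'\otimes_A B/A'} \;\simeq\; C\Omega^1_{A'\otimes_A P_\bullet/A'},
$$
which matches what we computed above, and the base change morphism is clearly the one induced by the canonical map.

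The only substantive step is the Tor-independence input: it is exactly what is needed so that the simplicial-resolution property of $P_\bullet \to B$ survives $A'\otimes_A -$. Every other ingredient is formal — the flatness of $\Omega^1_{P_n/A}$ over $A$, the base-change isomorphism for modules of differentials, and the fact that the standard resolution $P_\bullet(B)$ transports along the ring map $A\to A'$ by a simple change of coefficients. I expect no homotopical subtleties beyond those already packaged into Corollary \ref{corflattens}.
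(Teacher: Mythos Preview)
Your argument is correct and follows essentially the same strategy as the paper: use Tor-independence to see that $A'\otimes_A P_\bullet$ is a free simplicial resolution of $A'\otimes_A B$ over $A'$, then identify both sides with $C\Omega^1_{A'\otimes_A P_\bullet/A'}$. One small remark: in your final step you should cite Theorem~\ref{freeresolcotang} rather than (or in addition to) Corollary~\ref{corflattens}, since $A'\otimes_A P_\bullet$ is not the \emph{standard} resolution of $A'\otimes_A B$ and it is Theorem~\ref{freeresolcotang} that guarantees any free resolution computes the cotangent complex.
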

\begin{proof}
Let $P_\bullet\to B$ be a standard simplicial resolution of the $A$-algebra $B$. Since $A'$ and $B$ are $\Tor$-independent, the associated chain complex of $A'\otimes_A P_\bullet$ is acyclic outside degree $0$ where its homology is $A'\otimes_A B$. In particular, $A'\otimes_A P_\bullet$ is a free simplicial resolution of the $A'$-algebra $A'\otimes_A B$ and hence may be used to compute $L_{A'\otimes_A B/A'}$ by Theorem \ref{freeresolcotang}. Finally, note that $$(A'\otimes_A B)\otimes_{A'\otimes_A P_\bullet}(\Omega^1_{A'\otimes_A P_\bullet/A})\cong A'\otimes_{A}(B\otimes_{P_\bullet}\Omega^1_{P_\bullet/A})$$ computes $A'\otimes_A^{L} L_{B/A}$ using again the $\Tor$-independence of $A'$ and $B$, noting that $L_{B/A}$ is a complex of free $B$-modules.
\end{proof}

We now come to one of the most important properties of the cotangent complex.

\begin{theo}[Transitivity triangle]\label{cotangexacttriang}
A sequence $A\to B\to C$ of ring maps induces an exact triangle in the derived category of complexes of $C$-modules
\begin{equation*}
C\otimes_B^{L} L_{B/A}\to L_{C/A}\to L_{C/B}\to C\otimes_B^{L} L_{B/A}[1]\ .
\end{equation*}
\end{theo}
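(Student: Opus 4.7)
The plan is to produce the triangle from a short exact sequence of K\"ahler differentials for a well-chosen pair of simplicial resolutions. First, choose a free simplicial resolution $P_\bullet \to B$ of the $A$-algebra $B$ (e.g., the standard one of Construction \ref{standardres}). Next, build a simplicial $P_\bullet$-algebra $R_\bullet$ with each $R_n$ a polynomial algebra over the corresponding $P_n$, together with an augmentation $R_\bullet \to C_\bullet$ that is a simplicial resolution on underlying $A$-modules. This can be obtained by a bisimplicial iteration of the standard free-algebra construction applied to $C$ relative to $P_\bullet$. Since a polynomial algebra over a polynomial algebra is a polynomial algebra, $R_\bullet$ is automatically a free simplicial resolution of $C$ over $A$ as well.

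The key input is that for each $n$, because $R_n$ is polynomial over $P_n$, the first exact sequence of Fact \ref{diff}(3) becomes split short exact:
$$
0 \to R_n \otimes_{P_n} \Omega^1_{P_n/A} \to \Omega^1_{R_n/A} \to \Omega^1_{R_n/P_n} \to 0.
$$
These assemble into a split short exact sequence of simplicial $R_\bullet$-modules. Tensoring over $R_\bullet$ with $C$ preserves the splitting, so the associated chain-complex functor yields a short exact sequence of complexes of $C$-modules, hence a distinguished triangle in the derived category.

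The three terms are identified as follows. The middle term $C(C \otimes_{R_\bullet} \Omega^1_{R_\bullet/A})$ is quasi-isomorphic to $L_{C/A}$ by Theorem \ref{freeresolcotang} applied to the free simplicial $A$-resolution $R_\bullet \to C$. The left term rewrites as $C \otimes_B (B \otimes_{P_\bullet} \Omega^1_{P_\bullet/A})$; since each $B \otimes_{P_n} \Omega^1_{P_n/A}$ is a free $B$-module, $L_{B/A}$ is a complex of free $B$-modules and the ordinary tensor product coincides with the derived one, giving $C \otimes_B^L L_{B/A}$. For the right term, base change for K\"ahler differentials (Fact \ref{diff}(1)) gives $\Omega^1_{(B \otimes_{P_n} R_n)/B} \cong B \otimes_{P_n} \Omega^1_{R_n/P_n}$, so that $C \otimes_{R_\bullet} \Omega^1_{R_\bullet/P_\bullet} = C \otimes_{B \otimes_{P_\bullet} R_\bullet} \Omega^1_{(B \otimes_{P_\bullet} R_\bullet)/B}$; Theorem \ref{freeresolcotang} then identifies this with $L_{C/B}$.

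The principal obstacle is the last step, namely verifying that $B \otimes_{P_\bullet} R_\bullet \to C$ is genuinely a simplicial resolution over $B$. Termwise it is a polynomial $B$-algebra, but acyclicity must be established separately. The idea is that, since each $R_n$ is $P_n$-flat and $P_\bullet \to B$ is a resolution, the construction $B \otimes_{P_\bullet} R_\bullet$ computes the derived tensor product $B \otimes_{P_\bullet}^L R_\bullet$, which is weakly equivalent to $C$ because $R_\bullet \simeq C$ and $B \otimes_{P_\bullet}^L (-)$ may be replaced by $B \otimes_B^L (-)$ on $B$-algebras. Making this rigorous either invokes the model structure on simplicial commutative rings, or can be carried out by hand via a bisimplicial argument in the spirit of Lemma \ref{flattensorquasi}; this is where the bulk of the technical work lies.
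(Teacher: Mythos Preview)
Your proposal is correct and follows essentially the same strategy as the paper. The paper makes your $R_\bullet$ explicit as the diagonal $Q_\bullet^\Delta$ of the bisimplicial object obtained by taking the standard resolution of $C$ over each $P_n$; the Eilenberg--Zilber theorem (Theorem \ref{eilenbergzilber}) then certifies that $Q_\bullet^\Delta$ resolves $C$ over $A$, and the ``principal obstacle'' you flag is dispatched exactly as you suggest, by a direct application of Lemma \ref{flattensorquasi} to the resolution $P_\bullet\to B$ with $L_\bullet=Q_\bullet^\Delta$.
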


\begin{proof}
 Let $P_\bullet\to B_\bullet$ be the standard resolution of the $A$-algebra $B$, and consider the constant simplicial module $C_\bullet$ as a $P_\bullet$-module via the composite homomorphism $P_\bullet\to B_\bullet\to C_\bullet$. The standard simplicial resolutions of each $C_n$ as a $P_n$-algebra assemble to a bisimplicial $A$-algebra $Q_{\bullet\bullet}$. The diagonal $Q_\bullet^\Delta$ of $Q_{\bullet\bullet}$ is a free $P_\bullet$-algebra in each degree, therefore the first exact sequence of differentials induces for each $n\geq 0$ a short exact sequence
\begin{equation*}
0\to Q_n^\Delta\otimes_{P_n}\Omega^1_{P_n/A}\to \Omega^1_{Q_n^\Delta/A}\to \Omega^1_{Q_n^\Delta/P_n}\to 0
\end{equation*}
of $Q_n^\Delta$-modules which splits since $\Omega^1_{Q_n^\Delta/P_n}$ is a free module. Tensoring with $C$ then gives rise to a short exact sequence
\begin{equation}\label{disttriangkeyexact}
0\to C_\bullet\otimes_{P_\bullet}\Omega^1_{P_\bullet/A}\to C_\bullet\otimes_{Q_\bullet^\Delta}\Omega^1_{Q_\bullet^\Delta/A}\to C_\bullet\otimes_{Q_\bullet^\Delta}\Omega^1_{Q_\bullet^\Delta/P_\bullet}\to 0
\end{equation}
of simplicial $C$-modules. We now show that after taking associated chain complexes this sequence represents the exact triangle of the theorem in the derived category.

The complex
$C_\bullet\otimes_{P_\bullet}\Omega^1_{P_\bullet/A}$ represents $C\otimes_B^{L} L_{B/A}$ as the simplicial $B$-module  ${B_\bullet\otimes_{P_\bullet}\Omega^1_{P_\bullet/A}}$ has free terms and the map
$P_\bullet\to C_\bullet$ factors through $B_\bullet$ by construction. Next, note that each term of $Q_\bullet^\Delta$ is free as an $A$-algebra, the $P_n$ being free over $A$ and the $Q_n^\Delta$  free over $P_n$. On the other hand, since the total complex $CQ_{\bullet\bullet}$ is acyclic by construction, the Eilenberg--Zilber Theorem (Theorem\ \ref{eilenbergzilber}) implies that $Q_\bullet^\Delta$ is a free simplicial resolution of the $A$-algebra $C$. Theorem \ \ref{freeresolcotang} then yields that the associated chain complex of $C_\bullet\otimes_{Q_\bullet^\Delta}\Omega^1_{Q_\bullet^\Delta/A}$ represents the cotangent complex $L_{C/A}$.

Finally, put $\overline{Q}_\bullet:=B_\bullet\otimes_{P_\bullet} Q_\bullet^\Delta$. Since each $Q_n^\Delta$ is free over $P_n$, Lemma \ref{flattensorquasi} applied to the resolution $P_\bullet\to B$ implies that the map $Q_\bullet^\Delta\to \overline{Q}_\bullet$ induces a quasi-isomorphism on normalized complexes. Since $Q_\bullet^\Delta$ is a free simplicial resolution of $C$ as an $A$-algebra, so is $\overline{Q}_\bullet$ as a $B$-algebra. The base change property of differentials implies that we have an isomorphism of simplicial $C$-modules $C_\bullet\otimes_{Q_\bullet^\Delta}\Omega^1_{Q_\bullet^\Delta/P_\bullet}\cong C_\bullet\otimes_{\overline{Q}_\bullet}\Omega^1_{\overline{Q}_\bullet/B}$, so Theorem \ \ref{freeresolcotang} yields that the associated chain complex of $C\otimes_{Q_\bullet^\Delta}\Omega^1_{Q_\bullet^\Delta/P_\bullet}$ represents the cotangent complex $L_{C/B}$.
\end{proof}

Theorem \ref{cotangexacttriang} and Lemma \ref{h0cotang} now imply:

\begin{cor}
In the situation of the theorem there is a long exact homology sequence
\begin{equation*}
\dots\to H_1(L_{B/A}\otimes^L_{B}C)\to H_1(L_{C/A})\to H_1(L_{C/B})\to \Omega^1_{B/A}\otimes_B C\to\Omega^1_{C/A}\to \Omega^1_{C/B}\to 0.
\end{equation*}
\end{cor}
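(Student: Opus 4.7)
The plan is essentially mechanical: apply the long exact homology sequence associated with the distinguished triangle
\begin{equation*}
C\otimes_B^{L} L_{B/A}\to L_{C/A}\to L_{C/B}\to C\otimes_B^{L} L_{B/A}[1]
\end{equation*}
produced by Theorem \ref{cotangexacttriang}, and then identify the three $H_0$-terms using Proposition \ref{h0cotang}.

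More concretely, I would first recall that a distinguished triangle $X\to Y\to Z\to X[1]$ in the derived category of complexes of $C$-modules induces a long exact sequence $\dots\to H_{n+1}(Z)\to H_n(X)\to H_n(Y)\to H_n(Z)\to H_{n-1}(X)\to\dots$, and apply this verbatim to the transitivity triangle. The middle and right entries at level $0$ are $\Omega^1_{C/A}$ and $\Omega^1_{C/B}$ respectively by Proposition \ref{h0cotang}. For the left entry I would identify $H_0(C\otimes_B^L L_{B/A})$ with $\Omega^1_{B/A}\otimes_B C$: indeed, by construction (and by Theorem \ref{freeresolcotang}) $L_{B/A}$ is represented by a chain complex of free $B$-modules concentrated in non-negative degrees, so $C\otimes_B^L L_{B/A}$ is computed by $C\otimes_B L_{B/A}$, and right-exactness of $-\otimes_B C$ gives $H_0(C\otimes_B L_{B/A})\cong H_0(L_{B/A})\otimes_B C\cong \Omega^1_{B/A}\otimes_B C$.

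Finally, I would justify that the sequence terminates on the right with $\to 0$: this amounts to the vanishing of $H_{-1}(C\otimes_B^L L_{B/A})$, which is immediate since that object is represented by a non-negatively graded chain complex. Stringing these identifications together produces exactly the displayed long exact sequence.

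No step here is hard; the only point deserving care is the identification $H_0(C\otimes_B^L L_{B/A})\cong \Omega^1_{B/A}\otimes_B C$, where one must invoke the termwise freeness of the representing complex to replace the derived tensor product by the ordinary one before applying right-exactness.
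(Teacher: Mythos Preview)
Your proposal is correct and follows exactly the approach the paper takes: the paper simply states that the corollary is an immediate consequence of Theorem~\ref{cotangexacttriang} and Proposition~\ref{h0cotang}, without spelling out the identification of $H_0(C\otimes_B^L L_{B/A})$ or the vanishing in negative degrees. Your added justifications for these two points are accurate and appropriate.
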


We close this subsection by computing the cotangent complex in important special cases.

\begin{prop}\label{polcotangacyc}
If $B=A[X]$ is a free algebra on a set $X$ of generators, then the cotangent complex $L_{B/A}$ is acyclic in nonzero degrees.
\end{prop}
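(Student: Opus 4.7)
The plan is to use Theorem \ref{freeresolcotang} with a resolution much simpler than the standard one. Since $B=A[X]$ is already a free $A$-algebra, a natural candidate is the constant simplicial $A$-algebra $B_\bullet$ (every face and degeneracy map being the identity), augmented by $\id:B_\bullet\to B$. Its terms are free by hypothesis, so the only thing to check is that it is a simplicial resolution in the sense of Definition \ref{simpres}: the associated chain complex of its underlying simplicial $A$-module has alternating-sum differential of the form $\id-\id+\id-\cdots$, so the unnormalized complex is acyclic in positive degrees with $H_0=B$ (equivalently, the normalized complex reduces to $B$ concentrated in degree $0$).

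Once this is verified, Theorem \ref{freeresolcotang} gives a quasi-isomorphism
$$
L_{B/A}\simeq C(B_\bullet\otimes_{B_\bullet}\Omega^1_{B_\bullet/A})=C(\Omega^1_{B/A,\bullet}),
$$
where the right-hand side is the chain complex associated with the constant simplicial $B$-module $\Omega^1_{B/A}$. By the same alternating-sum argument as above, this complex has $H_0=\Omega^1_{B/A}$ and vanishes in positive degrees, giving the claim. (As a sanity check, Proposition \ref{h0cotang} already tells us $H_0(L_{B/A})\cong\Omega^1_{B/A}$, so the computation is consistent.)

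An alternative, perhaps more conceptual route is to invoke Lemma \ref{standardtriv}: the standard simplicial resolution $P_\bullet(A[X])\to A[X]_\bullet$ is a homotopy equivalence of simplicial $A$-algebras. Applying the functor $\Omega^1_{-/A}$ and then tensoring with $B_\bullet$ should propagate this homotopy equivalence to the complex that defines $L_{B/A}$, whose target is then the constant chain complex $\Omega^1_{B/A,\bullet}$. The mild technical point here is that tensoring a homotopy equivalence of simplicial modules over varying base rings preserves the equivalence; this is exactly the kind of statement handled by Lemma \ref{flattensorquasi}, since the relevant simplicial modules are termwise flat.

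The main obstacle is really just bookkeeping: one must confirm that the constant simplicial object on a free algebra qualifies as a simplicial resolution in the paper's sense and that the cotangent complex construction respects the homotopy equivalence coming from Lemma \ref{standardtriv}. Once these formalities are settled, the vanishing is immediate from the fact that a constant simplicial module has trivial higher homotopy (equivalently, its normalized chain complex is concentrated in degree zero).
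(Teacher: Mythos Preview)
Your proposal is correct. Your second route---invoke Lemma \ref{standardtriv}, apply $\Omega^1_{\cdot/A}$ to the homotopy equivalence $P_\bullet(A[X])\to A[X]_\bullet$, and then pass to the tensor with $B_\bullet$ using Lemma \ref{flattensorquasi} (in the form of Corollary \ref{corflattens})---is exactly the paper's proof. Your first route is a legitimate and slightly slicker shortcut: once Theorem \ref{freeresolcotang} is available, feeding it the constant resolution $B_\bullet\to B$ (which is a simplicial resolution with free terms, as you verify) immediately collapses $L_{B/A}$ to the constant chain complex on $\Omega^1_{B/A}$. There is no circularity, since the proof of Theorem \ref{freeresolcotang} does not rely on Proposition \ref{polcotangacyc}. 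The trade-off is only expository: the paper's argument is self-contained from Lemma \ref{standardtriv} and Corollary \ref{corflattens}, whereas your first route hides the same mechanism inside the black box of Theorem \ref{freeresolcotang}.
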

\begin{proof} By Lemma \ref{standardtriv} we have a homotopy equivalence between the constant simplicial algebra $A[X]_\bullet$ and its standard resolution $P_\bullet$. Applying the functor $\Omega^1_{\cdot/A}$ gives a homotopy equivalence between $\Omega^1_{P_\bullet/A}$ and $\Omega^1_{A[X]_\bullet/A}$, whence a quasi-isomorphism on associated chain complexes. But $C\Omega^1_{A[X]_\bullet/A}$ is a complex of free modules that is acyclic in nonzero degrees, so we conclude by Corollary \ref{corflattens}.
\end{proof}

The following case will be crucial for the calculations in the next section.

\begin{prop}\label{cotangquotientreg}
Assume that $A\to B$ is a surjective ring homomorphism with kernel $I=(f)$ generated by a nonzerodivisor $f\in A$. Then $L_{B/A}$ is quasi-isomorphic to the complex $I/I^2[1]$.
\end{prop}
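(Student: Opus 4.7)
The plan is to reduce to a universal computation via base change and to handle the latter with the transitivity triangle. I will introduce $R := A[Y]$ and consider two $R$-algebra structures on the ring $A$: one via the augmentation $\epsilon_0\colon R\to A$ sending $Y\mapsto 0$, which I call $S_0$, and one via the map $\epsilon_f\colon R\to A$ sending $Y\mapsto f$, which I call $A_f$. Since $Y-f$ is monic in $Y$, it is a nonzerodivisor in $R$, so $0\to R\xrightarrow{Y-f}R\to A_f\to 0$ is a free resolution. Tensoring it with $S_0$ over $R$ yields $0\to A\xrightarrow{-f}A\to A/(f)\to 0$, which is exact because $f$ is a nonzerodivisor in $A$. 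This shows simultaneously that $S_0\otimes_R A_f\cong B$ and that $\Tor^R_i(S_0,A_f)=0$ for $i>0$, i.e.\ $S_0$ and $A_f$ are Tor-independent $R$-algebras. Applying Lemma \ref{cotangflatbasechange} will then give
$$L_{B/A}\;\simeq\; A_f\otimes^L_R L_{S_0/R},$$
where I use that $A_f$ and $A$ coincide as rings, so $L_{B/A_f}=L_{B/A}$.

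Next, I will compute $L_{S_0/R}$ using the transitivity triangle of Theorem \ref{cotangexacttriang} applied to $A\to R\to S_0$ (with the second map $Y\mapsto 0$). Proposition \ref{polcotangacyc} gives $L_{R/A}\simeq R\cdot dY$ concentrated in degree $0$, so $S_0\otimes^L_R L_{R/A}\simeq A$ in degree $0$; and $L_{S_0/A}=L_{A/A}=0$ by the same proposition applied with empty generating set. The distinguished triangle $A\to 0\to L_{S_0/R}\to A[1]$ will therefore force $L_{S_0/R}\simeq A[1]$.

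Substituting back and re-using the Tor-independence computation yields
$$L_{B/A}\;\simeq\; A_f\otimes^L_R S_0\,[1]\;\simeq\; B[1].$$
To finish I will identify $B$ with $I/I^2$: the map $B\to I/I^2$, $\bar a\mapsto af\bmod I^2$, is well-defined (since $I$ acts trivially on $I/I^2$), surjective (since $I$ is principal) and injective (since $af\in (f^2)$ forces $a\in (f)$ by the nonzerodivisor property of $f$). Hence $B\cong I/I^2$ and $L_{B/A}\simeq I/I^2[1]$.

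The one delicate point in the plan is the Tor-independence verification of the two $R$-algebra structures on $A$; this is precisely where the nonzerodivisor hypothesis enters, and it simultaneously underlies the identification $S_0\otimes_R A_f\simeq B$ without higher derived corrections, making the base change argument collapse to the clean answer $B[1]$.
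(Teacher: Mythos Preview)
Your proof is correct and follows essentially the same strategy as the paper: compute the cotangent complex in the ``universal'' case of a polynomial ring modulo its variable via the transitivity triangle for the section $A\to A[Y]\to A$ (the paper uses $\mathbb{Z}\to\mathbb{Z}[x]\to\mathbb{Z}$), then base change along the map $Y\mapsto f$ using the Tor-independence granted by the nonzerodivisor hypothesis. The only difference is that you work $A$-linearly throughout while the paper descends to $\mathbb{Z}$ first; this is a cosmetic variation, not a genuinely different route.
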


\begin{proof}
We first treat the special case $A=\mathbb{Z}[x]$, $B=\mathbb{Z}$, $f=x$. Consider the exact triangle
\begin{equation*}
L_{\mathbb{Z}[x]/\mathbb{Z}}\otimes^L_{\mathbb{Z}[x]}\mathbb{Z}\to L_{\mathbb{Z}/\mathbb{Z}}\to L_{\mathbb{Z}/\mathbb{Z}[x]}\to L_{\mathbb{Z}[x]/\mathbb{Z}}\otimes^L_{\mathbb{Z}[x]}\mathbb{Z}[1]
\end{equation*}
associated by Theorem \ref{cotangexacttriang} to the sequence of ring maps $\mathbb{Z}\to \mathbb{Z}[x]\to\mathbb{Z}$. Lemma \ref{h0cotang} and Proposition \ref{polcotangacyc} imply that $L_{\mathbb{Z}/\mathbb{Z}}$ is acyclic and $L_{\mathbb{Z}[x]/\mathbb{Z}}$ is quasi-isomorphic to $\Omega^1_{\mathbb{Z}[x]/\mathbb{Z}}$ placed in degree 0. As the latter is a free module of rank 1, tensoring with $\mathbb{Z}$ over $\mathbb{Z}[x]$ yields that $L_{\mathbb{Z}[x]/\mathbb{Z}}\otimes_{\mathbb{Z}[x]}\mathbb{Z}$ is quasi-isomorphic to $\mathbb{Z}$ placed in degree 0. Hence the exact triangle implies that $L_{\mathbb{Z}/\mathbb{Z}[x]}$ is acyclic outside degree $1$. The isomorphism $H_1(L_{\mathbb{Z}/\mathbb{Z}[x]})\cong I/I^2$  follows from Fact \ref{diff} (\ref{secondexact}).

To treat the general case, consider the map $\mathbb{Z}[x]\to A$ sending $x$ to $f$. The $\mathbb{Z}[x]$-modules $A$ and $\mathbb Z$ are Tor-independent, because tensoring the short exact sequence $$0\to \mathbb{Z}[x]\overset{x}{\to}\mathbb{Z}[x]\to \mathbb{Z}\to 0$$ by $A$ over $\mathbb{Z}[x]$ yields the sequence $$0\to A\overset{f}{\to} A\to B\to 0$$ which is exact by the assumption that $f$ is nonzerodivisor. Therefore we may apply Lemma \ref{cotangflatbasechange} to obtain a quasi-isomorphism $A\otimes_{\mathbb{Z}[x]}^L L_{\mathbb{Z}/\mathbb{Z}[x]}\overset{\sim}{\to} L_{B/A}$, reducing the proposition to the special case treated above.
\end{proof}

\begin{rema}\rm
The above proposition can be easily extended to the case when $I$ is not necessarily principal but generated by a regular sequence.
\end{rema}

\subsection{First-order thickenings and the cotangent complex}

We continue the study of the cotangent complex by discussing its relation to first-order thickenings of $A$-algebras. Given an $A$-algebra $B$, a {\em first-order thickening} of $B$ is given by an extension
$$0\to I\to Y\to B\to 0$$
of $A$-algebras, where $I$ is an ideal satisfying $I^2=0$. Note that the condition $I^2=0$ implies that the natural $Y$-module structure on $I$ induces a $B$-module structure. Two first-order thickenings $Y_1$, $Y_2$ of $B$ whose kernels $I_1$, $I_2$ are isomorphic to $I$ as $B$-modules are called equivalent if there is a morphism $Y_1\to Y_2$ inducing the identity map on $B$ and a $B$-module isomorphism $I_1\cong I_2$. A Baer sum construction defines an abelian group structure on equivalence classes, denoted by ${\rm Exal}_A(B,I)$.

\begin{prop}\label{exalcommext1cot}
For a $B$-module $I$ we have a canonical isomorphism
$$\mathrm{Exal}_A(B,I)\stackrel{\sim}{\to}\Ext^1_B(L_{B/A},I).$$
\end{prop}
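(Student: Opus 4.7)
My plan is to identify both sides using the standard simplicial resolution $P_\bullet\to B$ from Construction~\ref{standardres}. Since $L_{B/A}$ is represented by the chain complex $C(B_\bullet\otimes_{P_\bullet}\Omega^1_{P_\bullet/A})$ of free $B$-modules, the group $\Ext^1_B(L_{B/A},I)$ is the first cohomology of the cochain complex with terms
$$
\Hom_B(B\otimes_{P_n}\Omega^1_{P_n/A},I)\cong\Hom_{P_n}(\Omega^1_{P_n/A},I)=\mathrm{Der}_A(P_n,I)
$$
and differentials given by alternating sums of compositions with the face maps of $P_\bullet$. A class on the right is thus represented by an $A$-derivation $D\colon P_1\to I$ satisfying the cocycle relation $D\circ d_0-D\circ d_1+D\circ d_2=0$ on $P_2$, modulo coboundaries of the form $D'\circ d_0-D'\circ d_1$ with $D'\in\mathrm{Der}_A(P_0,I)$.

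To build $\phi\colon\mathrm{Exal}_A(B,I)\to\Ext^1_B(L_{B/A},I)$, I start with an extension $0\to I\to Y\to B\to 0$ and use the freeness of the $A$-algebra $P_0$ to lift the augmentation to an $A$-algebra map $u_0\colon P_0\to Y$. The two composites $u_0\circ d_0,u_0\circ d_1\colon P_1\to Y$ both lift the common map $P_1\to B$, so their difference $D:=u_0\circ d_0-u_0\circ d_1$ factors through $I$, and the condition $I^2=0$ forces $D$ to be an $A$-derivation. The cocycle relation on $P_2$ follows formally from the simplicial identities (e.g.\ $d_0d_1=d_0d_0$, $d_0d_2=d_1d_0$, $d_1d_2=d_1d_1$). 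A second lift $u_0'$ of $P_0\to B$ differs from $u_0$ by an $A$-derivation $D'\colon P_0\to I$ (using $I^2=0$ once more), which changes $D$ by the coboundary $D'\circ d_0-D'\circ d_1$, so the cohomology class $[D]$ depends only on the equivalence class of $Y$. Compatibility with the Baer sum is a direct verification.

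For the inverse $\psi$, I represent a class by a normalized cocycle $D\in\mathrm{Der}_A(P_1,I)$ satisfying $D\circ s_0=0$. Setting $J:=\ker(P_0\to B)$, which equals the image of $d_0-d_1\colon P_1\to P_0$, the rule $\tilde D((d_0-d_1)(p)):=D(p)$ defines a $B$-linear map $\tilde D\colon J\to I$: well-definedness uses the identity $\ker(d_0-d_1)=\mathrm{im}(d_0-d_1+d_2)$, which holds because the augmented standard complex is acyclic in positive degrees, combined with the cocycle condition, and $B$-linearity follows from the Leibniz rule together with $D\circ s_0=0$. I then form
$$
Y_D:=(P_0\oplus I)\,\big/\,\{(j,-\tilde D(j)):j\in J\},
$$
where $P_0\oplus I$ carries the trivial square-zero extension structure. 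The subset quotiented out is an ideal by the computation $(a,b)\cdot(j,-\tilde D(j))=(aj,-\bar a\,\tilde D(j))=(aj,-\tilde D(aj))$, and the quotient fits into an extension $0\to I\to Y_D\to B\to 0$. One checks that $\phi$ and $\psi$ are mutually inverse by unwinding: the canonical lift $p\mapsto(p,0)$ from $P_0$ to $Y_D$ recovers $D$, while the assignment $(p,i)\mapsto u_0(p)+i$ descends to an isomorphism $Y_D\stackrel{\sim}{\to}Y$ by the five-lemma.

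The main obstacle will be the inverse construction: extracting an honest $A$-algebra extension from a cohomology class requires that the formula for $\tilde D$ descend from $P_1$ to the ideal $J$, and this is precisely where the cocycle condition, the normalization $D\circ s_0=0$, and the acyclicity of the standard resolution at degree $1$ all enter in an essential way. The forward direction is largely formal simplicial bookkeeping; the deeper content of the proposition is that every cohomology class on the right comes from a genuine first-order thickening, which is what the explicit construction of $Y_D$ delivers.
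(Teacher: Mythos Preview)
Your proof is correct and follows essentially the same approach as the paper: both directions use the standard resolution $P_\bullet$, lift the augmentation to build a derivation in the forward direction, and form the extension as a quotient of $P_0\oplus I$ in the backward direction. One small point worth noting: your normalization hypothesis $D\circ s_0=0$ is actually automatic from the cocycle condition, since $\sigma_0(P_0)\subset\ker(\partial_0-\partial_1)\subset\ker D$ by the acyclicity you already invoked; the paper exploits this and works directly with the cokernel of $(d_1,D)\colon P_1\to P_0\oplus I$ rather than first descending to $\tilde D$ on $J$, but the two constructions are visibly the same.
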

\begin{proof} We construct a set-theoretic bijection and leave the verification of additivity to the reader.

Consider the standard resolution $P_\bullet\to B$ of the $A$-algebra $B$.
Given a first-order thickening $Y$ of $B$ with ideal $I$, we may lift the surjection $\epsilon_0:\,P_0\to B$ to an $A$-algebra map $\theta\colon P_0\to Y$ by freeness of $P_0$. By composing with the differential $d_1=\partial_0-\partial_1:\, P_1\to P_0$ of the chain complex $CP_\bullet$, we obtain a map $D=\theta\circ d_1$ from $P_1$ to $I\subset Y$ which is readily seen to be an $A$-derivation. It thus induces a $P_1$-linear map $\Omega^1_{P_1/A}\to I$, whence also a $B$-linear map $\overline{D}\colon B\otimes_{P_1}\Omega^1_{P_1/A}\to I\subset Y$ by base extension, noting that the $P_1$-module structure on $Y$ (and hence on $I$) is given by the augmentation $\epsilon_1:\,P_1\to B$. Next, note that the differential $d_2$ of  $CP_\bullet$ induces a map $B\otimes_{P_2}\Omega^1_{P_2/A}\to B\otimes_{P_1}\Omega^1_{P_1/A}$. Its composite with  $\overline{D}$ factors through the map $B\otimes_{P_2}\Omega^1_{P_2/A}\to B\otimes_{P_0}\Omega^1_{P_0/A}$ induced by $d_1\circ d_2$, and hence is the zero map. Since
\begin{align*}
\Ext^1_B(L_{B/A},I)=H_1(\Hom(B\otimes_{P_\bullet}\Omega^1_{P_\bullet/A}, I)),
\end{align*}
the map $\overline D$ defines a class in $\Ext^1_B(L_{B/A},I)$. This class does not depend on the choice of the lifting $\theta$. Indeed, if $\theta'\colon P_0\to Y$, the relation $I^2=0$ implies that the difference $\theta-\theta'\colon P_0\to I$ is an $A$-derivation and hence gives rise to a map $B\otimes_{P_0}\Omega^1_{P_0/A}\to I$ as above. Composition with $d_1$ then yields a map in $\Hom_B(B\otimes_{P_1}\Omega^1_{P_1/A},I)$ which is $\overline{D}-\overline{D'}$ by construction, where $\overline D'\in\Hom_B(B\otimes_{P_1}\Omega^1_{P_1/A},I)$ is the map coming from $\theta'$. Thus $\overline D$ and $\overline D'$ define the same class in $\Ext^1_B(L_{B/A},I)$.

We construct an inverse map $\Ext^1_B(L_{B/A},I)\to \mathrm{Exal}_A(B,I)$ by reversing the above procedure. A class $\alpha$ in $\Ext^1_B(L_{B/A},I)$ is represented by a $B$-linear map $\overline D:\,B\otimes_{P_1}\Omega^1_{P_1/A}\to I$ whose restriction to the second factor gives rise to an $A$-derivation $D\colon P_1\to I$ such that $D\circ d_2=0$. Since ${\rm Im}(d_2)=\Ker(d_1)$, we have $\Ker(d_1)\subseteq\Ker(D)$. Note that this implies that the augmentation map $\epsilon_1:\, P_1\to B$ defining the $P_1$-module structure on $I$ has a set-theoretic section with values in $\Ker(D)$. Indeed, for $p_0\in P_0$ we have equalities $\partial_0(\sigma_0(p_0))=\partial_1(\sigma_0(p_0))=p_0$
where  $\sigma_0\colon P_0\to P_1$ is the degeneracy map and $\partial_i:\, P_1\to P_0$ the face maps, and therefore sending $x\in I$ to $\sigma_0(p_0)$ with some $p_0\in \epsilon_0^{-1}(x)$ defines such a section. It follows that $D(P_1)$ is a $B$-submodule of $I$, because for $p_1\in P_1$ and the above $p_0$ and $b$ we have $bD(p_1)=\epsilon_1(\sigma_0(p_0))D(p_1)=D(\sigma_0(p_0)p_1)$ by the Leibniz rule.

This being said, consider the $A$-module direct sum $P_0\oplus I$ equipped with the multiplication defined by $(p_0, i)(p_0', i')=(p_0p_0', p_0i'+p_0'i)$. It is an $A$-algebra in which $(0, I)$ is an ideal of square zero. Moreover, the $A$-module $Y$ defined as the cokernel of the $A$-module map $(d_1, D):\, P_1\to P_0\oplus I$ inherits an $A$-algebra structure from $P_0\oplus I$. Indeed, ${\rm Im}(d_1, D)$ is an ideal in $P_0\oplus I$ as all $p_0'\in P_0$, $p_1\in P_1$ and $x\in I$ satisfy
$$
(d_1(p_1), D(p_1))(p_0', x)=(d_1(p_1)p_0', d_1(p_1)x+p_0'D(p_1))=(d_1(p_1)p_0', \epsilon_0(p_0')D(p_1))\in {\rm Im}(d_1, D)
$$
since $d_1(p_1)\in \Ker(\epsilon_0)$ which is an ideal in $P_0$, and $D(p_1)\subset I$ is a $B$-submodule. It now follows that the surjection $(\epsilon_0, 0):\,P_0\oplus I\to B$ induces an $A$-algebra extension
$$
0\to I\to Y\to B\to 0,
$$
defining an object of ${\rm Exal}_A(B,I)$. We recover $D:\, P_1\to I$ as the derivation associated with the thickening $Y$ by the procedure of the previous paragraph, which shows that the two constructions are inverse to each other.
\end{proof}

Given an $A$-algebra $B$, first-order thickenings of $B$ naturally form a category $\underline{\mathrm{Exal}}_A(B)$ whose morphisms are $A$-algebra homomorphisms compatible with the surjections onto $B$.

\begin{prop}\label{Yuniv}
 If $\Omega^1_{B/A}=0$, the category $\underline{\mathrm{Exal}}_A(B)$ has an initial object.
\end{prop}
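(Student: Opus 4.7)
The plan is to exhibit a concrete initial object using the representability of the functor $I \mapsto \mathrm{Exal}_A(B,I)$. By Proposition~\ref{h0cotang}, the hypothesis $\Omega^1_{B/A} = 0$ translates into $H_0(L_{B/A}) = 0$, so that $L_{B/A}$ is quasi-isomorphic to its truncation in positive degrees. A truncation argument in the derived category (or the standard hypercohomology spectral sequence) then yields a natural isomorphism $\Ext^1_B(L_{B/A}, I) \cong \Hom_B(H, I)$ for every $B$-module $I$, where $H := H_1(L_{B/A})$. Combined with the bijection $\mathrm{Exal}_A(B, I) \cong \Ext^1_B(L_{B/A}, I)$ of Proposition~\ref{exalcommext1cot}, this makes the functor $I \mapsto \mathrm{Exal}_A(B, I)$ representable by $H$. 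We therefore take as candidate initial object the thickening $0 \to H \to Y_{\mathrm{univ}} \to B \to 0$ classified by $\mathrm{id}_H \in \Hom_B(H, H)$.

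To produce morphisms, given an arbitrary thickening $0 \to I \to Y \to B \to 0$ with class $\phi_Y \in \Hom_B(H, I)$, we form the Baer pushout $Y' := Y_{\mathrm{univ}} \sqcup_{H,\phi_Y} I$, which carries a natural $A$-algebra map from $Y_{\mathrm{univ}}$ and, by naturality of the representability isomorphism, is equivalent to $Y$ in $\mathrm{Exal}_A(B, I)$. Composing with any equivalence $Y' \cong Y$ yields a morphism $Y_{\mathrm{univ}} \to Y$ in $\underline{\mathrm{Exal}}_A(B)$. The choice of equivalence is immaterial since automorphisms of $Y$ fixing both $I$ and $B$ are classified by $A$-derivations $B \to I$, which vanish by the hypothesis $\Omega^1_{B/A} = 0$.

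For uniqueness, suppose $f, g \colon Y_{\mathrm{univ}} \to Y$ are two morphisms in $\underline{\mathrm{Exal}}_A(B)$. Any such $f$ factors as $Y_{\mathrm{univ}} \to Y_{\mathrm{univ}} \sqcup_{H, f|_H} I \to Y$, where the second arrow is the identity on both $I$ and $B$ and therefore an isomorphism of thickenings by the five-lemma; this forces $[Y]$ to be the pushout of $[Y_{\mathrm{univ}}]$ along $f|_H$, and by representability $f|_H = \phi_Y$ is uniquely determined by $Y$. Hence $f$ and $g$ agree on $H$, and $f - g$ is an $A$-derivation $Y_{\mathrm{univ}} \to I$ vanishing on $H$, so it factors through an $A$-derivation $B \to I$ which must be zero. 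The main obstacle will be justifying the identification $\Ext^1_B(L_{B/A}, I) \cong \Hom_B(H, I)$ via the truncation of $L_{B/A}$ with $H_0 = 0$; once representability is in hand, the rest follows formally, with $\Omega^1_{B/A} = 0$ doing triple duty, in representability, in the uniqueness of the Baer equivalence, and in the uniqueness of morphisms.
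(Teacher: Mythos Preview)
Your proof is correct and follows essentially the same route as the paper: both use $H_0(L_{B/A})=0$ to identify $\Ext^1_B(L_{B/A},I)\cong\Hom_B(H_1(L_{B/A}),I)$, take $Y_{\mathrm{univ}}$ to be the thickening classified by the identity of $H:=H_1(L_{B/A})$, and conclude by a Yoneda-type argument. The paper's version is terser, simply invoking ``a Yoneda type argument from the functoriality of the isomorphism $\mathrm{Exal}_A(B,I)\stackrel{\sim}{\to}\Hom_B(H_1(L_{B/A}),I)$ in $I$,'' whereas you unpack this explicitly via Baer pushouts and the vanishing of derivations; but the content is the same.
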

\begin{proof}
In view of Lemma \ref{h0cotang}, the assumption $\Omega^1_{B/A}=0$ implies that we may identify $\Ext^1_B(L_{B/A},I)$ with $\Hom_B(H_1(L_{B/A}),I)$ for all $B$-modules $I$. In particular, the identity map of $H_1(L_{B/A})$ yields a class in $\Ext^1_B(L_{B/A},H_1(L_{B/A}))$, which in turn corresponds to a first-order thickening $Y_{univ}$ of $B$ by Proposition \ref{exalcommext1cot}, with kernel $H_1(L_{B/A})$. That $Y_{univ}$ is an initial object follows by a Yoneda type argument from the functoriality of the  isomorphism $\mathrm{Exal}_A(B,I)\stackrel{\sim}{\to}\Hom_B(H_1(L_{B/A}),I)$  in $I$.
\end{proof}

We shall call $Y_{univ}$ the {\em universal first-order thickening of $B$.\/}\medskip

\begin{ex}\label{exyuniv}\rm In the case when $A\to B$ is a surjective morphism with kernel $J$, the condition $\Omega^1_{B/A}=0$ holds. In this case it is easy to describe $Y_{univ}$ by hand: it is given by the extension
$$
0\to J/J^2\to A/J^2\to B\to 0.
$$
In particular, we have an isomorphism $\mathrm{Exal}_A(B,I)\cong \Hom_B(J/J^2, I)$.
\end{ex}

Starting from Proposition \ref{exalcommext1cot}, Chapter III of \cite{I1} develops a deformation theory of algebras with the aid of the cotangent complex. We shall need two statements from this theory which we now explain.

Assume given a ring $A$ and an ideal $I\subset A$ of square zero. Given moreover an $A/I$-algebra $\overline B$ and a $\overline B$-module $J$ together with an $A$-module map $\lambda:\,I\to J$, one may ask whether there exists an $A$-algebra extension $B$ making the diagram
\begin{equation}\label{diagexistB}
\xymatrix{
0\ar[r]& J\ar[r] & B\ar[r] & \overline B\ar[r] & 0\\
0\ar[r]& I\ar[r]\ar[u]^\lambda & A\ar[r]\ar[u] & A/I\ar[r]\ar[u] & 0
}
\end{equation}
commute.

\begin{prop}\label{defext}
If $L_{\overline B/(A/I)}=0$, there is an $A$-algebra extension $B$ of $\overline B$ by $J$ making diagram (\ref{diagexistB}) commute, and such a $B$ is unique up to isomorphism.
\end{prop}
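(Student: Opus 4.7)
The strategy is to recast the statement as the bijectivity of a natural map between $\Ext$-groups that can be read off from the transitivity triangle of Theorem \ref{cotangexacttriang}.

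First, I would reformulate the problem. An extension $B$ fitting into diagram (\ref{diagexistB}) is exactly the datum of a class $[B]\in\mathrm{Exal}_A(\overline B,J)$ whose ``restriction along the left column'' recovers $\lambda$. Concretely, for any $A$-algebra extension $0\to J\to B\to \overline B\to 0$ the structure map $A\to B$ carries $I$ into $J$ (since $I$ dies in $\overline B$), yielding a functorial map
$$
\phi\colon\mathrm{Exal}_A(\overline B,J)\to\Hom_{A/I}(I,J).
$$
Making diagram (\ref{diagexistB}) commute is the same as requiring $\phi([B])=\lambda$, so the proposition reduces to showing that $\phi$ is a bijection: surjectivity gives a preimage of $\lambda$, injectivity gives uniqueness up to isomorphism.

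To obtain bijectivity I would apply $R\Hom_{\overline B}(-,J)$ to the transitivity triangle for $A\to A/I\to\overline B$,
$$
\overline B\otimes^L_{A/I}L_{(A/I)/A}\to L_{\overline B/A}\to L_{\overline B/(A/I)}\to+1,
$$
and combine the adjunction isomorphism $\Ext^i_{\overline B}(\overline B\otimes^L_{A/I}L_{(A/I)/A},J)\cong\Ext^i_{A/I}(L_{(A/I)/A},J)$ with Proposition \ref{exalcommext1cot} and Example \ref{exyuniv}. Since $A\to A/I$ is surjective with square-zero kernel $I$, Example \ref{exyuniv} gives $\mathrm{Exal}_A(A/I,J)\cong\Hom_{A/I}(I,J)$, and the relevant stretch of the long exact $\Ext$-sequence becomes
$$
\Ext^1_{\overline B}(L_{\overline B/(A/I)},J)\to\mathrm{Exal}_A(\overline B,J)\stackrel{\phi}{\to}\Hom_{A/I}(I,J)\to\Ext^2_{\overline B}(L_{\overline B/(A/I)},J).
$$
The hypothesis $L_{\overline B/(A/I)}=0$ kills both flanking terms, so $\phi$ is an isomorphism, and the unique $B$ corresponding to $\lambda$ under $\phi^{-1}$ solves the problem.

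The point that will require the most care is the identification of the map in the long exact sequence with the concretely defined $\phi$. This boils down to checking that under the isomorphism $\mathrm{Exal}_A(A/I,I)\cong\Hom_{A/I}(I,I)$ of Example \ref{exyuniv} the tautological extension $0\to I\to A\to A/I\to 0$ corresponds to $\mathrm{id}_I$, after which the description of $\phi$ follows from naturality of Proposition \ref{exalcommext1cot} in both arguments. This is essentially a diagram chase at the level of cocycles on the standard simplicial resolutions, but once established the result follows as indicated, and a Yoneda-type argument (as in the proof of Proposition \ref{Yuniv}) promotes the uniqueness of the class to uniqueness of $B$ up to isomorphism.
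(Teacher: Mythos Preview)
Your proposal is correct and follows essentially the same approach as the paper: both use the transitivity triangle for $A\to A/I\to\overline B$, the identification of $\mathrm{Exal}$ with $\Ext^1$ of the cotangent complex (Proposition \ref{exalcommext1cot}), and Example \ref{exyuniv} to reduce to the vanishing hypothesis. The paper phrases the argument constructively---pushing the class of the extension $0\to I\to A\to A/I\to 0$ forward along $\lambda$ and then transporting it through the quasi-isomorphism $L_{(A/I)/A}\otimes^L_{A/I}\overline B\simeq L_{\overline B/A}$ afforded by $L_{\overline B/(A/I)}=0$---whereas you package the same content as bijectivity of the restriction map $\phi$ via the long exact $\Ext$-sequence; your formulation has the minor advantage of yielding uniqueness automatically from injectivity, where the paper only says ``one checks''.
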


\begin{proof}
The extension class of $A$ defines a class in $\Ext^1_{A/I}(L_{{(A/I)}/A},I)$ by Proposition \ref{exalcommext1cot}, mapping to a class in $\Ext^1_{A/I}(L_{{(A/I)}/A},J)\cong \Ext^1_{\overline B}(L_{{(A/I)}/A}\otimes^L_{A/I}\overline B,J)$ via the map induced by $\lambda$ on Ext-groups. The assumption $L_{\overline B/(A/I)}=0$ yields a quasi-isomorphism $ L_{(A/I)/A}\otimes^L_{A/I}\overline B\cong L_{\overline B/A}$ by applying Theorem \ref{cotangexacttriang} to the sequence of maps  $A\to A/I\to \overline B$. We thus obtain a class in $\Ext^1_{\overline B}(L_{\overline B/A},J)$, giving rise to an $A$-algebra extension $B$ of $\overline B$ by $J$ via Proposition \ref{exalcommext1cot}. Going through the constructions in the proof of the said proposition one checks that up to isomorphism $B$ is the unique extension making diagram \eqref{diagexistB} commute.
\end{proof}

Now assume an $A$-algebra extension $B$ as above exists, and moreover $J=IB$ (and hence $\overline B=B/IB$). We have a natural surjection $I\otimes_{A/I}(B/IB)\to IB$ which is an isomorphism if $B$ is flat over $A$. Thus if $B$ is a {\em flat} $A$-algebra and $C$ an arbitrary $A$-algebra, every $A/I$-algebra map $\phi:\, B/IB\to C/IC$ gives rise to an $A/I$-algebra map $IB\to IC$ by tensoring with $I$. The map $\phi$ thus gives rise to a diagram with exact rows

\begin{equation}\label{extdiag}
\begin{CD}
0 @>>> IB @>>> B @>>> B/IB @>>> 0 \\
&& @V{\phi\otimes{\id}_I}VV && @VV{\phi}V \\
0 @>>> IC @>>> C @>>> C/IC @>>> 0
\end{CD}
\end{equation}

\begin{prop}\label{defmap}
In the above situation assume moreover $L_{(B/IB)/(A/I)}=0$. Then there exists a unique map $\widetilde \phi:\,B\to C$ making the diagram commute.
\end{prop}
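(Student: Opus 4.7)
The plan is to invoke Proposition \ref{defext} on two naturally arising $A$-algebra extensions of $\bar B := B/IB$ by $IC$, both extending the bottom row $0 \to I \to A \to A/I \to 0$ via the same map $\lambda : I \to IC$, $i \mapsto i \cdot 1_C$. The first is the fiber product
$$F := C \times_{C/IC} \bar B$$
formed from $\pi_C : C \to C/IC$ and $\phi : \bar B \to C/IC$, whose second projection gives $0 \to IC \to F \to \bar B \to 0$. The second is built from the given extension $0 \to IB \to B \to \bar B \to 0$: by flatness of $B$, the kernel $IB$ identifies with $I \otimes_{A/I} \bar B$, so pushing $B$ out along the $\bar B$-linear map $\phi \otimes \id_I : IB \to IC$ produces an $A$-algebra extension $B'$ of $\bar B$ by $IC$ compatible with the same $\lambda$.

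Because $L_{\bar B/(A/I)} = 0$, the uniqueness part of Proposition \ref{defext} yields an isomorphism $B' \simeq F$ of $A$-extensions. Composing the canonical maps
$$B \longrightarrow B' \stackrel{\sim}{\longrightarrow} F \longrightarrow C$$
(the last being the first projection of the fiber product) defines the desired $\widetilde\phi : B \to C$, and unwinding the definition of $F$ confirms that $\widetilde\phi$ reduces to $\phi$ modulo $I$, so diagram (\ref{extdiag}) commutes.

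For uniqueness, suppose $\widetilde\phi_1, \widetilde\phi_2 : B \to C$ both lift $\phi$. Their difference $D := \widetilde\phi_1 - \widetilde\phi_2$ lands in $IC$; since $IC^2 = 0$, a direct computation with the Leibniz rule shows that $D$ is an $A$-derivation. Because $IC$ is annihilated by $I$, one finds $D(IB) = 0$, so $D$ descends to an $A/I$-derivation $\bar D : \bar B \to IC$. The assumption $L_{\bar B/(A/I)} = 0$ implies in particular $\Omega^1_{\bar B/(A/I)} = H_0(L_{\bar B/(A/I)}) = 0$, so $\mathrm{Der}_{A/I}(\bar B, IC) = 0$, forcing $\bar D = 0$ and thus $\widetilde\phi_1 = \widetilde\phi_2$. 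The main technical point to check is that the Baer-sum style pushout $B' = B \sqcup_{IB} IC$ carries a genuine $A$-algebra extension structure of $\bar B$ by $IC$ with the stated compatibility — a routine computation relying on $IC \cdot IC = 0$ and the $\bar B$-module structure on $IC$.
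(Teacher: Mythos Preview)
Your proof is correct and follows essentially the same strategy as the paper: form the pushout $B'$ (the paper's $\widetilde B$) and the pullback $F$ (the paper's $\widetilde C$), show they are isomorphic, and conclude. The only difference is that you invoke the uniqueness clause of Proposition~\ref{defext} directly to identify $B'\cong F$, whereas the paper spells this identification out via the $\Ext^1$ classification (Proposition~\ref{exalcommext1cot}) and the transitivity triangle for $A\to A/I\to B/IB$, showing that the map $\rho$ to $\Ext^1_{A/I}(L_{(A/I)/A},IC)\cong\Hom_{A/I}(I,IC)$ is injective and sends both extension classes to the natural map $I\to IC$; your uniqueness argument via derivations is identical to the paper's.
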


\begin{proof}
First a word on uniqueness. The difference of two liftings of $\phi$ is an $A/I$-derivation $B/IB\to IC$. But the assumption $L_{(B/IB)/(A/I)}=0$ implies  $\Omega^1_{(B/IB)/(A/I)}=0$ in view of Proposition \ref{h0cotang}, so this derivation must be trivial.

For existence, observe first that the diagram (\ref{extdiag}) gives rise to two natural $A$-algebra extensions of $B/IB$ by $IC$: an extension $\widetilde B$ obtained as a pushout of the upper row by the map $\phi\otimes{\id}_I$, and an extension $\widetilde C$ obtained as the pullback of the lower row by the map $\phi$. The universal properties of pushout and pullback imply that a map $\widetilde\phi:\, B\to C$ as in the statement exists if and only if the extensions $\widetilde B$ and $\widetilde C$ are isomorphic.

By Proposition \ref{exalcommext1cot} both extensions have a class in $\Ext^1_{B/IB}(L_{(B/IB)/A}, IC)$. Theorem \ref{cotangexacttriang} applied to the sequence $A\to A/I\to B/IB$ gives an exact triangle
$$
(B/IB)\otimes^L_{A/I}L_{(A/I)/A}\to L_{(B/IB)/A}\to L_{(B/IB)/(A/I)}\to (B/IB)\otimes^L_{A/I}L_{(A/I)/A}[1].
$$
Applying the functor $\Ext^1_{B/IB}(\cdot, IC)$ gives an exact sequence
$$
\Ext^1_{B/IB}(L_{(B/IB)/(A/I)}, IC)\to \Ext^1_{B/IB}(L_{(B/IB)/A}, IC)\stackrel\rho\to \Ext^1_{A/I}(L_{(A/I)/A}, IC)
$$
where we may identify the last group with $\Hom_{A/I}(I, IC)$ by Proposition \ref{exalcommext1cot} and the Example \ref{exyuniv}. Furthermore, going through the constructions shows that the map $\rho$ sends both the class of $\widetilde B$ and that of $\widetilde C$ to the natural map $I\to IC$ induced by the structure map $A\to C$. But $\rho$ is injective since we have $\Ext^1_{B/IB}(L_{(B/IB)/(A/I)}, IC)=0$ by assumption. This shows $\widetilde B\cong \widetilde C$ as required.
\end{proof}

\subsection{The derived de Rham algebra}

We now come to the definition of the derived de Rham algebra $L\Omega^\bullet_{B/A}$.

Let $B$ be an $A$-algebra, and $P_\bullet\to B$ the standard simplicial resolution of $B$. The de Rham complex associated with the simplicial $A$-algebra $P_\bullet$ is given by the diagram
\begin{equation*}
\xymatrix{
\ddots & \vdots & \vdots &\vdots\\
\cdots\ar@<-.9ex>[r]\ar@<-.3ex>[r]\ar@<.3ex>[r]\ar@<.9ex>[r] & \Omega^2_{P_2/A} \ar@<-.6ex>[r]\ar[r]\ar@<.6ex>[r]\ar[u] & \Omega^2_{P_1/A} \ar@<-.3ex>[r]\ar@<.3ex>[r]\ar[u] & \Omega^2_{P_0/A } \ar[u]\\
\cdots\ar@<-.9ex>[r]\ar@<-.3ex>[r]\ar@<.3ex>[r]\ar@<.9ex>[r] & \Omega^1_{P_2/A} \ar@<-.6ex>[r]\ar[r]\ar@<.6ex>[r]\ar[u] & \Omega^1_{P_1/A} \ar@<-.3ex>[r]\ar@<.3ex>[r]\ar[u] & \Omega^1_{P_0/A } \ar[u]\\
 \cdots\ar@<-.9ex>[r]\ar@<-.3ex>[r]\ar@<.3ex>[r]\ar@<.9ex>[r] & P_{2}\ar@<-.6ex>[r]\ar[r]\ar@<.6ex>[r]\ar[u] & P_1\ar@<-.3ex>[r]\ar@<.3ex>[r]\ar[u] &P_{0}.\ar[u]
}
\end{equation*}
We may view it as a simplicial object in the category of differential graded $A$-algebras. By passing to the associated chain complex in the horizontal direction, we obtain a double complex $\Omega^\bullet_{P_\bullet/A}$.

\begin{defi}\rm The total complex (with the direct sum convention) of the double complex $\Omega^\bullet_{P_\bullet/A}$ is the {\em derived de Rham complex of $B$}. We denote it by $L\Omega^\bullet_{B/A}$.
\end{defi}

We sometimes view the derived de Rham algebra as an object in the bounded above derived category of $A$-modules, and sometimes as the complex itself. In the latter setting, we define the {\em  Hodge filtration} $F^iL\Omega^\bullet_{B/A}$ on $L\Omega^\bullet_{B/A}$ as the filtration induced by $$F^i(\Omega^\bullet_{P_\bullet/A})=\Omega^{\geq i}_{P_\bullet/A}$$ on the double complex $\Omega^\bullet_{P_\bullet/A}$.

The completion of $L\Omega^\bullet_{B/A}$ with respect to the Hodge filtration will play a crucial role in what follows. There is only one way to define it:

\begin{defi}\rm The {\em Hodge-completed derived de Rham complex of $B$} is defined as the projective system of complexes $L\widehat\Omega^\bullet_{B/A}:=(L\Omega^\bullet_{B/A}/F^i)$.
\end{defi}

To justify the terminology `de Rham {\em algebra}', we equip $L\Omega^\bullet_{P_\bullet/A}$ with the structure of a commutative differential graded algebra over $A$. We first define a product structure $P_i\otimes P_j \rightarrow P_{i+j}$ on the complex $CP_\bullet$ as the multiplication map on $P_0$ for $i=j=0$ and otherwise as the {\em shuffle map}

$$
x\otimes y \mapsto \sum_{(\mu,\nu)}\varepsilon(\mu,\nu)(\sigma_{\nu_1}\sigma_{\nu_2}\dots \sigma_{\nu_j}x)(\sigma_{\mu_1}\sigma_{\mu_2}\dots \sigma_{\mu_i}y)
$$
where $\sigma_{\nu_k}$ and $\sigma_{\mu_l}$ are the degeneracy maps in $P_\bullet$, the pair $(\mu,\nu)$ runs through the $(i,j)$-shuffles of the ordered set $(1,2,\dots,i+j)$ and $\varepsilon(\mu,\nu)$ is the sign of the shuffle as a permutation. (Recall that an $(i,j)$-shuffle is a permutation $\tau:\, (1,2,\dots,i+j)\to (1,2,\dots,i+j)$ with $\tau(1)<\tau(2)<\cdots <\tau(i)$ and $\tau(i+1)<\tau(i+2)<\cdots <\tau(i+j)$; this also makes sense if one of $i$ or $j$ is 0.)

The above product structure on $CP_\bullet$ induces a product structure on the double complex $\Omega^\bullet_{P_\bullet/A}$ and hence on the total complex $L\Omega^\bullet_{P_\bullet/A}$; we may therefore call it the {\em derived de Rham algebra of $B$}. In fact, with this product structure $L\Omega^\bullet_{P_\bullet/A}$ becomes a differential graded algebra over $A$. Moreover, one checks that the multiplicative structure on $\Omega^\bullet_{P_\bullet/A}$ is compatible with the Hodge filtration, and hence we may consider $L\widehat\Omega^\bullet_{B/A}$ as a projective system of differential graded algebras.

The  $i$-th graded piece with respect to the Hodge filtration on $L\Omega^\bullet_{P_\bullet/A}$ is computed as follows.
\begin{prop}\label{gridr}
There is a quasi-isomorphism of complexes of $A$-modules
\begin{equation}\label{grOmegaP}
\mathrm{gr}^i_F\Omega^\bullet_{P_\bullet/A}\stackrel\sim\to L\wedge^iL_{B/A}[-i]
\end{equation}
where $L_{B/A}$ is the cotangent complex of $B$.
\end{prop}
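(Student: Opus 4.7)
The plan is to identify both sides of the quasi-isomorphism with (a shift of) the chain complex associated with the simplicial $B$-module $B_\bullet \otimes_{P_\bullet} \Omega^i_{P_\bullet/A}$.

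First I would unwind the left-hand side. Since the Hodge filtration on $\Omega^\bullet_{P_\bullet/A}$ is defined by $F^i\Omega^\bullet_{P_\bullet/A} = \Omega^{\geq i}_{P_\bullet/A}$, its $i$-th graded piece consists of the bisimplicial object $\Omega^i_{P_\bullet/A}$ sitting in horizontal (de Rham) degree $i$ of the double complex. Passing to associated chain complex in the simplicial direction and taking the total complex of the resulting degenerate double complex yields simply $C\Omega^i_{P_\bullet/A}[-i]$.

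Next I would analyze the right-hand side. Recall that $L_{B/A}$ is represented by the simplicial $B$-module $B_\bullet \otimes_{P_\bullet} \Omega^1_{P_\bullet/A}$, whose terms are free $B$-modules: indeed, each $P_n$ is a free $A$-algebra, so $\Omega^1_{P_n/A}$ is a free $P_n$-module, and hence its base change to $B$ is free over $B$. Invoking the definition of the derived exterior power as the non-abelian left derived functor of $\wedge^i$ in the sense of Quillen (or Dold--Puppe), computed by applying $\wedge^i$ termwise to a simplicial resolution by free modules, I get
\begin{equation*}
L\wedge^i L_{B/A} \simeq C\bigl(\wedge^i_{B_\bullet}(B_\bullet \otimes_{P_\bullet}\Omega^1_{P_\bullet/A})\bigr) \cong C(B_\bullet\otimes_{P_\bullet}\Omega^i_{P_\bullet/A}),
\end{equation*}
where the second isomorphism uses that exterior powers commute with base change and the standard identification $\wedge^i_{P_n}\Omega^1_{P_n/A} \cong \Omega^i_{P_n/A}$ for a free module of differentials.

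Finally, the bridge between the two computations is given by the natural base-change map $\Omega^i_{P_\bullet/A} \to B_\bullet \otimes_{P_\bullet} \Omega^i_{P_\bullet/A}$. Applying Lemma \ref{flattensorquasi} with $A_\bullet = E_\bullet = P_\bullet$, $F_\bullet = B_\bullet$ (which defines a quasi-isomorphism on associated chain complexes since $P_\bullet \to B$ is a simplicial resolution), and $L_\bullet = \Omega^i_{P_\bullet/A}$ (termwise free over $P_\bullet$), yields the required quasi-isomorphism on associated chain complexes; shifting by $[-i]$ then produces the statement. The main subtlety is in the middle step: one must pin down what $L\wedge^i$ means on an object of the derived category and verify that, when the object is represented by a simplicial module with termwise free entries, it is correctly computed by applying $\wedge^i$ termwise. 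This relies on the non-abelian derived functor formalism and a check of independence of the chosen resolution (essentially a K\"unneth/shuffle argument analogous to the one in Lemma \ref{flattensorquasi}); apart from that, everything else is a matter of unwinding definitions.
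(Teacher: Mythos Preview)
Your proof is correct and follows essentially the same route as the paper: identify $\mathrm{gr}^i_F$ with $C\Omega^i_{P_\bullet/A}[-i]$, apply Lemma~\ref{flattensorquasi} with $L_\bullet=\Omega^i_{P_\bullet/A}$ to pass to $B_\bullet\otimes_{P_\bullet}\Omega^i_{P_\bullet/A}$, and rewrite this as $\wedge^i(B_\bullet\otimes_{P_\bullet}\Omega^1_{P_\bullet/A})$, which represents $L\wedge^iL_{B/A}$. The only difference is packaging: the paper simply \emph{declares} (just before the proof) that $L\wedge^iL_{B/A}$ is represented by $C\wedge^i(B_\bullet\otimes_{P_\bullet}\Omega^1_{P_\bullet/A})$, deferring the well-definedness to the appendix on non-additive derived functors, whereas you spell this out as a subtlety to be checked.
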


Here the object $L\wedge^iL_{B/A}$ is represented by $C\wedge^i(B_\bullet\otimes_{P_\bullet}\Omega^1_{P_\bullet/A})$, in accordance with Remarks \ref{derfunctrema} and \ref{remnc}.\medskip

\begin{dem}
First, note that
\begin{equation*}
\mathrm{gr}^i_FL\Omega^\bullet_{P_\bullet/A}\cong\Omega^i_{P_\bullet/A}[-i]=(\cdots\to \Omega^i_{P_j/A}\to\cdots\to\Omega^i_{P_1/A}\to \Omega^i_{P_0/A})
\end{equation*}
where on the right-hand side the term $\Omega^i_{P_j/A}$ has degree $j-i$ in the complex.

Consider now the constant simplicial ring $B_\bullet$, and view the augmentation map $P_\bullet\to B_\bullet$ as a morphism of simplicial $P_\bullet$-modules inducing a quasi-isomorphism on associated chain complexes. As $\Omega^i_{P_\bullet/A}$ is a simplicial $P_\bullet$-module with free terms, Lemma \ref{flattensorquasi} gives rise to the first quasi-isomorphism in the chain
$$ P_\bullet\otimes_{P_\bullet}\Omega^i_{P_\bullet/A}\overset{\sim}{\rightarrow}B_\bullet\otimes_{P_\bullet}\Omega^i_{P_\bullet/A}\cong B_\bullet\otimes_{P_\bullet}\wedge^i\Omega^1_{P_\bullet/A}\cong \wedge^i(B_\bullet\otimes_{P_\bullet}\Omega^1_{P_\bullet/A}).
$$
The quasi-isomorphism of the proposition follows.
\end{dem}

We next discuss the analogue of Theorem \ref{freeresolcotang}.

\begin{theo}\label{freeresolderham}
Let $Q_\bullet\to B$ be a simplicial resolution of the $A$-algebra $B$ whose terms are free $A$-algebras. We have a quasi-isomorphism of complexes $$L\Omega^\bullet_{B/A}\simeq {\rm Tot}(\Omega^\bullet_{Q_\bullet/A})$$ compatible with the product structure and the Hodge filtration.
\end{theo}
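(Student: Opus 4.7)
My plan follows closely the proof of Theorem \ref{freeresolcotang}, replacing $\Omega^1$ by the full de Rham complex. Form the bisimplicial $A$-algebra $P_\bullet(Q_\bullet)$ by applying the standard simplicial resolution in the first direction to each term of $Q_\bullet$. Applying the de Rham functor $\Omega^\bullet_{-/A}$ produces a triple complex $\Omega^\bullet_{P_\bullet(Q_\bullet)/A}$, and I let $T$ denote its total complex with the direct sum convention. There are natural maps $T \to L\Omega^\bullet_{B/A} = \mathrm{Tot}(\Omega^\bullet_{P_\bullet(B)/A})$ and $T \to \mathrm{Tot}(\Omega^\bullet_{Q_\bullet/A})$, and I aim to show both are filtered quasi-isomorphisms compatible with products.

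The first comparison is a direct analogue of Lemma \ref{totcompiso}. For each fixed $i \geq 0$ and each fixed $n \geq 0$, I claim the simplicial $A$-module $\Omega^i_{P_n(Q_\bullet)/A} \to \Omega^i_{P_n(B)/A}$ is a resolution. Indeed, the morphism $Q_\bullet \to B_\bullet$ of underlying simplicial sets is a homotopy equivalence by Kan's theorem (Propositions \ref{kanhomotopy} and \ref{kanabelian}); applying the functor $X \mapsto P_n(A[X])$ then yields a homotopy equivalence of simplicial $A$-algebras $P_n(Q_\bullet) \to P_n(B)_\bullet$, and since $\Omega^i_{-/A}$ preserves homotopy equivalences of simplicial $A$-algebras, the claim follows. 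A spectral sequence argument on the triple complex, degenerating column by column, then delivers $T \simeq L\Omega^\bullet_{B/A}$.

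For the second comparison one uses the freeness of the $Q_n$. By Lemma \ref{standardtriv}, for each $n$ the augmentation $P_\bullet(Q_n) \to (Q_n)_\bullet$ is a homotopy equivalence of simplicial $A$-algebras, hence so is $\Omega^i_{P_\bullet(Q_n)/A} \to (\Omega^i_{Q_n/A})_\bullet$ for each $i$. Thus the column $C\Omega^\bullet_{P_\bullet(Q_n)/A}$ is an acyclic resolution of $\Omega^\bullet_{Q_n/A}$, and a second spectral sequence argument gives $T \simeq \mathrm{Tot}(\Omega^\bullet_{Q_\bullet/A})$.

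Compatibility with the Hodge filtration is straightforward: all comparison maps preserve the subcomplexes $\Omega^{\geq i}$, so both quasi-isomorphisms are filtered, and one may check the induced maps on $\mathrm{gr}^i_F$ using Proposition \ref{gridr} together with Theorem \ref{freeresolcotang} applied to $L\wedge^i L_{B/A}$. For the multiplicative structure, the shuffle product is natural with respect to morphisms of simplicial $A$-algebras, so the comparison maps are morphisms of differential graded algebras. The step I expect to require the most care is the bookkeeping in the triple complex --- in particular, verifying that the two independent simplicial directions fit together coherently and that the filtered/multiplicative structures survive the passage through the spectral sequence rather than being preserved only up to higher homotopies that would demand an $E_\infty$ formalism to control.
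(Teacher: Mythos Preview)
Your strategy coincides with the paper's: build the triple complex from $\Omega^\bullet_{P_\bullet(Q_\bullet)/A}$ and compare its totalization to both $L\Omega^\bullet_{B/A}$ and $\mathrm{Tot}(\Omega^\bullet_{Q_\bullet/A})$ using the homotopy equivalences supplied by Lemma \ref{totcompiso} and Lemma \ref{standardtriv} in the two simplicial directions respectively. Where your sketch diverges is in the passage from columnwise quasi-isomorphisms to a quasi-isomorphism of total complexes. You invoke ``a spectral sequence argument'', but the free algebras $P_p(Q_m)$ are polynomial rings on infinitely many generators, so $\Omega^i_{P_p(Q_m)/A}\neq 0$ for all $i\geq 0$: the de Rham direction is unbounded, and after one totalization the remaining double complex is only half-plane, not first-quadrant, so the standard convergence hypotheses are not available. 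The paper flags exactly this point (``we have to be more careful concerning convergence issues'') and replaces the spectral sequence with an explicit exhaustion: truncate the $P$-direction at $p\leq p_0$, prove the comparison $C^Q_{\leq p_0}\to C^B_{\leq p_0}$ is a quasi-isomorphism for each $p_0$ by finite induction along split short exact sequences, and then pass to the direct limit over $p_0$ using that homology commutes with filtered colimits. The same device is used for the second comparison.

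Your closing worry about needing an $E_\infty$ formalism is misplaced here: the comparison maps are honest maps of filtered differential graded algebras, since the shuffle product and the Hodge filtration are strictly preserved by all the constructions involved. Once the underlying quasi-isomorphism is in hand, compatibility with products and filtration follows formally. The genuine subtlety you should have named is the convergence issue above.
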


Here the product structure and the Hodge filtration on the right hand side are defined in the same way as on $\Omega^\bullet_{Q_\bullet/A}$.

The presentation below is influenced by unpublished notes of Illusie.\smallskip

\begin{proof}
The proof proceeds along the lines of that of Theorem \ref{freeresolcotang} but we have to be more careful concerning convergence issues.

We first fix $n\geq 0$ and start with the homotopy equivalence $P_n(Q_\bullet)\to P_n(B_\bullet)$ obtained during the proof of Lemma \ref{totcompiso}. By functoriality it induces a homotopy equivalence $\Omega^\bullet_{P_n(Q_\bullet)}\to \Omega^\bullet_{P_n(B_\bullet)}$ of simplicial objects in the category of complexes of $A$-modules, whence a quasi-isomorphism
${\rm Tot}(\Omega^\bullet_{P_n(Q_\bullet)})\simeq {\rm Tot}(\Omega^\bullet_{P_n(B_\bullet)})$ of associated complexes.

Consider now the double complex $C_{p,q}^Q={{\rm Tot}(\Omega^\bullet_{P_p(Q_\bullet)})}_q$ with horizontal differentials those of the complex associated with the simplicial object $[p]\mapsto {\rm Tot}(\Omega^\bullet_{P_p(Q_\bullet)})$ in the category of complexes of $A$-modules and vertical differentials given by those of ${\rm Tot}(\Omega^\bullet_{P_p(Q_\bullet)})$. We have a total complex ${\rm Tot}(C_{p,q}^Q)$ taken with the direct sum convention and a morphism of complexes ${\rm Tot}(C_{p,q}^Q)\to {\rm Tot}(C_{p,q}^B)$ with ${\rm Tot}(C_{p,q}^B)$ defined similarly starting from $C_{p,q}^B={{\rm Tot}(\Omega^\bullet_{P_p(B_\bullet)})}_q$. We claim that this map is a quasi-isomorphism.

Define subcomplexes $C_{\leq p_0}^Q\subset {\rm Tot}(C_{p,q}^Q)$ (resp. $C_{\leq p_0}^B\subset {\rm Tot}(C_{p,q}^B)$) by replacing the columns with $p>p_0$ in $C_{p,q}^Q$ (resp. $C_{p,q}^B$) by 0 and taking the associated total complexes. We have morphisms of complexes $C_{\leq p_0}^Q\to C_{\leq p_0}^B$ that form a direct system as $p_0$ goes to infinity. In the direct limit we recover the morphism of complexes ${\rm Tot}(C_{p,q}^Q)\to {\rm Tot}(C_{p,q}^B)$ considered above. It thus suffices to prove that each morphism $C_{\leq p_0}^Q\to C_{\leq p_0}^Q$ is a quasi-isomorphism. This follows by finite induction using  the exact sequences
$$
0\to C_{\leq p_0-1}^Q\to C_{\leq p_0}^Q\to {\rm Tot}(\Omega^\bullet_{P_{p_0}(Q_\bullet)})\to 0,\quad 0\to C_{\leq p_0-1}^B\to C_{\leq p_0}^B\to {\rm Tot}(\Omega^\bullet_{P_{p_0}(B_\bullet)})\to 0
$$
together with the quasi-isomorphisms ${\rm Tot}(\Omega^\bullet_{P_{p_0}(Q_\bullet)})\simeq {\rm Tot}(\Omega^\bullet_{P_{p_0}(B_\bullet)})$ established above. (Note that the above short exact sequences are actually split exact, as their terms are free $A$-modules.)

We thus obtain quasi-isomorphisms ${\rm Tot}(C_{p,q}^Q)\simeq {\rm Tot}(C_{p,q}^B)\simeq L\Omega^\bullet_{B/A}$ as $B_\bullet$ is a constant simplicial algebra. On the other hand, starting from the homotopy equivalences $P_\bullet(Q_n)\to {(Q_n)}_\bullet$ given by Lemma \ref{standardtriv} for each fixed $n$ and performing a similar construction as above, we obtain a quasi-isomorphism
$
{\rm Tot}(C_{p,q}^Q)\simeq{\rm Tot}(\Omega^\bullet_{Q_\bullet/A}).
$
Finally, compatibility with products and Hodge filtrations follows as the constructions involved in the above proof satisfy them.
\end{proof}

\begin{rema}\rm If one only wishes to prove the independence of the Hodge-completed derived de Rham algebra $L\widehat\Omega^\bullet_{B/A}$ of the resolution, the above argument simplifies as we do not have to worry about unbounded filtrations. Alternatively, once the Hodge-truncated versions of the maps ${\rm Tot}(C_{p,q}^Q)\to {\rm Tot}(C_{p,q}^B)$ and ${\rm Tot}(C_{p,q}^Q)\to{\rm Tot}(\Omega^\bullet_{Q_\bullet/A})$ used in the above proof have been constructed, we may reduce to Theorem \ref{freeresolcotang} by means of Proposition \ref{gridr}.
\end{rema}

\begin{ex}\label{barresol}\rm An important example of a simplicial resolution with free terms other than the standard resolution is given by the
{\em bar resolution} $Q_\bullet$ in the case $A=R[x]$, $B=R$ where the ring $R$ is viewed as an $R[x]$-algebra $R$ via the map $x\mapsto 0$.  Here

$$Q_n:=R[x][x_1,\dots,x_n]$$
and the face (resp.\ degeneracy) maps $\partial_0,\dots,\partial_n\colon Q_n\to Q_{n-1}$ (resp.\ $\sigma_0,\dots,\sigma_n\colon Q_n\to Q_{n+1}$) are defined by
\begin{eqnarray*}
\partial_i(x_j)&=&\begin{cases}
x_j &\text{if }n\neq j\leq i\\
x_{j-1}&\text{if }j>i\\
0&\text{if }j=i=n\ ,
\end{cases}\\
\sigma_i(x_j)&=&\begin{cases}
x_j &\text{if } j\leq i\\
x_{j+1}&\text{if }j>i
\end{cases}
\end{eqnarray*}
where by convention we put $x_0:=x$. That this is indeed a simplicial resolution is verified by direct computation.
\end{ex}

Given the theorem, we can establish the analogue of Lemma \ref{cotangflatbasechange} for derived de Rham algebras.

\begin{cor}\label{derbasechange}
Given Tor-independent $A$ algebras $A'$ and $B$, we have a canonical quasi-isomorphism $$A'\otimes^L_AL\Omega^\bullet_{B/A}\simeq L\Omega^\bullet_{A'\otimes_AB/A'}$$
 of associated derived de Rham algebras.
\end{cor}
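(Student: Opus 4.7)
The plan is to mimic the proof of Lemma \ref{cotangflatbasechange} but at the level of derived de Rham algebras, using Theorem \ref{freeresolderham} to compute $L\Omega^\bullet_{A'\otimes_A B/A'}$ via a base-changed resolution. Let $P_\bullet \to B$ be the standard simplicial resolution of $B$ as an $A$-algebra. First I would show that $A'\otimes_A P_\bullet \to A'\otimes_A B$ is a simplicial resolution of the $A'$-algebra $A'\otimes_A B$ with terms that are free $A'$-algebras. Freeness of the terms is immediate, since the base change of a free $A$-algebra on a generating set $X$ is a free $A'$-algebra on the same set. The fact that it is a resolution follows from the Tor-independence hypothesis, exactly as in the proof of Lemma \ref{cotangflatbasechange}: the associated chain complex of $A'\otimes_A P_\bullet$ computes $\Tor_*^A(A',B)$, which is concentrated in degree $0$ where it equals $A'\otimes_A B$.

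Next I would apply Theorem \ref{freeresolderham} to $A'\otimes_A P_\bullet \to A'\otimes_A B$, yielding a quasi-isomorphism
$$
L\Omega^\bullet_{A'\otimes_A B/A'} \simeq \mathrm{Tot}(\Omega^\bullet_{A'\otimes_A P_\bullet /A'})
$$
compatible with products and Hodge filtration. The base change property for Kähler differentials (Fact \ref{diff}(1)) gives
$$
\Omega^i_{A'\otimes_A P_j/A'} \cong A'\otimes_A \Omega^i_{P_j/A}
$$
for all $i,j$, so the right hand side is canonically identified with $A'\otimes_A \mathrm{Tot}(\Omega^\bullet_{P_\bullet/A}) = A'\otimes_A L\Omega^\bullet_{B/A}$ (the underived tensor product).

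To conclude, I need to identify this underived tensor product with the derived one $A'\otimes^L_A L\Omega^\bullet_{B/A}$. Here is where I use that each $P_j$ is a free $A$-algebra, so that $\Omega^i_{P_j/A}$ is a free $P_j$-module and in particular a free (hence flat) $A$-module. Consequently every term of the double complex $\Omega^\bullet_{P_\bullet/A}$ is $A$-flat, and so the canonical map
$$
A'\otimes^L_A L\Omega^\bullet_{B/A} \longrightarrow A'\otimes_A L\Omega^\bullet_{B/A}
$$
is a quasi-isomorphism (one can replace the flat terms by a termwise flat resolution via Lemma \ref{flattensorquasi}, or argue directly termwise). Composing the two quasi-isomorphisms yields the canonical map of the corollary and shows it is a quasi-isomorphism.

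The main subtle point is convergence of the totalization under base change: one must verify that the quasi-isomorphism $\mathrm{Tot}(A'\otimes_A \Omega^\bullet_{P_\bullet/A}) \simeq A'\otimes_A \mathrm{Tot}(\Omega^\bullet_{P_\bullet/A})$ holds for the direct-sum totalization, but this is automatic since $A'\otimes_A$ commutes with direct sums. The analogous statement for the Hodge-completed version, were it needed, would require a more careful truncation argument as in the remark following Theorem \ref{freeresolderham}; but for the uncompleted statement of the corollary the above suffices.
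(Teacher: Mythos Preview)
Your proposal is correct and follows essentially the same approach as the paper's proof, which is a one-sentence sketch pointing to Lemma \ref{cotangflatbasechange} and Theorem \ref{freeresolderham}. You have simply filled in the details the paper leaves implicit, in particular the identification of the underived and derived tensor products via flatness of the terms $\Omega^i_{P_j/A}$ and the compatibility of direct-sum totalization with base change.
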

\begin{proof} This is similar to the proof of Lemma \ref{cotangflatbasechange}, the main point being that for the standard resolution $P_\bullet\to B$ the base change ${A'\otimes_A}P_\bullet\to A'\otimes_AB$ gives a free resolution of $A'\otimes_AB$ by Tor-independence, and hence may be used to compute $L\Omega^\bullet_{A'\otimes_AB/A'}$ by the theorem.
\end{proof}

We now use the derived de Rham algebra to give an explicit construction of the universal first-order thickening of the previous section. Note first that by the compatibility of the multiplicative structure of $L\Omega^\bullet_{P_\bullet/A}$ with the filtration the group $H_0(L\Omega^\bullet_{P_\bullet/A}/F^2)$ is an $A$-algebra.

\begin{theo}\label{universal1order}
Assume $\Omega^1_{B/A}=0$. Then $H_0(L\Omega^\bullet_{P_\bullet/A}/F^2)$ is a universal first order thickening of the $A$-algebra $B$.
\end{theo}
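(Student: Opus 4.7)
The plan is to show that $H_0(L\Omega^\bullet_{P_\bullet/A}/F^2)$ is a first-order thickening of $B$ by $H_1(L_{B/A})$, and then verify that it represents the universal class in $\mathrm{Exal}_A(B,H_1(L_{B/A}))$ by identifying the resulting class with the identity endomorphism under Proposition~\ref{exalcommext1cot}.

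First I would exploit the short exact sequence of quotient complexes
\[
0\to F^1L\Omega^\bullet_{P_\bullet/A}/F^2\to L\Omega^\bullet_{P_\bullet/A}/F^2\to L\Omega^\bullet_{P_\bullet/A}/F^1\to 0.
\]
The right-hand term is the chain complex $CP_\bullet$ of the bottom row, quasi-isomorphic to $B$ placed in degree $0$ by the resolution property. By Proposition~\ref{gridr} (combined with Corollary~\ref{corflattens} to pass from the tensored to the untensored model of the cotangent complex), the left-hand term is quasi-isomorphic to $L_{B/A}[-1]$. The associated long exact homology sequence, together with the hypothesis $\Omega^1_{B/A}=H_0(L_{B/A})=0$, immediately gives
\[
0\to H_1(L_{B/A})\to H_0(L\Omega^\bullet_{P_\bullet/A}/F^2)\to B\to 0.
\]
Equipping $H_0(L\Omega^\bullet_{P_\bullet/A}/F^2)$ with the $A$-algebra structure induced by the shuffle product and using compatibility of the product with the Hodge filtration ($F^i\cdot F^j\subseteq F^{i+j}$), the image of $F^1/F^2$ is a square-zero ideal, so the above sequence is a genuine first-order thickening of $B$ in the sense of Subsection~2.2.

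The last step is to show that, under the chain of isomorphisms
\[
\mathrm{Exal}_A(B,H_1(L_{B/A}))\xrightarrow{\sim}\Ext^1_B(L_{B/A},H_1(L_{B/A}))\xrightarrow{\sim}\Hom_B(H_1(L_{B/A}),H_1(L_{B/A}))
\]
of Proposition~\ref{exalcommext1cot} (the second isomorphism using $\Omega^1_{B/A}=0$ as in the proof of Proposition~\ref{Yuniv}), our thickening corresponds to $\mathrm{id}_{H_1(L_{B/A})}$. To do this I would choose an $A$-algebra lift $\theta\colon P_0\to H_0(L\Omega^\bullet_{P_\bullet/A}/F^2)$ of the augmentation $\epsilon_0$---this exists by freeness of $P_0$, and concretely $\theta$ sends $p_0$ to the class of some cocycle $(p_0,\omega_{p_0})\in P_0\oplus\Omega^1_{P_1/A}$, whose existence relies on the surjectivity of $\partial\colon\Omega^1_{P_1/A}\to\Omega^1_{P_0/A}$ forced by $\Omega^1_{B/A}=0$. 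The derivation $D=\theta\circ d_1\colon P_1\to H_1(L_{B/A})$ then induces a $B$-linear map $\overline D\colon B\otimes_{P_1}\Omega^1_{P_1/A}\to H_1(L_{B/A})$, and a direct boundary calculation inside the total complex---using $d_{\mathrm{tot}}(p_1,0)=(d_1(p_1),-d_{\mathrm{dR}}(p_1))$---shows that on cycles $\overline D$ descends to the identity of $H_1(L_{B/A})$. Proposition~\ref{Yuniv} then identifies $H_0(L\Omega^\bullet_{P_\bullet/A}/F^2)$ with the universal first-order thickening $Y_{univ}$.

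The main obstacle is precisely this final bookkeeping step: one has to choose the lift $\theta$ carefully, keep track of the sign conventions in the total differential of the double complex and of the shift appearing in the identification $F^1/F^2\simeq L_{B/A}[-1]$, and verify that the resulting endomorphism of $H_1(L_{B/A})$ is really the identity rather than some other automorphism. The robustness of the final answer under the many possible choices of $\theta$ is itself a consequence of the standard fact that all lifts yield the same class in $\Ext^1_B(L_{B/A},H_1(L_{B/A}))$, which is already used in the proof of Proposition~\ref{exalcommext1cot}.
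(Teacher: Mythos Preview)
Your proposal is correct and matches the paper's argument in all essentials: both derive the extension
\[
0\to H_1(L_{B/A})\to H_0(L\Omega^\bullet_{B/A}/F^2)\to B\to 0
\]
from the filtration $F^\bullet$ and Proposition~\ref{gridr}, and both identify the square-zero property via compatibility of the shuffle product with the Hodge filtration. The only difference is in how universality is checked at the end. You compute the class of this thickening in $\mathrm{Exal}_A(B,H_1(L_{B/A}))\cong\Hom_B(H_1(L_{B/A}),H_1(L_{B/A}))$ and show it equals the identity, then invoke Proposition~\ref{Yuniv}. The paper instead, for an \emph{arbitrary} thickening $Y$, writes down an explicit dga map $L\Omega^\bullet_{B/A}/F^2\to Y$ (with $Y$ placed in degree $0$) using the pair $(\theta,D)$ from the proof of Proposition~\ref{exalcommext1cot}, takes $H_0$ to get $\varphi_Y\colon H_0(L\Omega^\bullet_{B/A}/F^2)\to Y$, and then specializes to $Y=Y_{univ}$ where the induced map on kernels is visibly the identity. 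The paper's route has the minor advantage that it produces the universal map to every $Y$ explicitly and sidesteps the need to choose a cocycle lift $(p_0,\omega_{p_0})$ inside $H_0(L\Omega^\bullet_{B/A}/F^2)$ itself; your route is equally valid and the underlying computation is the same.
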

\begin{proof} The truncated derived de Rham complex $L\Omega^\bullet_{P_\bullet/A}/F^2$ is the total complex
\begin{equation}\label{totdRBAFil2double}
\cdots\to \Omega^1_{P_2/A}\oplus P_1\to \Omega^1_{P_1/A}\oplus P_0\to \Omega^1_{P_0/A}
\end{equation}
of the double complex
\begin{equation}\label{dRBAFil2double}
\xymatrix{
\cdots\ar[r] & 0 \ar[r] & 0 \ar[r] & 0\\
\cdots\ar[r] & \Omega^1_{P_2/A} \ar[r]\ar[u] & \Omega^1_{P_1/A} \ar[r]\ar[u] & \Omega^1_{P_0/A } \ar[u]\\
 \cdots\ar[r] & P_{2}\ar[r]\ar[u] & P_1\ar[r]\ar[u] &P_{0}\ar[u]
}
\end{equation}
If we use homological indexing for the complex (\ref{totdRBAFil2double}), then $\Omega^1_{P_0/A}$ sits in degree $-1$ and ${\Omega^1_{P_1/A}\oplus P_0}$ in degree 0.
In view of Lemma \ref{h0cotang} the assumption $\Omega^1_{B/A}=0$ implies $$H_0(\Omega^1_{P_\bullet/A})=H_0(\Omega^1_{P_\bullet/A}\otimes_{P_\bullet}P_\bullet)=H_0(\Omega^1_{P_\bullet/A}\otimes_{P_\bullet}B_\bullet)=0$$ and hence the complex (\ref{totdRBAFil2double}) has trivial $H_{-1}$.  On the other hand, applying Proposition \ref{gridr} for $i=0,1$ yields an exact sequence
\begin{equation}\label{univf2}0\to H_1L_{B/A}\to H_0(L\Omega^\bullet_{B/A}/F^2)\to  B \to 0.
\end{equation}
By definition of the multiplication on $L\Omega^\bullet_{B/A}$ the square of $\Omega^1_{P_1/A}$ lies in $\Omega^2_{P_2/A}$, and therefore $H_1L_{B/A}$ has square zero in $H_0(L\Omega^\bullet_{B/A}/F^2)$, which shows that we have obtained a first-order thickening of the $A$-algebra $B$.

Now let $$0\to I\to Y\to B\to 0$$ be a first-order thickening of $B$. We view $Y$ as a differential graded algebra concentrated in degree zero. Lift the augmentation $\epsilon_0:\,P_0\to B$ to a morphism $\theta\colon P_0\to Y$, and construct the derivation $D=\theta\circ d_1$ as in the proof of Proposition \ref{exalcommext1cot}.  This gives rise to a morphism of differential graded $A$-algebras given by the diagram
\begin{equation}\label{dgamap}
\xymatrix{
\text{degree}& 1 & 0 & -1\\
\cdots\ar[r] & 0\ar[r] &  Y\ar[r] & 0\\
 \cdots\ar[r] & \Omega^1_{P_2/A}\oplus P_{1}\ar[r]\ar[u] & \Omega^1_{P_1/A}\oplus P_0\ar[r]\ar[u]_{{D}\oplus \theta} & \Omega^1_{P_0/A }\ar[u]
}
\end{equation}
which commutes in view of the identities $\epsilon_0\circ d_1=D\circ d_2=0$ seen in the proof of Proposition \ref{exalcommext1cot}. By passing to 0-th homology we obtain an $A$-algebra homomorphism $\varphi_Y\colon H_0(L\Omega^\bullet_{B/A}/F^2)\to Y$ lifting the map $H_0(L\Omega^\bullet_{B/A}/F^2)\to Y$ in (\ref{univf2}). In the case $Y=Y_{univ}$, where $Y_{univ}$ is as in Proposition \ref{Yuniv}, the restriction of $\varphi_{Y_{univ}}$ to the term $H_1L_{B/A}$ in (\ref{univf2}) is the identity map by construction, and we are done.
\end{proof}

\section{Differentials and the de Rham algebra for $p$-adic rings of integers}

\subsection{Modules of differentials for $p$-adic rings of integers}\label{secfontaine}

Let $K$ be a finite extension of $\Q_p$ with fixed algebraic closure $\overline K$. Denote by $\OK$ (resp. $\OKB$) their respective rings of integers and by $v$ the unique extension of the $p$-adic valuation.
The goal of this section is to present a fundamental calculation, due to Fontaine \cite{F}, of the module of differentials $\Omega^1_{\OKB/\OK}$.

Denote, as usual, by $\mu_{p^\infty}$ the torsion $\Z_p$-module of all $p$-primary roots of unity in $\overline K$. The logarithmic derivative defines a map of $\Z_p$-modules
$$
{\rm dlog:} \,\,\mu_{p^\infty}\to \Omega^1_{\OKB/\OK},\quad \zeta_{p^r}\mapsto d\zeta_{p^r}/\zeta_{p^r}
$$
with $\OKB$-linear extension
$$
{\rm dlog:}\,\, \OKB\otimes_{\Z_p}\mu_{p^\infty}\to \Omega^1_{\OKB/\OK}.
$$
Now taking the inverse limit $\Z_p(1)$  of the modules $\mu_{p^r}$ for all $r$, we have $\Q_p/\Z_p(1)=(\Q_p/\Z_p)\otimes\Z_p(1)\cong \mu_{p^\infty}$, and therefore after tensoring by the $\Z_p$-module $\OKB$ we obtain an isomorphism
\begin{equation}\label{divis}
(\overline K/\OKB)\otimes_{\Z_p}\Z_p(1)\cong \OKB\otimes_{\Z_p}\mu_{p^\infty}
\end{equation}
recalling that $\overline K=\OKB[p^{-1}]$.

\begin{theo}[Fontaine]\label{fontaine}
Denote by $K_0$ the maximal unramified subextension of $K|\Q_p$ and by $D_{K|K_0}$ the associated different. The map
$$
{\rm dlog:}\,\, (\overline K/\OKB)\otimes_{\Z_p}\Z_p(1)\to \Omega^1_{\OKB/\OK}
$$
induces an isomorphism
$$
(\overline K/I_K)\otimes_{\Z_p}\Z_p(1)\stackrel\sim\to \Omega^1_{\OKB/\OK}
$$
where
$$
I_K:=\{a\in \overline K:\, v(a)\geq -v(D_{K/K_0})-1/(p-1)\}.
$$
\end{theo}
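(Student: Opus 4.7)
The plan is to compute $\Omega^1_{\OKB/\OK}$ as a filtered colimit
$$\Omega^1_{\OKB/\OK} = \limind_{L} \Omega^1_{\OL/\OK}$$
running over finite subextensions $L$ of $\overline K /K$, evaluate each term explicitly via the different, and then match the colimit with the right-hand side through the dlog map.

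First I would handle the finite case. Since $\OL$ is monogenic over $\OK$ (a classical fact for finite extensions of local fields), write $\OL = \OK[\alpha]$ with minimal polynomial $f$. The second exact sequence in Fact~\ref{diff}(4), applied to the presentation $\OK[X] \twoheadrightarrow \OL$, gives
$$\Omega^1_{\OL/\OK} \cong \OL/(f'(\alpha)) = \OL/D_{L/K},$$
a cyclic $\OL$-module generated by $d\alpha$ whose annihilator is the different ideal $D_{L/K}$. Using transitivity of the different, $v(D_{L/K_0}) = v(D_{L/K}) + e_{L/K}\, v(D_{K/K_0})$, I would translate each annihilator condition into a condition on $v(D_{L/K_0})$, so that the direct limit is controlled by how large $v(D_{L/K_0})/e_{L/K_0}$ can become as $L$ grows.

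Next I would extract the bound from the cyclotomic tower. For $L_r := K(\zeta_{p^r})$ one computes the derivative of the minimal polynomial explicitly, $\Phi_{p^r}'(\zeta_{p^r}) = p^r \zeta_{p^r}^{-1}/(\zeta_p - 1)$, giving $v(D_{L_r/\Q_p}) = r - 1/(p-1)$; combined with transitivity this shows that along the cyclotomic tower the normalized quantity $v(D_{L_r/K_0})/e_{L_r/K_0}$ converges to, but does not exceed, $v(D_{K/K_0}) + 1/(p-1)$. Unwinding the dlog construction, an element $a \in \overline K$ kills $a \otimes \zeta_{p^r}$ in $\Omega^1_{\OKB/\OK}$ precisely when, for some $L \supset L_r$, the product $a \cdot d\log \zeta_{p^r}$ vanishes in $\OL/D_{L/K}$, a condition which at the limit becomes the inequality defining $I_K$. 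Surjectivity of dlog should follow by exhibiting, for any $b \cdot d\alpha \in \Omega^1_{\OL/\OK}$, an equality with some $a \cdot d\log \zeta_{p^r}$ after enlarging $L$ sufficiently so that the differentials of uniformizers become expressible in terms of cyclotomic generators.

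The main obstacle is the uniform control across \emph{all} finite extensions: proving that no non-cyclotomic $L/K$ can produce an element with valuation strictly smaller than $-v(D_{K/K_0}) - 1/(p-1)$ in its annihilator. Equivalently, one needs the general bound
$$v(D_{L/K_0})/e_{L/K_0} \leq v(D_{K/K_0}) + 1/(p-1)$$
for every finite $L/K$, which must be extracted from ramification theory (separating tame and wild parts of the different and bounding each). Once this uniform bound and the cyclotomic surjectivity step are established, the identification of both the kernel and the image of dlog with the predicted sets becomes a valuation computation.
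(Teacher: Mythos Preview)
Your broad outline---compute $\Omega^1_{\OL/\OK}$ via the different, pass to the colimit, use the cyclotomic tower---is essentially Fontaine's original route and is sound in spirit. But you have misidentified the main obstacle and missed the actual technical crux.

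The uniform bound you propose, $v(D_{L/K_0})/e_{L/K_0} \le v(D_{K/K_0}) + 1/(p-1)$, is not what drives the argument; as written it is not even clear which normalization of $v$ you intend (with $v(p)=1$ the cyclotomic different has $v(D_{\Q_p(\zeta_{p^r})/\Q_p})=r-1/(p-1)\to\infty$, so one reading is already false). Neither the kernel nor the surjectivity step requires any such global inequality. For surjectivity the point is just that a fixed $\omega\in\Omega^1_{\OKB/\OK}$ has annihilator generated by some $a_\omega$ of \emph{finite} valuation, while ${\rm dlog}(\zeta_{p^r})$ has annihilator of valuation $r-1/(p-1)$ (after reducing to $K=\Q_p$); choosing $r$ with $r-1/(p-1)\ge v(a_\omega)$ gives $\OKB\omega \subset \OKB\cdot{\rm dlog}(\zeta_{p^r})$, and you are done. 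No comparison across all $L$ is needed.

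What you are genuinely missing is the step that makes both the kernel and surjectivity computations work: one must know that the transition maps $\Omega^1_{\OL/\OK}\to\Omega^1_{\OKB/\OK}$ are \emph{injective}, so that the annihilator in $\OKB$ of an element coming from level $L$ is just the extension of its annihilator in $\OL$. Without this, you cannot conclude that $a\cdot{\rm dlog}(\zeta_{p^r})=0$ in the colimit forces $a\in (p^r/(\zeta_p-1))\OKB$, nor can you assert that the annihilator of a general $\omega$ is principal with a well-defined valuation. The paper obtains this injectivity from the cotangent complex: Lemma~\ref{simpresOmega} and Corollary~\ref{corsimpresomega} show that $L_{\OKB/\OL}$ is concentrated in degree~$0$, so the transitivity triangle for $\OK\to\OL\to\OKB$ upgrades the first exact sequence of differentials to a \emph{short} exact sequence, yielding Lemma~\ref{lemfontaine}(1)--(2). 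This is precisely Beilinson's simplification of Fontaine's calculation that the text advertises. The paper also reduces to $K=\Q_p$ at the outset via Lemma~\ref{lemfontaine}(3), which spares you from tracking $D_{K/K_0}$ through the whole computation.
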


Here are some easy corollaries of the theorem.

\begin{cor}\label{cp1}
The {\rm dlog} map induces an isomorphism
$$
{\Bbb C}_K(1)\stackrel\sim\to V_p(\Omega^1_{\OKB/\OK}).
$$
\end{cor}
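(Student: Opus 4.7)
Plan. The starting point is Theorem \ref{fontaine}, which identifies $\Omega^1_{\OKB/\OK}$ with the twisted module $(\overline K/I_K)\otimes_{\Z_p}\Z_p(1)$ via $\mathrm{dlog}$. Applying $V_p$ to both sides, it suffices to construct a canonical $G_K$-equivariant isomorphism
$$V_p\bigl((\overline K/I_K)\otimes_{\Z_p}\Z_p(1)\bigr)\cong \mathbb{C}_K(1).$$
Since $\Z_p(1)$ is free of rank one as a $\Z_p$-module, tensoring by it commutes with $T_p$ and $V_p$ (it only twists the Galois action), so the task reduces to identifying $V_p(\overline K/I_K)$ with $\mathbb{C}_K$ as $G_K$-modules.

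For this, I consider the short exact sequence $0\to I_K\to \overline K\to \overline K/I_K\to 0$ of $G_K$-modules and apply the snake lemma for multiplication by $p^n$. Since $\overline K$ is a $\Q_p$-vector space, it is both $p$-torsion free and $p$-divisible, so one obtains natural $G_K$-equivariant isomorphisms
$$(\overline K/I_K)[p^n]\stackrel\sim\to I_K/p^nI_K,$$
sending the class of $x$ (with $p^nx\in I_K$) to the class of $p^nx$. A direct verification shows that under these identifications the transition maps ``multiplication by $p$'' on the left correspond to the canonical surjections $I_K/p^{n+1}I_K\to I_K/p^nI_K$ on the right. Passing to the inverse limit gives a $G_K$-equivariant isomorphism
$$T_p(\overline K/I_K)\cong \widehat{I_K},$$
where $\widehat{I_K}$ denotes the $p$-adic completion of $I_K$.

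To compute $\widehat{I_K}$, note that $I_K=\{a\in\overline K:v(a)\geq c\}$ with $c:=-v(D_{K|K_0})-1/(p-1)\in \Q$. Choosing any $\alpha\in\overline K^{\times}$ with $v(\alpha)=c$ yields an $\OKB$-linear homeomorphism $\OKB\stackrel\sim\to I_K$, $x\mapsto \alpha x$, for the $p$-adic topology. Completing gives $\widehat{I_K}\cong \widehat{\OKB}=\OCK$, and inverting $p$ yields $\widehat{I_K}[1/p]\cong \mathbb{C}_K$. Although the auxiliary element $\alpha$ need not be Galois-invariant, the resulting identification of $V_p(\overline K/I_K)$ with $\mathbb{C}_K$ is canonical and $G_K$-equivariant, because it is nothing but the inclusion $I_K\hookrightarrow \overline K\hookrightarrow \mathbb{C}_K$ extended to $p$-adic completions. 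Twisting by $\Z_p(1)$ yields the desired isomorphism $\mathbb{C}_K(1)\stackrel\sim\to V_p(\Omega^1_{\OKB/\OK})$.

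The main subtlety — rather than a genuine obstacle — is bookkeeping: one must verify that the chain of identifications $V_p(\Omega^1_{\OKB/\OK})\cong V_p((\overline K/I_K)\otimes \Z_p(1))\cong T_p(\overline K/I_K)\otimes_{\Z_p}\Q_p(1)\cong \widehat{I_K}[1/p](1)\cong \mathbb{C}_K(1)$ is $G_K$-equivariant at every step and is induced by $\mathrm{dlog}$ as claimed. The only noncanonical choice (the generator $\alpha$) disappears in the final step because it is absorbed by the natural inclusion $\overline K\hookrightarrow \mathbb{C}_K$.
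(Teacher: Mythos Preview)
Your argument is correct and follows essentially the same route as the paper's proof: both apply Theorem~\ref{fontaine}, then compute $T_p(\overline K/I_K)$ by identifying ${}_{p^n}(\overline K/I_K)\cong I_K/p^nI_K\cong \OKB/p^n\OKB$, pass to the inverse limit to get $\OCK$, and invert $p$. The paper writes the torsion identification directly as ${}_{p^r}(\overline K/I_K)=p^{-r}I_K/I_K\cong I_K/p^rI_K\cong \OKB/p^r\OKB$ without invoking the snake lemma, and leaves the Galois equivariance implicit, but the substance is identical to what you wrote.
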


Here, as usual, for a $\Z_p$-module $A$ the Tate module $T_p(A)$ is defined as the inverse limit of the $p^r$-torsion submodules ${}_{p^r}A$, and $V_p(A):=T_p(A)\otimes_{\Z_p}\Q_p$.

\begin{proof}
We have isomorphisms of $\Z_p$-modules
$$
{}_{p^r}(\overline K/I_K)= p^{-r}I_K/I_K\cong I_K/p^rI_K\cong \OKB/p^r\OKB.
$$
Passing to the inverse limit, we obtain the $p$-adic completion of $\OKB$ which is the ring of integers of ${\Bbb C}_K$. It remains to invert $p$ and apply the theorem.
\end{proof}

\begin{cor}\label{cp2}
The module of differentials $\Omega^1_{\OKB/\OK}$ is $p$-primary torsion and $p$-divisible. Moreover, the derivation $d:\, \OKB\to \Omega^1_{\OKB/\OK}$ is surjective.
\end{cor}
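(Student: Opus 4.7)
\medskip
\noindent\emph{Proof plan.} All three assertions will be derived from Theorem~\ref{fontaine}, i.e.\ from the isomorphism $\Omega^1_{\OKB/\OK}\cong(\overline K/I_K)\otimes_{\Z_p}\Z_p(1)$ given by $\mathrm{dlog}$.

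For the $p$\emph{-primary torsion}: any $a\in\overline K$ satisfies $v(p^Na)=N+v(a)\to +\infty$, so eventually $p^Na\in I_K$ by the very definition of $I_K$; hence $\overline K/I_K$ is $p$-primary torsion, and tensoring with the free rank-one $\Z_p$-module $\Z_p(1)$ preserves this.

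For the $p$\emph{-divisibility}: multiplication by $p$ is bijective on the $\Q_p$-vector space $\overline K$, so it is surjective on the quotient $\overline K/I_K$; again tensoring with $\Z_p(1)$ preserves surjectivity.

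For the \emph{surjectivity of $d$}, Theorem~\ref{fontaine} reduces us to producing, for any given $b\in\OKB$ and $r\geq 1$, an element $y\in\OKB$ with $dy=b\cdot d\log\zeta_{p^r}=:\omega$. The guiding idea is that formally $b\log\zeta_{p^r}$ would be an antiderivative; although $\log\zeta_{p^r}$ vanishes only in $\C_K$ and not in $\OKB$, the partial sums
$$L_N:=\sum_{k=1}^N(-1)^{k-1}\frac{\pi_r^k}{k},\qquad \pi_r:=\zeta_{p^r}-1$$
of its Taylor series provide candidates. A direct geometric-series computation gives $dL_N=(1-(-\pi_r)^N)\,d\log\zeta_{p^r}$, so that the Leibniz rule yields
$$d(bL_N)-\omega=L_N\cdot db-(-\pi_r)^N\omega.$$
Under Fontaine's isomorphism each of the two correction terms corresponds to a class in $\overline K/I_K$ whose representative has valuation tending to $+\infty$ with $N$, so for $N$ sufficiently large both terms fall into $I_K$ and so vanish in $\Omega^1_{\OKB/\OK}$; consequently $dy=\omega$ with $y=bL_N$.

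The main obstacle I anticipate is integrality of $y$: the terms $\pi_r^k/k$ with $p\mid k$ need not lie in $\OKB$, because of the denominator. I plan to handle this by keeping $N<p$, so that all denominators $1/k$ that appear in $L_N$ are units in $\OK$ and $L_N$ is automatically in $\OKB$, while checking that the resulting valuation estimates for the two correction terms still hold; in the range of parameters where this simple truncation is not enough, I would correct $L_N$ by adding elements of $\OKB$ whose derivatives cancel the problematic higher-order terms, using once again the explicit description of $I_K$ in terms of $v(D_{K|K_0})$ and $1/(p-1)$ provided by Theorem~\ref{fontaine}.
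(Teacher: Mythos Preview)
Your treatment of the first two assertions is correct and matches the paper's. The surjectivity argument via the partial logarithms $L_N$ is a genuinely different route from the paper, and the core idea is sound; however, your handling of the integrality of $L_N$ is where the argument goes astray.

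The ``$N<p$'' fix cannot work for $r\geq 2$. With $N\leq p-1$ one has $v(\pi_r^N)\leq \frac{1}{p^{r-1}}$, whereas the annihilator of $d\log\zeta_{p^r}$ in $\OKB$ is generated by an element of valuation $r-\frac{1}{p-1}$ (Example~\ref{exzetap}). Hence $(-\pi_r)^N\omega$ need not vanish, already for $b=1$. Your fallback (``correct $L_N$ by adding elements of $\OKB$'') is not a concrete argument. The right observation is the one you almost made but then discarded: since $\zeta_{p^r}$ is a root of unity, the $p$-adic logarithm gives $\log(\zeta_{p^r})=0$, so $L_N\to 0$ in $\C_K$ and therefore $v(L_N)\to+\infty$. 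In particular $L_N\in\OKB$ for all sufficiently large $N$, and simultaneously $L_N$ and $\pi_r^N$ land in $p^M\OKB$ for any prescribed $M$, which kills both correction terms. Integrality is thus obtained by taking $N$ large, not small; once you note this, your argument is complete.

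For comparison, the paper avoids logarithms entirely. It first uses only the torsion statement: given $a\,db$, choose $r$ with $p^r\,da=p^r\,db=0$. Since $\overline K$ is algebraically closed there exists $x\in\OKB$ with $x^{p^{2r}}+p^rx=b$; differentiating and using that $u:=p^rx^{p^{2r}-1}+1$ is a unit with $u\equiv 1\pmod{p^r}$ yields $db=p^r\,dx$, whence $a\,db=d(p^rax)$. This trick is shorter and purely algebraic, whereas your approach is more analytic and makes direct use of Fontaine's description of $\Omega^1_{\OKB/\OK}$.
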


\begin{proof}
The first statement is immediate from the theorem together with formula (\ref{divis}). As for the second, pick an element $adb\in \Omega^1_{\OKB/\OK}$. As $\Omega^1_{\OKB/\OK}$ is $p$-primary torsion, we find  $r> 0$ such that $p^rda=p^rdb=0$. Since $\overline{K}$ is algebraically closed, there exists an element $x\in \OKB$ satisfying $x^{p^{2r}}+p^rx=b$ and hence also $p^{r}(p^rx^{p^{2r}-1}+1)dx=db$. On the other hand, we have $(p^rx^{p^{2r}-1}+1)db=db$ as $p^rdb=0$. Note that $(p^rx^{p^{2r}-1}+1)$ is invertible in $\OKB$, being congruent to $1$ modulo $p^r$. Therefore the above equalities yield $db=(p^rx^{p^{r}-1}+1)^{-1}db=p^rdx$, whence
\begin{align*}
adb=p^radx=d(p^rax)-xd(p^ra)=d(p^rax)-xp^rda=d(p^rax)
\end{align*}
showing $adb\in{\rm Im}(d)$.
\end{proof}

 Before starting the proof of the theorem we first recall some basic facts concerning extensions of local fields. All of them can be found in \cite{S}, Chapter III, \S\S 6,7.

\begin{facts}\label{localfacts}\rm
Let $L$ be a finite extension of $K$, with ring of integers $\OL$. There exists $b\in\OL$ such that $\OL=\OK[b]$. As an $\OK$-module $\OL$ is freely generated by finitely many powers of $b$. The module of differentials $\Omega^1_{\OL/\OK}$ is generated by a single element $db$ over $\OL$. Its annihilator is the different $D_{L/K}$ of the extension $L|K$; it is the principal ideal generated by $f'(b)$, where $f\in \OK[x]$ is the minimal polynomial of $b$.
\end{facts}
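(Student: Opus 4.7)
The plan is to prove the four assertions in turn, all of which are classical and summarised in \cite{S}, Ch.\ III, \S\S 6--7; I indicate the logical skeleton. The first and main step is to produce $b\in\OL$ with $\OL=\OK[b]$. Use the tower $K\subset K'\subset L$ with $K'$ the maximal unramified subextension. For the unramified part $\mathcal{O}_{K'}|\OK$, Hensel's lemma lifts a generator $\bar\alpha$ of the residue field extension to an $\alpha\in\mathcal{O}_{K'}$ whose minimal polynomial has irreducible reduction; this gives $\mathcal{O}_{K'}=\OK[\alpha]$. For the totally ramified part $L|K'$, any uniformizer $\pi_L$ of $L$ satisfies an Eisenstein polynomial over $\mathcal{O}_{K'}$ and one has $\OL=\mathcal{O}_{K'}[\pi_L]$. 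A direct combination (or a small perturbation to avoid accidental coincidences of residues) produces a single element $b\in\OL$, e.g.\ essentially of the form $\alpha+\pi_L$, with $\OL=\OK[b]$.

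Once $b$ is fixed, the free $\OK$-module structure of $\OL$ is automatic: $\OL$ is finitely generated and torsion-free over the discrete valuation ring $\OK$, hence free, and it has rank $n:=[L:K]$. Since $L=K(b)$, the powers $1,b,\dots,b^{n-1}$ are $K$-linearly independent; and because $b$ satisfies a monic minimal polynomial $f\in\OK[x]$ of degree $n$, these same powers already $\OK$-span $\OL=\OK[b]$, giving the desired basis.

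To describe $\Omega^1_{\OL/\OK}$ I would apply the second fundamental exact sequence of Fact \ref{diff}(4) to the surjection $\OK[x]\twoheadrightarrow\OL$, $x\mapsto b$, whose kernel is the principal ideal $(f(x))$:
$$
(f)/(f)^2\stackrel{\delta}{\longrightarrow}\OL\otimes_{\OK[x]}\Omega^1_{\OK[x]/\OK}\to \Omega^1_{\OL/\OK}\to 0.
$$
The middle term is the free $\OL$-module of rank one on $dx\mapsto db$, so $db$ generates $\Omega^1_{\OL/\OK}$; the map $\delta$ sends the class of $f$ to $f'(b)\,dx$, whence $\Omega^1_{\OL/\OK}\cong\OL/(f'(b))$. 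In particular its annihilator is the principal ideal $(f'(b))$.

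It remains to identify $(f'(b))$ with the different $D_{L/K}$. Writing $f(x)/(x-b)=\sum_{i=0}^{n-1}c_i x^i$ with $c_i\in\OL$, classical Euler-type identities yield $\Tr_{L/K}(b^j c_i/f'(b))=\delta_{ij}$, so that $\{c_i/f'(b)\}$ is the trace-dual basis of $\{b^i\}$; hence the codifferent $D_{L/K}^{-1}$ coincides with $f'(b)^{-1}\OL$, which gives $D_{L/K}=(f'(b))$. The only genuinely nontrivial step in this outline is the existence of the monogenic generator $b$; everything else is a formal consequence of the second fundamental sequence of differentials together with the trace-duality characterisation of the different.
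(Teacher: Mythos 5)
Your outline is correct and is precisely the classical argument that the paper delegates to its reference (Serre, \emph{Corps locaux}, Ch.~III, \S\S 6--7): monogenicity via the unramified/totally ramified tower, the conormal sequence giving $\Omega^1_{\OL/\OK}\cong(\OL/(f'(b)))\,db$, and Euler's trace-duality identities identifying $(f'(b))$ with $D_{L/K}$. The paper offers no proof of its own beyond this citation, so there is nothing to contrast.
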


\begin{ex}\label{exzetap}\rm
As an example that will serve later, let us compute the different of the totally ramified extension $\Q_p(\zeta_{p^r})|\Q_p$, where $\zeta_{p^r}$ is a primitive $p^r$-th root of unity. The minimal polynomial of $\zeta_{p^r}$ over $\mathbb{Z}_p$ is $f=({x^{p^r}-1})/({x^{p^{r-1}}-1})$ with derivative $$f'=({p^rx^{p^r-1}(x^{p^{r-1}}-1)-p^{r-1}x^{p^{r-1}}(x^{p^r}-1)})/({(x^{p^{r-1}}-1)^2}).$$
Thus $f'(\zeta_{p^r})={p^r}/\zeta_{p^r}(\zeta_{p^r}^{p^{r-1}}-1)$, so setting $\zeta_p:=\zeta_{p^r}^{p^{r-1}}$ we see that the sought-after different is the principal ideal of $\Z_p[\zeta_{p^r}]$ generated by ${p^r}/(\zeta_p-1)$. The $p$-adic valuation of this element is $-1/(p-1)+r$, as  $\zeta_p-1$ is a uniformizer in the degree $p-1$ totally ramified extension $\mathbb{Q}_p(\mu_p)|\Q_p$.
\end{ex}

As observed by Beilinson, the use of the cotangent complex considerably simplifies Fontaine's original calculation, so we next compute $L_{\OKB/\OK}$.

\begin{lem}\label{simpresOmega}
Let $L|K$ be a finite extension of $p$-adic fields. The cotangent complex $L_{\OL/\OK}$  is acyclic in nonzero degrees.
\end{lem}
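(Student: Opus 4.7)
My plan is to use the presentation $\mathcal{O}_L = \mathcal{O}_K[x]/(f)$ coming from Fact \ref{localfacts} together with the transitivity triangle. Writing $\mathcal{O}_L = \mathcal{O}_K[b]$ with minimal polynomial $f \in \mathcal{O}_K[x]$, the polynomial $f$ is a nonzerodivisor in the domain $\mathcal{O}_K[x]$, so Proposition \ref{cotangquotientreg} applies to the surjection $\mathcal{O}_K[x] \twoheadrightarrow \mathcal{O}_L$ and yields a quasi-isomorphism
$$L_{\mathcal{O}_L/\mathcal{O}_K[x]} \simeq (f)/(f^2)[1],$$
where $(f)/(f^2)$ is a free $\mathcal{O}_L$-module of rank one generated by the class of $f$. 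On the other hand, Proposition \ref{polcotangacyc} gives that $L_{\mathcal{O}_K[x]/\mathcal{O}_K}$ is quasi-isomorphic to $\Omega^1_{\mathcal{O}_K[x]/\mathcal{O}_K} = \mathcal{O}_K[x]\,dx$ placed in degree~0.

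Next, I would feed these into the transitivity triangle (Theorem \ref{cotangexacttriang}) associated with $\mathcal{O}_K \to \mathcal{O}_K[x] \to \mathcal{O}_L$, namely
$$\mathcal{O}_L \otimes^L_{\mathcal{O}_K[x]} L_{\mathcal{O}_K[x]/\mathcal{O}_K} \to L_{\mathcal{O}_L/\mathcal{O}_K} \to L_{\mathcal{O}_L/\mathcal{O}_K[x]} \to \cdot [1].$$
Since the leftmost term reduces to $\mathcal{O}_L \cdot dx$ concentrated in degree~0 and the rightmost term is $(f)/(f^2)$ concentrated in degree~1, the long exact homology sequence degenerates to
$$0 \to H_1(L_{\mathcal{O}_L/\mathcal{O}_K}) \to (f)/(f^2) \stackrel{\delta}{\to} \mathcal{O}_L \cdot dx \to H_0(L_{\mathcal{O}_L/\mathcal{O}_K}) \to 0,$$
with $H_i(L_{\mathcal{O}_L/\mathcal{O}_K}) = 0$ for $i \geq 2$.

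It then remains to show that the connecting map $\delta$ is injective. Unwinding the construction from Fact \ref{diff}(4), $\delta$ sends the generator $f \bmod (f^2)$ to $1 \otimes df = f'(b)\,dx$. So $\delta$ is identified with multiplication by $f'(b)$ on the free rank-one $\mathcal{O}_L$-module. By Fact \ref{localfacts}, $f'(b)$ generates the different $D_{L/K}$, which is a nonzero ideal of the domain $\mathcal{O}_L$; hence $f'(b)$ is a nonzerodivisor and $\delta$ is injective. This forces $H_1(L_{\mathcal{O}_L/\mathcal{O}_K}) = 0$, finishing the proof (and as a bonus one reads off $H_0 = \Omega^1_{\mathcal{O}_L/\mathcal{O}_K} = \mathcal{O}_L\,db/D_{L/K}\,db$, consistent with Fact \ref{localfacts}).

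The only genuine point is the injectivity of $\delta$; everything else is a formal assembly of the transitivity triangle with Propositions \ref{polcotangacyc} and \ref{cotangquotientreg}. That injectivity is mild because $\mathcal{O}_L$ is a domain, so once one knows the different is nonzero the argument is immediate; the conceptual content is really packed into Fact \ref{localfacts}.
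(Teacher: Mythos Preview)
Your proof is correct and follows essentially the same route as the paper: both use the transitivity triangle for $\mathcal{O}_K \to \mathcal{O}_K[x] \to \mathcal{O}_L$, invoke Propositions \ref{polcotangacyc} and \ref{cotangquotientreg} to reduce to checking injectivity of the map $(f)/(f^2) \to \mathcal{O}_L\,dx$, and conclude because this is a nonzero map of free rank-one modules over the domain $\mathcal{O}_L$. Your version is slightly more explicit in identifying the map as multiplication by $f'(b)$ and linking it to the different, but the argument is the same.
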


\begin{proof}
Writing $\OL=\OK[x]/(f)$ a some monic polynomial $f\in\OK[x]$ as above, we may consider the sequence of ring maps $\OK\to\OK[x]\to\OL$ where the second map is the quotient map. The associated transitivity triangle (Theorem \ref{cotangexacttriang}) reads
\begin{equation}\label{disttriang}
L_{\OK[x]/\OK}\otimes_{\OK[x]}^L\OL\to L_{\OL/\OK}\to L_{\OL/\OK[x]}\to L_{\OK[x]/\OK}\otimes_{\OK[x]}^L\OL[1].
\end{equation}
Here $L_{\OK[x]/\OK}$ is acyclic in nonzero degrees by Proposition \ref{polcotangacyc}, and $L_{\OL/\OK[x]}$ is acyclic in degrees $\neq 1$ by Proposition \ref{cotangquotientreg} where its homology is $(f)/(f^2)$. It thus remains to check $H_1(L_{\OL/\OK})=0$. A piece of the long exact sequence of (\ref{disttriang}) reads
\begin{equation*}
0\to H_{1}(L_{\OL/\OK})\to H_1(L_{\OL/\OK[x]})\to H_0(L_{\OK[x]/\OK}\otimes_{\OK[x]}^L\OL).
\end{equation*}
By Lemma \ref{h0cotang} and Proposition \ref{cotangquotientreg} we may identify the last map in this sequence with
$$
(f)/(f^2){\to} \Omega^1_{\OK[x]/\OK}\otimes_{\OK[x]}\OL
$$
which is a nonzero map of free $\OL$-modules of rank 1 sending the class of $f$ to $df$. This shows $H_{1}(L_{\OL/\OK})=0$ as desired.
\end{proof}

\begin{rema}\label{remsimpresomega}\rm We remark for later use that the same argument as in the above proof shows $L_{L/K}=\Omega^1_{L/K}=0$ for a finite separable extension $L|K$ of arbitrary fields.
\end{rema}

\begin{cor}\label{corsimpresomega}
The natural map
$$
L_{\OKB/\OK}\to H_0(L_{\OKB/\OK})\cong \Omega^1_{\OKB/\OK}
$$
is an isomorphism.
\end{cor}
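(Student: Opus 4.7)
The strategy is to write $\OKB$ as a filtered colimit of finite extensions and then invoke the previous lemma, since both the cotangent complex construction and the functor $\Omega^1$ behave well under filtered colimits.

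First I would observe that $\OKB = \varinjlim_L \OL$, where $L$ runs through the filtered system of finite subextensions of $\overline K|K$, each $\OL$ being the ring of integers of $L$. The transition maps are the natural inclusions, which are flat (in fact free of finite rank as $\OK$-modules).

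Next I would argue that the cotangent complex commutes with this filtered colimit, i.e. $L_{\OKB/\OK} \simeq \varinjlim_L L_{\OL/\OK}$. This is seen from the construction: the standard simplicial resolution $P_\bullet(\OKB)$ of $\OKB$ as an $\OK$-algebra is obtained by iterating the free algebra functor $A \mapsto \OK[A]$ (on the underlying set), and both the free-algebra functor and the underlying-set functor commute with filtered colimits; hence $P_\bullet(\OKB) = \varinjlim_L P_\bullet(\OL)$ termwise. The Kähler differentials functor $\Omega^1_{-/\OK}$ likewise commutes with filtered colimits, and so does the functor $B_\bullet \otimes_{P_\bullet} (-)$. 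Passing to the associated chain complex is also compatible with filtered colimits, which completes the identification.

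By Lemma \ref{simpresOmega} each $L_{\OL/\OK}$ is acyclic in nonzero degrees, and Proposition \ref{h0cotang} identifies its $H_0$ with $\Omega^1_{\OL/\OK}$. Since taking homology commutes with filtered colimits (exactness of filtered colimits in the category of abelian groups), we conclude that $L_{\OKB/\OK}$ is also acyclic in nonzero degrees, with
$$
H_0(L_{\OKB/\OK}) \;\cong\; \varinjlim_L \Omega^1_{\OL/\OK} \;\cong\; \Omega^1_{\OKB/\OK},
$$
the last isomorphism coming from the base-change/localization formalism for Kähler differentials (Facts \ref{diff}). The natural map in the statement is then a quasi-isomorphism, as required. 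The only slightly delicate point is the justification that the standard resolution commutes with the filtered colimit of algebras $\OL \to \OKB$; this is where one uses that the free $\OK$-algebra functor preserves filtered colimits.
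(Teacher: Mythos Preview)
Your proof is correct and follows essentially the same route as the paper: write $\OKB$ as the filtered colimit of the $\OL$, use that the standard resolution (hence the cotangent complex) commutes with this colimit, and invoke Lemma~\ref{simpresOmega} for each finite $L$. If anything, you supply more detail than the paper does in justifying the colimit compatibility of the standard resolution.
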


\begin{proof}
Writing $\OKB$ as the direct limit of the $\OK$-algebras $\OL$ for each finite subextension $K\subset L\subset \overline K$ induces an isomorphism $\Omega^1_{\OKB/\OK}\cong \limind\Omega^1_{\OL/\OK}$. Similarly, the standard resolution $P_\bullet(\OKB)\to \OKB$ is the direct limit of the standard resolutions $P_\bullet(\OL)\to \OL$, so after applying the functor $\Omega^1_{\cdot/\OK}$ and tensoring with $\OL$ we obtain an isomorphism $L_{\OKB/\OK}\cong \limind L_{\OL/\OK}$. It remains to apply the isomorphisms $L_{\OL/\OK}\stackrel\sim\to\Omega^1_{\OL/\OK}$ given by the lemma.
\end{proof}

We now assemble some auxiliary statements to be used in the proof of the theorem.

\begin{lem}\label{lemfontaine} ${}$
\begin{enumerate}
\item The maps $\Omega^1_{\OL/\OK}\to \Omega^1_{\OKB/\OK}$ appearing in the above proof are injective.
\item If $\omega\in \Omega^1_{\OKB/\OK}$ comes from $\Omega^1_{\OL/\OK}$ and $I_L$ is its annihilator as an element of $\Omega^1_{\OL/\OK}$, then its annihilator $I_{\overline K}$ in $\OKB$ is $I_L\OKB$. In particular, $I_{\overline K}$ is principal.
\item If $K_0$ is the maximal unramified subextension of $K|\Q_p$ and $D_{K/K_0}$ is the associated different, we have an exact sequence
    $$0\to \OKB/D_{K/K_0}\OKB\to \Omega^1_{\OKB/\CO_{K_0}}\to \Omega^1_{\OKB/\OK}\to 0.$$
    Moreover, there is an isomorphism
    $$
    \Omega^1_{\OKB/\CO_{K_0}}\cong \Omega^1_{\OKB/\Z_p}.
    $$
\end{enumerate}
\end{lem}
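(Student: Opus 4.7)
The plan is to deduce all three statements from the cotangent complex formalism of the previous subsection, in particular from Lemma \ref{simpresOmega}. The strategy in each case is to apply the transitivity triangle (Theorem \ref{cotangexacttriang}) to a suitable tower of ring extensions, observe that the relevant cotangent complexes are concentrated in degree $0$, and use flatness to conclude that the triangle collapses into a short exact sequence of modules of differentials.

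For (1), apply this to a tower $\OK \to \OL \to \OL'$ with $L \subset L' \subset \overline K$ a finite extension. All three cotangent complexes $L_{\OL/\OK}$, $L_{\OL'/\OK}$, $L_{\OL'/\OL}$ are concentrated in degree $0$ by Lemma \ref{simpresOmega}, and since $\OL'$ is a free (hence flat) $\OL$-module, the derived tensor product $L_{\OL/\OK} \otimes^L_{\OL} \OL'$ agrees with the ordinary tensor product and is likewise concentrated in degree $0$. The long exact homology sequence attached to the transitivity triangle therefore collapses to
$$
0 \to \Omega^1_{\OL/\OK} \otimes_\OL \OL' \to \Omega^1_{\OL'/\OK} \to \Omega^1_{\OL'/\OL} \to 0,
$$
and faithful flatness of $\OL \to \OL'$ gives injectivity of $\Omega^1_{\OL/\OK} \to \Omega^1_{\OL'/\OK}$. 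Since filtered colimits preserve injections, passing to the colimit over all finite $L'$ yields the claim.

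For (2), recall from Facts \ref{localfacts} that $\Omega^1_{\OL/\OK}$ is the cyclic $\OL$-module $\OL\, db$ with annihilator $D_{L/K}$, so every element is of the form $\omega = \alpha\, db$. Let $\alpha' \in \OKB$ annihilate $\omega$ in $\Omega^1_{\OKB/\OK}$. Then $\alpha'$ lies in some $\OL'$ with $L \subset L' \subset \overline K$ finite, and by the injection $\Omega^1_{\OL'/\OK} \hookrightarrow \Omega^1_{\OKB/\OK}$ from (1) it already annihilates $\omega$ in $\Omega^1_{\OL'/\OK}$. Using the injection $\Omega^1_{\OL/\OK} \otimes_\OL \OL' \hookrightarrow \Omega^1_{\OL'/\OK}$ from the short exact sequence above, one can compute the annihilator of $\omega$ in the cyclic module $\OL'/D_{L/K}\OL'$; a direct calculation with $\OL'$-valuations shows that it is $I_L\OL'$. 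Taking the union over all such $L'$ yields $I_{\overline K} = I_L\OKB$, and principality follows because $I_L$ is principal in the DVR $\OL$.

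For (3), apply the same collapsing argument to the tower $\CO_{K_0} \to \OK \to \OKB$: Lemma \ref{simpresOmega} and Corollary \ref{corsimpresomega} ensure the relevant cotangent complexes are concentrated in degree $0$, and $\OKB$ is flat over $\OK$. This produces the short exact sequence
$$
0 \to \Omega^1_{\OK/\CO_{K_0}} \otimes_{\OK} \OKB \to \Omega^1_{\OKB/\CO_{K_0}} \to \Omega^1_{\OKB/\OK} \to 0,
$$
whose leftmost term equals $\OKB/D_{K/K_0}\OKB$ since $\Omega^1_{\OK/\CO_{K_0}} \cong \OK/D_{K/K_0}$ by Facts \ref{localfacts}. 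For the identification $\Omega^1_{\OKB/\CO_{K_0}} \cong \Omega^1_{\OKB/\Z_p}$, apply transitivity to $\Z_p \to \CO_{K_0} \to \OKB$. Since $K_0|\Q_p$ is unramified, the different $D_{K_0/\Q_p}$ is trivial, so $\Omega^1_{\CO_{K_0}/\Z_p} = 0$; by Lemma \ref{simpresOmega} the entire complex $L_{\CO_{K_0}/\Z_p}$ is then acyclic, and the transitivity triangle degenerates to a quasi-isomorphism $L_{\OKB/\Z_p} \simeq L_{\OKB/\CO_{K_0}}$. Taking $H_0$ yields the isomorphism. No serious obstacle arises; the only technical point, repeated throughout, is that the flatness of rings-of-integers extensions is what turns the derived transitivity triangles into honest short exact sequences.
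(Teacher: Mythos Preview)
Your proof is correct and follows the same strategy as the paper: transitivity triangles, concentration of the relevant cotangent complexes in degree $0$, and flatness to turn the triangles into short exact sequences. The only notable difference is organizational. For (1) and (2) the paper applies the transitivity triangle once to the tower $\OK\to\OL\to\OKB$ (using Corollary~\ref{corsimpresomega} to know that $L_{\OKB/\OL}$ is concentrated in degree $0$), obtaining directly the short exact sequence
\[
0\to \OKB\otimes_{\OL}\Omega^1_{\OL/\OK}\to \Omega^1_{\OKB/\OK}\to \Omega^1_{\OKB/\OL}\to 0.
\]
Statement (1) then follows from faithful flatness of $\OKB$ over $\OL$, and (2) is immediate from the injectivity of the first map together with the general fact that flat base change extends annihilators: no valuation computation is needed. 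Your route through finite intermediate $L'$ followed by a colimit reaches the same conclusions but with an extra step; for (3) the two arguments are essentially identical.
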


\begin{proof}
The transitivity triangle of the cotangent complex (Theorem \ \ref{cotangexacttriang}) associated with the sequence of maps $\OK\to \OL\to \OKB$ reduces to a short exact sequence $$0\to \OKB\otimes_{\OL}\Omega^1_{\OL/\OK}\to \Omega^1_{\OKB/\OK}\to \Omega^1_{\OKB/\OL}\to 0$$ in view of Corollary \ref{corsimpresomega}.
Moreover, since $\OKB$ is a directed union of free $\OL$-submodules as recalled in Facts \ref{localfacts}, it is faithfully flat over $\OL$ and hence the natural map $\Omega^1_{\OL/\OK}\to \OKB\otimes_{\OL}\Omega^1_{\OL/\OK}$ is injective. Thus so is the composite $\Omega^1_{\OL/\OK}\to \Omega^1_{\OKB/\OK}$, whence statement (1). Statement (2) follows from the injectivity of the map $\OKB\otimes_{\OL}\Omega^1_{\OL/\OK}\to \Omega^1_{\OKB/\OK}$. Finally, for the exact sequence in statement (3) we use the transitivity triangle associated with the sequence $\CO_{K_0}\to \OK\to \OKB$ to obtain
$$0\to \OKB\otimes_{\OK}\Omega^1_{\OK/\CO_{K_0}}\to\Omega^1_{\OKB/\CO_{K_0}}\to \Omega^1_{\OKB/\OK}\to 0$$
and apply the definition of the different (Facts \ref{localfacts}). The last isomorphism is induced by the exact sequence of differentials associated with the sequence of maps  $\Z_p\to\CO_{K_0}\to \OKB$, noting that $\Omega^1_{\CO_{K_0}/\Z_p}=0$ as the ring extension $\CO_{K_0}|\Z_p$ is unramified.\end{proof}

\noindent{\em Proof of Theorem \ref{fontaine}.} Using Lemma \ref{lemfontaine} (3) we reduce to the case $K=\Q_p$. In this case $I_K=(1/(\zeta_p-1))\OKB$ for a primitive $p$-th root of unity $\zeta_p$ as $v(\zeta_p-1)=1/(p-1)$.

We first determine the kernel of the dlog map. As finitely generated submodules of $\mu_{p^\infty}$ are cyclic, we may write each element of $\OKB\otimes\mu_{p^\infty}$ in the form $a\otimes \zeta_{p^r}$ for some $a\in \OKB$ and $\zeta_{p^r}\in\mu_{p^\infty}$. This element is in the kernel of the dlog map if and only if $a$ annihilates $d\zeta_{p^r}$. Applying Lemma \ref{lemfontaine} (2) with $L=\Q_p(\zeta_{p^r})$ and the calculation in Example \ref{exzetap}, we obtain that $a\in (p^r/(\zeta_p-1))\OKB\subset (1/(\zeta_p-1))\OKB$, as desired.

For surjectivity, pick $\omega\in\Omega^1_{\OKB/\Z_p}$. By Lemma \ref{lemfontaine} (2) we have $\OKB\omega\cong \OKB/I_{\overline K}$ where $I_{\overline K}\subset \OKB$ is a principal ideal. If $a_\omega\in I_{\overline K}$ is a generator, we have $v(a_\omega)\leq -1/(p-1)+r$ for $r$ large enough. Now choose a finite extension $L|\Q_p$ such that $\omega$ comes from $\Omega^1_{\OL/\Z_p}$ and moreover $p^r/(\zeta_p-1)\in\OL$. As $\OL$ is a discrete valuation ring whose valuation is a multiple of $v$, the inequality $v(a_\omega)\leq -1/(p-1)+r$ implies $\OL a_\omega\supset \OL(p^r/(\zeta_p-1))$. But then
$$
\OKB\omega\cong \OKB/\OKB a_\omega\subset \OKB/ \OKB(p^r/(\zeta_p-1))\cong \OKB{\rm dlog}(\zeta_{p^r})
$$
by the calculation recalled above.
\enddem

\subsection{The universal $p$-adically complete first order thickening of $\OCK/\OK$}

We now combine the results of the previous two sections to compute the truncated de Rham algebra $L\Omega^\bullet_{B/A}/F^2$  in the special case $A=\OK/(p^n)$ and $B=\OKB/(p^n)$ for an integer $n>0$.

\begin{prop}\label{univ1orderOCKn} Let $K$ be a $p$-adic field with algebraic closure $\overline K$, and let $n>0$ be a fixed integer.
\begin{enumerate}
\item The truncated de Rham algebra $L\Omega^\bullet_{(\OKB/(p^n))/(\OK/(p^n))}/F^2$ is concentrated in degree 0.
\item  We have a short exact sequence
\begin{equation}\label{h0seq}
0\to {}_{p^n}\Omega^1_{\OKB/\OK}\to H_0(L\Omega^\bullet_{(\OKB/(p^n))/(\OK/(p^n))}/F^2)\to \OKB/(p^n)\to 0\
\end{equation}
where the term on the left identifies with the image of $H_0(F^1L\Omega^\bullet_{(\OKB/(p^n))/(\OK/(p^n))}/F^2)$.
\item  The $\OK/(p^n)$-algebra $H_0(L\Omega^\bullet_{(\OKB/(p^n))/(\OK/(p^n))}/F^2)$ is the universal first order thickening of  $\OKB/(p^n)$.
    \end{enumerate}
\end{prop}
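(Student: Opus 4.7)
Set $A := \OK/(p^n)$ and $B := \OKB/(p^n)$. The plan is to first compute the cotangent complex $L_{B/A}$, then to use the Hodge-filtration distinguished triangle to deduce (1) and (2), and finally to appeal to Theorem \ref{universal1order} for (3).

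To compute $L_{B/A}$, observe that $\OKB$ is $\OK$-flat (being torsion-free over a discrete valuation ring), so the $\OK$-algebras $\OK/(p^n)$ and $\OKB$ are $\Tor$-independent. By Lemma \ref{cotangflatbasechange} we then have a quasi-isomorphism $L_{B/A} \simeq L_{\OKB/\OK} \otimes^L_{\OK} \OK/(p^n)$, and Corollary \ref{corsimpresomega} identifies $L_{\OKB/\OK}$ with $\Omega^1_{\OKB/\OK}$ in degree $0$. Using the resolution $[\OK \xrightarrow{p^n} \OK] \to \OK/(p^n)$ to compute the derived tensor product, $L_{B/A}$ is represented by the two-term complex $[\Omega^1_{\OKB/\OK} \xrightarrow{p^n} \Omega^1_{\OKB/\OK}]$ with the right-hand term in homological degree $0$. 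The $p$-divisibility of $\Omega^1_{\OKB/\OK}$ (Corollary \ref{cp2}) now forces $H_0(L_{B/A}) = \Omega^1_{B/A} = 0$, while $H_1(L_{B/A}) = {}_{p^n}\Omega^1_{\OKB/\OK}$ and $H_k(L_{B/A}) = 0$ for $k \geq 2$.

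For (1) and (2), Proposition \ref{gridr} identifies $\mathrm{gr}^0_F(L\Omega^\bullet_{B/A}/F^2)$ with $B$ concentrated in degree $0$ and $\mathrm{gr}^1_F(L\Omega^\bullet_{B/A}/F^2)$ with $L_{B/A}[-1]$, whence $H_n(\mathrm{gr}^1_F) = H_{n+1}(L_{B/A})$. Feeding the computation of $L_{B/A}$ into the long exact homology sequence attached to the distinguished triangle $\mathrm{gr}^1_F \to L\Omega^\bullet_{B/A}/F^2 \to \mathrm{gr}^0_F$ kills $H_n(L\Omega^\bullet_{B/A}/F^2)$ for all $n \neq 0$ and extracts the short exact sequence
$$0 \to {}_{p^n}\Omega^1_{\OKB/\OK} \to H_0(L\Omega^\bullet_{B/A}/F^2) \to B \to 0,$$
the left-hand term arising precisely as the image of $H_0(F^1 L\Omega^\bullet_{B/A}/F^2)$. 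Finally, since $\Omega^1_{B/A} = 0$ was established in the previous step, part (3) follows directly from Theorem \ref{universal1order}.

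The only substantive input is the computation of $L_{B/A}$: transporting the base change through the cotangent complex and exploiting the $p$-divisibility of $\Omega^1_{\OKB/\OK}$. The rest is a formal long-exact-sequence calculation combined with the universality statement of Theorem \ref{universal1order}.
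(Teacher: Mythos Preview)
Your proof is correct and follows essentially the same approach as the paper. Both compute $L_{B/A}$ via base change (Lemma \ref{cotangflatbasechange}) and Corollary \ref{corsimpresomega}, then use the properties of $\Omega^1_{\OKB/\OK}$ from Corollary \ref{cp2}; the paper presents the result as the total complex of the explicit $2\times 2$ square $[\OKB \xrightarrow{p^n} \OKB] \xrightarrow{d} [\Omega^1_{\OKB/\OK} \xrightarrow{p^n} \Omega^1_{\OKB/\OK}]$, while you package the same computation via the Hodge-filtration triangle, but the content is identical.
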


\begin{proof} Consider the standard resolution $\widetilde P_\bullet$ of $\OKB/(p^n)$ as an $\OK/(p^n)$-module. The complex $L\Omega^\bullet_{(\OKB/(p^n))/(\OK/(p^n))}/F^2$ is computed by the total complex of
\begin{equation*}
\xymatrix{
\cdots\ar[r] & 0 \ar[r] & 0 \ar[r] & 0\\
\cdots\ar[r] & \Omega^1_{\widetilde P_2/(\OK/(p^n))} \ar[r]\ar[u] & \Omega^1_{\widetilde P_1/(\OK/(p^n))} \ar[r]\ar[u] & \Omega^1_{\widetilde P_0/(\OK/(p^n)) } \ar[u]\\
 \cdots\ar[r] & \widetilde P_{2}\ar[r]\ar[u] & \widetilde P_1\ar[r]\ar[u] & \widetilde P_{0}\ar[u].
}
\end{equation*}
Here the bottom row is the resolution $\widetilde P_\bullet$ of $\OKB/(p^n)$. The middle row computes the cotangent complex $L_{(\OKB/(p^n))/(\OK/(p^n))}$ which is quasi-isomorphic to $L_{\OKB/\OK)}\otimes^L_{\OK}(\OK/(p^n))$ by Lemma \ref{cotangflatbasechange} as $\OKB$ is flat over $\OK$. But by Corollary \ref{corsimpresomega} we have a quasi-isomorphism $L_{\OKB/\OK}\simeq \Omega^1_{\OKB/\OK}$, so we have quasi-isomorphisms
$$L_{(\OKB/(p^n))/(\OK/(p^n))}\simeq L_{\OKB/\OK}\otimes^L_{\OK}(\OK/(p^n)) \simeq \Omega^1_{\OKB/\OK}\otimes_{\OK}[\OK\stackrel{p^n}\to\OK].$$
Therefore $L\Omega^\bullet_{(\OKB/(p^n))/(\OK/(p^n))}/F^2$ is computed by the total complex of
$$
\begin{CD}
\Omega^1_{\OKB/\OK} @>{p^n}>> \Omega^1_{\OKB/\OK} \\
@AdAA @AAdA \\
\OKB @>{p^n}>> \OKB
\end{CD}
$$
which is placed in degrees $-1$, 0 and 1. Since $\OKB$ has no $p$-torsion and $d$ is surjective by Corollary \ref{cp2}, this complex is indeed concentrated in degree 0. Using the $p$-divisibility of $\Omega^1_{\OKB/\OK}$ we see that we have an exact sequence as in (\ref{h0seq}). The last statement follows from Theorem \ref{universal1order} once we check $\Omega^1_{(\OKB/(p^n))/(\OK/(p^n))}=0$.  But by the base change property of differentials
$$
\Omega^1_{(\OKB/(p^n))/(\OK/(p^n))}\cong \Omega^1_{\OKB/\OK}\otimes_{\OK}\OK/(p^n)
$$
and the right hand side is 0 as $\Omega^1_{\OKB/\OK}$ is $p$-divisible (Corollary \ref{cp2}).
\end{proof}

\begin{cor}\label{coruniv}
The inverse limit
\begin{equation*}
\varprojlim_n H_0(L\Omega^\bullet_{(\OKB/(p^n))/(\OK/(p^n))}/F^2)
\end{equation*}
fits into a short exact sequence
\begin{equation*}
0\to T_p(\Omega^1_{\OKB/\OK})\to \varprojlim_n H_0(L\Omega^\bullet_{(\OKB/(p^n))/(\OK/(p^n))}/F^2)\to \CO_{\mathbb{C}_K}\to 0\
\end{equation*}
and defines a universal first-order thickening of the $\OK$-algebra $\OCK$ in the category of $p$-adically complete $\OK$-algebras.
\end{cor}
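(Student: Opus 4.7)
My plan is to obtain Corollary \ref{coruniv} by passing Proposition \ref{univ1orderOCKn} to the inverse limit, and then lifting the universal property at each finite level to a universal property of the limit.

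For the short exact sequence, the strategy is to apply $\varprojlim_n$ to the exact sequence (\ref{h0seq}). By functoriality of the standard resolution, the transition maps on $H_0(L\Omega^\bullet_{(\OKB/(p^n))/(\OK/(p^n))}/F^2)$ are induced by the natural projections $\OKB/(p^{n+1}) \to \OKB/(p^n)$. Reading off the explicit two-column complex displayed in the proof of Proposition \ref{univ1orderOCKn}, they act as the identity on the quotient $\OKB/(p^n)$ and as multiplication by $p$ on the kernel ${}_{p^n}\Omega^1_{\OKB/\OK}$ (the scaling on the kernel is forced by the commutativity $p \cdot p^n = p^{n+1}$ against the vertical multiplication-by-$p^n$ maps). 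The $p$-divisibility of $\Omega^1_{\OKB/\OK}$ from Corollary \ref{cp2} then makes each transition $p : {}_{p^{n+1}}\Omega^1_{\OKB/\OK} \to {}_{p^n}\Omega^1_{\OKB/\OK}$ surjective, so the Mittag--Leffler vanishing of $\varprojlim^1$ yields the stated exact sequence, with $\varprojlim_n {}_{p^n}\Omega^1_{\OKB/\OK} = T_p(\Omega^1_{\OKB/\OK})$ by definition and $\varprojlim_n \OKB/(p^n) = \OCK$ by definition of the $p$-adic completion.

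For the universal property, I would take a first-order thickening $0 \to J \to Y \to \OCK \to 0$ in the category of $p$-adically complete $\OK$-algebras and reduce it modulo $p^n$. Since $\OCK/p^n\OCK = \OKB/(p^n)$ and the image $(J + p^n Y)/p^n Y$ is still square-zero (being a quotient of $J$), one obtains first-order thickenings $0 \to (J + p^n Y)/p^n Y \to Y/p^n Y \to \OKB/(p^n) \to 0$ of the $\OK/(p^n)$-algebra $\OKB/(p^n)$. Proposition \ref{univ1orderOCKn}(3) then provides for each $n$ a unique $\OK/(p^n)$-algebra map $H_0(L\Omega^\bullet_{(\OKB/(p^n))/(\OK/(p^n))}/F^2) \to Y/p^n Y$ lifting the identity on $\OKB/(p^n)$. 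Uniqueness at each level forces compatibility across $n$, so these maps assemble into a unique $\OK$-algebra map $\varprojlim_n H_0(\cdots) \to \varprojlim_n Y/p^n Y = Y$ compatible with the augmentations to $\OCK$, where the identification on the right uses the $p$-adic completeness of $Y$. Note that the limit object is itself $p$-adically complete: each term in the inverse system is killed by $p^{2n}$ and the transitions are surjective, hence the limit is $p$-adically separated and complete.

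The main obstacle I anticipate is making the identification of the transition maps on ${}_{p^n}\Omega^1_{\OKB/\OK}$ airtight enough to secure Mittag--Leffler; functoriality of the derived de Rham construction only dictates the map up to homotopy, and one wants an honest surjectivity statement. A cleaner route that sidesteps this ambiguity is to pass to the limit directly at the level of the explicit two-column complexes: since each is concentrated in degree $0$ by Proposition \ref{univ1orderOCKn}(1), and since multiplication by $p$ is surjective on both $\OKB$ (trivially) and $\Omega^1_{\OKB/\OK}$ (by $p$-divisibility), both rows of the limit complex have vanishing $\varprojlim^1$, so $\varprojlim$ commutes with $H_0$ and the kernel in the limit is computed termwise as $T_p(\Omega^1_{\OKB/\OK})$.
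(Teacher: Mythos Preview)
Your proposal is correct and follows the same approach as the paper: pass the exact sequence (\ref{h0seq}) of Proposition \ref{univ1orderOCKn} to the inverse limit using Mittag--Leffler, and identify the limits of the outer terms as $T_p(\Omega^1_{\OKB/\OK})$ and $\OCK$. The paper's proof is in fact a single sentence to this effect, so your write-up supplies considerably more detail than the original --- in particular the explicit identification of the transition maps on the kernel as multiplication by $p$ (which is exactly what makes the limit the Tate module), and the passage of the universal property to the limit, which the paper leaves implicit.
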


\begin{proof}
In view of the proposition it remains to note that the inverse system of the exact sequences (\ref{h0seq}) satisfies the Mittag-Leffler condition and that $\OCK$ is none but the $p$-adic completion of $\OKB$.
\end{proof}

There is also an arithmetic approach to universal $p$-adically complete thickenings via Fontaine's ring $A_{\rm inf}$ that we explain next. We first begin with a quick proof of the basic facts concerning Witt rings of perfect rings by means of the cotangent complex, based on  ideas of Bhatt.

\begin{prop}\label{lifthom} Let $R$ be a perfect ring of characteristic $p>0$.
\begin{enumerate}
\item Up to isomorphism there is a unique $p$-adically complete flat $\Z_p$-algebra $W(R)$ with $W(R)/(p)\cong R$.

\item  Given moreover a $p$-adically complete ring $S$, every ring homomorphism $R\to S/(p)$ lifts uniquely to a $p$-adically continuous homomorphism $W(R)\to S$.
\end{enumerate}
\end{prop}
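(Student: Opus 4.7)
The plan is to lift $R$ step by step to a tower of flat $\Z/p^n\Z$-algebras and then pass to the inverse limit. The crucial input will be the vanishing of the cotangent complex $L_{R/\Fp}\simeq 0$ for a perfect $\Fp$-algebra $R$.

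\emph{Step 1. Vanishing of $L_{R/\Fp}$.} First I would establish $L_{R/\Fp}\simeq 0$ via a Frobenius argument. The absolute Frobenius $F_R\colon R\to R$, $r\mapsto r^p$, is an $\Fp$-algebra homomorphism which is an isomorphism by perfectness. By functoriality it induces an endomorphism of $L_{R/\Fp}$ which, on the one hand, is a quasi-isomorphism. On the other hand, lifting $F_R$ to a free simplicial resolution $P_\bullet\to R$ by raising each polynomial generator to its $p$-th power gives a chain-level model of the same derived map; on $\Omega^1_{P_\bullet/\Fp}$ this lift sends $dx$ to $d(x^p)=px^{p-1}dx=0$ in characteristic $p$, hence is zero as a chain map. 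An endomorphism that is simultaneously a quasi-isomorphism and zero in the derived category forces the complex to be acyclic.

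\emph{Step 2. Inductive construction of flat lifts.} Starting from $R_1:=R$, I would build $R_{n+1}$ from $R_n$ as follows. Assuming $R_n$ is a flat $\Z/p^n$-algebra with $R_n/(p)\cong R$, Lemma~\ref{cotangflatbasechange} and flatness give
$$L_{R_n/(\Z/p^n)}\otimes^L_{\Z/p^n}\Fp \;\simeq\; L_{R/\Fp}\;\simeq\;0.$$
Since the cotangent complex is concentrated in non-negative homological degrees, a derived Nakayama argument over the local Artinian ring $\Z/p^n$ --- exploiting the convergent spectral sequence $E^2_{p,q}=\Tor^{\Z/p^n}_p(H_q(-),\Fp)\Rightarrow H_{p+q}(-\otimes^L\Fp)$ together with ordinary Nakayama applied inductively to each $H_q$ --- will upgrade this to $L_{R_n/(\Z/p^n)}\simeq 0$. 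Applying Proposition~\ref{defext} with $A=\Z/p^{n+1}$, $I=(p^n)/(p^{n+1})\cong\Fp$, $\overline B=R_n$, $J=R$, and $\lambda$ the structure map then produces a unique (up to unique isomorphism) short exact sequence $0\to R\to R_{n+1}\to R_n\to 0$ of $\Z/p^{n+1}$-algebras; the identification $J=I\otimes_{A/I}\overline B$ forces $R_{n+1}$ to be flat over $\Z/p^{n+1}$, and reduction mod $p$ yields $R_{n+1}/(p)\cong R$.

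\emph{Step 3. Passing to the limit and part (2).} I would then set $W(R):=\varprojlim_n R_n$. The surjectivity of the transition maps supplies Mittag-Leffler, whence $W(R)/(p^n)\cong R_n$ for every $n$; in particular $W(R)$ is $p$-adically complete with $W(R)/(p)\cong R$, and, being an inverse limit of $p$-torsion-free modules, is $\Zp$-flat. Uniqueness in (1) follows from uniqueness at each inductive step. For (2), given a $p$-adically complete $S$ and $\phi\colon R\to S/(p)$, I would lift $\phi$ inductively to $\phi_n\colon R_n\to S/(p^n)$ by invoking Proposition~\ref{defmap} at each stage, whose hypothesis $L_{R_n/(\Z/p^n)}\simeq 0$ was set up in Step~2. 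Passing to the inverse limit yields the required $p$-adically continuous $W(R)\to S$, with uniqueness propagating from each stage.

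The main obstacle is the vanishing of the \emph{entire} cotangent complex in Step~1, not merely of its $H_0=\Omega^1_{R/\Fp}$: the Frobenius argument works because, although the two natural chain-level lifts of $F_R$ (functorial versus $p$-th power) disagree on $\Omega^1_{P_\bullet/\Fp}$, they must agree in the derived category. Once this vanishing is granted, the rest is a routine combination of deformation theory, derived Nakayama, and Mittag-Leffler.
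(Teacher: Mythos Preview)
Your proposal is correct and follows essentially the same strategy as the paper: establish $L_{R_n/(\Z/p^n)}\simeq 0$ from the perfectness of $R$, then invoke Propositions \ref{defext} and \ref{defmap} to build the tower $(R_n)$ and lift homomorphisms level by level. The only differences are cosmetic: the paper packages the vanishing as the later Lemma \ref{relperfcot0hodgetrunc}(1), argued via the \emph{relative} Frobenius $B^{(1)}\to B$ and the d\'evissage Lemma \ref{devissage}, whereas you use the absolute Frobenius and a derived Nakayama/spectral-sequence argument---these are equivalent formulations of the same trick (note that your ``ordinary Nakayama'' needs no finiteness since $p$ is nilpotent in $\Z/p^n$).
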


\begin{proof}

To prove (1), we construct by induction on $n$ flat $\Z/p^n\Z$-algebras $W_n(R)$ such that $W_1(R)=R$ and $W_i(R)\cong W_n(R)/(p^i)$ for all $1\leq i\leq n$. Assuming that $W_n(R)$ has been constructed, apply Proposition \ref{defext} with $A=\Z/p^{n+1}\Z$, $I=p^n\Z/p^{n+1}\Z$, $J=R$, $\overline B=W_n(R)$ and $\lambda:\,  p^n\Z/p^{n+1}\Z\to R\/$ the natural map to obtain a $\Z/p^{n+1}\Z$-algebra extension $W_{n+1}(R)$ of $W_n(R)$ by $R$. To be able to apply the proposition, we need to know that $L_{W_n(R)/(\Z/p^n\Z)}=0$. This vanishing follows from a more general statement, Lemma \ref{relperfcot0hodgetrunc} (1) below that we shall prove by an argument that uses only properties of the cotangent complex encountered so far. Note that $p^nW_n(R)=0$ implies $R\subset p^nW_{n+1}(R)$, and this inclusion is in fact an equality as the $W_{n+1}(R)$-module structure on $R$ coming from the extension structure is given by the composite of the surjections $W_{n+1}(R)\to W_n(R)\to R$. Thus we have isomorphisms $p^iW_{n+1}(R)/p^{i+1}W_{n+1}(R)\cong R$ for all $i\leq n$, whence we deduce $_{p^i}W_{n+1}(R)=p^{n+1-i}W_{n+1}(R)$ for all $1\leq i\leq n$ using the perfectness of $R$. This implies the flatness of $W_{n+1}(R)$ over $\mathbb{Z}/p^{n+1}$.

As for (2), by $p$-adic completeness it suffices to lift the map $R\to S/(p)$ inductively to maps $W_n(R)\to S/(p^n)$. Assume that a unique mod $p^n$ lifting exists. In view of the vanishing of $L_{W_n(R)/(\Z/p^n\Z)}$ already used above, the existence of a unique mod $p^{n+1}$ lifting follows from applying Proposition \ref{defmap} with $A=\Z/p^{n+1}\Z$, $B=W_{n+1}(R)$, $C=S/(p^{n+1})$ and $I=(p^n)$.
\end{proof}

The ring $W(R)$ is the Witt ring of $R$ as constructed e.g. in \cite{S}, \S II.5. For computational proofs of statement (2), see  \cite{BC}, Section 4.4 or \cite{Ked}.

Assume now that $R$ is a ring of characteristic $p>0$ on which the Frobenius morphism $x\mapsto x^p$ is surjective. We define the {\em perfection of $R$}  as the inverse limit $$R^{\rm perf}:=\varprojlim_{x\mapsto x^p}R.$$
Thus $R^{\rm perf}$ consists of sequences $(x_i)$ with $x_i^p=x_{i-1}$. On such sequences the map $x\mapsto x^p$ is bijective, hence $R^{\rm perf}$ is a perfect ring.

Following Fontaine, we set
$$
\Ainf:=W((\OKB/(p))^{\rm perf}).
$$
Since $\OCK$ is the $p$-adic completion of $\OKB$, we have $\OKB/(p)\cong \OCK/(p)$. By Proposition \ref{lifthom} (2), the natural surjection
$$
\bar\theta:\,(\OKB/(p))^{\rm perf}\twoheadrightarrow \OKB/(p)
$$
lifts to a surjection
$$
\theta:\,\Ainf\twoheadrightarrow \OCK.
$$

Note that $\Ainf$ is complete with respect to its $\ker(\theta)$-adic filtration. This follows from $p$-adic completeness and the fact that $(\OKB/(p))^{\rm perf}$ is complete with respect to the $\ker(\bar\theta)$-adic filtration.

Now a surjection $\rho:\, B\to A$ of $p$-adically complete $\OK$-algebras is {\em an order $k$ thickening} for some $k>0$ if $\ker(\rho)^{k+1}=0$. For fixed $A$ such pairs $(B,\rho)$ form a natural category, and an initial element in this category (if exists) is called a {\em universal $p$-adically complete $\OK$-thickening of order $k$}.

\begin{prop}[Fontaine]\label{propfontaine}
For each $k>0$ the $\OK$-algebra $\OCK$ has a universal $p$-adically complete $\OK$-thickening of order $k$, given by
$$
(\Ainf/\ker(\theta)^{k+1})\otimes_{\Z_p}\OK.
$$
\end{prop}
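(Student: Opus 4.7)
The plan is to reduce to the $\Z_p$-version of the statement, namely that $\Ainf/J^{k+1}$ (where $J := \ker(\theta)$) is the universal $p$-adically complete $\Z_p$-thickening of $\OCK$ of order $k$. Then the $\OK$-case will follow by extension of scalars: since $\OK$ is finite and flat over $\Z_p$, any $p$-adically complete $\OK$-thickening $C$ of $\OCK$ of order $k$ is a fortiori a $\Z_p$-thickening, so the universal $\Z_p$-algebra map $\Ainf/J^{k+1}\to C$ extends uniquely to an $\OK$-algebra map $(\Ainf/J^{k+1})\otimes_{\Z_p}\OK\to C$ by the universal property of the tensor product.

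For the $\Z_p$-version, let $\rho:C\twoheadrightarrow\OCK$ be an order-$k$ thickening in $p$-adically complete $\Z_p$-algebras, with $I:=\ker(\rho)$ satisfying $I^{k+1}=0$. By Proposition~\ref{lifthom}(2) applied to $C$, giving a continuous $\Z_p$-algebra map $\Ainf=W((\OKB/p)^{\rm perf})\to C$ amounts to giving a ring homomorphism $\bar\phi:(\OKB/p)^{\rm perf}\to C/pC$. I will construct $\bar\phi$ by the standard tilting device: given $(x_i)_{i\geq 0}\in (\OKB/p)^{\rm perf}$ with $x_{i+1}^p=x_i$, lift each $x_i$ arbitrarily to $\tilde x_i\in C/pC$ (possible because $C/pC\twoheadrightarrow \OCK/p\cong\OKB/p$ is surjective), and set $\bar\phi((x_i)):=\tilde x_N^{p^N}$ for any $N$ with $p^N\geq k+1$. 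Independence of the choices will follow from the characteristic-$p$ identity $(a+b)^{p^N}=a^{p^N}+b^{p^N}$ together with the observation that the difference of two lifts of $x_N$ lies in $I/pC$, whose $p^N$-th power vanishes because $I^{k+1}=0$; the same device shows $\bar\phi$ is a ring homomorphism. Lifting via Proposition~\ref{lifthom}(2) to $\phi:\Ainf\to C$ and comparing with $\theta$ by invoking the uniqueness clause of that proposition applied to $\OCK$ will show $\rho\circ\phi=\theta$. Hence $\phi(J)\subseteq I$ and $\phi(J^{k+1})\subseteq I^{k+1}=0$, so $\phi$ factors uniquely through $\Ainf/J^{k+1}\to C$, and uniqueness of the factorization follows from the uniqueness clauses in Proposition~\ref{lifthom}(2).

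The main technical obstacle will be verifying that $(\Ainf/J^{k+1})\otimes_{\Z_p}\OK$ is itself an order-$k$ thickening of $\OCK$, so that it actually lies in the category whose initial object we are describing. In the unramified case this is clean, but when $K|\Q_p$ is ramified with uniformizer $\pi$ of residue degree $e$, the kernel of the structure map to $\OCK$ contains not only the image of $J/J^{k+1}$ but also ``comparison'' elements of the form $1\otimes\pi-\tilde\pi\otimes 1$ for any lift $\tilde\pi\in\Ainf$ of $\pi\in\OCK$. One controls these by noting that $\tilde\pi^e-p\in J$, which together with the flatness of $\OK$ over $\Z_p$ and careful bookkeeping on powers of the kernel yields the required nilpotency.
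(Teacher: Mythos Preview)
Your argument for $\OK=\Z_p$ is correct and is exactly the paper's proof: reduce via Proposition~\ref{lifthom}(2) to constructing a ring map modulo $p$, then use stabilization of $p$-th powers in $C/pC$ (the paper writes this as $\lim_n\widehat{x_n}^{\,p^n}$, you take the value at a single large $N$; these agree because the kernel is nilpotent). The base-change reduction to $\Z_p$ is likewise what the paper does, in one sentence.

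The ``main technical obstacle'' you raise in your last paragraph is a genuine subtlety that the paper does not address, but your proposed resolution cannot work, because the assertion you are trying to verify is \emph{false} for $K\neq\Q_p$. Already when $K|\Q_p$ is unramified of residue degree $f>1$, the ring $\Ainf$ is an $\OK=W(\kappa)$-algebra, and modulo $p$ one has
\[
(\Ainf/J^{k+1})\otimes_{\Z_p}\OK \;\equiv\; \prod_{\sigma\in\Gal(\kappa/\F_p)}\Ainf/(J^{k+1},p),
\]
with the map to $\OCK/p$ given by projection onto the $\sigma=\id$ factor. The idempotents cutting out the other factors lie in the kernel and are not nilpotent, so the kernel certainly does not have $(k{+}1)$-st power zero. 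In the totally ramified case your relation $\tilde\pi^e-p\in J$ only yields that $(1\otimes\pi-\tilde\pi\otimes 1)^e$ lies in $J\cdot((\Ainf/J^{k+1})\otimes\OK)+(1\otimes\pi-\tilde\pi\otimes 1)\cdot((\Ainf/J^{k+1})\otimes\OK)$, which does not force $(1\otimes\pi-\tilde\pi\otimes 1)^{k+1}$ into $J^{k+1}\otimes\OK$.

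What your first two paragraphs (and the paper's proof) actually establish is the universal \emph{mapping} property: every $p$-adically complete order-$k$ $\OK$-thickening of $\OCK$ receives a unique $\OK$-algebra map from $(\Ainf/J^{k+1})\otimes_{\Z_p}\OK$ compatible with the projections. That is the content used downstream. If one insists on a literal initial object of the category, the correct description is $(\Ainf\otimes_{\Z_p}\OK)/\ker(\theta_K)^{k+1}$, where $\theta_K\colon\Ainf\otimes_{\Z_p}\OK\to\OCK$ is the extended surjection; for $K$ unramified this is just $\Ainf/J^{k+1}$ with its canonical $\OK$-structure.
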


\begin{proof}
It suffices to treat the case $\OK=\Z_p$, as then the general case follows by base change. Furthermore, in view of Proposition \ref{lifthom} (2), given an order $k$ thickening $\rho:\, B\to\OCK$ it suffices to construct a map $\tau:\,(\OKB/(p)^{\rm perf}\to B/(p)$.

For an element $x\in \OCK/(p)=\OKB/(p)$ choose some lifting $\widehat{x}\in B/(p)$ via the mod $p$ reduction $\bar\rho$ of $\rho$. Given an element $(\dots,x_n,\dots,x_0)\in (\OKB/(p))^{\rm perf}$, set
$$\tau(\dots,x_n,\dots,x_0):=\lim_{n\to\infty} \widehat{x_n}^{p^n}\in B/(p).$$ Note that this limit exists since $\ker(\bar\rho)^k=0$ and we obtain a ring homomorphism. Also, this is the only possible definition as $\tau(\dots,x_{n+r},\dots,x_r)\equiv \widehat{x_r}\pmod{\ker(\bar\rho)}$ forces $$\tau(\dots,x_n,\dots,x_0)=\tau(\dots,x_{n+r},\dots,x_r)^{p^r}\equiv \widehat{x_r}^{p^r}\pmod{\ker(\bar\rho)}$$ for all $r\geq 0$.
\end{proof}

\begin{cor}\label{corfontaine}
We have a canonical isomorphism of $\OK$-algebras
$$
(\Ainf/\ker(\theta)^{2})\otimes_{\Z_p}\OK\cong \varprojlim_n H_0(L\Omega^\bullet_{(\OKB/(p^n))/(\OK/(p^n))}/F^2).
$$
In particular, we have an isomorphism
$$
\ker(\theta)/\ker(\theta)^{2}\otimes_{\Z_p}\OK\cong T_p(\Omega^1_{\OKB/\OK}).
$$
and therefore the ideal
$\ker(\theta)\otimes_{\Z_p}\OK\subset \Ainf\otimes_{\Z_p}\OK$
is principal.
\end{cor}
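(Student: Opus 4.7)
The strategy is to exploit the two universal properties established earlier and then bootstrap from a structural calculation. By Proposition \ref{propfontaine} applied with $k=1$, the ring $(\Ainf/\ker(\theta)^{2})\otimes_{\Z_p}\OK$ is the universal $p$-adically complete first-order $\OK$-thickening of $\OCK$. By Corollary \ref{coruniv}, the inverse limit $\varprojlim_n H_0(L\Omega^\bullet_{(\OKB/(p^n))/(\OK/(p^n))}/F^2)$ is likewise a universal first-order thickening of $\OCK$ in the same category. Any two initial objects in the same category are canonically isomorphic, which yields the first isomorphism.

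The second isomorphism follows by comparing the augmentation ideals. On the Fontaine side, the surjection onto $\OCK$ is $\theta\otimes\id_{\OK}$, whose kernel is $\ker(\theta)/\ker(\theta)^2\otimes_{\Z_p}\OK$. On the derived de Rham side, the exact sequence of Corollary \ref{coruniv} identifies the kernel with $T_p(\Omega^1_{\OKB/\OK})$. The canonical isomorphism constructed above is an isomorphism of extensions of $\OCK$ (since both sides are initial under the augmentation to $\OCK$), so taking kernels yields the claimed isomorphism.

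For the principality statement, the key input is that $T_p(\Omega^1_{\OKB/\OK})$ is in fact free of rank one over $\OCK$: indeed, the proof of Corollary \ref{cp1} constructs isomorphisms of $\OKB$-modules ${}_{p^r}\Omega^1_{\OKB/\OK}\cong\OKB/p^r\OKB$, and passing to the inverse limit gives an $\OCK$-linear isomorphism $T_p(\Omega^1_{\OKB/\OK})\cong\OCK$. Combining with the isomorphism of the previous paragraph, $\ker(\theta)/\ker(\theta)^2\otimes_{\Z_p}\OK$ is a free rank-one module over $(\Ainf\otimes\OK)/(\ker(\theta)\otimes\OK)\cong\OCK$. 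Choose $\xi\in\ker(\theta)\otimes_{\Z_p}\OK$ lifting a generator. Since the graded ring $\bigoplus_n(\ker(\theta)^n/\ker(\theta)^{n+1})\otimes\OK$ is a quotient of the symmetric $\OCK$-algebra on its degree-one piece, the element $\xi^n$ generates the $n$-th graded piece for every $n$. A standard successive-approximation argument, using that $\Ainf\otimes_{\Z_p}\OK$ is $\ker(\theta)$-adically complete (which follows from $\ker(\theta)$-adic completeness of $\Ainf$ noted just before Proposition \ref{propfontaine}, together with finite generation of $\OK$ over $\Z_p$), then writes any $y\in\ker(\theta)\otimes\OK$ as a convergent series $\xi(a_1+\xi a_2+\cdots)$, showing that $\xi$ generates the ideal.

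The main obstacle is the precise identification $T_p(\Omega^1_{\OKB/\OK})\cong\OCK$ as an $\OCK$-module (not merely as a $\Z_p$-module), which unlocks both the rank-one statement and the Nakayama-style lifting in the last step; once this is in place, everything else is a formal consequence of the universal properties and of $\ker(\theta)$-adic completeness.
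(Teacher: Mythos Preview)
Your proof is correct and follows essentially the same approach as the paper: both combine Corollary \ref{coruniv} and Proposition \ref{propfontaine} (case $k=1$) via their universal properties, compare kernels of the augmentations to $\OCK$, and conclude principality from the fact that $T_p(\Omega^1_{\OKB/\OK})$ is free of rank one over $\OCK$ together with $\ker(\theta)$-adic completeness. The paper cites Theorem \ref{fontaine} directly for the rank-one statement and abbreviates your successive-approximation paragraph to the single clause ``follows as $\Ainf$ is $\ker(\theta)$-adically complete,'' but the content is the same.
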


\begin{proof}
The two isomorphisms result from putting Corollary \ref{coruniv} and Proposition \ref{propfontaine} together. By Theorem \ref{fontaine} the $\OCK$-module $T_p(\Omega^1_{\OKB/\OK})$ is free of rank 1, hence so is the $\OCK$-module $\ker(\theta)/\ker(\theta)^{2}$. The last statement follows as $\Ainf$ is $\ker(\theta)$-adically complete.
\end{proof}

\subsection{Derived de Rham algebra calculations}

Our next goal is to compute the $p$-adic completion of the derived de Rham algebra $L\Omega^\bullet_{\OKB/\OK}$ for a finite extension $K|\Q_p$. The methods to do so stem from the preprint \cite{Bh2} of Bhargav Bhatt. This section is devoted to preliminary calculations.

Arguably the key step is the computation of $L\Omega^\bullet_{(\mathbb{Z}/p^n\Z)/(\mathbb{Z}/p^n\Z[x])}$, where $\Z/p^n\Z$ is viewed as a $\Z/p^n\Z[x]$-algebra via the natural projection sending $x$ to 0. To describe it, we  need the divided power algebra $\Gamma_A^\bullet(M)$ introduced in Lemma \ref{freepd} of the Appendix in the case where $M\cong A^n$ is a free $A$-module on generators $t_1,\dots, t_n$. We set
$$
A\langle t_1,\dots, t_n\rangle:=\Gamma_A^\bullet(A^n).
$$
We denote the kernel of the natural augmentation map $A\langle t_1,\dots, t_n\rangle\to A$ by $\langle t_1,\dots, t_n\rangle$. The {\em divided powers} of the ideal $\langle t_1,\dots, t_n\rangle$ are defined as follows. First, the maps $\gamma_i:\, At_1\oplus\dots\oplus At_n\to A\langle t_1,\dots, t_n\rangle$ extend to a unique divided power structure on $A\langle t_1,\dots, t_n\rangle$ by $\langle t_1,\dots, t_n\rangle$ by setting
\begin{equation}\label{gammakgammai}
\gamma_k(\gamma_{i_1}(x_1)\dots\gamma_{i_n}(x_n)):=\frac{\prod_{l=1}^n(ki_l)!}{k!\cdot\prod_{l=1}^n(i_l!)^k}\gamma_{ki_1}(x_1)\dots\gamma_{ki_n}(x_n). \end{equation}
Next, one defines the divided powers of the ideal by
$$
\langle t_1,\dots, t_n\rangle^{[i]}:=A[\gamma_{i_1}(x_1)\cdots \gamma_{i_r}(x_r)\mid x_j\in \langle t_1,\dots, t_n\rangle, i_1+\cdots + i_r\geq i].
$$

\begin{rema}\label{divpowrema}\rm These formulas are unfortunately complicated, but notice for later use that in the case where $A$ is a domain with fraction field $K$, the filtration by  $\langle t_1,\dots, t_n\rangle^{[i]}\otimes_AK$ on $K\langle t_1,\dots, t_n\rangle$ becomes the filtration by powers of $\langle t_1,\dots, t_n\rangle$.
\end{rema}

\begin{prop}[Bhatt]\label{dRPDpol}
The derived de Rham algebra $L\Omega^\bullet_{(\mathbb{Z}/p^n\Z)/(\mathbb{Z}/p^n\Z[x])}$  is concentrated in degree 0 and we have an isomorphism
$$
H_0(L\Omega^\bullet_{(\mathbb{Z}/p^n\Z)/(\mathbb{Z}/p^n\Z[x])})\cong \mathbb{Z}/p^n\Z\langle x\rangle.
$$
Moreover, under this isomorphism the Hodge filtration on the left hand side coincides with the filtration by divided powers $\langle x\rangle^{[i]}$  on the right.
\end{prop}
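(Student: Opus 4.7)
The plan is to read off the structure of $L\Omega^\bullet_{(\Z/p^n\Z)/(\Z/p^n\Z[x])}$ from the graded pieces of its Hodge filtration. Writing $R:=\Z/p^n\Z$, the quotient $R[x]\to R$ (with $x\mapsto 0$) is cut out by the nonzerodivisor $x$, so Proposition \ref{cotangquotientreg} gives $L_{R/R[x]}\simeq (x)/(x^2)[1]\cong R[1]$, a free $R$-module of rank one in homological degree $1$. Proposition \ref{gridr} then identifies
$$
\mathrm{gr}^i_F L\Omega^\bullet_{R/R[x]}\;\simeq\; L\wedge^i L_{R/R[x]}[-i] \;\simeq\; L\wedge^i(R[1])[-i].
$$

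The central calculation --- and the main technical obstacle --- is the décalage identification $L\wedge^i(M[1])\simeq \Gamma^i(M)[i]$ for a flat $R$-module $M$, which applied with $M=R$ yields $\mathrm{gr}^i_F L\Omega^\bullet_{R/R[x]}\simeq \Gamma^i(R)\cong R\cdot\gamma_i(x)$: free of rank one, concentrated in homological degree $0$. I would establish this by choosing an explicit simplicial free $R$-module $M_\bullet$ whose normalized chain complex is $R[1]$ and observing via Dold--Kan that the normalization of the termwise exterior power $\wedge^i M_\bullet$ computes $\Gamma^i(R)$ placed in degree $i$; the divided-power relations arise precisely from the shuffle combinatorics built into the normalization of $\wedge^i M_\bullet$, where the factorials defining $\gamma_i$ appear as counts of shuffles.

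Given the graded-piece computation, an induction on $i$ using the distinguished triangles $\mathrm{gr}^i_F\to L\Omega^\bullet_{R/R[x]}/F^{i+1}\to L\Omega^\bullet_{R/R[x]}/F^i$ shows that every Hodge truncation $L\Omega^\bullet_{R/R[x]}/F^{i+1}$ is concentrated in degree $0$ with a filtration whose graded pieces are $R\cdot\gamma_j(x)$ for $j\le i$. Since in each fixed cohomological degree only finitely many $F^j$ contribute to the double complex computing $L\Omega^\bullet_{R/R[x]}$, the Hodge spectral sequence converges and $L\Omega^\bullet_{R/R[x]}$ itself is concentrated in degree $0$. Finally, the derived de Rham algebra is a commutative differential graded algebra, so $H_0$ is a commutative $R[x]$-algebra in which $x$ acts as $\gamma_1(x)$; the universal property of $R\langle x\rangle$ as the free PD-algebra on one generator over $R[x]$ supplies a canonical $R[x]$-algebra map $R\langle x\rangle\to H_0 L\Omega^\bullet_{R/R[x]}$, which is a filtered isomorphism because both filtrations have $R\cdot \gamma_i(x)$ as $i$-th graded piece.
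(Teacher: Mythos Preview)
Your computation of the graded pieces is correct and is indeed used in the paper: $\mathrm{gr}^i_F L\Omega^\bullet_{R/R[x]}\simeq R$ in degree $0$ via Propositions \ref{cotangquotientreg}, \ref{gridr} and Quillen's shift formula (Proposition \ref{quillen}). The gap is the passage from the graded pieces to the uncompleted derived de Rham algebra. Your assertion that ``in each fixed cohomological degree only finitely many $F^j$ contribute to the double complex'' is false: using the bar resolution $Q_n=R[x][x_1,\dots,x_n]$, the term in homological degree $0$ is $\bigoplus_{n\geq 0}\Omega^n_{Q_n/R[x]}$, with contributions from \emph{every} step $F^n$ of the Hodge filtration. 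Thus the Hodge spectral sequence is not convergent in any na\"ive sense, and your argument only controls the Hodge-completed object $L\widehat\Omega^\bullet_{R/R[x]}$. This matters: the proposition asserts $H_0\cong R\langle x\rangle=\bigoplus_i R\gamma_i(x)$, whereas the Hodge completion would give the product $\prod_i R\gamma_i(x)$, so the two are genuinely different.

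The paper circumvents this by working not with the Hodge filtration but with a Cartier-type decomposition in characteristic $p$ (Lemma \ref{Cartierpol}): over $\F_p$ one obtains a \emph{direct sum} splitting $L\Omega^\bullet_{\F_p/\F_p[x]}\simeq\bigoplus_i L\wedge^i L_{(\F_p[x]/(x^p))/\F_p[x]}[-i]$, which is automatically ``convergent'' since it is a decomposition of the object itself rather than a filtration. One then lifts to $\Z_p$ using torsion-freeness (Corollary \ref{dRPDpolZp}) and descends to $\Z/p^n\Z$ by d\'evissage (Lemma \ref{devissage}). Your final step also needs work: to invoke the universal property of $R\langle x\rangle$ you must first know that the image of $x$ in $H_0$ admits divided powers, which is not visible from the graded pieces alone; the paper extracts this from torsion-freeness over $\Z_p$, where $\varphi(x)^n/n!$ makes sense.
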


We start the proof of the proposition with the case $n=1$. It is based on the following splitting lemma.

\begin{lem}\label{Cartierpol}
We have a quasi-isomorphism
\begin{equation*}
L\Omega^\bullet_{\mathbb{F}_p/\mathbb{F}_p[x]}\simeq\bigoplus_{i= 0}^\infty L\wedge^i L_{(\mathbb{F}_p[x]/(x^p))/\mathbb{F}_p[x]}[-i]
\end{equation*}
where $\mathbb{F}_p[x]/(x^p)$ is viewed as an $\mathbb{F}_p[x]$-algebra via the natural projection.
\end{lem}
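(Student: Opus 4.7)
The statement is a derived Cartier-style decomposition in which $\mathbb{F}_p[x]/(x^p)$ plays the role of the Frobenius twist of $\mathbb{F}_p$ over $\mathbb{F}_p[x]$. First I would note that the absolute Frobenius on $\mathbb{F}_p[x]$ sends $x\mapsto x^p$, so the derived Frobenius twist $\mathbb{F}_p\otimes^L_{\mathbb{F}_p[x],F}\mathbb{F}_p[x]$ reduces to the ordinary tensor product $\mathbb{F}_p[x]/(x^p)$ because $x$ is a nonzerodivisor. This gives the right-hand side its natural interpretation as the graded pieces of a conjugate filtration. The plan is then to show both sides are concentrated in homological degree $0$ and to produce a natural quasi-isomorphism between them.

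Concentration in degree $0$ can be checked on each side via the Hodge spectral sequence. On the left, Proposition \ref{cotangquotientreg} yields $L_{\mathbb{F}_p/\mathbb{F}_p[x]}\simeq (x)/(x^2)[1]\simeq \mathbb{F}_p[1]$, so Proposition \ref{gridr} together with the standard formula $L\wedge^i(M[1])\simeq \Gamma^i(M)[i]$ for a flat module $M$ gives $\mathrm{gr}^i_F L\Omega^\bullet_{\mathbb{F}_p/\mathbb{F}_p[x]}\simeq \mathbb{F}_p$ concentrated in degree $0$ for every $i\geq 0$; the Hodge-to-de Rham spectral sequence then has only diagonal entries on its $E_1$-page and degenerates there, forcing the total complex to live in degree $0$. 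On the right, applying Proposition \ref{cotangquotientreg} with $f=x^p$ gives $L_{(\mathbb{F}_p[x]/(x^p))/\mathbb{F}_p[x]}\simeq (x^p)/(x^{2p})[1]$, and the same identity makes each summand $L\wedge^i L_{(\mathbb{F}_p[x]/(x^p))/\mathbb{F}_p[x]}[-i]$ equal to $\Gamma^i((x^p)/(x^{2p}))$ in degree $0$.

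To produce the required map, my approach would be to exploit the factorisation $\mathbb{F}_p[x]\to\mathbb{F}_p[x]/(x^p)\to\mathbb{F}_p$ through the relative Frobenius and to work on an explicit free simplicial resolution, namely the bar resolution $Q_n=\mathbb{F}_p[x][x_1,\dots,x_n]$ of Example \ref{barresol}. The $p$-th power operations $x_i\mapsto x_i^p$ induce an explicit chain-level morphism between the relevant de Rham double complexes, and one expects the induced map on $H_0$ to be the identification of the divided power algebras $\mathbb{F}_p\langle x\rangle$ and $(\mathbb{F}_p[x]/(x^p))\langle t\rangle$ via $t\mapsto \gamma_p(x)$ (where $t$ is a generator of $(x^p)/(x^{2p})$). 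The check reduces to observing that in $\gamma_i(\gamma_p(x))=\tfrac{(ip)!}{i!(p!)^i}\gamma_{ip}(x)$ the coefficient is a $p$-adic unit by Legendre's formula, and that $\binom{ip+r}{r}\equiv 1\pmod p$ by Lucas, so that one obtains a bijection on the natural $\mathbb{F}_p$-bases.

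The main obstacle will be the last point: building the natural chain map rather than just the abstract $H_0$-identification, i.e., splitting the conjugate filtration on $L\Omega^\bullet_{\mathbb{F}_p/\mathbb{F}_p[x]}$ whose graded pieces are identified with the right-hand side. What makes this tractable in the present case is the degenerate situation already established: every graded piece is concentrated in a single cohomological degree, so the higher Ext-obstructions to splitting vanish, and any reasonable candidate map produced from the Frobenius on the bar resolution will automatically be a quasi-isomorphism once it realises the matching on $H_0$.
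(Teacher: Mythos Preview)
Your degree-$0$ concentration arguments via Propositions \ref{gridr}, \ref{cotangquotientreg} and \ref{quillen} are on target, and you are right that the bar resolution and the relative Frobenius are the correct ingredients. The genuine gap is in the construction of the chain map. The $p$-th power map $x_j\mapsto x_j^p$ on $Q_n$ does give a ring map $Q_n^{(1)}\to Q_n$, but the induced map on differentials is \emph{zero} on $\Omega^i$ for every $i\geq 1$, since $d(x_j^p)=px_j^{p-1}\,dx_j=0$ in characteristic $p$. So the naive Frobenius produces only the trivial map in positive form degree and does not split anything. The missing idea---and this is exactly what the paper supplies---is the Deligne--Illusie lifting trick: lift the $p$-th power endomorphism to $\Z/p^2\Z[x][x_1,\dots,x_n]$, where the induced map $F_i$ on $\Omega^i$ lands in $p^i\Omega^i$; since these are free $\Z/p^2\Z$-modules, $\omega\mapsto(1/p)F_1(\omega)$ is well defined after reducing mod $p$ and yields a nontrivial map $\overline{(1/p)F_1}\colon \Omega^1_{Q_n^{(1)}/\F_p[x]}\to \Omega^1_{Q_n/\F_p[x]}$. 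Because $\overline{(1/p)F_2}=0$, taking wedge powers gives a genuine map of complexes $\bigoplus_i\Omega^i_{Q_n^{(1)}/\F_p[x]}[-i]\to\Omega^\bullet_{Q_n/\F_p[x]}$, which one checks is a quasi-isomorphism for $n=1$ by hand and for general $n$ by passing to tensor powers. Applying this along the bar resolution and using $\F_p^{(1)}\cong\F_p[x]/(x^p)$ finishes the lemma.

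Your fallback via Ext-obstructions does not close the gap: to invoke it you must already know that the conjugate filtration on $L\Omega^\bullet_{\F_p/\F_p[x]}$ has graded pieces $L\wedge^i L_{(\F_p[x]/(x^p))/\F_p[x]}[-i]$, and that identification \emph{is} the derived Cartier isomorphism you are trying to prove (in the paper it appears only later, in Remark \ref{conjrema}, as an external citation to Bhatt). Likewise, your $H_0$-matching via $\F_p\langle x\rangle\cong(\F_p[x]/(x^p))\langle t\rangle$ presupposes $H_0(L\Omega^\bullet_{\F_p/\F_p[x]})\cong\F_p\langle x\rangle$, which is Corollary \ref{dRPDpolmodp} and is derived \emph{from} the present lemma, so that part of the argument is circular.
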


The proof of the lemma, which is a version of the decomposition technique of \cite{deligneillusie} for the de Rham complex,  will use some basic facts about relative Frobenii that we now recall.

\begin{facts}\label{frob}\rm Assume $A$ and $B$ are $\F_p$-algebras, and consider the $A$-algebra $A^{(1)}$ defined by $A$ with its $A$-algebra structure given by the Frobenius map $a\mapsto a^p$. We have a morphism of $A$-algebras $A\to A^{(1)}$ induced by Frobenius, whence a morphism $B\to B^{(1)}:=B\otimes_AA^{(1)}$ by base change. Furthermore, the commutative square of $A$-algebras
$$
\begin{CD}
A @>>> B \\
@VpVV @VVpV \\
A @>>> B
\end{CD}
$$
induces a morphism $B^{(1)}\to B$. When $A$ is perfect, the morphism $A\to A^{(1)}$ is an isomorphism by definition, hence so is the base change $B\to B^{(1)}$. If moreover $B$ is perfect, the morphism $B^{(1)}\to B$ induced by the diagram is an isomorphism as well.\end{facts}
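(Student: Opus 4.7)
The content of Facts \ref{frob} is elementary and amounts to tracking $A$-module structures through tensor products; the plan is to verify each of the four assertions in turn, using only the basic fact that in characteristic $p$ the absolute Frobenius is a ring endomorphism together with the universal property of base change.

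First I would observe that $A^{(1)}$ equals $A$ as a ring but carries the $A$-algebra structure $a\mapsto a^p$; hence the map $F_A\colon a\mapsto a^p$, read as a map $A\to A^{(1)}$, intertwines the standard $A$-action on the source with the Frobenius-twisted $A$-action on the target and is therefore an $A$-algebra morphism. Applying $-\otimes_A B$ to this map yields $B\to B^{(1)}:=B\otimes_A A^{(1)}$. For the induced map $B^{(1)}\to B$, I would interpret the commutative square of the statement as saying that $F_B\colon B\to B$, viewed as going from $(B,\text{standard structure})$ to $(B, F_A\text{-twisted structure})$, is an $A$-algebra map; the universal property of the tensor product (as a pushout in commutative rings) then produces a unique ring map $F_{B/A}\colon B^{(1)}\to B$ sending $b\otimes a\mapsto b^p\psi(a)$, where $\psi\colon A\to B$ is the structure morphism. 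A direct inspection shows that the composite $B\to B^{(1)}\stackrel{F_{B/A}}\to B$ recovers the absolute Frobenius $F_B$ on $B$.

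For the isomorphism claims: if $A$ is perfect then $F_A$ is bijective, so $A\to A^{(1)}$ is an isomorphism, and its base change $B\to B^{(1)}$ inherits this property. If in addition $B$ is perfect, then $F_B$ is also bijective; combining this with the factorization $F_B=F_{B/A}\circ(B\to B^{(1)})$ and the fact that the first factor is already an isomorphism forces $F_{B/A}\colon B^{(1)}\to B$ to be an isomorphism as well. The only point that truly requires care — and which I expect to be the main bookkeeping obstacle rather than a genuine mathematical difficulty — is to keep straight which of the two possible $A$-module structures on $B^{(1)}=B\otimes_A A^{(1)}$ is in play at each stage (the left factor via $\psi$, or the right factor via $F_A\circ\psi$); but the constructions above are canonical and compatible with both by design, so this is purely a matter of careful notation.
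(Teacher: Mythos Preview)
Your argument is correct. Note, however, that in the paper this statement is presented under the heading ``Facts'' and is not given a proof: the authors are simply recalling the standard construction of the relative Frobenius and its elementary behaviour under the perfectness hypotheses, and they use these facts immediately afterwards in the proof of Lemma~\ref{Cartierpol} and in Lemma~\ref{relperfcot0hodgetrunc}. So there is no ``paper's proof'' to compare with; your write-up supplies exactly the routine verification the paper leaves implicit, and the key identity you isolate---the factorisation $F_B = F_{B/A}\circ(B\to B^{(1)})$---is precisely what makes the final isomorphism claim immediate.
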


\begin{proof} For $n\geq 0$ set $Q_n:=\F_p[x][x_1,\dots, x_{n}]$ and consider the above situation for $A=\F_p[x]$ and $B=Q_n$. Identifying $\F_p[x]$ with $\F_p[x^p]$ via the Frobenius map, the map $Q_n^{(1)}\to Q_n$ becomes identified with the map $\F_p[x] [x_1,\dots, x_{n}]\to \F_p[x][ x_1,\dots, x_{n}]$ that is the identity on $\F_p[x]$ and sends  $x_j$ to $x_j^p$. We may lift this map to a morphism of $\Z/p^2\Z[x]$-algebras
$$
(\Z/p^2\Z)[x][x_1,\dots, x_{n}]\to (\Z/p^2\Z)[x][x_1,\dots, x_{n}]
$$
sending $x_j$ to $x_j^p$. For all $i\geq 1$ there is an induced map
$$
F_i:\,\Omega^i_{(\Z/p^2\Z)[x][x_1,\dots, x_{n}]/(\Z/p^2\Z)[x]}\to \Omega^i_{(\Z/p^2\Z)[x][x_1,\dots, x_{n}]/(\Z/p^2\Z)[x]}
$$
on differential forms whose image is contained in $p^i\Omega^i_{(\Z/p^2\Z)[x][x_1,\dots, x_{n}]/(\Z/p^2\Z)[x]}$. As the $p$-torsion of the free $\Z/p^2\Z$-modules $\Omega^i_{(\Z/p^2\Z)[x][x_1,\dots, x_{n}]/(\Z/p^2\Z)[x]}$ is $p\Omega^i_{(\Z/p^2\Z)[x][x_1,\dots, x_{n}]/(\Z/p^2\Z)[x]}$, the map $$\omega\mapsto (1/p)F_i(\omega)$$ induces a well-defined map $$\overline{(1/p)F_i}:\, \Omega^i_{Q_n^{(1)}/\F_p[x]}\to \Omega^i_{Q_n/\F_p[x]}$$ after reducing modulo $p$; it is the zero map for $i>1$ but nonzero for $i=1$. Thus by construction we obtain a commutative diagram
\begin{equation*}
\xymatrix{
\Omega^1_{Q_n^{(1)}/\F_p[x]}\ar[rr]\ar[d]_d && \Omega^1_{Q_n/\F_p[x]}\ar[d]_d\\
\Omega^2_{Q_n^{(1)}/\F_p[x]}\ar[rr] && \Omega^2_{Q_n/\F_p[x]}
}
\end{equation*}
whose horizontal maps are respectively given by $\overline{(1/p)F_1}$ and $\overline{(1/p)F_2}$ modulo $p$. As the latter map is zero, we get a well-defined map of complexes $$\Omega^1_{Q_n^{(1)}/\F_p[x]}[-1]\to \Omega^\bullet_{Q_n/\F_p[x]}.$$ Taking the direct sum of the $i$-th wedge powers of $\overline{(1/p)F_1}$ for all $i$ and applying a similar argument, we obtain a map
\begin{equation}\label{cartier}\bigoplus_i\Omega^i_{Q_n^{(1)}/\F_p[x]}[-i]\to \Omega^\bullet_{Q_n/\F_p[x]}.\end{equation}
This map is a quasi-isomorphism for $n=0,1$ by  direct computation, and therefore for general $n$ by passing to tensor powers and using that $\Omega^\bullet_{Q_n/\F_p[x]}\cong {(\Omega^\bullet_{Q_1/\F_p[x]})}^{\otimes n}$. (The learned reader will recognize that (\ref{cartier}) induces the Cartier isomorphism on cohomology groups.)

Now consider the bar resolution of the $\Z/p^2\Z[x]$-algebra $\Z/p^2\Z$ introduced in Example \ref{barresol}. Reducing modulo $p$ we obtain the bar resolution $Q_\bullet$ of the $\F_p[x]$-algebra $\F_p$. Twisting by relative Frobenius gives the bar resolution $Q_\bullet^{(1)}\to \F_p^{(1)}$. Notice that
$$
\F_p^{(1)}\cong\F_p[x]\otimes_{\F_p[x]}\F_p\cong \F_p[x]/(x^p)
$$
where the $\F_p[x]$-module structure on $\F_p[x]$ in the tensor product is given by $x\mapsto x^p$. Applying the inverse of the isomorphism (\ref{cartier}) to the terms of the bar resolution, we obtain using Theorems \ref{freeresolcotang} and \ref{freeresolderham} quasi-isomorphisms
$$
L\Omega^\bullet_{\F_p/\F_p[x]}\simeq \Omega^\bullet_{Q_\bullet/\F_p[x]}\simeq \bigoplus_i \Omega^i_{Q_\bullet/\F_p[x]}[-i]\simeq \bigoplus_iL\wedge^iL_{\F_p[x]/(x^p)/\F_p[x]}[-i]
$$
as desired.
\end{proof}

Next we compute the right hand side of the quasi-isomorphism in Lemma \ref{Cartierpol}.

\begin{lem}\label{specialquillen}
The complex $L\wedge^i L_{\mathbb{F}_p[x]/(x^p)/\mathbb{F}_p[x]}$ is acyclic outside degree $i$. Its degree $i$ homology is isomorphic to a free $(\mathbb{F}_p[x]/(x^p))$-module generated by $\gamma_i(y)$, where $y$ is a generator of the rank 1 free module $(x^p)/(x^{2p})$.
\end{lem}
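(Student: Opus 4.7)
The plan is to compute $L_{B/A}$ for $A = \mathbb{F}_p[x]$ and $B = \mathbb{F}_p[x]/(x^p)$ explicitly via Proposition \ref{cotangquotientreg}, and then to reduce the computation of its derived exterior powers to the classical d\'ecalage formula for shifted flat modules. Since the surjection $A \twoheadrightarrow B$ has kernel generated by the nonzerodivisor $x^p$, Proposition \ref{cotangquotientreg} yields a quasi-isomorphism $L_{B/A} \simeq M[1]$, where $M := (x^p)/(x^{2p})$ is the free $B$-module of rank $1$ with generator $y$ given by the class of $x^p$. The problem therefore reduces to computing $L\wedge^i_B(M[1])$.

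The key input is the d\'ecalage formula (Illusie \cite{I1}, Ch.~I, Prop.~4.3.2.1): for any flat $B$-module $N$ there is a canonical quasi-isomorphism $L\wedge^i_B(N[1]) \simeq \Gamma^i_B(N)[i]$. Applied to $N = By$, the right-hand side is the free $B$-module of rank $1$ generated by $\gamma_i(y)$, placed in homological degree $i$. This is exactly the conclusion of the lemma, so that everything reduces to establishing the d\'ecalage formula in the case at hand.

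For a self-contained argument one picks a simplicial free $B$-module $F_\bullet$ realizing $N[1]$ under Dold-Kan --- concretely, the simplicial module with $F_n = N^{\oplus n}$ whose boundary maps are the alternating sums encoding the shift --- and studies the simplicial module $\wedge^i_B F_\bullet$, whose associated chain complex represents $L\wedge^i_B(N[1])$ by Lemma \ref{flattensorquasi}. A direct calculation based on the Eilenberg-Zilber shuffle product then identifies this chain complex with $\Gamma^i_B(N)$ concentrated in degree $i$. The main obstacle is precisely this combinatorial verification: the sign subtleties of exterior algebra interact with the shuffle structure so as to produce divided rather than ordinary powers, and this is the conceptual source of the appearance of divided powers throughout $p$-adic de Rham theory. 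Once the formula is established, specializing to $N = By$ identifies the generator of $\Gamma^i_B(By)$ as $\gamma_i(y)$, completing the proof.
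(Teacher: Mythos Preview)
Your proof is correct and follows essentially the same route as the paper: compute $L_{B/A}$ via Proposition~\ref{cotangquotientreg} as a free rank-one module shifted by $1$, then apply the d\'ecalage isomorphism $L\wedge^i(N[1])\simeq L\Gamma^i(N)[i]$ and use that free modules are $\Gamma^i$-acyclic. The only difference is bibliographic: the paper already records the d\'ecalage formula as Proposition~\ref{quillen} (Quillen's shift formula) in its appendix, so you can cite that directly rather than Illusie, and there is no need to sketch an independent proof of it.
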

\begin{proof}
By Proposition \ref{cotangquotientreg} the cotangent complex $L_{\mathbb{F}_p[x]/(x^p)/\mathbb{F}_p[x]}$ is concentrated in degree $1$ where its homology is $(x^p)/(x^{2p})$. This is a free module of rank $1$ over $\mathbb{F}_p[x]/(x^p)$. Denoting by $y$ a free generator, we have a quasi-isomorphism of complexes $$L_{\mathbb{F}_p[x]/(x^p)/\mathbb{F}_p[x]}\simeq (\mathbb{F}_p[x]/(x^p)) y[1]. $$
Taking derived exterior powers, we obtain
\begin{equation*}
L\wedge^i L_{\mathbb{F}_p[x]/(x^p)/\mathbb{F}_p[x]}\cong L\wedge^i ((\mathbb{F}_p[x]/(x^p)) y[1])\cong L\Gamma^i(\mathbb{F}_p[x]/(x^p)) y)[i]
\end{equation*}
using Quillen's shift formula (Proposition \ref{quillen}). Finally, since free modules are acyclic for the functor $\Gamma^i$, we obtain $$L\Gamma^i((\mathbb{F}_p[x]/(x^p))y)=\Gamma^i((\mathbb{F}_p[x]/(x^p))y)\cong  (\mathbb{F}_p[x]/(x^p))\gamma_i(y).$$
\end{proof}

Now we can handle the case $n=1$ of Proposition \ref{dRPDpol}.

\begin{cor}\label{dRPDpolmodp}
The derived de Rham algebra $L\Omega^\bullet_{\mathbb{F}_p/\mathbb{F}_p[x]}$  is concentrated in degree 0, and we have an isomorphism
$$
H_0(L\Omega^\bullet_{\mathbb{F}_p/\mathbb{F}_p[x]})\cong \mathbb{F}_p\langle x\rangle.
$$
\end{cor}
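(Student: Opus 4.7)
The strategy is to combine Lemma \ref{Cartierpol} with Lemma \ref{specialquillen}: these two results give exactly the two pieces of data we need to conclude.

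\textbf{Step 1 (degree concentration).} Lemma \ref{Cartierpol} yields a quasi-isomorphism
$$
L\Omega^\bullet_{\mathbb{F}_p/\mathbb{F}_p[x]}\;\simeq\;\bigoplus_{i\geq 0}\,L\wedge^i L_{(\mathbb{F}_p[x]/(x^p))/\mathbb{F}_p[x]}[-i].
$$
By Lemma \ref{specialquillen}, the $i$-th summand on the right is acyclic outside homological degree $i$; after the shift by $[-i]$ each term is therefore concentrated in degree $0$. As a result the direct sum sits in degree $0$ as well, which gives the first assertion.

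\textbf{Step 2 (the $H_0$ as an $\mathbb{F}_p$-module).} Passing to $H_0$ and applying Lemma \ref{specialquillen} once more, we obtain
$$
H_0(L\Omega^\bullet_{\mathbb{F}_p/\mathbb{F}_p[x]})\;\cong\;\bigoplus_{i\geq 0}\bigl(\mathbb{F}_p[x]/(x^p)\bigr)\,\gamma_i(y),
$$
where $y$ is a generator of the free rank-one $\mathbb{F}_p[x]/(x^p)$-module $(x^p)/(x^{2p})$. In particular $\{\,x^j\gamma_i(y)\mid 0\leq j<p,\ i\geq 0\,\}$ is an $\mathbb{F}_p$-basis.

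\textbf{Step 3 (identification with the PD algebra).} Using the compatible product structures on $L\Omega^\bullet_{\mathbb{F}_p/\mathbb{F}_p[x]}$ (via the shuffle product) and on $\mathbb{F}_p\langle x\rangle$, we obtain a natural $\mathbb{F}_p[x]$-algebra map $\mathbb{F}_p\langle x\rangle\to H_0(L\Omega^\bullet_{\mathbb{F}_p/\mathbb{F}_p[x]})$ by sending $\gamma_k(x)$ to the image of $\gamma_k(x)$ built from the generator $x$ in degree zero. Under the identification $y\leftrightarrow \gamma_p(x)$ suggested by Step 2, the key compatibility to check is
$$
\gamma_i(y)\,\gamma_j(y)=\binom{i+j}{i}\gamma_{i+j}(y)\quad\text{versus}\quad\gamma_{ip}(x)\,\gamma_{jp}(x)=\binom{(i+j)p}{ip}\gamma_{(i+j)p}(x),
$$
which holds in $\mathbb{F}_p$ by Lucas's congruence $\binom{(i+j)p}{ip}\equiv\binom{i+j}{i}\pmod p$. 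The basis $\{x^j\gamma_i(y)\}$ of Step 2 then matches the basis $\{\gamma_{ip+j}(x)\}$ of $\mathbb{F}_p\langle x\rangle$ up to the units $(ip+j)!/(ip)!\in\mathbb{F}_p^\times$ for $0\leq j<p$, giving the desired isomorphism of $\mathbb{F}_p[x]$-algebras.

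\textbf{Expected obstacle.} Steps 1 and 2 are immediate from the previous lemmas. The only genuinely computational point is Step 3, namely checking that the product and PD structure on $H_0$ coming from the shuffle multiplication on the derived de Rham algebra really do correspond to the PD structure on $\mathbb{F}_p\langle x\rangle$. Rather than computing divided powers of $\gamma_i(y)$ directly, one can alternatively appeal to the universal property: the target $\mathbb{F}_p\langle x\rangle$ is the PD envelope of $(\mathbb{F}_p[x],(x))$, and the natural PD structure carried by the derived de Rham algebra furnishes the inverse map, bypassing the explicit verification of multiplicative identities.
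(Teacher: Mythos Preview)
Your Steps 1 and 2 coincide with the paper's proof exactly. For Step 3 the paper takes a slightly more direct route than you do: rather than invoking any PD or shuffle structure on $H_0(L\Omega^\bullet_{\mathbb{F}_p/\mathbb{F}_p[x]})$, it simply writes down an explicit $\mathbb{F}_p$-linear map between the two abstract PD algebras
\[
\mathbb{F}_p[x]/(x^p)\langle y\rangle \longrightarrow \mathbb{F}_p\langle x\rangle,\qquad x^j\gamma_i(y)\longmapsto j!\,\gamma_j(x)\,\gamma_i(\gamma_p(x)),
\]
and checks bijectivity using the composition formula \eqref{gammakgammai} for $\gamma_i\circ\gamma_p$ together with the fact that $(ip)!/(i!p^i)$ is a $p$-adic unit. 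This is the same combinatorics as your Lucas computation, but it bypasses entirely the issue you flag in your ``Expected obstacle'': no PD structure on $H_0$ need be produced, since the identification happens purely between the target $\mathbb{F}_p[x]/(x^p)\langle y\rangle$ of Step 2 and $\mathbb{F}_p\langle x\rangle$. Your more structural approach is correct in spirit but, as you yourself note, making the map $\mathbb{F}_p\langle x\rangle\to H_0$ precise via the shuffle product or a universal property requires extra work that the paper's explicit map avoids.
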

\begin{proof}
Applying the previous two lemmas, we compute
$$
L\Omega^\bullet_{\mathbb{F}_p/\mathbb{F}_p[x]}\cong \bigoplus_{i=0}^\infty L\wedge^i L_{(\mathbb{F}_p[x]/(x^p))/\mathbb{F}_p[x]}[-i]\cong \bigoplus_{i= 0}^\infty\Gamma^i((\mathbb{F}_p[x]/(x^p)) y)[i][-i]\cong \mathbb{F}_p[x]/(x^p)\langle y\rangle\ .$$

Finally, we identify the right hand side with $\mathbb{F}_p\langle x\rangle$ as follows. Noting that $\mathbb{F}_p[x]/(x^p)\langle y\rangle$ is generated over $\F_p$ by the elements $x^j\gamma_i(y)$ $(0\leq j\leq p-1)$, we define a map $\mathbb{F}_p[x]/(x^p)\langle y\rangle\to \mathbb{F}_p\langle x\rangle$ by sending $x^j\gamma_i(y)$ to $j!\gamma_j(x)\gamma_i(\gamma_p(x))$. A calculation using formula (\ref{gammakgammai}) and the fact that ${(ip)!}/(i!p^i)$ is a unit in $\mathbb{F}_p$ shows that this map is an isomorphism.
\end{proof}

We shall need a consequence of this result for the analogous situation with $\Z_p$-coefficients.

\begin{cor}\label{dRPDpolZp}
The derived de Rham algebra $L\Omega^\bullet_{\Z_p/\Z_p[x]}$  is concentrated in degree 0, and $H_0(L\Omega^\bullet_{\Z_p/\Z_p[x]})$ is a torsion-free $\Z_p$-module.
\end{cor}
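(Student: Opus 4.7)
The plan is to combine a computation of the Hodge graded pieces with a mod-$p$ base change. By Proposition \ref{cotangquotientreg} applied to the surjection $\mathbb{Z}_p[x]\twoheadrightarrow\mathbb{Z}_p$, $x\mapsto 0$, whose kernel is generated by the nonzerodivisor $x$, we have $L_{\mathbb{Z}_p/\mathbb{Z}_p[x]}\simeq \mathbb{Z}_p[1]$. Quillen's shift formula (Proposition \ref{quillen}), combined with the acyclicity of $\Gamma^i$ on the free module $\mathbb{Z}_p$, then yields $L\wedge^i L_{\mathbb{Z}_p/\mathbb{Z}_p[x]}\simeq\mathbb{Z}_p[i]$, and Proposition \ref{gridr} gives that each Hodge graded piece $\mathrm{gr}^i_F L\Omega^\bullet_{\mathbb{Z}_p/\mathbb{Z}_p[x]}$ is $\mathbb{Z}_p$ concentrated in homological degree $0$. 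Iterating the short exact sequences $0\to \mathrm{gr}^{n-1}_F\to L\Omega^\bullet/F^n\to L\Omega^\bullet/F^{n-1}\to 0$, each Hodge truncation $L\Omega^\bullet_{\mathbb{Z}_p/\mathbb{Z}_p[x]}/F^n$ is thus concentrated in degree $0$ with $H_0$ an iterated extension of copies of $\mathbb{Z}_p$, hence $\mathbb{Z}_p$-torsion-free, since torsion-freeness is preserved by extensions over the PID $\mathbb{Z}_p$.

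Next, I would use the bar resolution $Q_\bullet$ of Example \ref{barresol} to realize $L\Omega^\bullet_{\mathbb{Z}_p/\mathbb{Z}_p[x]}$, via Theorem \ref{freeresolderham}, as the total complex $\mathrm{Tot}(\Omega^\bullet_{Q_\bullet/\mathbb{Z}_p[x]})$, consisting of free $\mathbb{Z}_p$-modules and supported in homological degrees $\geq 0$ (since $\Omega^p_{Q_n/\mathbb{Z}_p[x]}$ vanishes for $p>n$). Corollary \ref{derbasechange} applied to the Tor-independent pair $\mathbb{F}_p[x]$ and $\mathbb{Z}_p$ over $\mathbb{Z}_p[x]$ — Tor-independence following from tensoring the free resolution $0\to\mathbb{Z}_p[x]\xrightarrow{p}\mathbb{Z}_p[x]\to\mathbb{F}_p[x]\to 0$ with $\mathbb{Z}_p$ along $x\mapsto 0$ — together with $\mathbb{Z}_p$-flatness of the representative, identifies $L\Omega^\bullet_{\mathbb{Z}_p/\mathbb{Z}_p[x]}\otimes_{\mathbb{Z}_p}\mathbb{F}_p\simeq L\Omega^\bullet_{\mathbb{F}_p/\mathbb{F}_p[x]}\simeq\mathbb{F}_p\langle x\rangle$ in degree $0$ (Corollary \ref{dRPDpolmodp}). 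The universal coefficient theorem applied to the flat representative then shows that $H_i(L\Omega^\bullet_{\mathbb{Z}_p/\mathbb{Z}_p[x]})$ is $p$-torsion-free for all $i\geq 0$ and $p$-divisible for all $i\geq 1$; in particular, any positive-degree homology is a $\mathbb{Q}_p$-vector space, and $H_0/p\cong\mathbb{F}_p\langle x\rangle$.

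The main obstacle is to eliminate these potential $\mathbb{Q}_p$-vector space contributions in degrees $\geq 1$. The long exact sequence for $F^n\hookrightarrow L\Omega^\bullet\twoheadrightarrow L\Omega^\bullet/F^n$, combined with the concentration in degree $0$ of each truncation, gives $H_i(F^n L\Omega^\bullet_{\mathbb{Z}_p/\mathbb{Z}_p[x]})\cong H_i(L\Omega^\bullet_{\mathbb{Z}_p/\mathbb{Z}_p[x]})$ for all $n\geq 1$ and all $i\geq 1$, reducing the problem to showing the homological triviality of $F^n$ in positive degrees. In the bar-resolution model, $F^n$ involves only Hodge degrees $\geq n$, hence simplicial degrees $\geq n$; combining this with the $p$-adic properties already established and a spectral sequence analysis for the filtration with graded pieces $\mathrm{gr}^{\geq n}_F$, each a copy of $\mathbb{Z}_p$ in degree $0$, should force $H_i(F^n)=0$ for $i\geq 1$. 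Once the vanishing $H_i(L\Omega^\bullet_{\mathbb{Z}_p/\mathbb{Z}_p[x]})=0$ for $i\geq 1$ is established, the $p$-torsion-freeness of $H_0$ follows at once from the mod-$p$ analysis above.
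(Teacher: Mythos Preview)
Your first two paragraphs are correct and largely parallel the paper's proof: the bar-resolution model, the base change to $\F_p$, and the universal-coefficient analysis all appear there and yield exactly what you claim, namely that each $H_i$ is $p$-torsion-free and that $H_i$ for $i\geq 1$ is a $\Q_p$-vector space. Your Hodge graded-piece computation in the first paragraph is an extra (correct) observation not used in the paper's proof of this corollary.

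The gap is in your third paragraph. You correctly deduce $H_i(F^n)\cong H_i(L\Omega^\bullet)$ for $i\geq 1$, but the phrase ``should force $H_i(F^n)=0$'' is not an argument. The Hodge filtration here is infinite and not known a priori to be complete, so the spectral sequence with $E_1$-page $\bigoplus_{m\geq n}\mathrm{gr}^m_F$ need not converge strongly to $H_*(F^n)$; equivalently, knowing that every truncation $L\Omega^\bullet/F^n$ is concentrated in degree $0$ does not by itself force $L\Omega^\bullet$ to be. Your observation that $F^n$ lives in simplicial degrees $\geq n$ gives no bound on the \emph{total} homological degree $m-p$, which can be any nonnegative integer.

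The paper fills this gap by a direct computation you are missing: since $H_i$ for $i\geq 1$ is already known to be a $\Q_p$-vector space, it suffices to show $L\Omega^\bullet_{\Z_p/\Z_p[x]}\otimes\Q_p\simeq L\Omega^\bullet_{\Q_p/\Q_p[x]}$ is acyclic in positive degrees. Using the bar resolution over $\Q_p$, one has $\Omega^\bullet_{\Q_p[x,x_1]/\Q_p[x]}\simeq \Q_p[x]$ in degree $0$ (the characteristic-$0$ Poincar\'e lemma), so by K\"unneth $\mathrm{Tot}(\Omega^\bullet_{(\Q_p\otimes Q_\bullet)/\Q_p[x]})$ is the chain complex of the constant simplicial object $\Q_p[x]_\bullet$, hence acyclic in positive degrees. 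This one-line rational computation replaces your incomplete spectral-sequence step.
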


\begin{proof}
If we compute $L\Omega^\bullet_{\Z_p/\Z_p[x]}$ as the total complex of $\Omega^\bullet_{Q_\bullet/\Z_p[x]}$  where $Q_\bullet\to\Z_p$ is the bar resolution, we have $\Omega^i_{Q_n/\Z_p[x]}=0$ for $i>n$, which shows that $L\Omega^\bullet_{\Z_p/\Z_p[x]}$ is concentrated in nonnegative homological degrees. Furthermore, we have quasi-isomorphisms
$$ \F_p\otimes_{\Z_p}^LL\Omega^\bullet_{\Z_p/\Z_p[x]}\simeq \F_p[x]\otimes_{\Z_p[x]}^LL\Omega^\bullet_{\Z_p/\Z_p[x]}\simeq L\Omega^\bullet_{\F_p/\F_p[x]}\simeq \F_p\langle x\rangle$$ by Corollaries \ref{derbasechange} and \ref{dRPDpolmodp}, so $\F_p\otimes_{\Z_p}^LL\Omega^\bullet_{\Z_p/\Z_p[x]}$ is a complex concentrated in degree 0. On the other hand, its homologies are computed by the K\"unneth spectral sequence
$$
E_2^{ij}={\rm Tor}_i^{\Z_p}(\F_p,H_j(L\Omega^\bullet_{\Z_p/\Z_p[x]}))\Rightarrow H_{i+j}(\F_p\otimes_{\Z_p}^LL\Omega^\bullet_{\Z_p/\Z_p[x]}).
$$
Since $\Z_p$ has flat cohomological dimension 1, we have $E_2^{ij}=0$ for $i>1$, and therefore the spectral sequence degenerates at $E_2$. Thus the vanishing of the abutment for $i+j\neq 0$ implies ${\rm Tor}_1^{\Z_p}(\F_p,H_j(L\Omega^\bullet_{\Z_p/\Z_p[x]}))=0$ for all $j$, i.e. all homologies of $L\Omega^\bullet_{\Z_p/\Z_p[x]}$ are torsion free. To finish the proof, we show that they are also torsion for $j>0$. To do so, we compute the complex $\Q_p\otimes_{\Z_p}^LL\Omega^\bullet_{\Z_p/\Z_p[x]}\simeq L\Omega^\bullet_{\Q_p/\Q_p[x]}$ by means of the bar resolution $\Q_p\otimes_{\Z_p} Q_\bullet$. For fixed $n$ we have $\Omega^\bullet_{(\Q_p\otimes Q_n)/\Q_p[x]}\cong (\Omega^\bullet_{\Q_p[x, x_1]/\Q_p[x]})^{\otimes n}$, and there is a quasi-isomorphism  $$\Omega^\bullet_{\Q_p[x, x_1]/\Q_p[x]}=(\Q_p[x, x_1]\stackrel d\to \Q_p[x, x_1]dx_1)\simeq (\Q_p[x]\to 0).$$ Thus ${\rm Tot}(\Omega^\bullet_{(\Q_p\otimes Q_\bullet)/\Q_p[x]})$ is the chain complex associated with the constant simplicial object $\Q_p[x]_\bullet$; in particular, it is acyclic in positive degrees.
\end{proof}

To pass from the case $n=1$ of Proposition \ref{dRPDpol} to the general case, we need:

\begin{lem}[D\'evissage]\label{devissage}
Let $\phi\colon A_\bullet\to B_\bullet$ be a morphism of complexes of $\mathbb{Z}/p^n\Z$-modules. If the base change map $A_\bullet\otimes^L_{\mathbb{Z}/p^n\Z}\mathbb{F}_p\to B_\bullet\otimes^L_{\mathbb{Z}/p^n\Z}\mathbb{F}_p$ is a quasi-isomorphism, then so is $\phi$.
\end{lem}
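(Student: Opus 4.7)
The plan is to reduce to a statement about acyclicity of a single complex by considering the cone, and then to proceed by induction on $n$ using a short exact sequence that compares $\mathbb{Z}/p^n\Z$, $\mathbb{F}_p$, and $\mathbb{Z}/p^{n-1}\Z$.

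First I would set $C_\bullet:=\mathrm{Cone}(\phi)$, so that $\phi$ is a quasi-isomorphism iff $C_\bullet$ is acyclic. Since the mapping cone is compatible with derived tensor products, the hypothesis translates into: $C_\bullet\otimes^L_{\mathbb{Z}/p^n\Z}\mathbb{F}_p$ is acyclic. It therefore suffices to prove the following claim: \emph{if $C_\bullet$ is a complex of $\mathbb{Z}/p^n\Z$-modules and $C_\bullet\otimes^L_{\mathbb{Z}/p^n\Z}\mathbb{F}_p$ is acyclic, then $C_\bullet$ is already acyclic.}

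I would prove this claim by induction on $n$. The case $n=1$ is tautological. For the inductive step, consider the short exact sequence of $\mathbb{Z}/p^n\Z$-modules
$$
0\to \mathbb{F}_p\xrightarrow{\,\cdot p^{n-1}\,}\mathbb{Z}/p^n\Z\to \mathbb{Z}/p^{n-1}\Z\to 0
$$
(the first map is injective because its image $p^{n-1}\mathbb{Z}/p^n\Z$ has exactly $p$ elements). Applying $C_\bullet\otimes^L_{\mathbb{Z}/p^n\Z}(-)$ yields a distinguished triangle
$$
C_\bullet\otimes^L_{\mathbb{Z}/p^n\Z}\mathbb{F}_p\to C_\bullet\to C_\bullet\otimes^L_{\mathbb{Z}/p^n\Z}\mathbb{Z}/p^{n-1}\Z.
$$
By assumption the left-hand term is acyclic, so the second arrow is a quasi-isomorphism. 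On the other hand, by the projection/base-change formula,
$$
\bigl(C_\bullet\otimes^L_{\mathbb{Z}/p^n\Z}\mathbb{Z}/p^{n-1}\Z\bigr)\otimes^L_{\mathbb{Z}/p^{n-1}\Z}\mathbb{F}_p\;\simeq\;C_\bullet\otimes^L_{\mathbb{Z}/p^n\Z}\mathbb{F}_p,
$$
which is acyclic. The inductive hypothesis, applied to the complex $C_\bullet\otimes^L_{\mathbb{Z}/p^n\Z}\mathbb{Z}/p^{n-1}\Z$ of $\mathbb{Z}/p^{n-1}\Z$-modules, then gives that this complex is acyclic, and therefore so is $C_\bullet$.

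The argument is almost entirely formal; the only genuine point to watch is that mapping cones and the distinguished triangle associated to the short exact sequence above really commute with $C_\bullet\otimes^L_{\mathbb{Z}/p^n\Z}(-)$, which one justifies by replacing $C_\bullet$ with a termwise-flat (e.g.\ free) resolution over $\mathbb{Z}/p^n\Z$ before tensoring. There is no deeper obstacle: the induction on $n$ packages the Nakayama-type step ``kill the top layer of the $p$-adic filtration, then iterate'' into a clean derived statement.
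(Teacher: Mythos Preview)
Your proof is correct and follows essentially the same strategy as the paper: reduce to acyclicity of a single complex (the paper sets $B_\bullet=0$, you take the cone), then induct on $n$ via a short exact sequence linking $\Z/p^n\Z$, $\Z/p^{n-1}\Z$, and $\F_p$. The only differences are cosmetic: the paper replaces the complex by a free one and chases elements through the sequence $0\to pA_\bullet\to A_\bullet\to A_\bullet/pA_\bullet\to 0$ (coming from $0\to\Z/p^{n-1}\Z\xrightarrow{\cdot p}\Z/p^n\Z\to\F_p\to 0$), whereas you stay in the derived category and use the other sequence $0\to\F_p\xrightarrow{\cdot p^{n-1}}\Z/p^n\Z\to\Z/p^{n-1}\Z\to 0$; your phrasing is a bit cleaner but the content is the same.
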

\begin{proof}
By exactness of the derived tensor product we reduce to the case $B_\bullet=0$. Moreover, after replacing $A_\bullet$ by a complex with free terms we may assume $A_\bullet$ has free terms. Thus we have to show that the acyclicity of $A_\bullet\otimes_{\Z/p^n\Z}\F_p$ implies that of $A_\bullet$. We use induction on $n$. If $\alpha_i\in A_i$ satisfies $d\alpha_i=0$, we have $\alpha_i=d\alpha_{i+1}+p\beta_i$ for some $\alpha_{i+1}\in A_{i+1}$ and $\beta_i\in A_i$ by acyclicity of $A_\bullet/pA_\bullet=A_\bullet\otimes_{\Z/p^n\Z}\F_p$. Since $A_\bullet$ has free terms,  multiplication by $p$ induces an isomorphism $A_\bullet/pA_\bullet\cong pA_\bullet/p^2A_\bullet$, and hence the complex of $\Z/p^{n-1}\Z$-modules $pA_\bullet$ is acyclic by induction on $n$. As $d(p\beta_i)=0$ by construction, we then find $\beta_{i+1}\in A_{i+1}$ such that $p\beta_i=d(p\beta_{i+1})$, so finally $\alpha_i=d(\alpha_{i+1}+p\beta_{i+1})$.
\end{proof}

\noindent{\em Proof of Proposition \ref{dRPDpol}.} The key point is the construction of a map
$$
\Z_p\langle x\rangle \to L\Omega^\bullet_{\mathbb{Z}_p/\mathbb{Z}_p[x]}
$$
that lifts the map
$
\F_p\langle x\rangle \to L\Omega^\bullet_{\mathbb{F}_p/\mathbb{F}_p[x]}
$
inducing the isomorphism of Corollary \ref{dRPDpolmodp} and is compatible with the filtrations on both sides.
Once such a map has been constructed, we obtain maps
$$
\mathbb{Z}/p^n\Z\langle x\rangle\to L\Omega^\bullet_{(\mathbb{Z}/p^n\Z)/(\mathbb{Z}/p^n\Z)[x]}
$$
for all $n$ by reducing modulo $p^n$. These maps are isomorphisms modulo $p$ by Corollary \ref{dRPDpolmodp}, hence isomorphisms by Lemma \ref{devissage}.

The idea of the following construction is due to Bhargav Bhatt (private communication). By Corollary \ref{dRPDpolZp} we may replace $L\Omega^\bullet_{\Z_p/\Z_p[x]}$ by its $H_0$ and consider it as an honest $\Z_p[x]$-algebra. We then claim that the structure map $\varphi:\,\Z_p[x]\to L\Omega^\bullet_{\Z_p/\Z_p[x]}$ extends to a map $\widetilde\varphi:\,\Zp\langle x\rangle{\to}L\Omega^\bullet_{\Z_p/\Z_p[x]}$. To see this, denote by $I$ the kernel of the augmentation map $L\Omega^\bullet_{\Z_p/\Zp[x]}\to \Zp$. After reducing modulo $p$ we have $$L\Omega^\bullet_{\Z_p/\Zp[x]}\otimes_{\Zp}\F_p\cong L\Omega^\bullet_{\F_p/\Fp[x]}\cong \F_p\langle x\rangle$$ by Corollary \ref{dRPDpolmodp}, with $I$ mapping to the ideal $\langle x\rangle$. In particular, for $f\in I$ with image $\overline f$ in $\langle x\rangle$ we have $$\overline{f^p}=\overline{f}^p=p!\gamma_p(\overline{f})=0,$$ showing that $f^p$ is divisible by $p$ in $L\Omega^\bullet_{\Zp/\Zp[x]}$. As $L\Omega^\bullet_{\Z_p/\Z_p[x]}$ is torsion free by Corollary \ref{dRPDpolZp}, there is a unique element ${f^p}/{p}\in L\Omega^\bullet_{\Zp/\Zp[x]}$ with $p({f^p}/{p})=f^p$. Since $f^p\in I$ and $\Zp$ is torsion free, we in fact have ${f^p}/{p}\in I$. Applying this to ${f^p}/{p}$ in place of $f$ we find ${f^{p^2}}/{p^p}\in pI$. Iterating  $k$ times we deduce $p^{\frac{p^k-1}{p-1}}\mid f^{p^k}$. For a positive integer $n$ with $p$-adic expansion $n=a_rp^r+\dots+a_1p+a_0$  this implies the divisibility $$p^{\sum_{k=0}^ra_k\frac{p^k-1}{p-1}}\mid f^{\sum_{k=0}^ra_kp^k}=f^n\ .$$ Here the left hand side is exactly the $p$-part of $n!$, so we conclude (using torsion freeness again) that there is a unique element $(f^n/n!)\in L\Omega^\bullet_{\Zp/\Zp[x]}$ with $n!(f^n/n!)=f^n$. Applying this to $f=\varphi(x)\in I$ we may then unambiguously set $\widetilde{\varphi}(\gamma_n(x)):={\varphi(x)^n}/{n!}$ for all $n$, which defines $\widetilde\phi$.

For the compatibility of the divided power filtration with the Hodge filtration it suffices to show that $\widetilde{\varphi}(\gamma_n(x))\in F^n L\Omega^\bullet_{\Zp/\Zp[x]}$ for all $n$.  Since $F^1 L\Omega^\bullet_{\Zp/\Zp[x]}$ is the kernel of the augmentation map to $\Z_p$, it contains $\varphi(x)$, and therefore $F^n L\Omega^\bullet_{\Zp/\Zp[x]}$ contains  $\varphi(x)^n=\widetilde{\varphi}(x^n)=n!\widetilde{\varphi}(\gamma_n(x))$. However, the graded pieces of the Hodge filtration are given by  $$L\wedge^i L_{\Zp/\Zp[x]}[-i]\cong \Gamma^i((x)/(x^2))$$ by Propositions \ref{gridr}, \ref{cotangquotientreg} and \ref{quillen}, and these $\Z_p$-modules are torsion free for all $i\geq 0$.  We deduce $\widetilde{\varphi}(\gamma_n(x))\in F^nL\Omega^\bullet_{\Zp/\Zp[x]}$ as required.
\enddem

The last result in this section may be viewed as an analogue of Proposition \ref{cotangquotientreg}.

\begin{theo}[Bhatt]\label{dRPD}
Assume that $A\to B$ is a surjective homomorphism of flat $\mathbb{Z}/p^n\Z$-algebras with kernel $I=(f)$ generated by a nonzerodivisor $f\in A$. The derived de Rham algebra $L\Omega^\bullet_{B/A}$ is concentrated in degree $0$ and we have an isomorphism of $A$-algebras
$$
H_0(L\Omega^\bullet_{B/A})\cong A\langle t\rangle/(t-f).
$$
Moreover, the  Hodge filtration on $L\Omega^\bullet_{B/A}$ corresponds on the right hand side to the filtration induced by the divided power filtration of $A\langle t\rangle$.
\end{theo}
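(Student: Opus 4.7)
My plan is to reduce to the universal case $A=\mathbb{Z}/p^n\mathbb{Z}[x]$, $B=\mathbb{Z}/p^n\mathbb{Z}$, $f=x$ already handled by Proposition \ref{dRPDpol}, via a base-change argument analogous to the reduction step in the proof of Proposition \ref{cotangquotientreg}. I consider the ring homomorphism $\mathbb{Z}/p^n\mathbb{Z}[x]\to A$ sending $x\mapsto f$. The Koszul sequence
$$
0\to \mathbb{Z}/p^n\mathbb{Z}[x]\stackrel{\cdot x}{\lra} \mathbb{Z}/p^n\mathbb{Z}[x]\to \mathbb{Z}/p^n\mathbb{Z}\to 0
$$
is a flat resolution of $\mathbb{Z}/p^n\mathbb{Z}$ as a $\mathbb{Z}/p^n\mathbb{Z}[x]$-module, and tensoring it with $A$ over $\mathbb{Z}/p^n\mathbb{Z}[x]$ yields the sequence $A\stackrel{\cdot f}{\lra} A$, which is injective with cokernel $B$ by the nonzerodivisor hypothesis on $f$. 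Hence $A$ and $\mathbb{Z}/p^n\mathbb{Z}$ are Tor-independent over $\mathbb{Z}/p^n\mathbb{Z}[x]$ and $B\cong A\otimes_{\mathbb{Z}/p^n\mathbb{Z}[x]}\mathbb{Z}/p^n\mathbb{Z}$. Applying Corollary \ref{derbasechange} together with Proposition \ref{dRPDpol}, I obtain a natural quasi-isomorphism
$$
L\Omega^\bullet_{B/A}\simeq A\otimes^L_{\mathbb{Z}/p^n\mathbb{Z}[x]}L\Omega^\bullet_{\mathbb{Z}/p^n\mathbb{Z}/\mathbb{Z}/p^n\mathbb{Z}[x]}\simeq A\otimes^L_{\mathbb{Z}/p^n\mathbb{Z}[x]}\mathbb{Z}/p^n\mathbb{Z}\langle x\rangle
$$
which, by inspection of the proofs, is compatible with the Hodge filtration on the left-hand side and with the one induced by the divided power filtration $\langle x\rangle^{[i]}$ of $\mathbb{Z}/p^n\mathbb{Z}\langle x\rangle$ on the right.

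Next I compute this derived tensor product using the length-one flat resolution
$$
0\to A\otimes_{\mathbb{Z}/p^n\mathbb{Z}}\mathbb{Z}/p^n\mathbb{Z}[x]\stackrel{\cdot(x-f)}{\lra} A\otimes_{\mathbb{Z}/p^n\mathbb{Z}}\mathbb{Z}/p^n\mathbb{Z}[x]\to A\to 0
$$
of $A$ as a $\mathbb{Z}/p^n\mathbb{Z}[x]$-module: its terms are flat over $\mathbb{Z}/p^n\mathbb{Z}[x]$ because $A$ is flat over $\mathbb{Z}/p^n\mathbb{Z}$, and exactness on the left holds because $x-f$ is a monic polynomial in $A[x]$. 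Tensoring this resolution with $\mathbb{Z}/p^n\mathbb{Z}\langle x\rangle$ over $\mathbb{Z}/p^n\mathbb{Z}[x]$ and relabelling the PD-generator as $t$ for clarity, one arrives at the two-term complex
$$
A\langle t\rangle\stackrel{\cdot(t-f)}{\lra}A\langle t\rangle,
$$
where $t-f$ is now interpreted as an element of the ring $A\langle t\rangle$. A direct calculation in the $A$-basis $\{\gamma_i(t)\}_{i\geq 0}$ gives $(t-f)\sum_i a_i\gamma_i(t)=\sum_i(ia_{i-1}-fa_i)\gamma_i(t)$ with the convention $a_{-1}=0$; setting this to zero and using that $f$ is a nonzerodivisor on $A$ forces $a_0=0$, then $a_1=0$, and by induction $a_i=0$ for all $i$. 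Hence the map is injective, the complex is concentrated in degree $0$, and $H_0\cong A\langle t\rangle/(t-f)$, giving the first claim.

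For the Hodge filtration I repeat the base-change computation with the sub-$\mathbb{Z}/p^n\mathbb{Z}[x]$-module $\langle x\rangle^{[i]}\subset \mathbb{Z}/p^n\mathbb{Z}\langle x\rangle$ in place of the whole. The same flat resolution produces the analogous two-term complex $\langle t\rangle^{[i]}A\langle t\rangle\stackrel{\cdot(t-f)}{\lra}\langle t\rangle^{[i]}A\langle t\rangle$; the injectivity argument transfers verbatim to this sub-$A$-module, and a parallel induction (showing that $(t-f)\eta\in\langle t\rangle^{[i]}A\langle t\rangle$ forces the coefficients $b_0,\ldots,b_{i-1}$ of $\eta$ to vanish) shows that the inclusion of sub-complexes into the full complex induces an injection on $H_0$. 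Its image in $A\langle t\rangle/(t-f)$ is exactly the $i$-th piece of the divided power filtration. The main technical subtlety I foresee is making precise the filtration-compatible version of Corollary \ref{derbasechange}, which is not stated there; however, its proof via the standard simplicial resolution $P_\bullet$ goes through unchanged for the subcomplexes $\Omega^{\geq i}_{P_\bullet/A}$, so the filtered refinement follows. Everything else reduces to the elementary computations above.
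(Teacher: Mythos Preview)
Your argument is correct and follows the same strategy as the paper: both reduce to the universal case $\mathbb{Z}/p^n\mathbb{Z}[x]\to\mathbb{Z}/p^n\mathbb{Z}$ via the Tor-independent base change along $x\mapsto f$, invoke Proposition~\ref{dRPDpol}, and then compute $A\otimes^{L}_{\mathbb{Z}/p^n\mathbb{Z}[x]}\mathbb{Z}/p^n\mathbb{Z}\langle x\rangle$ as the cokernel of multiplication by $t-f$ on $A\langle t\rangle$. The only difference is in how the vanishing of the higher Tor is established: the paper proves a separate lemma that $A$ and $\mathbb{Z}/p^n\mathbb{Z}\langle x\rangle$ are Tor-independent over $\mathbb{Z}/p^n\mathbb{Z}[x]$ by d\'evissage to $n=1$ (where $\mathbb{F}_p\langle x\rangle$ splits as copies of $\mathbb{F}_p[x]/(x^p)$), whereas you compute the derived tensor product directly by resolving $A$ via $A[x]\stackrel{x-f}{\to}A[x]$ and checking by hand that $t-f$ is a nonzerodivisor on $A\langle t\rangle$ using the basis $\{\gamma_i(t)\}$ and the hypothesis that $f$ is a nonzerodivisor in $A$---this is slightly more elementary and self-contained, at the cost of the small explicit induction.
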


The proof uses the following lemma.

\begin{lem} The $\Z/p^n\Z[x]$-algebras $A$ and $\Z/p^n\Z\langle x\rangle$ are Tor-independent, where $A$ is
considered  as a $\Z/p^n\Z[x]$-algebra via the map $x\mapsto f$.
\end{lem}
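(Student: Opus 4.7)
The plan is to exhibit an explicit short flat resolution of $A$ over $\Z/p^n\Z[x]$ and then compute the resulting complex after tensoring with $\Z/p^n\Z\langle x\rangle$ directly.

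Since $A$ is flat over $\Z/p^n\Z$, the $\Z/p^n\Z[x]$-module $A[x]:=A\otimes_{\Z/p^n\Z}\Z/p^n\Z[x]$ is flat over $\Z/p^n\Z[x]$ by base change. The element $x-f\in A[x]$ is monic in the polynomial variable, hence a nonzerodivisor, and the quotient $A[x]/(x-f)$ is canonically isomorphic to $A$ via $x\mapsto f$. I would package this into the length-one flat resolution
$$0\to A[x]\xrightarrow{\,\cdot(x-f)\,}A[x]\to A\to 0$$
of the $\Z/p^n\Z[x]$-module $A$, so that $\Tor_i^{\Z/p^n\Z[x]}(A,\Z/p^n\Z\langle x\rangle)$ is computed by tensoring this two-term complex with $\Z/p^n\Z\langle x\rangle$.

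Using the natural identification
$$A[x]\otimes_{\Z/p^n\Z[x]}\Z/p^n\Z\langle x\rangle\;\cong\;A\otimes_{\Z/p^n\Z}\Z/p^n\Z\langle x\rangle\;=:\;A\langle x\rangle,$$
multiplication by $x$ (which lies in the subring $\Z/p^n\Z[x]$) passes across the tensor product and acts via the divided power structure $x\cdot\gamma_i(x)=(i+1)\gamma_{i+1}(x)$, while multiplication by $f\in A\subset A[x]$ remains on the left factor and acts on $A$. Hence the induced differential is literally $x-f$ on $A\langle x\rangle$, interpreted in this mixed fashion, and Tor-independence reduces to its injectivity.

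The only substantive calculation, and in my view the main (rather mild) obstacle, is this injectivity. I would expand a general element uniquely as $\xi=\sum_{i\geq 0}a_i\gamma_i(x)$ with $a_i\in A$ almost all zero. The relation $(x-f)\xi=0$ then unfolds as $-fa_0+\sum_{i\geq 1}(i\,a_{i-1}-fa_i)\gamma_i(x)=0$, equivalently $fa_0=0$ together with the recursion $i\,a_{i-1}=fa_i$ for $i\geq 1$. Since $f$ is a nonzerodivisor in $A$, the first equation forces $a_0=0$, and an immediate induction on $i$ then gives $a_i=0$ for all $i$. This yields $\Tor_i^{\Z/p^n\Z[x]}(A,\Z/p^n\Z\langle x\rangle)=0$ for every $i>0$, as desired.
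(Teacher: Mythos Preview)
Your proof is correct and genuinely different from the paper's. The paper resolves the other factor: it takes a free resolution $F_\bullet\to\Z/p^n\Z\langle x\rangle$ over $\Z/p^n\Z[x]$, then uses the d\'evissage Lemma~3.11 to reduce to $n=1$; there it invokes the structural fact that $\F_p\langle x\rangle$ is a direct sum of copies of $\F_p[x]/(x^p)$ as an $\F_p[x]$-module, and finishes by checking Tor-independence of $A/pA$ and $\F_p[x]/(x^p)$ as in Proposition~2.13 (which uses that $f$ remains a nonzerodivisor modulo $p$, a consequence of the flatness of $B=A/(f)$ over $\Z/p^n\Z$).

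Your approach bypasses all of this by resolving $A$ instead: the two-term flat resolution $A[x]\xrightarrow{x-f}A[x]$ immediately kills $\Tor_{\geq 2}$, and the injectivity of $x-f$ on $A\langle x\rangle$ follows from a short recursion using only that $f$ is a nonzerodivisor in $A$ itself. This is more elementary and self-contained; the paper's route, on the other hand, fits its broader pattern of reducing modulo $p$ via d\'evissage and reusing the explicit description of $\F_p\langle x\rangle$ already established in Corollary~3.10.
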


\begin{proof} Take a resolution $F_\bullet \to \mathbb{Z}/(p^n)\langle x\rangle$ by free $\mathbb{Z}/p^n\Z[x]$-modules. We show that $A\otimes_{\mathbb{Z}/(p^n)[x]}F_\bullet$ is acyclic in positive degrees. To do so, we reduce by d\'evissage (Lemma \ref{devissage}) to proving acyclicity of $$\F_p\otimes_{\Z/p^n\Z}(A\otimes_{\mathbb{Z}/p^n\Z[x]}F_\bullet)\cong (\F_p\otimes_{\Z/p^n\Z}A)\otimes_{\F_p[x]}(\F_p\otimes_{\Z/p^n\Z}F_\bullet). $$
Since both $\Z/p^n\Z\langle x\rangle$ and the terms of $F_\bullet$ are free over $\Z/p^n\Z$, the base change $\F_p\otimes_{\Z/p^n\Z}F_\bullet$ is a free resolution of $\F_p\langle x\rangle$ over $\F_p[x]$, so we reduce to proving acyclicity of $A\otimes_{\mathbb{Z}/(p^n)[x]}F_\bullet$ in the case $n=1$. But then $\mathbb{F}_p\langle x\rangle$ is isomorphic to a direct sum of copies of $\mathbb{F}_p[x]/(x^p)$  as an $\mathbb{F}_p[x]$-module, so it suffices to show Tor-independence of $A$ and $\mathbb{F}_p[x]/(x^p)$ over $\F_p[x]$. This is verified as in the proof of Proposition \ref{cotangquotientreg}.
\end{proof}

\noindent{\em Proof of theorem \ref{dRPD}.}  As in the proof of Proposition \ref{cotangquotientreg}, we see that the  $\Z/p^n\Z[x]$-algebras $\Z/p^n\Z$ and  $A$ are Tor-independent, and therefore by the base change property of derived de Rham algebras we have a quasi-isomorphism
$$
L\Omega^\bullet_{(\mathbb{Z}/p^n\Z\otimes_{\mathbb{Z}/p^n\Z[x]} A)/A}\simeq
L\Omega^\bullet_{(\mathbb{Z}/p^n\Z)/({\mathbb{Z}/p^n\Z[x]})}\otimes_{\mathbb{Z}/p^n\Z[x]}^LA.
$$
On the other hand, we have
$$
L\Omega^\bullet_{(\mathbb{Z}/p^n\Z)/({\mathbb{Z}/p^n\Z[x]})}\otimes_{\mathbb{Z}/p^n\Z[x]}^LA\simeq \mathbb{Z}/p^n\Z\langle x\rangle\otimes_{\mathbb{Z}/p^n[x]}^L A\cong \mathbb{Z}/p^n\Z\langle x\rangle\otimes_{\mathbb{Z}/p^n[x]}A
$$
in view of Proposition \ref{dRPDpol} and the lemma above.

So we obtain that $L\Omega^\bullet_{B/A}$ is concentrated in degree zero and compute its $0$-th cohomology as
$$
\mathbb{Z}/p^n\Z\langle x\rangle\otimes_{\mathbb{Z}/p^n\Z[x]}A\cong\cok (\mathbb{Z}/p^n\Z[x]\langle t\rangle\overset{(x-t)\cdot}{\to}\mathbb{Z}/p^n\Z[x]\langle t\rangle)\otimes_{\mathbb{Z}/p^n\Z[x]}A
\cong A\langle t\rangle/(f-t).
$$
The equality of the Hodge filtration with the PD filtration follows from the equality of these filtrations in Proposition \ref{dRPDpol}.
\enddem

\begin{rema}\rm
Note that the divided power structure on $A\langle t\rangle$ induces one one the quotient by $(f-t)$. Indeed, since $f$ is not a zerodivisor in $B$, for $\alpha\in A\langle t\rangle$ we have $(f-t)\alpha\in \langle t\rangle$ if and only if $\alpha\in\langle t\rangle$, so we may consider the $A\langle t\rangle/(t-f)$-module $\langle t\rangle/(t-f)\langle t\rangle$ and induce divided power operations on the quotient using axioms (3) and (4) of Definition \ref{divpower}.
\end{rema}

\subsection{The $p$-completed derived de Rham algebra of $\OCK/\OK$.}

We now apply the theorem to the surjection $\theta:\, A_{\rm inf}\to \OCK$ introduced in the previous section. Modulo $p^n$ it induces a map $\theta_n:\, A_{\rm inf}/(p^n)\to\OCK/(p^n)$. By Corollary \ref{corfontaine} the kernel of $\theta_n$ is a principal ideal; denote by $\xi_n$ a generator.

\begin{cor}\label{dROCKAinf} The derived de Rham algebra $L\Omega^\bullet_{(\OCK/(p^n))/(A_{\rm inf}/(p^n))}$ is concentrated in degree 0, and we have a filtered isomorphism
$$H_0(L\Omega^\bullet_{(\OCK/(p^n))/(A_{\rm inf}/(p^n))})\cong (A_{\rm inf}/(p^n))\langle t\rangle/(t-\xi_n).$$
\end{cor}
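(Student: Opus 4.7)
The plan is to recognise that the hypotheses of Theorem~\ref{dRPD} are exactly satisfied for the mod-$p^n$ surjection $\theta_n\colon A_{\rm inf}/(p^n)\to\OCK/(p^n)$ with $f=\xi_n$, and then read off the conclusion directly. So the work consists of three verifications: the flatness of the source and target over $\Z/p^n\Z$, the principality of $\ker(\theta_n)$, and the nonzerodivisor property of $\xi_n$.

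For flatness, Proposition~\ref{lifthom}(1) produces $A_{\rm inf}=W((\OKB/(p))^{\rm perf})$ as a $p$-adically complete flat $\Z_p$-algebra, so $A_{\rm inf}/(p^n)$ is flat over $\Z/p^n\Z$. Similarly, $\OCK$ is the $p$-adic completion of the $\Z_p$-flat ring $\OKB$, hence $p$-torsion-free, so $\OCK/(p^n)$ is flat over $\Z/p^n\Z$. Principality of $\ker(\theta_n)=(\xi_n)$ then follows by reducing modulo $p^n$ the principality of $\ker(\theta)\subset A_{\rm inf}$ established in Corollary~\ref{corfontaine} (taking $\OK=\Z_p$ trivialises the tensor factor there).

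The key technical point is showing $\xi_n$ is a nonzerodivisor in $A_{\rm inf}/(p^n)$. I would first check that $\xi$ itself is a nonzerodivisor in $A_{\rm inf}$: its image modulo $p$ lies in the integral domain $(\OKB/(p))^{\rm perf}$ (the perfection of the valuation domain $\OKB/(p)$), and is nonzero because otherwise $\ker(\theta)\subset pA_{\rm inf}$ would force $\OCK=A_{\rm inf}/(\xi)$ to surject onto $A_{\rm inf}/(p)$, a contradiction; combining with the $p$-adic separatedness of $A_{\rm inf}$ then gives that $\xi$ itself is a nonzerodivisor. To pass to $\xi_n$, suppose $\tilde a\xi\equiv 0\pmod{p^n}$, so $\tilde a\xi=p^n b$ for some $b\in A_{\rm inf}$. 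Applying $\theta$ gives $p^n\theta(b)=0$, whence $\theta(b)=0$ by $p$-torsion-freeness of $\OCK$, so $b=\xi b_1$; then $(\tilde a-p^n b_1)\xi=0$ forces $\tilde a\in p^nA_{\rm inf}$, as required.

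With these verifications in hand, Theorem~\ref{dRPD} applies directly with $A=A_{\rm inf}/(p^n)$, $B=\OCK/(p^n)$, and $f=\xi_n$: it gives that $L\Omega^\bullet_{(\OCK/(p^n))/(A_{\rm inf}/(p^n))}$ is concentrated in degree $0$, that its $H_0$ identifies with $(A_{\rm inf}/(p^n))\langle t\rangle/(t-\xi_n)$, and that the Hodge filtration is transported to the divided power filtration, which is the stated conclusion. The only genuinely delicate step is the nonzerodivisor check for $\xi$ (and its transfer to $\xi_n$); the rest is a matter of citing the structural results on $A_{\rm inf}$ and the flatness of $\OCK$.
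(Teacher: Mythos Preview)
Your overall strategy matches the paper's: both reduce to verifying that $\xi_n$ is a nonzerodivisor in $A_{\rm inf}/(p^n)$ and then invoke Theorem~\ref{dRPD}. There is, however, an error in your justification: $\OKB/(p)$ is \emph{not} a domain --- for $a,b\in\OKB$ with $0<v(a),v(b)<1$ and $v(a)+v(b)\geq 1$ one has $ab=0$ in $\OKB/(p)$ while neither factor vanishes. Your conclusion that the perfection $(\OKB/(p))^{\rm perf}$ is an integral domain is nevertheless correct (it is a valuation ring, the ring of integers of the tilt $\C_K^\flat$), but this fact is not established in the paper and requires a separate argument. Once it is granted, your reasoning is sound, and your lifting argument for passing from $\xi$ to $\xi_n$ is clean and correct.

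The paper avoids appealing to the domain property altogether: it represents $\xi_1\in(\OKB/(p))^{\rm perf}$ explicitly by the sequence $(p^{1/p^k}\bmod p)_{k\geq 0}$ and checks by a direct valuation estimate that if $(\mu_k)\cdot\xi_1=0$ then each $\mu_k\equiv 0\pmod p$; it then passes to general $n$ by d\'evissage on the $p$-adic filtration. The paper's route is thus self-contained with respect to the material developed so far, whereas yours is more conceptual but leans on structural facts about the tilt that lie outside the text.
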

\begin{proof} In order to be able to apply Theorem \ref{dRPD} we have to check that the generator $\xi_n$ of  $\ker(\theta_n)$ is not a zero-divisor.
Denote by $\xi_1$ its image in $A_{inf}/(p)=(\OKB/(p))^{\rm perf}$, and represent $\xi_1$ by a sequence $(\xi_1^{(k)})$ of elements of the form $\xi_1^{(k)}=\lambda_k+(p)$ with some choice  $\lambda_k=\sqrt[p^k]{p}$ of a compatible system of $p$-power roots of $p$. Were $\xi_1$ a zero divisor in $A_{inf}/(p)$, there would be a sequence $(\mu_k)\subset\OCK$ with $\mu_{k+1}^p=\mu_k$ such that $v_p(\mu_k\lambda_k)\geq 1$ for all $k\geq 0$. Since $v_p(\lambda_k)=1/p^k$, we obtain $v_p(\mu_k)\geq 1-1/p^k$ for all $k\geq 0$. However, this means that $v_p(\mu_k)=pv_p(\mu_{k+1})\geq p-1/p^k\geq 1$ for all $k\geq 0$, so the class of the sequence $(\mu_k+(p))$ is zero in $A_{inf}/(p)$, a contradiction. Thus $\xi_1$ is not a zero-divisor, and neither is $\xi_n$ by a d\'evissage argument.
\end{proof}

Assume now that $K|\Q_p$ is an {\em unramified extension}. In this case $\Ainf=W((\OKB/(p))^{\rm perf})$ has an $\OK$-algebra structure via the canonical map
$$
\OK=W(\OK/(p))\to W((\OKB/(p))^{\rm perf})
$$
lifting the inclusion $\OK/(p)\to (\OKB/(p))^{\rm perf}$ according to Proposition \ref{lifthom} (2). Moreover, we have an $\OK$-algebra map $\Ainf\to \OCK$. An important observation of Bhatt is that modulo $p^n$ we may compare the associated derived de Rham algebra with that of $\OKB$ over $\OK$. This is enabled by the following general lemma.

\begin{lem}\label{relperfcot0hodgetrunc}
Let $A\to B$ be a flat map of $\mathbb{Z}/p^n\Z$-algebras such that both $A/pA$ and $B/pB$ are perfect $\F_p$-algebras.
\begin{enumerate}
\item We have $L_{B/A}\simeq 0$.
\item If $C$ is a $B$-algebra, we have a quasi-isomorphism $$L\widehat\Omega^\bullet_{C/B}\simeq L\widehat\Omega^\bullet_{C/A}$$ of Hodge-completed derived de Rham algebras.
\end{enumerate}
\end{lem}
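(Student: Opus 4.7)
My approach is to prove (1) by reducing via devissage and flat base change to the vanishing of the cotangent complex of a perfect $\mathbb{F}_p$-algebra over $\mathbb{F}_p$, and then to deduce (2) from (1) by a Hodge-graded analysis.

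For (1), I would first apply Lemma \ref{devissage}: in order to show $L_{B/A}\simeq 0$ it suffices to prove that $L_{B/A}\otimes^L_{\mathbb{Z}/p^n\mathbb{Z}}\mathbb{F}_p\simeq 0$. Since $A$ is flat over $\mathbb{Z}/p^n\mathbb{Z}$ and $B$ is flat over $A$, the rings $A$ and $\mathbb{F}_p$ are Tor-independent over $\mathbb{Z}/p^n\mathbb{Z}$, so by Lemma \ref{cotangflatbasechange} the above derived tensor product is quasi-isomorphic to $L_{(B/pB)/(A/pA)}$. The transitivity triangle (Theorem \ref{cotangexacttriang}) for $\mathbb{F}_p\to A/pA\to B/pB$ then reduces the problem to the claim that $L_{R/\mathbb{F}_p}\simeq 0$ for every perfect $\mathbb{F}_p$-algebra $R$. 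For this last vanishing, the key observation is that the Frobenius $F:R\to R$ is a ring isomorphism by perfectness, and it lifts to a simplicial $\mathbb{F}_p$-algebra endomorphism $F_\bullet:P_\bullet\to P_\bullet$ of the standard resolution of $R$ (namely, the $p$-th power map, which is a ring homomorphism in characteristic $p$ and commutes with all face and degeneracy maps). By functoriality of the cotangent complex, $F$ being a ring isomorphism makes the induced endomorphism $F_*$ of $L_{R/\mathbb{F}_p}$ a quasi-isomorphism. On the other hand, on each $\Omega^1_{P_n/\mathbb{F}_p}$ the Frobenius acts by $db\mapsto d(b^p)=pb^{p-1}db=0$, so $F_*$ is literally the zero map on the representing complex $R\otimes_{P_\bullet}\Omega^1_{P_\bullet/\mathbb{F}_p}$. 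Being simultaneously a quasi-isomorphism and the zero map forces $L_{R/\mathbb{F}_p}\simeq 0$.

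For (2), apply the transitivity triangle of Theorem \ref{cotangexacttriang} to the sequence $A\to B\to C$: the vanishing $L_{B/A}\simeq 0$ from (1) makes the canonical map $L_{C/A}\to L_{C/B}$ a quasi-isomorphism. Taking derived wedge powers preserves quasi-isomorphisms, hence $L\wedge^i L_{C/A}\overset\sim\to L\wedge^i L_{C/B}$ for every $i\geq 0$, and by Proposition \ref{gridr} these compute, up to shift, the $i$-th Hodge-graded pieces of $L\Omega^\bullet_{C/A}$ and $L\Omega^\bullet_{C/B}$. Consequently the natural morphism $L\Omega^\bullet_{C/A}\to L\Omega^\bullet_{C/B}$ induces a quasi-isomorphism on associated graded objects for the Hodge filtration. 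Induction on $i$ using the short exact sequences $0\to \mathrm{gr}^i_F\to L\Omega^\bullet_{C/\cdot}/F^{i+1}\to L\Omega^\bullet_{C/\cdot}/F^i\to 0$ (together with the five lemma on long exact homology sequences) shows that $L\Omega^\bullet_{C/A}/F^{i}\to L\Omega^\bullet_{C/B}/F^{i}$ is a quasi-isomorphism for every $i$, and passing to the projective system gives the desired $L\widehat\Omega^\bullet_{C/A}\simeq L\widehat\Omega^\bullet_{C/B}$. The main technical obstacle lies in Step (1): one must verify that Frobenius genuinely lifts to a simplicial endomorphism of the standard resolution compatible with the augmentation and that the abstract-functorial and explicit-resolution descriptions of $F_*$ coincide---but both verifications are routine given that in characteristic $p$ the $p$-th power map is a ring homomorphism compatible with all the simplicial structure.
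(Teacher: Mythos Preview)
Your proof is correct and follows essentially the same strategy as the paper's. Both reduce part (1) by d\'evissage to the case of perfect $\F_p$-algebras and then exploit that Frobenius induces simultaneously an isomorphism and the zero map on the cotangent complex; both deduce (2) from (1) via the transitivity triangle and Proposition~\ref{gridr}, checking the comparison on Hodge-graded pieces and then passing to the projective system.

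The one genuine variation is in (1): after reducing modulo $p$ you insert an extra transitivity step for $\F_p\to A/pA\to B/pB$ to reduce to $L_{R/\F_p}$ for a single perfect ring $R$, and then run the \emph{absolute} Frobenius argument over $\F_p$. The paper instead stays over the perfect base $A/pA$ and uses the \emph{relative} Frobenius $B^{(1)}\to B$, which is an $A/pA$-algebra isomorphism because both $A/pA$ and $B/pB$ are perfect. Since over $\F_p$ relative and absolute Frobenius coincide, your argument is the paper's argument specialized to base $\F_p$; the extra transitivity reduction is valid but not needed, and the paper's version avoids having to separately justify that the explicit $p$-th-power lift on $P_\bullet$ agrees with the functorial one (this is packaged into the relative-Frobenius formalism of Facts~\ref{frob}).

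One small caveat: you assert that $A$ is flat over $\Z/p^n\Z$, but this is not among the hypotheses. What Lemma~\ref{cotangflatbasechange} actually gives you, using only that $B$ is flat over $A$, is $L_{B/A}\otimes^L_A(A/pA)\simeq L_{(B/pB)/(A/pA)}$; identifying this further with $L_{B/A}\otimes^L_{\Z/p^n\Z}\F_p$ as you do would require $A$ to be $\Z/p^n\Z$-flat. The paper's terse ``by d\'evissage (Lemma~\ref{devissage})'' glosses over the same point, and in every application of the lemma in the paper $A$ is in fact $\Z/p^n\Z$-flat, so the issue is harmless in context.
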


In the second statement the quasi-isomorphism is to be understood as a projective system of compatible quasi-isomorphisms $L\Omega^\bullet_{C/B}/F^i\simeq L\Omega^\bullet_{C/A}/F^i$.

\begin{proof} It is enough to verify the first statement for $n=1$ by d\'evissage (Lemma \ref{devissage}). So assume $A$ and $B$ are perfect $\F_p$-algebras, and recall the basics about relative Frobenii explained in Facts \ref{frob}. The $A$-isomorphism $B^{(1)}\stackrel\sim\to B$ established there induces an isomorphism of cotangent complexes $L_{B^{(1)}/A}\stackrel\sim\to L_{B/A}$. To compute it, consider the standard resolution $P_\bullet\to B$. As in the proof of Lemma \ref{Cartierpol}, for a free $A$-algebra $P=A[x_i\mid i\in I]$ if we identify $A$ with $A^{(1)}$ via the Frobenius map, the morphism of $A$-algebras $P^{(1)}\to P$ constructed above becomes identified with the map $P\to P$ which is the identity on $A$ and sends each $x_i$ to $x_i^p$. In view of the equalities $dx_i^p=px_i^{p-1}dx_i=0$ in characteristic $p$, the map $\Omega^1_{P^{(1)}/A}\to  \Omega^1_{P/A}$ induced by the morphism $P^{(1)}\to P$ is 0. Applying this to the terms of $P_\bullet$, we obtain that the isomorphism $B^{(1)}\stackrel\sim\to B$ induces the zero map $L_{B^{(1)}/A}\to L_{B/A}$, which is only possible if $L_{B/A}=0$.

For the second statement, observe that  by functoriality of modules of differentials the morphism $A\to B$  induces a morphism of derived de Rham algebras $L\Omega^\bullet_{C/A}\to L\Omega^\bullet_{C/B}$ compatible with the Hodge filtration, whence also a morphism $L\widehat\Omega^\bullet_{C/A}\to L\widehat\Omega^\bullet_{C/B}$ on Hodge completions. It is an isomorphism if and only if the induced map on associated graded objects is, so by Proposition \ref{gridr} it suffices to show that $L_{C/A}\to L_{C/B}$ is an isomorphism. In view of the transitivity triangle for cotangent complexes (Proposition \ref{cotangexacttriang}), this in turn follows from the vanishing of $L_{B/A}$, which is statement (1).
\end{proof}

\begin{rema}\label{conjrema}\rm
In (\cite{Bh2}, Corollary 3.8 and Lemma 8.3(5)), Bhatt proves that the conclusion of statement (2) holds also for the uncompleted derived de Rham algebras: under the assumptions of the lemma we have a quasi-isomorphism $L\Omega^\bullet_{C/B}\simeq L\Omega^\bullet_{C/A}$. The proof follows the same pattern as above, except that instead of the Hodge filtration on $L\Omega^\bullet_{B/A}$ it uses the {\em conjugate filtration} $F^{\rm conj}$. It is an {\em increasing} filtration induced by canonical truncations on the de Rham complexes $\Omega^\bullet_{P_n/A}$:

 $$F^{\rm conj}_i(\Omega^j_{P_n/A}):=\begin{cases} \Omega^j_{P_n/A}&\text{if }j<i\\ \Ker(\Omega^i_{P_n/A}\overset{d}{\to} \Omega^{i+1}_{P_n/A})&\text{if }j=i\\ 0&\text{if }j>i\ .\end{cases}$$
The key point then is that modulo $p$ the Cartier isomorphism splits the conjugate filtration: there is a direct sum decomposition
\begin{equation*}
\mathrm{gr}^i_{F_{\rm conj}}(L\Omega^\bullet_{B/A})\cong L\wedge^i L_{B^{(1)}/A}[i]
\end{equation*}
for a map of $\F_p$-algebras $A\to B$ induced by the Cartier isomorphism (\cite{Bh2}, Lemma 3.5). The rest of the argument is then the same as above.
\end{rema}

\begin{cor}\label{corperfect}
Assume $K|\Q_p$ is unramified. The $\OK$-algebra map $\Ainf\to\OCK$ induces an isomorphism of Hodge-completed derived de Rham algebras
\begin{equation}\label{corperfecteq}
L\widehat\Omega^\bullet_{(\OCK/(p^n))/\Ainf/(p^n)}\simeq L\widehat\Omega^\bullet_{(\OCK/(p^n))/(\OK/(p^n))}=L\widehat\Omega^\bullet_{(\OKB/(p^n))/(\OK/(p^n))}
\end{equation}
for all $n>0$. Hence $L\widehat\Omega^\bullet_{(\OKB/(p^n))/(\OK/(p^n))}$ is concentrated in degree 0, where its homology is isomorphic to the completion of $(A_{\rm inf}/(p^n))\langle t\rangle/(t-\xi_n)$ with respect to its divided power filtration.
\end{cor}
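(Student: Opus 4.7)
The strategy is to apply the second statement of Lemma \ref{relperfcot0hodgetrunc} with $A=\OK/(p^n)$, $B=\Ainf/(p^n)$, $C=\OCK/(p^n)$, and then combine the resulting quasi-isomorphism with Corollary \ref{dROCKAinf}. Thus the first thing to check is that the hypotheses of the lemma hold in this setup. For perfectness: since $K|\Q_p$ is unramified, the residue field $k=\OK/(p)$ is a finite, hence perfect, extension of $\F_p$, while $\Ainf/(p)=(\OKB/(p))^{\rm perf}$ is perfect by construction as a perfection. For flatness of $\OK/(p^n)\to \Ainf/(p^n)$: in the unramified case $\OK=W(k)$ is a Witt ring, and the structure map $\OK\to\Ainf$ is precisely the one provided by Proposition \ref{lifthom}(2) for the inclusion $k\hookrightarrow (\OKB/(p))^{\rm perf}$ of $\F_p$-algebras. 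Since $k$ is a field, this inclusion is automatically flat; lifting flatness along $p$ via d\'evissage (Lemma \ref{devissage}) using the fact that both $\OK$ and $\Ainf$ are $p$-torsion free by construction shows that $\OK/(p^n)\to \Ainf/(p^n)$ is flat for every $n\geq 1$.

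With the hypotheses verified, Lemma \ref{relperfcot0hodgetrunc}(2) applied to the tower $\OK/(p^n)\to \Ainf/(p^n)\to \OCK/(p^n)$ gives the filtered quasi-isomorphism
$$
L\widehat\Omega^\bullet_{(\OCK/(p^n))/(\Ainf/(p^n))}\;\simeq\;L\widehat\Omega^\bullet_{(\OCK/(p^n))/(\OK/(p^n))}.
$$
The equality $L\widehat\Omega^\bullet_{(\OCK/(p^n))/(\OK/(p^n))}=L\widehat\Omega^\bullet_{(\OKB/(p^n))/(\OK/(p^n))}$ is then immediate, because $\OCK$ is by definition the $p$-adic completion of $\OKB$ and therefore $\OCK/(p^n)=\OKB/(p^n)$ as $\OK/(p^n)$-algebras; the derived de Rham algebra depends only on the latter data.

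For the final assertion, recall from Corollary \ref{dROCKAinf} that $L\Omega^\bullet_{(\OCK/(p^n))/(\Ainf/(p^n))}$ is concentrated in degree $0$ and equal to $(\Ainf/(p^n))\langle t\rangle/(t-\xi_n)$, and that via Theorem \ref{dRPD} its Hodge filtration agrees with the one induced by the divided power filtration of $(\Ainf/(p^n))\langle t\rangle$. Passing to the Hodge completion therefore yields that $L\widehat\Omega^\bullet_{(\OCK/(p^n))/(\Ainf/(p^n))}$ is still concentrated in degree $0$ (Hodge completion being an inverse limit of quotients by the Hodge filtration, which commutes with taking $H_0$ here since each quotient is already concentrated in that degree), with $H_0$ equal to the completion of $(\Ainf/(p^n))\langle t\rangle/(t-\xi_n)$ with respect to its divided power filtration. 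Transporting this conclusion along the filtered quasi-isomorphism above produces the stated description of $L\widehat\Omega^\bullet_{(\OKB/(p^n))/(\OK/(p^n))}$.

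The only genuine subtlety I anticipate is the flatness verification for $\OK/(p^n)\to\Ainf/(p^n)$, since without it one cannot apply Lemma \ref{relperfcot0hodgetrunc}; everything else is a matter of putting together previously established results. In the ramified case this flatness would already be more delicate, which is precisely why the corollary is stated under the unramified hypothesis.
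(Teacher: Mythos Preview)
Your proof is correct and follows the same approach as the paper: verify the hypotheses of Lemma \ref{relperfcot0hodgetrunc}(2) for the tower $\OK/(p^n)\to\Ainf/(p^n)\to\OCK/(p^n)$, then invoke Corollary \ref{dROCKAinf}. One small correction: Lemma \ref{devissage} is a statement about quasi-isomorphisms of complexes, not about flatness of ring maps; the flatness you need follows more directly from the fact that $\Ainf$ is $p$-torsion-free (Proposition \ref{lifthom}(1)) and that $p$ is a uniformizer of $\OK$ in the unramified case, so $\Ainf$ is flat over the DVR $\OK$ and base change along $\OK\to\OK/(p^n)$ gives flatness of $\OK/(p^n)\to\Ainf/(p^n)$.
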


\begin{proof}
As $K|\Q_p$ is unramified, the ring $\OK/(p)$ is a finite field, so the map of $\F_p$-algebras $\OK/(p)\to \Ainf/(p)=(\OK/(p))^{\rm perf}$ is a morphism of perfect $\F_p$-algebras. Therefore Lemma \ref{relperfcot0hodgetrunc} applies to the map $\OK/(p^n)\to \Ainf/(p^n)$ and yields isomorphism (\ref{corperfecteq}). The second statement follows from Corollary \ref{dROCKAinf}.
\end{proof}

Finally, define the derived $p$-adic completion of $L\widehat\Omega^\bullet_{\OKB/\OK}$ by
$$
L\widehat\Omega^\bullet_{\OKB/\OK}\widehat\otimes\Z_p:=R\varprojlim(L\widehat\Omega^\bullet_{\OKB/\OK}\otimes^L\Z/p^n\Z)
$$
(see e.g. \cite{weibel}, \S 3.5 for derived inverse limits). Here the right hand side is to be understood as the projective system of the $R\varprojlim(L\Omega^\bullet_{\OKB/\OK}/F^i\otimes^L\Z/p^n\Z)$ for all $i$. Since $\OKB$ is flat over $\OK$, we have
$$
L\widehat\Omega^\bullet_{\OKB/\OK}\otimes^L\Z/p^n\Z\simeq L\widehat\Omega^\bullet_{(\OKB/(p^n))/(\OK/(p^n))}
$$
by base change (Corollary \ref{derbasechange}). The complexes on the right hand side are computed by Corollary \ref{corperfect}. In particular, they are concentrated in degree 0 and the maps in their inverse system are surjective. Hence the derived inverse limit is the usual inverse limit over $n$ (see \cite{weibel}, Proposition 3.5.7), and we obtain

\begin{cor}\label{corperfect2}
If  $K|\Q_p$ is unramified, the derived $p$-adic completion of $L\widehat\Omega^\bullet_{\OKB/\OK}$ is concentrated in degree 0, where its homology is isomorphic to the completion of $A_{\rm inf}\langle t\rangle/(t-\xi)$ with respect to its divided power filtration.
\end{cor}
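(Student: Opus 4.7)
The heavy lifting has already been done in the paragraph immediately preceding the corollary, so the plan is essentially an assembly. First, by definition the derived $p$-adic completion $L\widehat\Omega^\bullet_{\OKB/\OK}\widehat\otimes\Z_p$ is the projective system (indexed by the Hodge truncation level $i$) of complexes $R\varprojlim_n(L\Omega^\bullet_{\OKB/\OK}/F^i\otimes^L\Z/p^n\Z)$. Using the base change quasi-isomorphism of Corollary \ref{derbasechange}, which is applicable because $\OKB$ is flat over $\OK$, I would rewrite each of these as $R\varprojlim_n L\Omega^\bullet_{(\OKB/(p^n))/(\OK/(p^n))}/F^i$.

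Second, I would invoke Corollary \ref{corperfect} to identify each $L\widehat\Omega^\bullet_{(\OKB/(p^n))/(\OK/(p^n))}$ with the completion of $(A_{\rm inf}/(p^n))\langle t\rangle/(t-\xi_n)$ with respect to the divided power filtration. In particular, each term of the inverse system (at fixed Hodge level $i$) is concentrated in degree $0$. The transition maps $(A_{\rm inf}/(p^{n+1}))\langle t\rangle/(t-\xi_{n+1})\to (A_{\rm inf}/(p^n))\langle t\rangle/(t-\xi_n)$ are surjective: indeed, the $\xi_n$ may be chosen as compatible reductions of a single generator $\xi$ of the principal ideal $\ker(\theta)\subset A_{\rm inf}$ (use Corollary \ref{corfontaine}), and both $A_{\rm inf}$ and its divided power envelope reduce surjectively modulo $p^n$. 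Hence the Mittag-Leffler condition holds, so $R\varprojlim$ collapses to the ordinary inverse limit, and the limit is concentrated in degree $0$.

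Third, I would identify the resulting ordinary inverse limit with the divided power completion of $A_{\rm inf}\langle t\rangle/(t-\xi)$. Concretely, since $A_{\rm inf}$ is $p$-adically complete and the filtration on each $(A_{\rm inf}/(p^n))\langle t\rangle/(t-\xi_n)$ that is being completed is the divided power filtration $\langle t\rangle^{[k]}$, passing to the limit in $n$ recovers first $A_{\rm inf}\langle t\rangle/(t-\xi)$ and then, after Hodge (= PD) completion, its divided power completion. One just has to check that the two limits (over $n$ in the $p$-adic direction and over $i$ in the PD direction) commute, which is formal because at each finite stage $(i,n)$ everything is a concrete module and the Mittag-Leffler property is preserved.

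The only place any real thought is required is verifying that the identification of the Hodge filtration with the divided power filtration provided by Theorem \ref{dRPD} (and hence Corollary \ref{dROCKAinf}) is preserved under the $p$-adic inverse limit, i.e.\ that completing first modulo $p^n$ and then in the PD direction yields the same object as completing $A_{\rm inf}\langle t\rangle/(t-\xi)$ directly with respect to its PD filtration. This follows from a cofinality argument between the two filtrations, since $p$ itself lies in the PD ideal $\langle t\rangle$ after one divided power step (as $\xi \equiv t$ and the PD structure on $A_{\rm inf}$ relates $p$ to divided powers of $\xi$). Once that compatibility is noted, the corollary follows by direct citation of Corollary \ref{corperfect}.
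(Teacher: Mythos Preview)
Your first three paragraphs match the paper's argument essentially verbatim: unwind the definition of the derived $p$-adic completion, apply base change (Corollary \ref{derbasechange}) using flatness of $\OKB$ over $\OK$, invoke Corollary \ref{corperfect} to see that each $L\widehat\Omega^\bullet_{(\OKB/(p^n))/(\OK/(p^n))}$ is concentrated in degree $0$, observe surjectivity of the transition maps so that $R\varprojlim=\varprojlim$, and then identify the limit. That part is fine.

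The fourth paragraph, however, contains a genuine error. The claim that ``$p$ itself lies in the PD ideal $\langle t\rangle$'' is false: modulo $\langle t\rangle$ the ring $A_{\rm inf}\langle t\rangle/(t-\xi)$ surjects onto $A_{\rm inf}/(\xi)\cong\OCK$, which has characteristic $0$, so $p$ is not in $\langle t\rangle$. Consequently there is no cofinality between the $p$-adic and PD filtrations, and your proposed argument for the compatibility of completions does not work. Fortunately this cofinality is not needed. The paper's route is simpler and correct: at each fixed Hodge level $i$, the quotient $(A_{\rm inf}\langle t\rangle/(t-\xi))/\langle t\rangle^{[i]}$ is a finitely generated $A_{\rm inf}$-module, and since $A_{\rm inf}$ is $p$-adically complete this quotient is already $p$-adically complete. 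Hence the inverse limit over $n$ at level $i$ simply returns $(A_{\rm inf}\langle t\rangle/(t-\xi))/\langle t\rangle^{[i]}$, and the resulting projective system over $i$ is by definition the PD completion of $A_{\rm inf}\langle t\rangle/(t-\xi)$. Replace your cofinality argument with this observation and the proof is complete.
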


Here $\xi$ is a generator of the kernel of $\theta:\, A_{\rm inf}\to \OCK$ (Corollary \ref{corfontaine}), and we have used the $p$-adic completeness of $\Ainf$.

\begin{rema}\label{acris}\rm
Using Remark \ref{conjrema} we also obtain that the uncompleted derived de Rham algebra $L\Omega^\bullet_{\OKB/(p^n)/(\OK/(p^n))}$ is concentrated in degree 0, where its homology is isomorphic to $(A_{\rm inf}/(p^n))\langle t\rangle/(t-\xi_n)$. In the inverse limit we obtain that
$L\Omega^\bullet_{\OKB/\OK}\widehat\otimes\Z_p$ is isomorphic to the $p$-adic completion of $A_{\rm inf}\langle t\rangle/(t-\xi).$
This is Fontaine's ring $A_{{\rm cris},K}$ as defined in \cite{F2}.
\end{rema}

\section{Construction of period rings}

\subsection{Construction and basic properties of $\Bdr$}

Let $K$ be a finite extension of $\Q_p$, with algebraic closure $\overline K$. Following Beilinson, we define
$$
A_{{\rm dR},K}:=L\widehat\Omega^\bullet_{\OKB/\OK}
$$
and
$$
\Bdr^+:=A_{{\rm dR},K}\widehat\otimes\Q_p=(A_{{\rm dR},K}\widehat\otimes\Z_p)\otimes\Q_p.
$$
Note that by construction these objects are equipped with an action of $G_K:=\Gal(\overline K|K)$ and are complete with respect to the Hodge filtration.

When clear from the context, we shall drop the subscript $K$ from the notation $A_{{\rm dR},K}$. However, $\Bdr^+$ does not depend on $K$ any more, as the following lemma shows.

\begin{lem} Let $K'|K$ be a finite extension. The natural map $A_{{\rm dR},K}{\to}A_{{\rm dR},K'}$ induced by the maps $\Omega^\bullet_{\cdot/K}\to \Omega^\bullet_{\cdot/K'}$ on modules of differentials
   gives rise to an isomorphism $$A_{{\rm dR},K}\widehat{\otimes}\mathbb{Q}_p\overset{\sim}{\to}A_{{\rm dR},K'}\widehat{\otimes}\mathbb{Q}_p.$$
\end{lem}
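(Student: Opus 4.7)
The plan is to verify the isomorphism by showing that the natural map induces an isomorphism on the associated graded pieces of the Hodge filtration, and then invoke Hodge completeness. Since both $A_{{\rm dR},K}$ and $A_{{\rm dR},K'}$ are pro-objects with respect to $F^{\bullet}$ (by construction as Hodge-completed derived de Rham algebras), and the natural map respects this filtration, the isomorphism of projective systems $(A_{{\rm dR},K}/F^i)\widehat\otimes\Q_p\to (A_{{\rm dR},K'}/F^i)\widehat\otimes\Q_p$ can be established by induction on $i$, using the short exact sequences $0\to F^i/F^{i+1}\to A_{{\rm dR},\cdot}/F^{i+1}\to A_{{\rm dR},\cdot}/F^i\to 0$. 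This reduces the proof to showing that each graded piece is invariant under the base change $\OK\to \OL$ after derived $p$-adic completion and rationalization.

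\noindent By Proposition \ref{gridr} the $i$-th graded piece of $L\widehat\Omega^\bullet_{\OKB/\OK}$ is represented by $L\wedge^iL_{\OKB/\OK}[-i]$, and similarly for $K'$. Therefore I will first show that the natural map of cotangent complexes $L_{\OKB/\OK}\to L_{\OKB/\OL}$ becomes a quasi-isomorphism after derived $p$-adic completion and $\otimes\Q_p$. To do so, apply the transitivity triangle (Theorem \ref{cotangexacttriang}) to the sequence $\OK\to \OL\to \OKB$:
\[
\OKB\otimes^L_{\OL} L_{\OL/\OK}\to L_{\OKB/\OK}\to L_{\OKB/\OL}\to (\cdots)[1].
\]
By Lemma \ref{simpresOmega}, $L_{\OL/\OK}$ is quasi-isomorphic to $\Omega^1_{\OL/\OK}$ placed in degree $0$. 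The latter is a finitely generated $\OL$-module annihilated by the different $D_{K'/K}$ (Facts \ref{localfacts}); since $D_{K'/K}$ is a nonzero ideal of $\OL$ and $\OL$ is finite over $\OK$, it is annihilated by some power $p^n$ of $p$. Consequently $L_{\OL/\OK}\otimes^L_{\OL}\OKB$ is a $p^n$-torsion complex (noting $\OKB$ is flat over $\OL$ as a filtered colimit of finite free $\OL$-modules), so its derived $p$-adic completion equals itself, and tensoring with $\Q_p$ kills it.

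\noindent Having shown $L_{\OKB/\OK}\widehat\otimes\Q_p\stackrel\sim\to L_{\OKB/\OL}\widehat\otimes\Q_p$, I will then deduce the corresponding quasi-isomorphism for derived exterior powers, giving isomorphic graded pieces for the two Hodge-completed $p$-completed de Rham algebras after $\otimes\Q_p$. The induction over the Hodge filtration levels and the completeness of $A_{{\rm dR},K}\widehat\otimes\Q_p$ and $A_{{\rm dR},K'}\widehat\otimes\Q_p$ as projective systems then produce the desired isomorphism.

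\noindent The main technical obstacle is the compatibility of the non-linear functor $L\wedge^i$ with derived $p$-adic completion and rationalization: one needs to know that a quasi-isomorphism of cotangent complexes that holds after $(\cdot)\widehat\otimes\Q_p$ passes to derived exterior powers. This is cleanest to handle by using that $L_{\OKB/\OK}\simeq \Omega^1_{\OKB/\OK}$ is concentrated in degree $0$ (Corollary \ref{corsimpresomega}), so $L\wedge^i$ can be computed from a free resolution that is itself $p^n$-torsion-isomorphic to the analogous one for $K'$, and the argument propagates termwise. Once this compatibility is in hand, the rest of the proof is essentially formal.
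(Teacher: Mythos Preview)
Your proposal is correct and follows essentially the same route as the paper's proof: transitivity triangle for $\OK\to\CO_{K'}\to\OKB$, identification of $L_{\CO_{K'}/\OK}$ with the finite $p$-torsion module $\Omega^1_{\CO_{K'}/\OK}$ via Lemma~\ref{simpresOmega}, vanishing of the first term after $\widehat\otimes\Q_p$, and then passage to graded pieces of the Hodge filtration via Proposition~\ref{gridr} followed by induction on $i$. The paper is in fact terser than you are about the step you flag as the ``main technical obstacle'' (propagating the quasi-isomorphism from $L_{\OKB/\OK}\widehat\otimes\Q_p$ to its derived exterior powers); your suggestion to exploit Corollary~\ref{corsimpresomega} so that both cotangent complexes are honest modules in degree $0$ is a reasonable way to make this precise.
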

\begin{proof}
By Theorem \ref{cotangexacttriang} the sequence of maps $\CO_{K}\to \CO_{K'}\to \OKB$ yields an exact triangle
\begin{equation*}
\OKB\otimes_{\OK}^{L} L_{\CO_{K'}/\OK}\to L_{\OKB/\CO_{K}}\to L_{\OKB/\CO_{K'}}\to \OKB\otimes_{\OK}^{L} L_{\CO_{K'}/OK}[1]
\end{equation*}
of cotangent complexes. By Lemma \ref{simpresOmega} we have a quasi-isomorphism $ L_{\CO_{K'}/\OK}\simeq \Omega^1_{\CO_{K'}/\OK}$. According to the structure of $\Omega^1_{\CO_{K'}/\OK}$ recalled in Facts \ref{localfacts}, the latter is a finitely generated torsion $\Z_p$-module, i.e. a finite abelian $p$-group, whence
$$
L_{\CO_{K'}/\OK}\widehat{\otimes}\mathbb{Q}_p=0.
$$
Since the derived tensor product with $\Q_p$ is an exact functor, we conclude that there is a quasi-isomorphism
\begin{equation}\label{lo}
L_{\OKB/\CO_{K}}\widehat{\otimes}\mathbb{Q}_p\to L_{\OKB/\CO_{K'}}\widehat{\otimes}\mathbb{Q}_p.
\end{equation}
Now we use the Hodge filtration on $A_{{\rm dR},K}$. By Proposition \ref{gridr} we have quasi-isomorphisms
$$
{\rm gr}^i_FA_{{\rm dR},K}\simeq L\wedge^iL_{\OKB/\OK}[-i]
$$
and similarly for $A_{{\rm dR},K'}$. So from (\ref{lo}) we derive quasi-isomorphisms
$$
{\rm gr}^i_FA_{{\rm dR},K}\widehat{\otimes}\mathbb{Q}_p\simeq {\rm gr}^i_FA_{{\rm dR},K'}\widehat{\otimes}\mathbb{Q}_p
$$
for all $i$, whence quasi-isomorphisms
$$
(A_{{\rm dR},K}/F^i)\widehat{\otimes}\mathbb{Q}_p\simeq (A_{{\rm dR},K'}/F^i)\widehat{\otimes}\mathbb{Q}_p
$$
for all $i$ by induction on $i$. As by definition $A_{{\rm dR},K}$ is the projective system of the $A_{{\rm dR},K}/F^i$ and similarly for $A_{{\rm dR},K'}$, we are done.
\end{proof}

By the lemma, when computing $\Bdr^+$ we may assume it is defined as $A_{{\rm dR},K}\widehat\otimes\Q_p$ with a finite {\em unramified} extension $K|\Q_p$. Corollary \ref{corperfect2} then implies that we may view $\Bdr^+$ as a complete filtered ring and not just as a projective system of complexes.

\begin{prop}\label{BdRdiscreteval} The ring
$\Bdr^+$ is a complete discrete valuation ring with residue field $\mathbb{C}_K$. Moreover, its filtration ${\rm Fil}^i$ by powers of the maximal ideal satisfies a $G_K$-equivariant isomorphism
\begin{equation}\label{grfil}
{\rm Fil}^i/{\rm Fil}^{i+1}\cong\C_K(i).
\end{equation}
\end{prop}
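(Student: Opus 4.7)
The plan is as follows. By the lemma immediately preceding the proposition, the ring $\Bdr^+$ is independent of the choice of $K$, so I would take $K$ to be unramified over $\Q_p$ (for instance $K=\Q_p$) and invoke Corollary \ref{corperfect2} to identify $A_{{\rm dR},K}\widehat{\otimes}\Z_p$ with the completion of $\Ainf\langle t\rangle/(t-\xi)$ along its divided-power filtration, where $\xi$ is a generator of $\ker(\theta)$ provided by Corollary \ref{corfontaine}. Under this identification the filtration $\Fil^i$ on $\Bdr^+$ corresponds to the image of the divided-power filtration after tensoring with $\Q_p$.

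The next step is to observe that after inverting $p$ the divided powers disappear: tensoring with $\Q_p$ makes all positive integers invertible, so the algebra $\Ainf\langle t\rangle\otimes\Q_p$ becomes the ordinary polynomial algebra $(\Ainf\otimes\Q_p)[t]$ and the divided-power filtration $\langle t\rangle^{[i]}\otimes\Q_p$ becomes the $(t)$-adic filtration (this is essentially the content of Remark \ref{divpowrema}). Dividing out by $(t-\xi)$ and sending $t\mapsto \xi$ then identifies $\Bdr^+$ with the $(\xi)$-adic completion of $\Ainf\otimes\Q_p$, with $\Fil^i=(\xi)^i\Bdr^+$.

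To conclude that $\Bdr^+$ is a complete discrete valuation ring with residue field $\C_K$, I would verify two algebraic facts. First, $\xi$ is a nonzerodivisor in $\Ainf$: by $p$-torsion freeness and $p$-adic completeness of $\Ainf$ this reduces to the nonzerodivisor property of the image of $\xi$ in $\Ainf/(p)$, which was established in the proof of Corollary \ref{dROCKAinf}; it follows that $\xi$ is also a nonzerodivisor in $\Ainf\otimes\Q_p$. Second, $(\Ainf\otimes\Q_p)/(\xi)\cong\OCK\otimes_{\Z_p}\Q_p=\C_K$ is a field. Combining these with the $(\xi)$-adic completeness of $\Bdr^+$ forces $\Bdr^+$ to be a complete discrete valuation ring with uniformizer $\xi$ and residue field $\C_K$.

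Finally, for the Galois-equivariant description of the graded pieces, I would note that multiplication induces $G_K$-equivariant maps $(\Fil^1/\Fil^2)^{\otimes_{\C_K} i}\to \Fil^i/\Fil^{i+1}$ which must be isomorphisms since both sides are free $\C_K$-modules of rank one. This reduces the claim to the case $i=1$, where Corollary \ref{corfontaine} identifies $\Fil^1/\Fil^2\cong \ker(\theta)/\ker(\theta)^2\otimes\Q_p$ with $V_p(\Omega^1_{\OKB/\OK})$, which Corollary \ref{cp1} in turn identifies with $\C_K(1)$. The main subtlety of the whole argument is the passage from divided powers to ordinary powers after inverting $p$; once that step is correctly justified, both the DVR property and the graded-piece identification follow rather formally.
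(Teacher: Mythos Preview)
Your proposal is correct and follows essentially the same route as the paper: reduce to unramified $K$, invoke Corollary~\ref{corperfect2} for the divided-power description, observe that inverting $p$ turns divided powers into ordinary powers (Remark~\ref{divpowrema}), and then identify $\Fil^1/\Fil^2$ with $\C_K(1)$ via Fontaine's computation of $\Omega^1_{\OKB/\OK}$. The only cosmetic difference is that you deduce the DVR property from the explicit nonzerodivisor $\xi$, whereas the paper deduces principality of the maximal ideal from the fact that $\Fil^1/\Fil^2\cong\C_K(1)$ is one-dimensional over $\C_K$ together with completeness; both arguments are equivalent and both route the $i=1$ graded-piece identification through Corollaries~\ref{coruniv}/\ref{corfontaine} and~\ref{cp1}.
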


\begin{proof} As already remarked, by the previous lemma we may assume $K$ is unramified over $\mathbb{Q}_p$. Then by Corollary \ref{corperfect2} the Hodge filtration on $A_{\rm dR}\widehat\otimes\Z_p$ is the filtration by divided powers of the ideal $\ker(\theta_{\rm dR})$, where $\theta_{\rm dR}$ is the natural surjection $A_{\rm dR}\widehat\otimes\Z_p\to (A_{\rm dR}/F^1)\widehat\otimes\Z_p=\OCK$. After tensoring by $\Q$ this becomes the filtration by powers of $\ker(\theta_{\rm dR}\otimes\Q)$ which is the maximal ideal of $\Bdr^+$ as the associated quotient is $\OCK\otimes\Q=\C_K$. Moreover, $\Bdr^+$ is complete with respect to the filtration since $A_{\rm dR}$ is the Hodge-completed de Rham algebra. By Corollaries \ref{coruniv} and \ref{cp1} we have a $G_K$-equivariant isomorphism
\begin{equation}\label{grfil2}
(\ker(\theta_{\rm dR})/\ker(\theta_{\rm dR})^2)\otimes\Q\cong \C_K(1)
\end{equation}
which shows in particular that this is a $\C_K$-vector space of dimension 1, and therefore $\ker(\theta_{\rm dR})\otimes\Q$ is a principal ideal by completeness of $\Bdr^+$. Its powers define the filtration ${\rm Fil}^i$, and the isomorphism (\ref{grfil}) follows from (\ref{grfil2}).
\end{proof}

\begin{prop}\label{GKsplitting}
There exists a $G_K$-equivariant embedding $\overline{K}\hookrightarrow \Bdr^+$ such that the composite $\overline K\to\Bdr^+\twoheadrightarrow \mathbb{C}_K$ is the natural embedding $\overline K\hookrightarrow \C_K$.
\end{prop}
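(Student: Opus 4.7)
The plan is to lift the embedding $\overline K \hookrightarrow \C_K = \Bdr^+/\Fil^1$ stepwise to compatible $K$-algebra maps $\phi_n \colon \overline K \to \Bdr^+/\Fil^n$ for all $n \geq 1$, and then pass to the inverse limit using the completeness of $\Bdr^+$ with respect to its filtration, established in Proposition~\ref{BdRdiscreteval}. Since $\overline K$ is the union of its finite subextensions $L|K$, I will construct the lift first on each such $L$ and then glue by means of a uniqueness argument. Recall that $\Bdr^+$ is naturally a $K$-algebra via the structure inherited from $A_{\mathrm{dR}}$, so all maps below are taken in the category of $K$-algebras.

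For the inductive step, suppose a $K$-algebra lift $\phi_n^L \colon L \to \Bdr^+/\Fil^n$ of the inclusion $L \hookrightarrow \C_K$ has been constructed. Pulling back the square-zero extension
\begin{equation*}
0 \to \Fil^n/\Fil^{n+1} \to \Bdr^+/\Fil^{n+1} \to \Bdr^+/\Fil^n \to 0
\end{equation*}
along $\phi_n^L$ produces a $K$-algebra extension in $\mathrm{Exal}_K(L, \C_K(n))$, after identifying $\Fil^n/\Fil^{n+1}$ with $\C_K(n)$ via Proposition~\ref{BdRdiscreteval}. By Proposition~\ref{exalcommext1cot} its class lies in $\Ext^1_L(L_{L/K}, \C_K(n))$, which vanishes because $L_{L/K} = 0$ by Remark~\ref{remsimpresomega}, the extension $L|K$ being finite separable. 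Any splitting of the pulled-back extension then yields a lift $\phi_{n+1}^L \colon L \to \Bdr^+/\Fil^{n+1}$. Two such lifts differ by a $K$-derivation $L \to \C_K(n)$, and $\mathrm{Der}_K(L, \C_K(n)) \cong \Hom_L(\Omega^1_{L/K}, \C_K(n)) = 0$ again by separability, so the lift at each stage is uniquely determined by its predecessor.

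Uniqueness forces compatibility across finite subextensions: for $L \subset L'$, the restriction of $\phi_n^{L'}$ to $L$ must agree with $\phi_n^L$, since both lift $L \hookrightarrow \C_K$. Hence the maps $\phi_n^L$ assemble into a coherent system of $K$-algebra maps $\phi_n \colon \overline K \to \Bdr^+/\Fil^n$ whose inverse limit is the desired embedding $\phi \colon \overline K \to \Bdr^+$; by construction at level $n=1$ its composition with the reduction to $\C_K$ is the natural inclusion. For $G_K$-equivariance, observe that for $\sigma \in G_K$ the conjugate $\sigma \circ \phi \circ \sigma^{-1}$ is again a $K$-algebra lift of the $G_K$-equivariant inclusion $\overline K \hookrightarrow \C_K$, so uniqueness gives $\sigma \circ \phi \circ \sigma^{-1} = \phi$. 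The substantive input here is the vanishing $L_{L/K}=0$ combined with the deformation-theoretic identification of Proposition~\ref{exalcommext1cot}, both already in hand; the main point to verify carefully is that the $L$-module structure on $\C_K(n)$ entering the $\Ext$ computation is indeed the one induced by the embedding $L \hookrightarrow \C_K$, which follows by tracing through the construction of the pullback extension.
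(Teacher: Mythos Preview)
Your proof is correct, but it takes a different route from the paper's. The paper gives a one-line construction exploiting the specific derived de Rham model for $\Bdr^+$: since $L_{\OKB/\OK}\simeq\Omega^1_{\OKB/\OK}$ is torsion (Corollaries~\ref{cp2} and~\ref{corsimpresomega}), all positive graded pieces of $L\widehat\Omega^\bullet_{\OKB/\OK}\otimes\Q$ vanish, so this object is simply $\overline K$, and the natural map $L\widehat\Omega^\bullet_{\OKB/\OK}\to L\widehat\Omega^\bullet_{\OKB/\OK}\widehat\otimes\Z_p$ tensored with~$\Q$ gives the embedding directly, with $G_K$-equivariance built in. Your argument is the classical Hensel-type successive lifting (essentially Fontaine's original proof): it uses the vanishing $L_{L/K}=0$ for finite $L|K$ rather than the torsion property of $L_{\OKB/\OK}$, and it works for any realization of $\Bdr^+$ once one knows the graded pieces and completeness, without needing the derived de Rham description. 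The paper's approach is shorter and more conceptual in this context; yours is more portable and makes the uniqueness (hence $G_K$-equivariance) explicit rather than relying on naturality of the construction.
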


\begin{proof}
By Corollaries \ref{cp2} and \ref{corsimpresomega} the cotangent complex $L_{\OKB/\OK}$ is concentrated in degree 0 where its homology is torsion. Thus $L_{\OKB/\OK}\otimes\Q\simeq 0$ and therefore $$L\widehat\Omega^\bullet_{\OKB/\OK}\otimes\Q\simeq (L\widehat\Omega^\bullet_{\OKB/\OK}/F^1)\otimes\Q=\overline K.$$ Thus the natural map $L\widehat\Omega^\bullet_{\OKB/\OK}\to L\widehat\Omega^\bullet_{\OKB/\OK}\widehat\otimes\Z_p$ induces the required map after tensoring with $\Q$.
\end{proof}

\begin{rema}\rm
There does \emph{not} exist a $G_K$-equivariant splitting $\mathbb{C}_K\hookrightarrow \Bdr^+$. This would entail a $G_K$-equivariant isomorphism  $\Bdr^+\cong\mathbb{C}_K[[ t]]$ which is not the case.
\end{rema}

\begin{cor} We have $(\Bdr^+)^{G_K}=K$.
\end{cor}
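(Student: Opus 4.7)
The plan is to use the $G_K$-stable filtration $\Fil^i$ together with Tate's theorem (\ref{tatetheo}) to show that any Galois-invariant element of $\Bdr^+$ must lie in $K$. The inclusion $K \subset (\Bdr^+)^{G_K}$ is already free: Proposition \ref{GKsplitting} provides a $G_K$-equivariant embedding $\overline K \hookrightarrow \Bdr^+$, and since $G_K$ acts trivially on $K \subset \overline K$, we obtain $K \subset (\Bdr^+)^{G_K}$.

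For the reverse inclusion, I would proceed by successive approximation modulo powers of the maximal ideal. Given $x \in (\Bdr^+)^{G_K}$, look first at its image $\bar x$ in $\Bdr^+/\Fil^1 \cong \C_K$ (using Proposition \ref{BdRdiscreteval}). Since the projection is $G_K$-equivariant, $\bar x \in \C_K^{G_K} = K$ by Tate's theorem. Choose a lifting $a_0 \in K \subset \Bdr^+$ of $\bar x$; then $x - a_0 \in \Fil^1$ is still $G_K$-invariant. Now iterate: assuming $x - a_0 \in \Fil^i \cap (\Bdr^+)^{G_K}$ for some $i \geq 1$, its image in $\Fil^i/\Fil^{i+1} \cong \C_K(i)$ is $G_K$-invariant, hence zero by Tate's theorem since $i \neq 0$. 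Therefore $x - a_0 \in \Fil^{i+1}$.

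By induction $x - a_0 \in \Fil^i$ for every $i \geq 1$, and completeness of $\Bdr^+$ with respect to the filtration (Proposition \ref{BdRdiscreteval}) gives $\bigcap_i \Fil^i = 0$, whence $x = a_0 \in K$.

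There is no real obstacle here: the argument is a standard d\'evissage relying on two ingredients already in hand, namely the $G_K$-equivariant identification of the graded pieces of $\Bdr^+$ with the Tate twists $\C_K(i)$ and the vanishing $\C_K(i)^{G_K} = 0$ for $i \neq 0$. The only point that deserves mild care is the initial step, where one must know that $K$ actually maps into $\Bdr^+$ as a subring of Galois invariants, which is precisely the content of Proposition \ref{GKsplitting}.
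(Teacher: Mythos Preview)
Your proof is correct and follows essentially the same route as the paper's: both use Tate's theorem on the graded pieces $\Fil^i/\Fil^{i+1}\cong\C_K(i)$ together with completeness to show that $(\Fil^1)^{G_K}=0$, and then invoke Proposition~\ref{GKsplitting} for the inclusion $K\subset(\Bdr^+)^{G_K}$. The only cosmetic difference is that the paper phrases the d\'evissage as proving $(\Fil^1)^{G_K}=0$ directly, whereas you carry out the same induction element by element.
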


\begin{dem}
By Tate's theorem cited in the introduction, we have $\C_K(i)^{G_K}=0$ for $i>0$, hence $({\rm Fil}^1)^{G_K}=0$ by induction from the second statement of Proposition \ref{BdRdiscreteval}
and completeness. We thus obtain an injection $(\Bdr^+)^{G_K}\hookrightarrow (\Bdr^+/{\rm Fil^1})^{G_K}\cong \C_K^{G_K}=K$. Since we also have an injection $K={\overline K}^{G_K}\hookrightarrow (\Bdr^+/{\rm Fil^1})^{G_K}$ by the proposition, the corollary follows.
\end{dem}

\begin{defi}\rm
The field $\Bdr$ of $p$-adic periods is the fraction field  of the discrete valuation ring $\Bdr^+$.
\end{defi}

Thus $\Bdr$ comes equipped with a $G_K$-action for which $\Bdr^{G_K}=K$ by the last corollary, and a filtration ${\rm Fil}^i$  inherited from $\Bdr^+$. Its associated graded ring is
$$
B_{\rm HT}:={\rm gr}^\bullet_{\rm Fil}\Bdr\cong\bigoplus_{i\in Z}\C_K(i)
$$
in view of the last statement of Proposition \ref{BdRdiscreteval}.

\subsection{Deformation problems and period rings}

In Proposition \ref{propfontaine} we saw that truncations of Fontaine's ring $\Ainf$ yield solutions to a universal deformation problem. In \cite{F2}, Fontaine shows a similar universal deformation property for the ring $\Acris$ considered in Remark \ref{acris}. Our main goal in this subsection is establish a property of this type for the ring $\Adr$, thereby making the link with Fontaine's original constructions.

As $\Adr$ carries a divided power structure, the deformation problem will have to take it into account. First some definitions. A divided power ideal, or PD-ideal for short, in a ring $B$ is an ideal $I\subset B$ together with a divided power structure on $B$ by $I$ in the sense of Definition \ref{divpower}, such that the maps $\gamma_i:\, I\to B$ moreover satisfy $\gamma_1=\id_I$, $\gamma_i(I)\subset I$ for $i>1$ as well as the supplementary axiom
\begin{equation}
\gamma_n(\gamma_m(a))=\frac{(mn)!}{(m!)^nn!}\gamma_{nm}(a).
\end{equation}

\begin{exs}\rm ${}$\smallskip

\noindent (1) If $B$ is a $\Q$-algebra, the usual divided power operations $\gamma_i(a)=a^i/i!$ equip every ideal $I\subset B$ with a PD-structure.\smallskip

\noindent (2) If $K|\Q_p$ is a finite unramified extension, restricting the above divided power operations on $K$ to $(p)\subset \OK$ equips $(p)$ with the structure of a $p$-ideal. This follows from the well-known formula $v_p(n!)=[{n}/{p}]+[{n}/{p^2}]+[{n}/{p^3}]\dots$
However, there may not be a PD-structure on $(p)$ for general $K$; in fact, such a PD-structure exists if and only if the ramification index of $K|\Q_p$ is $<p$ (see \cite{BO}, Example 3.2(3)).
\end{exs}

The divided powers of a $PD$-ideal $I\subset B$ are defined by
$$
I^{[i]}:=(\gamma_{i_1}(x_1)\cdots \gamma_{i_r}(x_r)\mid x_j\in I, i_1+\cdots + i_r\geq i)
$$
generalizing the special case discussed before Remark \ref{divpowrema}. Note that $I^{[2]}=I^2$ but these ideals differ in general if $i>2$.

Finally, in the case when $B$ is an $\OK$-algebra for $K|\Q_p$ unramified, we say that the $PD$-structure on $I\subset B$ is compatible with that on $(p)\subset \OK$ if $\gamma_i(bp)=b^ip^i/i!$ for all $b\in B$ for which $bp\in I$.

\begin{defi}\label{pdthick}\rm A surjection $\rho:\, B\to A$ of $p$-adically complete $\OK$-algebras is {\em an order $i$ PD-thickening} for some $i>0$ if $\ker(\rho)$ is a PD-ideal with PD-structure compatible with that of $(p)\subset \OK$, and moreover $\ker(\rho)^{[i+1]}=0$.
\end{defi}

For fixed $A$ such pairs $(B,\rho)$ form a natural category, and an initial element in this category (if exists) is called a {\em universal $p$-adically complete PD-thickening of order $i$}.

\begin{theo} If $K|\Q_p$ is unramified, then for all $i$ the $\OK$-algebra $(\Adr/F^{i+1})\widehat\otimes\Z_p$ is the universal $p$-adically complete PD-thickening of order $i$ of $\OCK$ over $\OK$.
\end{theo}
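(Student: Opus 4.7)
My plan is to combine Fontaine's universal property of $\Ainf$-thickenings (Proposition~\ref{propfontaine}) with the universal property of PD-envelopes, using Corollary~\ref{corperfect2} as the bridge. First I would verify that $(\Adr/F^{i+1})\widehat\otimes\Z_p$ really is a $p$-adically complete PD-thickening of $\OCK$ of order $i$ over $\OK$. By Corollary~\ref{corperfect2}, in the unramified case we have a filtered identification of $\Adr\widehat\otimes\Z_p$ with the $p$-adic completion of $\Ainf\langle t\rangle/(t-\xi)$, the Hodge filtration corresponding to the divided power filtration $\langle t\rangle^{[\bullet]}$ of the augmentation ideal $\ker(\theta_{\rm dR})$. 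Hence the surjection $(\Adr/F^{i+1})\widehat\otimes\Z_p\twoheadrightarrow\OCK$ has kernel $F^1/F^{i+1}$, a PD-ideal satisfying $\ker^{[i+1]}=0$ by construction. The PD-structure is compatible with that on $(p)\subset\OK$ because $\Ainf=W((\OKB/(p))^{\rm perf})$ carries its canonical Witt PD-structure on $(p)$, and the inclusion $\OK\hookrightarrow\Ainf$ (provided by Proposition~\ref{lifthom} in the unramified case) is a PD-morphism.

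For the universal property, I would proceed in two stages. Given any $p$-adically complete PD-thickening $\rho:B\to\OCK$ of order $i$ over $\OK$ with $J:=\ker(\rho)$, the inclusion $J^{i+1}\subseteq J^{[i+1]}=0$ shows that $(B,\rho)$ is in particular an ordinary order-$i$ $\OK$-thickening. Proposition~\ref{propfontaine} then furnishes a unique $\OK$-algebra homomorphism $\phi_0:(\Ainf/\ker(\theta)^{i+1})\otimes_{\Z_p}\OK\to B$ lifting $\rho$, which necessarily sends the image of $\xi$ into $J$. Because $J$ is a PD-ideal compatible with $(p)\OK$, the universal property of the PD-envelope of $\Ainf\otimes\OK$ along $\ker(\theta)\otimes\OK$ (taken compatibly with the PD-structure on $(p)$) extends $\phi_0$ uniquely to a PD-algebra map from $\Ainf\langle t\rangle/(t-\xi)\otimes\OK$ to $B$. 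Taking $p$-adic completion (legitimate since $B$ is $p$-adically complete) and passing to the quotient by $F^{i+1}$ (legitimate since $J^{[i+1]}=0$) yields the unique $\OK$-algebra map $(\Adr/F^{i+1})\widehat\otimes\Z_p\to B$ lifting $\rho$.

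The main difficulty lies in the careful bookkeeping of the second paragraph: one must justify that $\Ainf\langle t\rangle/(t-\xi)$ genuinely realizes the PD-envelope of $\Ainf$ along $\ker(\theta)=(\xi)$ compatibly with $(p)$, and that the universal property of PD-envelopes survives both $p$-adic completion and the base change $\otimes_{\Z_p}\OK$ needed to translate between Fontaine's formalism and ours. A secondary subtlety is ensuring uniqueness at each stage; this rests on the fact that a PD-morphism out of a PD-envelope is determined by its restriction to the base ring $\Ainf$, which combined with the uniqueness in Proposition~\ref{propfontaine} yields the global uniqueness of the constructed map.
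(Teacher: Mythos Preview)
Your proposal is correct and follows essentially the same architecture as the paper: identify $(\Adr/F^{i+1})\widehat\otimes\Z_p$ via Corollary~\ref{corperfect2}, produce a map $\Ainf\to B$, extend to $\Ainf\langle t\rangle/(t-\xi)$ by the PD universal property, then pass to the $p$-adic completion and mod out by $F^{i+1}$.

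The one substantive difference is in how you obtain the map $\Ainf\to B$. You observe that $J^{i+1}\subseteq J^{[i+1]}=0$, so $B$ is also an ordinary order-$i$ thickening, and then invoke Proposition~\ref{propfontaine}. The paper instead reconstructs this map from scratch using only the PD structure: mod $p$ one sets $\tau((x_n)):=\widehat{x_1}^{\,p}$ for any lift $\widehat{x_1}$ of $x_1$, which is well defined because for two lifts $\widehat{x_1},y_1$ one has $\widehat{x_1}^{\,p}-y_1^{\,p}=(\widehat{x_1}-y_1)^p=p!\,\gamma_p(\widehat{x_1}-y_1)=0$ in $B/(p)$. Your route is more modular (it reuses Proposition~\ref{propfontaine} verbatim), while the paper's direct construction avoids the detour through ordinary thickenings and in fact yields the map $\Ainf\to B$ for \emph{any} $p$-adically complete PD-thickening, not just one of finite PD-order. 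Both arguments then finish identically, using that $\Z_p\langle t\rangle\otimes_{\Z_p[t]}\Ainf\cong\Ainf\langle t\rangle/(t-\xi)$ realizes the relevant PD-envelope.
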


\begin{proof}
We use the description of Corollary \ref{corperfect2}. For simplicity we treat the case  $K=\Q_p$; the general case follows by base change to $\OK$. Given a $p$-adically complete PD-thickening  $\rho:\,B\to\OCK$ of order $i$ over $\OK$, we first show that there exists a unique $p$-adically continuous homomorphism $\Ainf\to B$ making the diagram
$$
\begin{CD}
0 @>>> \ker(\theta) @>>> \Ainf @>{\theta}>> \OCK @>>> 0 \\
&& @VVV @VVV @VV{\id}V \\
0 @>>> \ker(\rho) @>>> B @>{\rho}>> \OCK @>>> 0
\end{CD}
$$
commute. As $B$ is $p$-adically complete, we reduce by Proposition \ref{lifthom} (2) to constructing a unique map $\tau:\,\Ainf/(p)\to B/(p)$ making the mod $p$ diagram commute. Given an element $(x_n)$ in $\Ainf/(p)=(\OKB/(p))^{\rm perf}$, consider the unique $p$-th root of $(x_n)$ in $\Ainf/(p)$, namely the shifted sequence $(x_{n+1})$. We must have $\tau((x_{n+1}))^p=\tau((x_n))^p$, and $(x_{n+1})$ maps to $x_1$ in $\OKB/(p)$. Therefore we must have $\tau(x_n)=\widehat x_1^p$ for a lifting $\widehat x_1\in B/(p)$ of $x_1\in \OKB/(p)$. On the other hand, the $p$-th power $\widehat x_1^p$ does not depend on $\widehat x_1$, for if $y_1$ is another lifting of $x_1$, then $\widehat{x_1}-y_1\in\ker(\rho\!\!\mod p)$, and therefore $\widehat{x_1}^p-y_1^p=(\widehat{x_1}-y_1)^p=p!\gamma_p(\widehat{x_1}-y_1)=0$ by compatibility of the PD-structure on $\ker(\rho)$ with that on $(p)\subset\OK$. This shows that the map $\tau(x_n):=\widehat x_1^p$ is well defined, and is the only possible choice for $\tau$.

Next, consider $\Ainf$ as a $\Z_p[t]$-algebra via the map $t\mapsto\xi$, where $\xi$ is a generator of $\ker(\theta)$ (Corollary \ref{corfontaine}). By the diagram the composite map $\Z_p[t]\to \Ainf\to B$ sends $t$ to an element of the PD-ideal $\ker(\rho)$, hence it extends uniquely to a $\Z_p$-algebra map $\Z_p\langle t\rangle \to B.$ The commutative diagram
$$
\begin{CD}
\Z_p[t] @>>> \Ainf \\
@VVV @VVV \\
\Z_p\langle t\rangle @>>> B
\end{CD}
$$
induces a map $\Z_p\langle t\rangle\otimes_{\Z_p[t]}\Ainf\to B$. But
$$\Z_p\langle t\rangle\otimes_{\Z_p[t]}\Ainf\cong \Ainf\langle t\rangle/(t-\xi)$$
and the map respects the filtration by powers of PD-ideals, so we conclude by Corollary \ref{corperfect2}.
\end{proof}

\begin{remas}\rm ${}$
\begin{enumerate}
\item The proof above shows that $\Adr\widehat\otimes\Z_p$ is the universal $p$-adically complete pro-PD-thickening of $\OCK$ over $\OK$, where a pro-PD thickening $(B, \rho)$ is a PD-thickening complete with respect to divided powers of $\ker(\rho)$. After inverting $p$, the divided power structure gets killed, which shows that $\Adr\widehat\otimes\Q_p$ identifies with Fontaine's ring $\Bdr^+$ as defined in (\cite{F2}, \S 1.5).
\item The proof also shows that the ring $\Acris=L\Omega^\bullet_{\OKB/\OK}\widehat\otimes\Z_p$ introduced in Remark \ref{acris} is the universal $p$-adically complete PD-thickening of $\OCK$ over $\OK$, where the definition of a PD-thickening is the same as in \ref{pdthick} except that we do not require $\ker(\rho)^{[i+1]}=0$ (compare \cite{F2}, \S 2.2).
\end{enumerate}
\end{remas}

\subsection{The Fontaine element} Recall that we defined the $G_K$-module $\Z_p(1)$ as the inverse limit $\limproj \mu_{p^r}$; it can be also viewed as the Tate module of the torsion $\Z_p$-module $\mu_{p^\infty}$. Our aim is now to construct a canonical $G_K$-equivariant map $$\Z_p(1)\to {\rm Fil}^1\Bdr^+.$$ The image of a generator of $\Z_p(1)$ will be an analogue of the complex period $2\pi i$ called the Fontaine element.

\begin{cons}\label{consfonel}\rm
Represent an element  of $\Zp(1)$ by a sequence $(\varepsilon_n)$ of $p$-power roots of unity with $\varepsilon_0=1$ and $\varepsilon_{n+1}^p=\varepsilon_n$. Reducing the sequence modulo $p$ we obtain an element $\varepsilon$ in $(\OKB/(p))^{\rm perf}$ with multiplicative representative $[\varepsilon]$ in the Witt ring $W(\OKB/(p))^{\rm perf})=A_{\rm inf}$. As the canonical surjection $\theta:\, A_{\rm inf}\twoheadrightarrow \OCK$ lifts the projection $(\OKB/(p))^{\rm perf}\to\OCK/(p)$ sending the mod $p$ class of $\varepsilon$ to 1, we have $\theta([\varepsilon])=1$. On the other hand, we may view $[\varepsilon]$ as an element of $\Adr\widehat\otimes \Z_p\subset \Bdr^+$ via the embedding $A_{\rm inf}\hookrightarrow\Adr\widehat\otimes \Z_p$ given by Corollary \ref{corperfect2}. It follows that the assignment
\begin{equation}\label{fonel}
(\varepsilon_n)\mapsto  \sum_{n=1}^\infty(-1)^{n+1}\frac{([\varepsilon]-1)^n}{n}
\end{equation}
gives a well-defined map  $$\iota:\,\Z_p(1)\to {\rm Fil}^1\Bdr^+$$ as $[\varepsilon]-1$ lies in the maximal ideal ${\rm Fil}^1\Bdr^+=\ker(\theta\otimes\Q)$ of the complete discrete valuation ring $\Bdr^+$. We may also view the right hand side of (\ref{fonel}) as the $p$-adic logarithm of $[\varepsilon]$ in the $p$-adically complete ring $\Bdr^+$. This shows that $\iota$ is $G_K$-equivariant (for $K=\Q_p$): given $g\in G_K$, we have by definition $g\varepsilon=\varepsilon^{\chi(g)}$ where $\chi$ is the cyclotomic character, whence $g[\varepsilon]=[\varepsilon]^{\chi(g)}$ by the multiplicativity of the lifting $[\varepsilon]\mapsto \varepsilon$. Taking the logarithm finally gives $g\iota((\varepsilon_n))=\chi(g)\iota((\varepsilon_n)$ as desired.\end{cons}

\begin{defi}\rm
We define the Fontaine element $t\in {\rm Fil}^1\Bdr^+$ as the image of a fixed generator of $\Z_p(1)$ under $\iota$.\end{defi}

Thus the Fontaine element depends on the choice of a generator up to multiplication by an element in $\Zp^\times$ and the Galois group acts on it via the cyclotomic character. This is the promised analogue of the complex period $2\pi i$.

In Subsection \ref{secproof} we shall see that the Fontaine element may also be defined by means of the $p$-adic comparison isomorphism. Here is a first step towards this claim. By passing to the quotient by $\Fil^2\Bdr^+$ the map $\iota$ induces a $G_K$-equivariant embedding
$$
\overline{\iota}\colon \Z_p(1)\hookrightarrow{\rm gr}^1\Bdr^+.$$ Another such map is constructed as follows. Recall from Subsection \ref{secfontaine} that the dlog map $\mu_{p^\infty}\to \Omega^1_{\OKB/\OK}$ induces an embedding $\Z_p(1)=T_p(\mu_{p^\infty})\to T_p(\Omega^1_{\OKB/\OK})$ and the latter is identified with ${\rm gr}^1_FL\widehat\Omega^\bullet_{\OKB/\OK}\widehat\otimes\Z_p\subset {\rm gr}^1\Bdr^+$ by Corollary \ref{coruniv}. So we have another Galois-equivariant embedding
\begin{equation}\label{dlog}
\Z_p(1)\to {\rm gr}^1\Bdr^+.
\end{equation}

\begin{prop}\label{fontaineelement} The map (\ref{dlog}) coincides with $\overline\iota$.

\end{prop}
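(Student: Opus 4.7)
I would begin by exploiting the fact that modulo $\Fil^2\Bdr^+$ the series defining $\iota((\varepsilon_n))=\log[\varepsilon]$ collapses to its first term $[\varepsilon]-1$, since $([\varepsilon]-1)^k\in\Fil^k$ for $k\geq 2$. Via the identifications in Corollaries \ref{coruniv}, \ref{corfontaine} and \ref{corperfect2}, the graded piece $\mathrm{gr}^1\Bdr^+$ is obtained from $\ker(\theta)/\ker(\theta)^2\otimes_{\Z_p}\OK\simeq T_p(\Omega^1_{\OKB/\OK})$ by inverting $p$, and by its very construction the map (\ref{dlog}) sends $(\varepsilon_n)$ to the compatible system $(d\varepsilon_n/\varepsilon_n)_n$ inside this Tate module. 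The task therefore reduces to verifying, for every $n\geq 1$, that the image of $[\varepsilon]-1$ in the $\OK$-algebra
\begin{equation*}
Y_n:=H_0(L\Omega^\bullet_{(\OKB/(p^n))/(\OK/(p^n))}/F^2)
\end{equation*}
lies in the $F^1/F^2$ subgroup ${}_{p^n}\Omega^1_{\OKB/\OK}\hookrightarrow Y_n$ from Proposition \ref{univ1orderOCKn} and equals $d\varepsilon_n/\varepsilon_n$ there.

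The main input is the multiplicativity $[\varepsilon]=[\varepsilon^{1/p^n}]^{p^n}$ of the Teichm\"uller lift in $\Ainf$. Since $\theta_n([\varepsilon^{1/p^n}])=\varepsilon_n$, the image $s\in Y_n$ of $[\varepsilon^{1/p^n}]$ is a lift of $\varepsilon_n\in\OKB/(p^n)$, and the $p^n$-th power of any lift of $\varepsilon_n$ in $Y_n$ is the same: two lifts differ by an element $\eta$ of the square-zero ideal ${}_{p^n}\Omega^1_{\OKB/\OK}$, and the binomial expansion reduces to a single cross term $p^ns^{p^n-1}\eta$, which vanishes since $p^n\eta=0$. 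So $s^{p^n}=[\varepsilon]$ in $Y_n$ can be evaluated using any convenient lift. I plan to use the concrete model of $Y_n$ extracted from the proof of Proposition \ref{univ1orderOCKn} as pairs $(\omega,a)\in\Omega^1_{\OKB/\OK}\oplus\OKB$ satisfying $p^n\omega=da$, modulo the image of the map $\OKB\to\Omega^1_{\OKB/\OK}\oplus\OKB$, $a\mapsto(da,p^na)$, and to take the lift $s=((1/p^n)d\varepsilon_n,\varepsilon_n)$, where $(1/p^n)d\varepsilon_n$ is any $\omega\in\Omega^1_{\OKB/\OK}$ with $p^n\omega=d\varepsilon_n$ (such $\omega$ exists by the $p$-divisibility of Corollary \ref{cp2}).

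The shuffle product on the derived de Rham algebra induces on the truncation $L\Omega^\bullet/F^2$ the simple multiplication rule $(\omega,a)\cdot(\omega',a')=(a\omega'+a'\omega,aa')$, the wedge term $\omega\wedge\omega'$ being killed by $F^2$. A short induction then yields $s^k=((k/p^n)\varepsilon_n^{k-1}d\varepsilon_n,\varepsilon_n^k)$, and specializing to $k=p^n$ with $\varepsilon_n^{p^n}=1$ gives
\begin{equation*}
s^{p^n}=(\varepsilon_n^{p^n-1}d\varepsilon_n,1)=(d\varepsilon_n/\varepsilon_n,1).
\end{equation*}
Here $d\varepsilon_n/\varepsilon_n\in{}_{p^n}\Omega^1_{\OKB/\OK}$ because $p^n(d\varepsilon_n/\varepsilon_n)=\varepsilon_n^{-1}d(\varepsilon_n^{p^n})=0$, so subtracting the canonical lift $(0,1)$ of $1\in\OKB/(p^n)$ we obtain $[\varepsilon]-1=d\varepsilon_n/\varepsilon_n$ in $Y_n$. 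Passing to the inverse limit over $n$ and inverting $p$ completes the argument.

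The principal technical point requiring care is the derivation of the product rule $(\omega,a)\cdot(\omega',a')=(a\omega'+a'\omega,aa')$ on $Y_n$ from the shuffle product on the derived de Rham algebra: one has to check that after passing to the total complex of $\Omega^\bullet_{P_\bullet/A}$ and truncating by $F^2$, the only surviving contributions beyond the obvious $aa'$ are the expected cross terms. Everything else in the argument is either immediate from the cited results or a direct calculation.
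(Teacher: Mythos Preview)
Your argument is correct and follows the same overall strategy as the paper—reduce modulo $\Fil^2$, identify the image of $[\varepsilon]-1$ in the universal first-order thickening $Y_n$, and compute it by raising a lift of a root of unity to a $p$-th-power—but your execution differs in two respects. First, the paper carries out the multiplication in the full standard-resolution model for $L\Omega^\bullet/F^2$ (where the product is transparent, being the ring multiplication on $P_0$) and only transfers to the small $2\times 2$ model at the very end to read off the answer; you work directly in the small model throughout. Second, the paper realizes the image of $[\varepsilon]$ as a $p$-adic limit $\lim_m \widetilde{\varepsilon_{n+m}}^{\,p^{n+m}}$ and keeps the auxiliary index $m$ through the calculation, whereas your observation that every lift of $\varepsilon_n$ has the same $p^n$-th power (immediate since $p^n=0$ in the $\OK/(p^n)$-algebra $Y_n$ and the kernel has square zero) collapses this limit to a single power, making the bookkeeping noticeably lighter.

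The technical point you flag—the product formula $(\omega,a)\cdot(\omega',a')=(a\omega'+a'\omega,aa')$ on $Y_n$—is genuine and is exactly what the paper's detour through $P_\bullet$ is designed to avoid. It can, however, be justified cleanly: the $2\times 2$ model is the tensor product of cdgas
\[
\bigl[\OKB\stackrel{d}\to\Omega^1_{\OKB/\OK}\bigr]\otimes_\Z K(p^n),
\]
where $K(p^n)=[\Z\stackrel{p^n}\to\Z]$ is the Koszul complex viewed as the exterior algebra on a single degree-one generator $e$ with $de=p^n$ and $e^2=0$. The first factor is the non-derived $F^2$-truncated de Rham complex, which receives a multiplicative quasi-isomorphism from $L\Omega^\bullet_{\OKB/\OK}/F^2$ by Corollary~\ref{corsimpresomega} and Proposition~\ref{gridr}; tensoring with $K(p^n)$ and invoking Corollary~\ref{derbasechange} identifies the result with $L\Omega^\bullet_{(\OKB/(p^n))/(\OK/(p^n))}/F^2$ as cdgas. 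The Koszul sign rule on degree-zero elements $a\otimes 1+\omega\otimes e$ then gives precisely your formula. With this in hand, the inductive computation of $s^{p^n}$ goes through as written.
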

\begin{proof} Note first that since $[\varepsilon]-1\in {\rm Fil^1}\Bdr^+$, the element $\overline\iota((\varepsilon_n))$ is the class of $[\varepsilon]-1$ in ${\rm gr}^1\Bdr^+.$ We have to show that this class corresponds to that of ${\rm dlog}(\varepsilon_n)\in T_p(\mu_{p^\infty})$ under the identification of  Corollary \ref{coruniv}.  We do this modulo $p^n$.

The map $\Ainf\to H_0(L\widehat\Omega^\bullet_{\OKB/(p^n)/\OK/(p^n)}/F^2)$  coming from Corollary \ref{corperfect} takes the multiplicative representative $[\varepsilon]$ to the $p$-adic limit of the elements $\widetilde{\varepsilon_{n+m}}^{p^{n+m}}$, where $\widetilde{\varepsilon_{n+m}}$ is an arbitrary lift of the class of  $\varepsilon_{n+m}$ under the surjection $H_0(L\widehat\Omega^\bullet_{(\OCK/p^n)/(\OK/p^n)}/F^2)\twoheadrightarrow \OCK/(p^n)$.  Let $P_\bullet$ be the standard free simplicial resolution of the $\OK$-algebra $\OKB$ and for an element $b\in P_i$ put $x_b\in P_{i+1}=\OK[P_i]$ for the corresponding variable for all $i\geq -1$ (with the convention $P_{-1}:=\OKB$). By definition, $L\widehat\Omega^\bullet_{\OKB/(p^n)/\OK/(p^n)}/F^2\simeq (L\widehat\Omega^\bullet_{\OKB/\OK}/F^2)\otimes^L[\Z\stackrel{p^n}\to \Z]$ is
quasi-isomorphic to the total complex of the double complex
\begin{equation}\label{doublecomplextens}
\xymatrix{
\cdots\ar[r]& P_1\oplus \Omega^1_{P_2/\OK}\ar[r] & P_0\oplus \Omega^1_{P_1/\OK}\ar[r] & \Omega^1_{P_0/\OK}\ar[r] & 0\\
\cdots\ar[r]& P_1\oplus \Omega^1_{P_2/\OK}\ar[r]\ar[u]^{p^n} & P_0\oplus \Omega^1_{P_1/\OK}\ar[r]\ar[u]^{p^n} & \Omega^1_{P_0/\OK}\ar[r]\ar[u]^{p^n} & 0.
}
\end{equation}
As each term in \eqref{doublecomplextens} is torsion free, the total complex is quasi-isomorphic to
\begin{equation}\label{singlecomplex}
\cdots\to P_1/(p^n)\oplus \Omega^1_{P_2/\OK}/p^n\stackrel{d_1}\to P_0/(p^n)\oplus \Omega^1_{P_1/\OK}/p^n\stackrel{d_0}\to \Omega^1_{P_0/\OK}/(p^n)\to 0,
\end{equation}
a complex placed in homological degrees $\geq -1$. By Proposition \ref{univ1orderOCKn} (2) we have an exact sequence
$$
0\to {}_{p^n}\Omega^1_{\OKB/\OK}\to H_0(L\Omega^\bullet_{(\OKB/(p^n))/(\OK/(p^n))}/F^2)\to \OKB/(p^n)\to 0,
$$
and the proof of the proposition shows that the term ${}_{p^n}\Omega^1_{\OKB/\OK}$ comes from setting the summands $P_i/(p^n)$ in the terms of (\ref{doublecomplextens}) to 0. Now the element $(x_{\varepsilon_{2n+m}}^{p^n},0)\in P_0\oplus \Omega^1_{P_1/\OK}$ has image $dx_{\varepsilon_{2n+m}}^{p^n}=p^nx_{\varepsilon_{2n+m}}^{p^n-1}dx_{\varepsilon_{2n+m}}$ in $\Omega^1_{P_0/\OK}$, hence its mod $p^n$ class lies in the kernel of the map $d_0$ of (\ref{doublecomplextens}). Therefore $(x_{\varepsilon_{2n+m}}^{p^n},0)$ defines a class $\widetilde{\varepsilon_{n+m}}$ in $H_0(L\widehat\Omega^\bullet_{(\OCK/p^n)/(\OK/p^n)}/F^2)$ which lifts that of $\varepsilon_{n+m}$ in $\OCK/(p^n)$ by the above description of the map $H_0(L\Omega^\bullet_{(\OKB/(p^n))/(\OK/(p^n))}/F^2)\to \OKB/(p^n)$. So we have to compute the class of $(x_{\varepsilon_{2n+m}}^{p^{2n+m}}-1,0)$ in $(L\widehat\Omega^\bullet_{\OKB/\OK}/F^2)\otimes^L \mathbb{Z}/p^n\Z$. The mod $p^n$ class of  $(x_{\varepsilon_{2n+m}}^{p^{2n+m}}-1,0)$ in \eqref{singlecomplex} is again in the kernel of $d_0$, and under the quasi-isomorphism of the total complex of \eqref{doublecomplextens} with \eqref{singlecomplex} it corresponds to the class of $(x_{\varepsilon_{2n+m}}^{p^{2n+m}}-1,0, p^{n+m}x_{\varepsilon_{2n+m}}^{p^{2n+m}-1}dx_{\varepsilon_{2n+m}})\in P_0\oplus \Omega^1_{P_1/\OK}\oplus \Omega^1_{P_0/\OK} $ in the total complex of \eqref{doublecomplextens}. As $x_{\varepsilon_{2n+m}}^{p^{2n+m}}-1$ maps to 0 under the map $P_0\to \OKB$, this class represents an element of ${}_{p^n}\Omega^1_{\OKB/\OK}$, and therefore comes from the double complex obtained by deleting the $P_i$ terms. Chasing through the construction shows that the element of ${}_{p^n}\Omega^1_{\OKB/\OK}$ thus obtained is the image of $p^{n+m}x_{\varepsilon_{2n+m}}^{p^{2n+m}-1}dx_{\varepsilon_{2n+m}}$, which is $p^{n+m}\varepsilon_{2n+m}^{p^{2n+m}-1}d\varepsilon_{2n+m}=p^{n+m}d\log \varepsilon_{2n+m}=d\log \varepsilon_n$.
\end{proof}

\begin{cor}
The Fontaine element $t$ generates the maximal ideal of the discrete valuation ring $\Bdr^+$, and hence $\Bdr=\Bdr^+[t^{-1}]$.\end{cor}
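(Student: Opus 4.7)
The plan is to leverage Proposition \ref{fontaineelement} together with the structure of $\Bdr^+$ as a complete discrete valuation ring established in Proposition \ref{BdRdiscreteval}. Recall that in a complete DVR $R$ with maximal ideal $\mathfrak{m}$, an element $\pi\in \mathfrak{m}$ is a uniformizer (equivalently, generates $\mathfrak{m}$) if and only if its image in $\mathfrak{m}/\mathfrak{m}^2$ is nonzero. In our case $\mathfrak m = {\rm Fil}^1\Bdr^+$ and $\mathfrak{m}/\mathfrak{m}^2 = {\rm gr}^1\Bdr^+ \cong \C_K(1)$ is a one-dimensional $\C_K$-vector space. Thus it suffices to show that the class $\bar t$ of the Fontaine element in ${\rm gr}^1\Bdr^+$ is nonzero.

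To verify this, I would apply Proposition \ref{fontaineelement} directly: it identifies $\bar t = \overline\iota((\varepsilon_n))$ with the image of the chosen generator $(\varepsilon_n)$ of $\Z_p(1)$ under the dlog map \eqref{dlog}. But this map is obtained by composing the dlog embedding $\Z_p(1) = T_p(\mu_{p^\infty}) \hookrightarrow T_p(\Omega^1_{\OKB/\OK})$ with the identification $T_p(\Omega^1_{\OKB/\OK})\otimes\Q_p \cong {\rm gr}^1\Bdr^+$ coming from Corollary \ref{coruniv}; after inverting $p$, Corollary \ref{cp1} tells us that dlog induces an \emph{isomorphism} $\C_K(1) \stackrel\sim\to V_p(\Omega^1_{\OKB/\OK})$. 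In particular, the image of a generator of $\Z_p(1)$ is a $\C_K$-basis of $\C_K(1)$, hence nonzero. Therefore $\bar t \neq 0$, so $t$ is a uniformizer of $\Bdr^+$.

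For the second assertion, since $\Bdr^+$ is a complete discrete valuation ring and $\Bdr$ is by definition its fraction field, inverting any uniformizer yields the fraction field. Thus $\Bdr = \Bdr^+[t^{-1}]$.

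There is no real obstacle here beyond correctly tracing the identifications: the only subtlety is ensuring that the map in Proposition \ref{fontaineelement} lands in a \emph{basis} of $\C_K(1)$ rather than merely in a nontrivial subgroup. This is precisely what Corollary \ref{cp1} guarantees, since the dlog map becomes an isomorphism after passing from $T_p$ to $V_p$.
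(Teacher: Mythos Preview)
Your argument is correct and is precisely the reasoning the paper has in mind: the corollary is stated without proof because it follows immediately from Proposition~\ref{fontaineelement} together with the DVR structure of Proposition~\ref{BdRdiscreteval}, exactly as you spell out. The only minor remark is that you do not even need the full force of Corollary~\ref{cp1}; the text preceding Proposition~\ref{fontaineelement} already describes the map~(\ref{dlog}) as an \emph{embedding} $\Z_p(1)\hookrightarrow {\rm gr}^1\Bdr^+$, so a generator of $\Z_p(1)$ automatically has nonzero image.
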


\begin{rema}\rm
It can be checked that the same construction as in \ref{consfonel} realizes $t$ as an element of $A_{\rm cris}$. One defines $B_{\rm cris}=A_{\rm cris}[t^{-1}]$. The Frobenius endomorphism of $\OCK/(p)$ lifts uniquely to an endomorphism $\varphi$ of $A_{\rm cris}$, and satisfies $\varphi(t)=pt$ (see \cite{F2}, 2.3.4).
\end{rema}

\section{Beilinson's comparison map}

\subsection{Sheaf-theoretic preliminaries}
In this section we construct the map comparing de Rham and $p$-adic \'etale cohomology following Beilinson's approach.
We begin by assembling general facts about sheaves in certain Grothendieck topologies needed for the construction. First a general comparison result for Grothendieck topologies due to Verdier:

\begin{theo}\label{verdier}
Assume $F: C\to C'$ is a functor between small categories, and $C'$ is equipped with a Grothendieck topology. Equip $C$ with the induced Grothendieck topology, i.e. the finest topology in which the pullback of a sheaf on $C'$ by $F$ is a sheaf on $C$.

If $F$ is fully faithful, and moreover every object of $C'$ has a covering by objects in the image of $F$, then the pullback functor induces an equivalence of category of sheaves on $C'$ with the category of sheaves on $C$.
\end{theo}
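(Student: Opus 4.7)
The plan is to construct an explicit quasi-inverse $F_*\colon Sh(C)\to Sh(C')$ to the pullback functor $F^*$. For $\mathcal{G}\in Sh(C)$ and $Y\in C'$, I would first form the presheaf
$$(F_p\mathcal{G})(Y):=\varprojlim_{(X,\,F(X)\to Y)}\mathcal{G}(X),$$
where the limit runs over the comma category of pairs $(X,\phi\colon F(X)\to Y)$ with morphisms induced from those of $C$, and then define $F_*\mathcal{G}$ as the sheafification of $F_p\mathcal{G}$ in the topology of $C'$. A formal adjunction argument, using that the induced topology on $C$ was chosen precisely so that $F^*\mathcal{F}\in Sh(C)$ whenever $\mathcal{F}\in Sh(C')$, shows that $(F^*,F_*)$ is an adjoint pair, so the entire task reduces to checking that the unit and counit of this adjunction are isomorphisms.

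For the counit $F^*F_*\mathcal{G}\overset{\sim}{\to}\mathcal{G}$ I would exploit full faithfulness. For $X\in C$, every morphism $\phi\colon F(X')\to F(X)$ is of the form $F(\psi)$ for a unique $\psi\colon X'\to X$, so the comma category $(F\downarrow F(X))$ is equivalent to the slice $C/X$. Since $C/X$ has the terminal object $\mathrm{id}_X$, the limit defining $(F_p\mathcal{G})(F(X))$ collapses to $\mathcal{G}(X)$. A short separate argument then shows that sheafifying $F_p\mathcal{G}$ on $C'$ does not alter its value on objects in the image of $F$: by the very definition of the induced topology, the sheaf condition for $\mathcal{G}$ on $C$ is exactly the \v Cech compatibility needed to make $F_p\mathcal{G}$ already a sheaf at $F(X)$.

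For the unit $\mathcal{F}\to F_*F^*\mathcal{F}$ the covering hypothesis becomes essential. Given $Y\in C'$, choose a cover $\{F(X_i)\to Y\}_i$ by objects in the image of $F$, which exists by assumption. The sheaf property presents $\mathcal{F}(Y)$, and likewise $F_*F^*\mathcal{F}(Y)$, as the equalizer of a \v Cech-type diagram whose degree-zero term is $\prod_i\mathcal{F}(F(X_i))$. On each factor the unit is identified (by the triangle identity) with the inverse of the counit just established, hence is an isomorphism. To conclude, one checks the analogous identification for the higher-degree terms, namely the fibre products $F(X_i)\times_Y F(X_j)$ and their iterates, after further resolving each by a covering from the image of $F$.

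The main obstacle is exactly this last point. The fibre products $F(X_i)\times_Y F(X_j)$ need not lie in the essential image of $F$, so one must invoke the covering hypothesis iteratively, organize the resulting bisimplicial data, and verify compatibility with the limit defining $F_p$ and with the sheafification step on $C'$. Once this bookkeeping is in place, every remaining step reduces formally to full faithfulness of $F$ and to the defining property of the induced topology.
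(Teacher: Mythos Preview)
Your sketch is essentially the standard proof and aligns with what the paper indicates, but you should know that the paper does not actually prove this theorem: it simply cites SGA4, Expos\'e III, Theorem 4.1 and Tamme, Chapter I, \S 3.9, remarking only that ``the main point of the proof is that under the conditions of the theorem one may construct a push-forward functor from sheaves on $C$ to sheaves on $C'$ which is right adjoint to pullback.'' Your construction of $F_*$ as a right Kan extension followed by sheafification is exactly this push-forward, so your approach matches the paper's hint.

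Your outline is sound, and you have correctly identified the one genuinely nontrivial step: when verifying that the unit $\mathcal{F}\to F_*F^*\mathcal{F}$ is an isomorphism, the fibre products $F(X_i)\times_Y F(X_j)$ need not lie in the essential image of $F$, so a direct comparison of \v Cech diagrams is not immediate. Your proposed fix (cover these fibre products again from the image of $F$ and organize the resulting hypercover or bisimplicial diagram) is the right idea, and is what the references carry out. One small caveat: your claim that $F_p\mathcal{G}$ is already a sheaf at objects of the form $F(X)$ deserves care, since covers of $F(X)$ in $C'$ need not come from covers of $X$ in $C$; the cleaner route is to compare after sheafification rather than argue that sheafification is trivial on the image.
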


See \cite{sga4}, Expos\'e III, Theorem 4.1 or \cite{tamme}, Chapter I, \S 3.9. The main point of the proof is that under the conditions of the theorem one may construct a push-forward functor from sheaves on $C$ to sheaves on $C'$ which is right adjoint to pullback.

Beilinson needs a refinement of the above theorem for functors that are faithful but not necessarily fully faithful. He therefore replaces the covering condition in Verdier's theorem by the following more complicated one:\smallskip

\noindent {\em Condition (*)}. For every $V\in C'$ and a finite family of pairs $(W_\alpha, f_\alpha)$ with $W_\alpha\in C$ and $f_\alpha: V\to F(W_\alpha)$ morphisms in $C'$ there exists a set of objects $W_\beta\in C$ together with morphisms $F(W_\beta)\to V$ in $C'$ satisfying:
\begin{itemize}
\item The morphisms $F(W_\beta)\to V$  form a covering family of $V$.
\item Every composite morphism $F(W_\beta)\to V\to F(W_\alpha)$ is in the image of a morphism $W_\beta\to W_\alpha$ via $F$.
\end{itemize}    \smallskip

Under this condition Beilinson proves in \S 2.2 of \cite{Bei}:

\begin{theo}\label{beilh} If $C, C'$ are as in the previous theorem and $F:\, C\to C'$ is a faithful functor satisfying condition (*), then  the pullback functor induces an equivalence of the category of sheaves on $C'$ with the category of sheaves on $C$ for the topology induced by $F$.
\end{theo}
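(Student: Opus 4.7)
The plan is to imitate the proof of Verdier's Theorem \ref{verdier} by constructing a right adjoint $F_*\colon \mathrm{Sh}(C)\to\mathrm{Sh}(C')$ to the pullback $F^*$, and then verifying that both the unit and the counit of the resulting adjunction are isomorphisms. Condition (*) will compensate everywhere for the lack of full faithfulness of $F$.

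First I would define $F_*$ on presheaves by the usual comma-category limit formula
$$
(F_*\mathcal{F})(V):=\varprojlim_{(W,\,f\colon V\to F(W))}\mathcal{F}(W),
$$
where the limit is taken over pairs $(W,f)$ with $W\in C$ and $f$ a morphism in $C'$, with the evident transition maps coming from morphisms in $C$. The crucial technical point is to check that if $\mathcal{F}$ is a sheaf on $C$ for the induced topology, then $F_*\mathcal{F}$ is a sheaf on $C'$. This is precisely where condition (*) enters: given a covering $\{V_i\to V\}$ in $C'$ and compatible sections, one applies (*) to each pair $(V_i,f_i\colon V_i\to F(W_\alpha))$ appearing in the limit description to refine the covering by objects of the form $F(W_\beta)$ where the sections already come from $\mathcal{F}(W_\beta)$; the sheaf axiom for $\mathcal{F}$ on $C$ then produces the required glueing. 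Once $F_*$ is shown to land in sheaves, the adjunction $F^*\dashv F_*$ is formal.

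Next I would show that the unit $\eta_{\mathcal{G}}\colon\mathcal{G}\to F_*F^*\mathcal{G}$ is an isomorphism for every sheaf $\mathcal{G}$ on $C'$. For $V\in C'$, a section of $F_*F^*\mathcal{G}$ over $V$ is a compatible family $\{s_{W,f}\in\mathcal{G}(F(W))\}$ indexed by morphisms $f\colon V\to F(W)$. Applying condition (*) to any finite collection of such pairs yields a covering $\{F(W_\beta)\to V\}$ together with factorizations of the $f$'s through morphisms in $C$; pulling the $s_{W,f}$ back along these produces a compatible family of sections of $\mathcal{G}$ on an honest covering of $V$, and the sheaf property of $\mathcal{G}$ on $C'$ then glues them to a unique section of $\mathcal{G}(V)$ mapping to the given datum. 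Faithfulness of $F$ guarantees that distinct comparisons of sections under different factorizations agree.

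Finally, the counit $\varepsilon_{\mathcal{F}}\colon F^*F_*\mathcal{F}\to\mathcal{F}$ is checked to be an isomorphism locally on $C$: evaluating at $W\in C$, the identity morphism $\mathrm{id}\colon F(W)\to F(W)$ singles out a tautological projection from the limit defining $(F_*\mathcal{F})(F(W))$ onto $\mathcal{F}(W)$, and condition (*) again lets one verify that this projection is an isomorphism after passing to a suitable covering in the induced topology on $C$.

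The main obstacle is the sheaf property of $F_*\mathcal{F}$ together with the bijectivity of the unit: in the fully faithful case of Verdier's theorem the set of pairs $(W,f)$ has an initial object whenever $V$ itself lies in the image of $F$, and coverings by such $V$ suffice; without full faithfulness several distinct morphisms $V\to F(W)$ must be compared, and it is exactly the two-part conclusion of condition (*)---a covering by objects $F(W_\beta)$ \emph{and} a compatible factorization of every prescribed morphism $V\to F(W_\alpha)$---that supplies the combinatorial input making the limit formula and the glueing arguments work.
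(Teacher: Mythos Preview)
The paper does not actually give a proof of this theorem; it simply cites \S 2.2 of Beilinson's paper \cite{Bei} for the argument. Your sketch follows the natural and correct line---right Kan extension along $F$ to produce $F_*$, then verification that unit and counit are isomorphisms---and this is indeed the shape of Beilinson's proof as well. The key insight you correctly identify is that condition (*) is exactly what makes the comma category $(V\downarrow F)$ behave well enough (every finite family of objects is dominated by a covering family whose structure maps lift to $C$) for the limit formula and the glueing arguments to go through, replacing the cofinality argument available in Verdier's fully faithful case.
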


In the case where the initial  $B_\alpha$ form an empty set, Beilinson's condition reduces to Verdier's.\medskip

We now recall the notion of {\em Godement resolutions.} This is a canonical way to construct a flabby resolution of a sheaf on a site that has enough points.

\begin{cons}\rm Suppose for simplicity that $\calf$ is a sheaf for the Zariski topology on a scheme $Y$; this is the only case we need. Given a point $y\in Y$, we may consider the inclusion map $i_y:\, \Spec k(y)\to Y$ and the push-forward sheaf $i_{y*}\calf_y$, where $\calf_y$ is the stalk of $\calf$ at $y$ considered as a constant sheaf. The rule
$$
U\mapsto C^0(\calf)(U):=\prod_{y\in U}i_{y*}\calf_y
$$
with the obvious restriction maps defines a flabby sheaf $C^0(\calf)$ on $X$ and there is a natural injective morphism of sheaves
$$
\iota:\,\calf\to C^0(\calf),\quad s\mapsto (s_y)
$$
where $s_y$ is the image of a section $s$ in the stalk $\calf_y$. Now we define
$$
C^1(\calf):=C^0(\cok(\iota)).
$$
By construction, there is a natural map
$$
d^0:\, C^0(\calf)\to C^1(\calf).
$$
We now construct inductively sheaves
$$
C^i(\calf):=C^0(\cok(d^{i-2})).
$$
and maps
$$
d^{i-1}:\, C^{i-1}(\calf)\to C^i(\calf).
$$
In this way we obtain a flabby resolution $\calf\to C^\bullet(\calf)$ canonically attached to $\calf$ and functorial in $\calf$, the Godement resolution. By definition, the complex $\Gamma(X, C^\bullet(\calf))$ represents $R\Gamma(X,\calf)$ in the bounded below derived category of abelian groups. This construction extends in the usual way from sheaves to bounded below complexes of sheaves.
\end{cons}

Finally, we need the definition of Voevodsky's $h$-topology.

\begin{defi}\rm
A morphism $\phi:\, X\to Y$ of schemes is a {\em topological epimorphism} if on underlying spaces it induces a topological quotient map (i.e. $\phi$ is surjective and the topology of $Y$ is the same as the quotient topology induced by $\phi$.) It is a {\em universal topological epimorphism} if for every morphism $Z\to Y$ the base change map $X\times_YZ\to Z$ is a topological epimorphism.

An {\em $h$-covering} of a scheme $X$ is a finite family of morphisms of finite type $X_i\to X$ such that
$\amalg X_i\to X$ is a universal topological epimorphism. We equip the category of schemes with the induced Grothendieck topology and call it the {\em $h$-topology.}

\end{defi}

\'Etale surjective maps and proper surjective maps are universal topological epimorphisms, so the $h$-topology is finer than the \'etale or proper topologies (defined respectively by finite surjective families of \'etale and proper maps). The following geometric fact is nontrivial, however.

\begin{fact}\rm Assume $X$ is a reduced connected Noetherian excellent scheme (for instance a reduced scheme of finite type over a field or a discrete valuation ring of characteristic 0). Every $h$-covering $\amalg X_i\to X$ has a refinement $\amalg Y_j\to X$ that factors as $\amalg Y_j\to  Y\to X$, where $\amalg Y_j\to Y$ is a  Zariski open covering and $Y\to X$ is proper and surjective (but usually not flat). See \cite{sv}, Corollary 10.4.
\end{fact}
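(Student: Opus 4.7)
The plan is to reduce the statement to the Raynaud--Gruson platification theorem, exploiting Noetherian induction to handle residual strata. First I would replace the $h$-covering $\{X_i\to X\}$ by the single morphism $f\colon X'\to X$ with $X':=\coprod X_i$; since a refinement of this single morphism refines the original family, nothing is lost. The map $f$ inherits being a universal topological epimorphism of finite type, so in particular it is surjective and remains so after any base change.

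Next I would apply Raynaud--Gruson flattening: for the finite-type morphism $f$ over the Noetherian excellent base $X$, there exists a blowup $\pi\colon \widetilde X\to X$ centered in a nowhere-dense closed subscheme such that the strict transform $\widetilde f\colon \widetilde{X'}\to \widetilde X$ is flat. Because $X$ is reduced and connected and $\pi$ is proper birational, no irreducible component of $X$ is contracted, so $\pi$ is surjective; any refinement of $\widetilde f$ composed with $\pi$ therefore still gives a proper surjection onto $X$. After this reduction, $\widetilde f$ is flat, of finite type, and surjective.

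It now remains to refine a flat surjection $g\colon X''\to Y$ of finite type between Noetherian schemes by a Zariski covering of a further proper cover of $Y$. A flat finite-type morphism is open, so $g$ is an open surjection; generically on each irreducible component of $Y$ one finds, after an \'etale base change, a section of $g$. Absorbing this \'etale base change into a finite (hence proper) cover $Y'\to Y$ --- for instance a Galois closure realizing the relevant function field extension --- yields a Zariski open subset of $Y'$ on which $g$ acquires a section. Noetherian induction applied to the closed complement, where each inductive step again passes to a further proper cover before slicing out an open, terminates and produces the desired factorization $\coprod Y_j\to Y\to X$.

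The main obstacle is precisely the last step: converting flat- or \'etale-local sections into genuine Zariski-open sections of a suitable proper modification. This is where the passage from the flat topology to ``proper plus Zariski'' requires the absorption trick of replacing \'etale covers by their Galois closures (which are finite, hence proper over $X$), so that the finite covers contribute to the proper part $Y\to X$ while the Zariski geometry of these covers supplies the open sections demanded by the statement. The excellence hypothesis on $X$ ensures that Raynaud--Gruson applies and that the Galois closures remain of finite type.
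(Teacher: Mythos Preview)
The paper does not prove this fact; it simply records it as a Fact and cites \cite{sv}, Corollary 10.4. So there is no argument in the paper to compare yours against beyond the reference. Your outline does invoke the same circle of ideas that Suslin--Voevodsky use --- platification by blowing up, Noetherian induction, and Zariski's Main Theorem to convert quasi-finite pieces into opens of finite covers --- so the strategy is sound in outline. But there is a genuine gap, and the step you yourself flag as ``the main obstacle'' is not actually carried out.

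The gap is the sentence ``After this reduction, $\widetilde f$ is flat, of finite type, and surjective.'' Raynaud--Gruson guarantees flatness of the strict transform, and flatness makes its image open and dense in $\widetilde X$, but it does not force surjectivity: passing to the strict transform discards precisely those pieces of $X'\times_X\widetilde X$ supported over the blowup center, and these may be the only pieces hitting certain points of the exceptional locus. The standard remedy, and the one used in \cite{sv}, is to first Nagata-compactify $f$ to a proper map $\bar f\colon\bar{X'}\to X$ and apply platification to $\bar f$; the strict transform is then proper over $\widetilde X$, hence has closed image containing the dense open, hence is surjective. Without this preliminary compactification your reduction does not produce a flat surjection.

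Even granting a flat surjection, the Noetherian induction is where the real content lies, and your sketch does not engage with it. After producing an open $W$ inside a finite cover $\bar W\to X$ together with $W\to X'$, one must still arrange that the closed remainder $\bar W\setminus W$ is covered by Zariski opens of the eventual $Y$ that themselves map to $X'$. Simply adjoining an inductively constructed piece over the complement, as in $Y=\bar W\sqcup(\text{inductive piece})$, leaves $\bar W\setminus W$ sitting inside $Y$ uncovered. Organizing the induction so that the final $Y$ is genuinely covered by such opens while $Y\to X$ remains proper is the technical heart of the Suslin--Voevodsky argument; your ``absorption trick'' names the right tool but does not carry out this bookkeeping.
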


This fact has an important consequence for $h$-hypercoverings, i.e. hypercoverings for the class of coverings in the $h$-topology. Namely, we may apply Theorem \ref{descent} of the Appendix to obtain:

\begin{cor}\label{cordescent} ${}$
\begin{enumerate}
\item In the category of reduced connected excellent schemes $h$-hypercoverings satisfy cohomological descent for torsion \'etale sheaves.

\item In the category of reduced $\C$-schemes of finite type  $h$-hypercoverings satisfy cohomological descent for the complex topology.
\end{enumerate}
\end{cor}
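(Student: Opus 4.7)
The plan is to invoke Theorem \ref{descent} of the Appendix, applied to the two classes of morphisms that together generate the $h$-topology: Zariski open coverings on the one hand, and proper surjective morphisms on the other. The Fact cited immediately before the corollary guarantees that, on the categories of schemes under consideration, every $h$-covering admits a refinement factoring through a proper surjective map followed by a Zariski open cover. Hence an arbitrary $h$-hypercovering can be dominated by a hypercovering built from these two types of coverings, and it will suffice to establish cohomological descent for proper surjective hypercoverings and for Zariski hypercoverings separately, and then combine them via Theorem \ref{descent}.

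For part (1), I would first note that Zariski open coverings automatically satisfy cohomological descent for any \'etale sheaf, since the \'etale topology refines the Zariski one and descent along Zariski opens is built into the definition of sheaf cohomology. The genuinely non-trivial ingredient is proper cohomological descent for torsion \'etale sheaves, which is the classical theorem of Deligne proved by d\'evissage using the proper base change theorem together with Chow's lemma and resolution by alterations. Assembling these two descent statements via Theorem \ref{descent} would then yield descent for $h$-hypercoverings on reduced connected excellent schemes.

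For part (2), I would run the argument in parallel, substituting the complex topology for the \'etale one. Zariski coverings give descent trivially because Zariski opens are also open in the complex topology. The proper surjective case is Deligne's proper cohomological descent theorem in the analytic setting, which likewise relies on resolution of singularities and the degeneration of the attached simplicial spectral sequence. Theorem \ref{descent} then glues the two pieces into descent for complex-topology cohomology along arbitrary $h$-hypercoverings of reduced $\C$-schemes of finite type.

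The main obstacle is bridging the gap between what the Fact provides and what Theorem \ref{descent} demands: the Fact refines a single $h$-covering by one proper surjection composed with a Zariski cover, whereas Theorem \ref{descent} needs a simplicial refinement of an entire hypercovering by a mixed proper/Zariski hypercovering. Overcoming this requires propagating the degreewise refinement across all simplicial degrees, exploiting the stability of both the proper surjective class and the Zariski class under base change and composition, together with the functoriality and cofinality properties of the hypercovering construction. A secondary point that would need verification is that the resulting mixed hypercoverings are cofinal among $h$-hypercoverings in the appropriate simplicial homotopy-theoretic sense, so that the conclusion of Theorem \ref{descent} transports back to the full $h$-topology.
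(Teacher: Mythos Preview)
Your approach is essentially the paper's: the paper simply states that the Fact together with Theorem~\ref{descent} of the Appendix yield the corollary, without further elaboration. Your discussion is therefore more detailed than what the paper provides.

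That said, you are overcomplicating one point. You propose to ``establish cohomological descent for proper surjective hypercoverings and for Zariski hypercoverings separately, and then combine them via Theorem~\ref{descent}.'' But Theorem~\ref{descent} is not a combination device: part~(4) already asserts descent for hypercoverings built from the class $P$ of \emph{composites} of proper surjective maps and \'etale (respectively, open) coverings. Since Zariski coverings are \'etale, the Fact tells you that every $h$-covering on the schemes in question is refined by a single map in this class $P$. You should invoke part~(4) directly rather than parts~(1)--(3) separately; there is no assembly step.

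The ``main obstacle'' you identify---passing from the refinement of a single $h$-cover to a refinement of an entire $h$-hypercovering by a $P$-hypercovering---is a genuine technical point, and the paper does gloss over it. It is, however, a standard inductive construction (refine degree by degree, using that $P$ is stable under base change and composition, and that refinements of hypercoverings preserve cohomological descent). Your outline of how to handle it is correct; it simply is not something the paper spells out.
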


Moreover, applying Theorem \ref{limitdescent} of the Appendix we obtain:

\begin{cor}\label{cordescent2}
Let $X$ be as above, and let $A$ be a torsion abelian group. Denoting by $A_{\mbox{\scriptsize \rm\'et}}$ and $A_h$ the associated constant \'etale and $h$-sheaves on $X$, we have a canonical quasi-isomorphism
$$
R\Gamma(X_{\mbox{\scriptsize \rm\'et}},A_{\mbox{\scriptsize \rm\'et}})\cong R\Gamma(X_{h},A_h).
$$
\end{cor}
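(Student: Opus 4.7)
The plan is to deduce this corollary from Corollary \ref{cordescent}(1) by invoking Theorem \ref{limitdescent} of the Appendix. Since $X$ is a reduced connected excellent scheme and $A$ is torsion, $A_{\mbox{\scriptsize \'et}}$ is a torsion \'etale sheaf, and this is precisely the class of sheaves for which $h$-hypercoverings of $X$ satisfy cohomological descent.

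First I would set up the canonical morphism of sites $\epsilon \colon X_h \to X_{\mbox{\scriptsize \'et}}$ induced by the observation that every \'etale cover is an $h$-cover. The pullback $\epsilon^{-1} A_{\mbox{\scriptsize \'et}}$ is identified with $A_h$, since both arise as the sheafifications (in the respective topologies) of the constant presheaf with value $A$; equivalently, $A_h$ is the $h$-sheafification of the presheaf underlying $A_{\mbox{\scriptsize \'et}}$. Functoriality of cohomology then produces the natural comparison map
$$
R\Gamma(X_{\mbox{\scriptsize \'et}}, A_{\mbox{\scriptsize \'et}}) \to R\Gamma(X_h, A_h)
$$
whose bijectivity is the assertion of the corollary.

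The core of the proof is the application of Theorem \ref{limitdescent}, whose content may be informally recalled as follows: if hypercoverings in a finer topology satisfy cohomological descent for a sheaf $\mathcal F$ originally defined in a coarser topology, then the natural map $\mathcal F \to R\epsilon_\ast \epsilon^{-1}\mathcal F$ is a quasi-isomorphism, and hence the two cohomologies agree by the Leray spectral sequence. Corollary \ref{cordescent}(1) supplies exactly the required cohomological descent for the torsion \'etale sheaf $A_{\mbox{\scriptsize \'et}}$ along $h$-hypercoverings, so Theorem \ref{limitdescent} applies directly and yields the desired quasi-isomorphism. There is essentially no obstacle beyond unwinding the statements of the two results one is combining; the only point requiring a line of verification is that the terms of an $h$-hypercovering of $X$ remain reduced excellent schemes so that the descent hypothesis holds at every simplicial level, but this is automatic for schemes of finite type over a reduced excellent base.
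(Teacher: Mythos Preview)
Your overall strategy—setting up the morphism of sites $\epsilon\colon X_h\to X_{\mbox{\scriptsize \'et}}$ and invoking Leray—is sound, but your appeal to Theorem \ref{limitdescent} is a misquotation. That theorem does \emph{not} assert that cohomological descent along hypercoverings implies $\mathcal F\stackrel\sim\to R\epsilon_*\epsilon^{-1}\mathcal F$; rather, it is a Verdier-style hypercovering formula computing, for a single Grothendieck topology, the cohomology of a sheaf as the filtered colimit $\limind_{Y_\bullet} H^i(C(\calf(Y_\bullet)))$ over all hypercoverings $Y_\bullet\to X$. So as written your argument has a gap at the key step: the theorem you cite does not say what you claim it says.

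The paper's proof uses Theorem \ref{limitdescent} in its actual form, but differently from what you sketch. One applies it in the $h$-topology to obtain
$$H^i_h(X,A_h)\cong \limind_{Y_\bullet} H^i(C(A_{\mbox{\scriptsize \'et}}(Y_\bullet)))\cong \limind_{Y_\bullet} H^i_{\mbox{\scriptsize \'et}}(Y_\bullet, A_{\mbox{\scriptsize \'et}}^\bullet),$$
the second isomorphism being the standard \v Cech-to-derived identification. Now Corollary \ref{cordescent}(1) says each $h$-hypercovering $Y_\bullet\to X$ satisfies cohomological descent for the torsion \'etale sheaf $A_{\mbox{\scriptsize \'et}}$, so every term in the colimit is canonically $H^i_{\mbox{\scriptsize \'et}}(X,A_{\mbox{\scriptsize \'et}})$ and the colimit is constant. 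Your route via $R\epsilon_*$ can be made to work, but it requires either a separate argument that $R^q\epsilon_*A_h=0$ for $q>0$ (which itself amounts to the hypercovering computation above, localized) or an appeal to a comparison theorem not contained in the paper's appendix.
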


\begin{dem} For all $i>0$ we have a series of canonical isomorphisms
\begin{equation}\label{eqcordescent}
H^i_h(X,A_h)\cong \limind H^i(C(A_{\mbox{\scriptsize \rm\'et}}(Y_\bullet)))\cong \limind H^i_{\mbox{\scriptsize \rm\'et}}(Y_\bullet, A_{\mbox{\scriptsize \rm\'et}}^\bullet)
\end{equation}
where the first isomorphism comes from applying Theorem \ref{limitdescent} of the Appendix to the system of $h$-hypercoverings $Y_\bullet\to X$, and the second is proven by the same argument as the analogous fact for \v Cech cohomology (see e.g. \cite{tamme}, Chapter I, Theorem 2.2.3). Here $A_{\mbox{\scriptsize \rm\'et}}^\bullet$ denotes the constant simplicial sheaf coming from $A_{\mbox{\scriptsize \rm\'et}}$. Since $A_{\mbox{\scriptsize \rm\'et}}^\bullet$ is the pullback of the constant sheaf $A_{\mbox{\scriptsize \rm\'et}}$ on $X$ to $Y_\bullet$, the direct system on the right hand side of (\ref{eqcordescent}) is constant by Corollary \ref{cordescent} (1).
\end{dem}

\subsection{Preliminaries on logarithmic structures}\label{logsection}

We now give a summary of the notions from logarithmic geometry we shall use; they will be needed from Section \ref{arithsec} onwards. Our basic reference for log structures is Kato's paper \cite{kato}. A gentle introduction is contained in sections 2-3 of \cite{abr}; a textbook by A. Ogus is expected.

A {\em monoid} is a commutative semigroup with unit. Every monoid $M$ has a group completion $M^{\rm gp}$ which is the universal object for monoid morphisms of $M$ into groups. It can be constructed as the quotient of $M\times M$ where two pairs $(x,y)$ and $(z,t)$ are identified if $axt= ayz$ for some $a\in M$. There is a natural map $M\to M^{\rm gp}$ induced by $x\mapsto (x,1)$; if it is injective, then $M$ is called an {\em integral} monoid.

A {\em pre-logarithmic ring} (or pre-log ring for short) is a triple $(A, M, \alpha)$, where $A$ is a commutative ring with unit, $M$ is a monoid and $\alpha:\, M\to A$ is a homomorphism in the multiplicative monoid of $A$. Morphisms $(A,M,\alpha)\to (B,N,\beta)$ of pre-log rings are given by pairs of morphisms $M\to N$ and $A\to B$ compatible via $\alpha$ and $\beta$. A pre-log ring is a {\em log ring} if the map $\alpha$ induces an isomorphism $\alpha^{-1}(A^\times)\stackrel\sim\to A^\times$, where $A^\times$ is the group of units in $A$. Every pre-log ring as above has an associated log ring given by a morphism $\alpha^a:\, M^a\to A$, where $M^a$ is the quotient of $A^\times \times M$ where two pairs $(a,x)$ and $(b,y)$ are identified if there exist $c,d\in A^\times$ such that $a\alpha(d)=b\alpha(c)$ and $cx=dy$. One checks that this is indeed a log ring, with $\alpha^a$ induced by the map $(a,x)\mapsto a\alpha(x)$.

Given a scheme $X$, one can define pre-log structures and log structures on $X$ by sheafifying the above notions for the \'etale topology. Thus a pre-log scheme is a triple $(X, M,\alpha)$, where $X$ is a scheme,  $M$ is a sheaf of monoids on the small \'etale site of $X$ and  $\alpha:\, M\to \calo_X$ is a morphism of \'etale monoid sheaves, where $\calo_X$ is considered as a monoid for its multiplicative structure.  A morphism $(Y,N, \beta)\to (X,M, \alpha)$ of pre-log schemes is given by a morphism $\phi:\, Y\to X$ of schemes and a morphism $\phi^{-1}M\to N$ of \'etale monoid sheaves whose composite with $\beta$ equals the composite $\phi^{-1}M\to\phi^{-1}\calo_X\to \calo_Y$, where the first map is the pullback of $\alpha$ and the second is induced by $\phi$. A pre-log scheme is a {\em log scheme} if moreover $\alpha$ induces an isomorphism $\alpha^{-1}(\calo_X^\times)\stackrel\sim\to\calo_X^\times$.  From now on we shall drop $\alpha$ from the notation when considering (pre-)log schemes.

One may define  an associated log scheme $(X, M^a)$ for every pre-log scheme $(X,M)$ by sheafifying the construction for pre-log rings described above.
A log scheme $(X,M)$ is {\em coherent} (resp. {\em integral\/}) if \'etale locally there exists a morphism $P_X\to \calo_X$ whose associated log structure is isomorphic to $M$, where $P_X$ is a constant sheaf of monoids defined by a finitely generated (resp. integral) monoid $P$. The log structure is {\em fine} if it is coherent and integral.
We shall only use the two most important examples of log structures: the {\em trivial log structure}, given by $M=\calo_X^\times$ and the natural inclusion $\alpha: \calo_X^\times\to\calo_X$, and the canonical log structure associated with a pair $(X,D)$, where $X$ is a regular scheme and $D\subset X$ a normal crossing divisor (see the beginning of the next section for a reminder). In the latter case the map $M\to\calo_X$ is given by the inclusion of $(\calo_X\cap j_*\calo_U^\times)\to \calo_X$, where $j:\, U\to X$ is the inclusion map of the open complement $U$ of $D$ in $X$.

Given a morphism $(A,M,\alpha)\to (B,N,\beta)$ of pre-log rings, one defines the $B$-module $\Omega^1_{(B,N)/(A,M)}$ of log differentials as the quotient of $\Omega^1_{B/A}\oplus (B\otimes_\Z\cok(M^{\rm gp}\to N^{\rm gp}))$ by the submodule generated by elements of the form $(d\beta(n),0)-(0,\beta(n)\otimes n)$ for $n\in N$. It comes equipped with natural maps $d: B\to \Omega^1_{(B,N)/ (A,M)}$, ${\rm dlog}: N\to \Omega^1_{(B,N)/ (A,M)}$ related by the formula $\beta(n){\rm dlog}(n)=d\beta(n)$ for all $n\in N$. One can show that the operation of taking associated log rings induces an isomorphism on log differentials.

Given a morphism $\phi:\,(Y,N)\to (X,M)$ of pre-log schemes, one defines the $\calo_Y$-module $\Omega^1_{(Y,N)/ (X,M)}$ of log differentials by performing the above construction in the context of \'etale monoid sheaves. If moreover $\phi$ is a morphism of fine log schemes, one says that $\phi$ is {\em log smooth} if the underlying scheme morphism is locally of finite presentation and $f$ satisfies a log analogue of the infinitesimal lifting property (\cite{kato}, 3.3 or \cite{abr}, Definition 3.10). In this case one can show that the sheaf $\Omega^1_{(Y,N)/ (X,M)}$ is locally free of finite rank. The fundamental example of a log smooth morphism is given by a regular flat scheme $X\to S$, where $S$ is the spectrum of a discrete valuation ring and $X$ has semi-stable reduction over $S$. Here the log structures on $X$ and $S$ are the canonical ones associated with the special fibre of $X\to S$ and the closed point of $S$, respectively.

Once log differentials have been defined, one has a notion of a log cotangent complex. Olsson's paper \cite{olsson}, which is the main reference on the subject, contains two constructions; Beilinson works with that of Gabber, explained in Section 8 of \cite{olsson}. The main point is that Gabber defines an analogue of the free $A$-algebra $A[B]$ used in the construction of the usual cotangent complex $L_{B/A}$ for a morphism $(A,M)\to (B, N)$ of pre-log rings. This is the pre-log ring given by the free $A$-algebra $A[B\amalg N]$ on the disjoint union on the underlying sets of $B$ and $N$ together with the morphism of monoids $M\oplus {\Bbb N}^N\to A[B\amalg N]$ induced by the structure map $M\to A$ and the map sending the basis element of the free monoid ${\Bbb N}^N$ corresponding to $n\in N$ to the generator of $A[B\amalg N]$ given by $n$. There is a natural morphism $(A,M)\to (A[B\amalg N],M\oplus {\Bbb N}^N)$ of pre-log rings and the associated module of log differentials is isomorphic to the free $B$-module with basis $B\amalg N$.

With the notion of a free algebra attached to $(A,M)\to (B, N)$ at hand, one defines a canonical free resolution ${P_{(A,M)}(B,N)}_\bullet\to (B, N)$ in this context by mimicking Construction \ref{standardres} (resolution meaning here that the underlying morphism of simplicial sets is a trivial fibration). One then defines
$$
L_{(B,N)/(A,M)}:={\Omega^1}_{{P_{(A,M)}(B,N)}_\bullet/(A,M)}\otimes_{{P_{(A,M)}(B,N)}_\bullet}B
$$
where the tensor product is taken over the underlying simplicial ring of ${P_{(A,M)}(B,N)}_\bullet$.

Similarly, one defines the log de Rham algebra
$$
L\Omega^\bullet_{(B,N)/(A,M)}:={\rm Tot}({\Omega^\bullet}_{{P_{(A,M)}(B,N)}_\bullet/(A,M)})
$$
which has a Hodge filtration and a Hodge-completed version  $L\widehat\Omega^\bullet_{(B,N)/(A,M)}$. The graded pieces of the Hodge filtration are given by shifts of derived exterior powers of $L_{(B,N)/(A,M)}$ as in Proposition \ref{gridr}. All these notions reduce to the usual ones in case the pre-log structures are trivial (i.e. given by the unit submonoids).

It is proven in Olsson's paper (\cite{olsson}, Theorem 8.20) that passing to associated log rings for a morphism $(A,M)\to (B, N)$ of pre-log rings induces an isomorphism on associated log cotangent complexes. Furthermore, the logarithmic cotangent complex enjoys properties analogous to those of the usual one. In particular, one has a natural map $L_{(B,N)/(A,M)}\to \Omega^1_{(B,N)/(A,M)}$ which induces an isomorphism on $H_0$ and is a quasi-isomorphism if $(B,N)$ is a free algebra over $(A,M)$ defined by Gabber's construction (\cite{olsson}, Lemmas 8.9 and 8.10). Most importantly, there is a log analogue of the exact transitivity triangle (Theorem \ref{cotangexacttriang}) for a sequence $(A,M)\to (B,N)\to (C, L)$ of morphisms of pre-log rings (\cite{olsson}, Theorem 8.18).

Finally, given a morphism $\phi:\,(Y,N)\to (X,M)$ of pre-log schemes, one defines sheafified variants $L_{(Y,N)/(X,M)}$, $L\Omega^\bullet_{(Y,N)/(X,M)}$ and $L\widehat\Omega^\bullet_{(Y,N)/(X,M)}$ of the above constructions by performing analogous operations starting from the natural morphism $(\phi^{-1}\calo_X, \phi^{-1}M)\to (\calo_Y, N)$ of pairs of \'etale sheaves induced by $\phi$. In the case when $\phi$ is a morphism of fine log schemes that is log smooth and integral (the latter is a technical condition satisfied in our basic example of semistable reduction), the natural morphism $L_{(Y,N)/(X,M)}\to \Omega^1_{(Y,N)/(X,M)}$ is a quasi-isomorphism (see \cite{olsson}, 3.7 and 8.34).

\subsection{The geometric side of the comparison map}\label{geomside}

In what follows, by `variety' we mean a separated scheme of finite type over a field.

Let $Y$ be a smooth variety over a field $k$ of characteristic 0.
Choose a smooth compactification $j:\, Y\hookrightarrow \overline Y$ such that $D:=\overline Y\setminus Y$ is a divisor with normal crossings. Such a compactification exists by Hironaka's theorem.

Recall the notion of divisor with normal crossings: there is a family of \'etale morphisms $\phi_i:\,\overline Y_i\to \overline Y$ with $\bigcup \im(\phi_i)=\overline Y$ (i.e. a covering of $\overline Y$ in the \'etale topology) such that each $Y_i$ sits in a cartesian square
$$
\begin{CD}
D\times_{\overline Y}\overline Y_i @>>> \overline Y_i\\
@VVV @VV{\rho_i}V \\
V(t_1\cdots t_r) @>>> \A^n_k
\end{CD}
$$
where the morphism $\rho_i$ is \'etale, the $t_1,\dots, t_n$ are coordinate functions on  $\A^n_k$ and $r\leq n$.

In the above situation, we have the notion of the logarithmic de Rham complex.

\begin{defi}\rm
Given a pair $(\overline Y, D)$ as above, the logarithmic de Rham complex $ \Omega^{\bullet}_{\overline Y/k}(\log D)$ is the subcomplex of  $j_*\Omega^{\bullet}_{ Y/k}$ whose terms have local sections $\omega\in j_*\Omega^{i}_{Y/k}(U)$ such that both $\omega$ and $d\omega$ have a simple pole along $D$ (i.e. $f\omega$ is a section of $\Omega^{i}_{\overline  Y/k}$ and $fd\omega$ is a section of $\Omega^{i+1}_{\overline  Y/k}$ for a local equation $f$ of $D$ in $U$ sufficiently small).
\end{defi}

\begin{rema}\rm It can be shown (see \cite{deligneeqdiff}, \S 3 and \cite{Hodge2}, \S 3.1) that $ \Omega^{i}_{\overline Y/k}(\log D)=\wedge^i\Omega^1_{\overline Y/k}(\log D)$ and if we pull back $ \Omega^{1}_{\overline Y/k}(\log D)$ to an \'etale neighbourhood $\overline Y_i$ as above, it becomes freely generated by $dt_1/t_1,\dots, dt_r/t_r, dt_{r+1},\dots, dt_n$.
\end{rema}

Now set
$$
R\Gamma_{\rm dR}(Y/k):=R\Gamma(\overline Y, \Omega^{\bullet}_{\overline Y/k}(\log D)),
$$

Notice that $R\Gamma_{\rm dR}(Y/k)$ is an object in the bounded derived category of abelian groups, and its cohomologies are the groups
$$
H^i_{\rm dR}(Y/k)=\hyp^i(\overline Y, \Omega^{\bullet}_{\overline Y/k}(\log D)).
$$
These groups are equipped with the Hodge filtration defined by
$$
F^pH^i_{\rm dR}(Y/k):=\im(\hyp^i(\overline Y, F^p\Omega^{\bullet}_{\overline Y/k}(\log D))\to \hyp^i(\overline Y, \Omega^{\bullet}_{\overline Y/k}(\log D)))
$$
where
$$
F^p\Omega^{\bullet}_{\overline Y/k}(\log D)=(0\to\Omega^{p}_{\overline Y/k}(\log D)\to \Omega^{p+1}_{\overline Y/k}(\log D)\to \cdots)
$$
In case $Y$ is proper, we may take $\overline Y=Y$ and hence
$$
H^i_{\rm dR}(Y/k)=\hyp^i(Y, \Omega^{\bullet}_{ Y/k}).
$$

\begin{remas}\rm ${}$
\begin{enumerate}
\item Deligne has shown (see \cite{Hodge2}, Theorem 3.2.5) that $R\Gamma_{\rm dR}(Y/k)$ does not depend on the choice of $\overline Y$.
\item When $k=\C$  and $Y$ is not necessarily proper but smooth, we have isomorphisms
$$
\hyp^i(Y, \Omega^{\bullet}_{ Y/k})\stackrel\sim\to H^i(Y^{\rm an}, \C)\stackrel\sim\leftarrow \hyp^i(\overline Y, \Omega^{\bullet}_{\overline Y/k}(\log D)).
$$
However, the filtration on $H^i(Y^{\rm an}, \C)$ coming from the filtration on the left hand side induced by
$$
F^p\Omega^{\bullet}_{ Y/k}=(0\to\Omega^{p}_{ Y/k}\to \Omega^{p+1}_{ Y/k}\to \cdots)
$$
is {\em not\/} the same as the Hodge filtration coming from the right hand side (in fact, it is often trivial).
\end{enumerate}
\end{remas}

We now discuss a sheafified variant of the above notions.
Fix an algebraically closed base field $k$ of characteristic 0, and consider pairs $(U, \overline U)$, where $U$ is a smooth $k$-variety with smooth compactification $\overline U$ such that $\overline U\setminus U$ is a divisor $D$ with normal crossings. These form a category ${\mathcal P}_k$ whose morphisms $(U, \overline U)\to (U', \overline U')$  are defined as morphisms $\overline U\to \overline U'$ mapping $U$ into $U'$. We have a contravariant functor on this category given by
\begin{equation}\label{prepreadr}
(U, \overline U)\mapsto\Omega^\bullet_{(U, \overline U)/k}:=\Omega^\bullet_{\overline U/k}(\log D).
\end{equation}
We consider this functor as a presheaf on ${\mathcal P}_k$ and would like to sheafify its total derived functor but the latter takes values in a derived category. However, following Illusie \cite{illsurvey}, we may use Godement resolutions to find a canonical complex representing it:
\begin{equation}\label{preadr}
(U, \overline U)\mapsto \Gamma(\overline U,C^\bullet(\Omega^\bullet_{(U, \overline U)/k}))
\end{equation}
We thus have a presheaf on the category ${\mathcal P}_k$ that is a derived version of (\ref{prepreadr}). Via the forgetful functor ${\mathcal P}_k\to {\rm Var}_k$ given by $(U, \overline U)\mapsto U$ we may restrict Voevodsky's $h$-topology on ${\rm Var}_k$ to ${\mathcal P}_k$, and therefore the following definition makes sense.

\begin{defi}\rm We define
${\mathcal A}_{\rm dR}$ to be the complex of $h$-sheaves associated with the presheaf (\ref{preadr}) on the category ${\mathcal P}_k$.
\end{defi}

Note that ${\mathcal A}_{\rm dR}$ carries a Hodge filtration induced from the one on $\Omega^\bullet_{(U, \overline U)/k}$.

The good news is that ${\mathcal A}_{\rm dR}$ defines a filtered complex of $h$-sheaves on the whole of ${\rm Var}_k$, by virtue of the following theorem.

\begin{theo}\label{hequiv}
The forgetful functor ${\mathcal P}_k\to{\rm Var}_k$ induces an equivalence of categories between $h$-sheaves on ${\mathcal P}_k$ and $h$-sheaves on ${\rm Var}_k$.
\end{theo}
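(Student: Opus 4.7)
The plan is to apply Beilinson's refinement of Verdier's theorem (Theorem \ref{beilh}) to the forgetful functor $F:\mathcal{P}_k\to\mathrm{Var}_k$, both endowed with the $h$-topology (the one on $\mathcal P_k$ being induced from that on $\mathrm{Var}_k$). We therefore need to check that $F$ is faithful and that condition (*) is satisfied.

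Faithfulness is immediate: two morphisms $(U,\overline U)\to(U',\overline U')$ are, by definition, two morphisms $\overline U\to\overline U'$ carrying $U$ into $U'$; since $U$ is dense in $\overline U$ and $\overline U'$ is separated, they are determined by their restrictions to $U$. The bulk of the argument goes into verifying condition (*). So suppose we are given $V\in\mathrm{Var}_k$ together with a finite family of pairs $W_\alpha=(U_\alpha,\overline U_\alpha)\in\mathcal P_k$ and morphisms $f_\alpha:V\to U_\alpha$ in $\mathrm{Var}_k$. We must produce a covering family of $V$ in the $h$-topology by objects of the form $F(W_\beta)$ such that each composite $F(W_\beta)\to V\xrightarrow{f_\alpha}F(W_\alpha)$ comes from a morphism $W_\beta\to W_\alpha$ in $\mathcal P_k$.

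The construction proceeds in three steps. First, apply de~Jong's alteration theorem to find a single proper surjective morphism $\pi:V'\to V$ with $V'$ smooth and quasi-projective; this is already an $h$-covering. Second, use Hironaka's theorem to choose a smooth compactification $V'\hookrightarrow\overline{V'}$ with $\overline{V'}\setminus V'$ a normal crossing divisor. Third, each composite $f_\alpha\circ\pi:V'\to U_\alpha\hookrightarrow\overline U_\alpha$ defines a rational map $\overline{V'}\dasharrow\overline U_\alpha$; by Hironaka's resolution of indeterminacies, after finitely many blow-ups with smooth centers disjoint from $V'$, we obtain a smooth proper variety $\widetilde V$ together with a proper birational morphism $\widetilde V\to\overline{V'}$ that is an isomorphism over $V'$ and such that all of these rational maps extend to genuine morphisms $g_\alpha:\widetilde V\to\overline U_\alpha$. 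A further application of Hironaka (blowing up strata of the total boundary) ensures that $\widetilde V\setminus V'$ remains a normal crossing divisor, so that $W':=(V',\widetilde V)$ lies in $\mathcal P_k$. By construction, $g_\alpha$ carries $V'$ into $U_\alpha$, so it defines a morphism $W'\to W_\alpha$ in $\mathcal P_k$ whose image under $F$ is $f_\alpha\circ\pi$. Taking the single ``covering family'' consisting of $F(W')=V'\xrightarrow{\pi}V$ completes the verification.

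The main obstacle is Step 3, where one must resolve several indeterminacies simultaneously while preserving smoothness and keeping the boundary strictly normal crossing. This is a standard but nontrivial application of Hironaka's theorems on resolution of indeterminacies and principalization of ideals with normal-crossing exceptional loci; it uses in an essential way that $\mathrm{char}(k)=0$. Once condition (*) is established, Theorem \ref{beilh} applies verbatim and yields the asserted equivalence of sheaf categories.
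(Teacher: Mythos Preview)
Your argument is correct. You verify condition~(*) for the forgetful functor ${\mathcal P}_k\to{\rm Var}_k$ directly, using de~Jong (or Hironaka) to make the source smooth, compactifying with normal crossings, and then resolving the finitely many indeterminacies of the rational maps to the $\overline U_\alpha$ while keeping the boundary normal crossing. Each step is standard in characteristic~$0$ and the logic is sound.

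The paper proceeds differently, and the comparison is instructive. Rather than applying Theorem~\ref{beilh} to ${\mathcal P}_k\to{\rm Var}_k$ in one shot, it factors the functor as ${\mathcal P}_k\hookrightarrow{\mathcal P}_k'\to{\rm Var}_k$, where ${\mathcal P}_k'$ is the category of pairs $(V,\overline V)$ with $\overline V$ merely proper (no smoothness or normal-crossing condition). For ${\mathcal P}_k'\to{\rm Var}_k$ condition~(*) is verified by an elementary graph-closure trick: embed $V$ in some proper $V'$ via Nagata, then take $\overline V$ to be the closure of the image of $V\hookrightarrow V'\times\prod_\alpha\overline V_\alpha$; the projections give the required maps of pairs with no resolution needed at all. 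The passage from ${\mathcal P}_k'$ back to ${\mathcal P}_k$ then uses only Verdier's Theorem~\ref{verdier}, since the inclusion is fully faithful and every object of ${\mathcal P}_k'$ is $h$-covered by one in ${\mathcal P}_k$ via a single application of Hironaka or de~Jong. The advantage of the paper's decomposition is that the hard input from resolution is isolated in the second step, where one only needs to \emph{cover} an arbitrary pair by a nice one, with no control on auxiliary morphisms; your approach packs everything into one step but must simultaneously resolve indeterminacies and maintain the normal-crossing boundary. Both routes are valid; the paper's is a bit more modular and avoids the (admittedly routine) bookkeeping of iterated blow-ups.
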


\begin{proof} Apply Beilinson's Theorem \ref{beilh} in the situation where $C'$ is ${\rm Var}_k$ equipped with the $h$-topology, and $F$ is the (faithful) forgetful functor $(V, \overline V)\to V$ from the category ${\mathcal P}_k'$ of pairs $(V, \overline V)$ consisting of a $k$-variety $V$ and a proper $k$-variety $\overline V$ containing $V$ as a dense open subset. Notice that condition (*) is satisfied: given a finite family of maps $V\to V_\alpha$ with $V_\alpha$ having a compactification $\overline V_\alpha$, embed $V$ in a proper $k$-variety $V'$ (such a $V'$ exists by Nagata's theorem) and let $\overline V$ be the closure of the image of the embedding $V\to \overline V'\times\prod \overline V_\alpha$. Then $\overline V$ is proper and the second projection
induces maps $(V, \overline V)\to (V_\alpha, \overline V_\alpha)$.

Next notice that the inclusion functor ${\mathcal P}_k\to {\mathcal P}_k'$ is fully faithful, hence we may apply Verdier's Theorem \ref{verdier} to it: we have to check that each pair $(V, \overline V)$ in  ${\mathcal P}_k'$  has an $h$-covering $(U, \overline U)\to (V, \overline V)$ by a pair in  ${\mathcal P}_k$. This follows from Hironaka's theorem or de Jong's alteration theorem over fields \cite{dejong}.
\end{proof}

We now come to the main result of this subsection. The morphism of filtered complexes of presheaves
$$
C^\bullet(\Omega^\bullet_{(U, \overline U)/k})\to {\mathcal A}_{\rm dR}
$$
gives rise to a morphism
\begin{equation}\label{drmap}
R\Gamma_{\rm dR}(Y/k)\to R\Gamma_h(Y,{\mathcal A}_{\rm dR})
\end{equation}
for a smooth $k$-variety $Y$, inducing maps
$$H^n_{\rm dR}(Y/k)\to \hyp^n(Y, {\mathcal A}_{\rm dR}).$$

\begin{theo}\label{adr} For a smooth variety $Y$ over $k$ the maps (\ref{drmap}) are filtered quasi-isomorphisms.
\end{theo}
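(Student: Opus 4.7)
\bigskip

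\noindent\emph{Proof plan.} The map (\ref{drmap}) is constructed via Godement resolutions and is manifestly compatible with the Hodge filtrations on both sides. Thus to prove it is a filtered quasi-isomorphism, it suffices by an induction on the filtration to show that for each $i\ge 0$ and each smooth $Y$ with good compactification $(Y,\overline Y)$, the induced map on graded pieces
\begin{equation*}
R\Gamma(\overline Y,\Omega^i_{\overline Y/k}(\log D))\to R\Gamma_h(Y,\Omega^i_h)
\end{equation*}
is a quasi-isomorphism. Here $\Omega^i_h$ is the $h$-sheafification of the presheaf $(U,\overline U)\mapsto \Omega^i_{\overline U/k}(\log D_U)$ on $\mathcal P_k$, which by Theorem \ref{hequiv} extends to an $h$-sheaf on $\mathrm{Var}_k$.

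My plan would be to compute the right-hand side by means of an $h$-hypercovering. Using Hironaka's resolution of singularities in characteristic zero, I would arrange such a hypercovering to take the form $(U_\bullet,\overline U_\bullet)\to (Y,\overline Y)$ with each pair $(U_n, \overline U_n)$ in $\mathcal P_k$ (smooth $U_n$, smooth proper $\overline U_n$, normal crossings $D_n=\overline U_n\setminus U_n$). The canonical Godement-resolution construction exhibits $R\Gamma_h(Y,\Omega^i_h)$ as the total complex of a bicomplex whose $(p,q)$-term (up to refinement of the hypercovering) is $H^q(\overline U_p,\Omega^i_{\overline U_p/k}(\log D_p))$.

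The crux of the proof is then the following cohomological descent statement: for any $h$-hypercovering $(\overline U_\bullet,D_\bullet)\to (\overline Y,D)$ as above, the natural map
\begin{equation*}
R\Gamma(\overline Y,\Omega^i_{\overline Y/k}(\log D))\stackrel\sim\to \mathrm{Tot}\,R\Gamma(\overline U_\bullet,\Omega^i_{\overline U_\bullet/k}(\log D_\bullet))
\end{equation*}
is a quasi-isomorphism. The étale component of the $h$-topology is handled by classical faithfully flat descent for coherent sheaves. The proper component, however, is the main obstacle: it is essentially Deligne's proper cohomological descent for logarithmic differentials from the theory of mixed Hodge structures (see \cite{Hodge2}), whose proof proceeds via complex Hodge theory and ultimately appeals to the classical Hodge decomposition over $\mathbb C$. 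This is consistent with the introduction's remark that the comparison with classical de Rham cohomology on the geometric side uses complex Hodge theory.

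Once this descent is in place, the spectral sequence of the bicomplex degenerates onto $R\Gamma(\overline Y,\Omega^i_{\overline Y/k}(\log D))$, and the map from left to right induces an isomorphism on abutments. This gives a quasi-isomorphism on each graded piece of the Hodge filtration, and hence the desired filtered quasi-isomorphism.
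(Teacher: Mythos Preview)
Your overall strategy---reduce to $\C$, compute $h$-cohomology via a smooth simplicial hypercovering with good compactifications, and invoke Deligne's Hodge theory for the descent---is the same as the paper's. The execution, however, differs in an important way, and your version has a gap.

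The paper does \emph{not} pass to graded pieces and argue descent for each $\Omega^i(\log D)$ separately. Instead it compares the entire filtered log de Rham complex to singular cohomology over $\C$: for the chosen hypercovering $V_\bullet\to Y$ with compactifications $\overline V_\bullet$, Grothendieck's algebraic de Rham theorem together with Deligne's \cite{Hodge2} gives filtered quasi-isomorphisms $R\Gamma(\overline V_\bullet,\Omega^\bullet_{\overline V_\bullet/\C}(\log D_\bullet))\simeq R\Gamma_{\rm sing}(V_\bullet,\C)$ and similarly for $(Y,\overline Y)$. Topological cohomological descent for $h$-hypercoverings (Corollary \ref{cordescent}) then identifies $R\Gamma_{\rm sing}(V_\bullet,\C)\simeq R\Gamma_{\rm sing}(Y,\C)$ in one stroke, with no need to separate \'etale and proper contributions. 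Finally Theorem \ref{limitdescent} identifies the direct limit over hypercoverings with $R\Gamma_h(Y,\mathcal A_{\rm dR})$.

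The gap in your argument is the \'etale step. You invoke ``faithfully flat descent for coherent sheaves'', but the sheaves $\Omega^i_{\overline U/k}(\log D_U)$ live on the \emph{compactification} $\overline U$, not on $U$. An \'etale surjection $U'\to U$ need not extend to an \'etale---or even flat---map $\overline U'\to\overline U$ of chosen compactifications, so ordinary flat descent for coherent sheaves does not apply. One could try to repair this, but the cleanest route is precisely the paper's: pass to singular cohomology, where descent for the full $h$-topology is available directly. This is also why the paper's proof works uniformly without splitting the $h$-topology into its \'etale and proper constituents.
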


\begin{proof} By a `Lefschetz principle' type argument we reduce to the case $k=\C$.

Choose a  smooth normal crossing compactification $\overline Y$ for $Y$ with complement $D$. By (\cite{Hodge3}, (6.2.8)) or in more detail  (\cite{conraddescent}, Theorem 4.7) there exists an $h$-hypercovering $V_\bullet\to Y$ such that each $V_n$ is a smooth $k$-scheme of finite type and furthermore there is a simplicial compactification $V_\bullet\hookrightarrow \overline V_\bullet\to \overline Y$ such that  $\overline V_n$ is proper and smooth  with $D_n:= \overline V_n\setminus V_n$ a normal crossing divisor. On $V_\bullet$ consider the simplicial  complex of presheaves $\Omega^{\bullet}_{\overline V_\bullet/k}(\log D_\bullet)$.

According to Grothendieck \cite{grothdr} (see also \cite{Hodge2}), we have a filtered quasi-isomorphism
$$
R\Gamma(\overline V_\bullet, \Omega^{\bullet}_{\overline V_\bullet/\C}(\log D_\bullet))\cong R\Gamma_{\rm sing}(\overline V_\bullet,\C)
$$
where on the right hand side we have complex singular cohomology. Similarly, we have
$$
R\Gamma(\overline Y, \Omega^{\bullet}_{\overline Y/\C}(\log D)\cong R\Gamma_{\rm sing}(\overline Y,\C).
$$
The two isomorphisms induce commutative diagrams for all $n$
$$
\begin{CD}
\hyp^n(\overline V_\bullet, \Omega^{\bullet}_{\overline V_\bullet/\C}(\log D_\bullet)) @>{\cong}>> H^n_{\rm sing}(\overline V_\bullet,\C)\\
@VVV @VVV \\
H^n_{\rm dR}(Y)=\hyp^n(\overline Y, \Omega^{\bullet}_{\overline Y/\C}(\log D)) @>{\cong}>> H^n_{\rm sing}(\overline Y,\C)
\end{CD}
$$
By Corollary \ref{cordescent} the right vertical map is an isomorphism, hence so is the left one. Recall that $R\Gamma(\overline Y, \Omega^{\bullet}_{\overline Y/\C}(\log D))$ is computed (in the Zariski topology) by $\Gamma(\overline Y,C^\bullet(\Omega^\bullet_{(Y, \overline Y)/\C}))$ and similarly for the simplicial version.  It follows that the direct system  $H^n(\overline V_\bullet, C^\bullet\Omega^{\bullet}_{\overline V_\bullet/\C}(\log D_\bullet))$ for all $V_\bullet$ as above is constant. Since the direct limit of this system is $H^n(Y, {\mathcal A}_{\rm dR})$ by Theorem \ref{limitdescent}, we are done.
\end{proof}

We shall in fact need a Hodge-completed version of the above theorem. Define the Hodge-completed de Rham complex $\widehat\Omega^\bullet_{(U, \overline U)/k}$ as the projective system defined by the quotients $\widehat\Omega^\bullet_{(U, \overline U)/k}/F^i$ by steps of the Hodge filtration. Next, denote by $\widehat{\mathcal A}_{\rm dR}$ the $h$-sheaf (of projective systems of complexes) associated with
$$
(U, \overline U)\mapsto \Gamma(\overline U,C^\bullet(\widehat\Omega^\bullet_{(U, \overline U)/k})).
$$
As above, this gives rise to an $h$-sheaf on the category of varieties, whence morphisms
\begin{equation}\label{compdrmap}
R\Gamma_{\rm dR}(Y)^\wedge\to R\Gamma_h(Y,\widehat{\mathcal A}_{\rm dR})
\end{equation}
where on the left-hand side we have hypercohomology of the Hodge-completed de Rham complex. Its cohomology groups are the same as those of the non-completed complex as the Hodge filtration on each fixed group $H^n_{\rm dR}(Y)$ is finite. Hence we have:

\begin{theo}\label{coradr} For a smooth variety $Y$ over $k$ the maps (\ref{compdrmap}) are filtered quasi-isomorphisms.
\end{theo}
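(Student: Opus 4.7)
My plan is to deduce Theorem \ref{coradr} from Theorem \ref{adr} by a level-by-level argument in the Hodge filtration, followed by a derived inverse limit. The starting observation is the elementary fact that a filtered morphism of filtered complexes is a filtered quasi-isomorphism if and only if each induced map on the truncations by finite pieces of the filtration is a quasi-isomorphism. Thus Theorem \ref{adr}, applied to the Hodge truncations $\Omega^\bullet_{(U,\overline U)/k}/F^i$, already yields for every $i\geq 0$ a quasi-isomorphism
$$R\Gamma_{\rm dR}(Y)/F^i \simeq R\Gamma_h(Y, {\mathcal A}_{\rm dR}/F^i),$$
together with compatible quasi-isomorphisms on each further filtration step $F^j/F^i$ for $j\leq i$.

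Next I would unwind the definitions on both sides of the theorem. The Hodge-completed presheaf $(U,\overline U)\mapsto \Gamma(\overline U, C^\bullet(\widehat\Omega^\bullet_{(U,\overline U)/k}))$ is, by construction, the projective system of its finite truncations, and $h$-sheafification is performed termwise; hence $\widehat{\mathcal A}_{\rm dR}$ is, as a pro-object, the projective system of the $h$-sheaves ${\mathcal A}_{\rm dR}/F^i$, and $R\Gamma_h(Y,\widehat{\mathcal A}_{\rm dR})$ is by definition $R\lim_i R\Gamma_h(Y,{\mathcal A}_{\rm dR}/F^i)$. Similarly $R\Gamma_{\rm dR}(Y)^\wedge = R\lim_i R\Gamma_{\rm dR}(Y)/F^i$. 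Applying $R\lim_i$ to the compatible family of quasi-isomorphisms of the previous display yields the desired quasi-isomorphism, and the same argument applied to each fixed filtration step $F^j\widehat{\mathcal A}_{\rm dR} = \lim_i F^j({\mathcal A}_{\rm dR}/F^i)$ upgrades it to a filtered quasi-isomorphism.

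The main technical point I expect to wrestle with is the behaviour of the $R\lim$. On the de Rham side this is harmless: for a fixed smooth $Y$ the Hodge-to-de Rham spectral sequence forces $F^p H^n_{\rm dR}(Y)=0$ for $p>\dim Y$, so the inverse system $\{R\Gamma_{\rm dR}(Y)/F^i\}$ is essentially eventually constant in every cohomological degree, making $R\lim$ an honest $\lim$ and the Hodge completion of $R\Gamma_{\rm dR}(Y)$ cohomologically identical to $R\Gamma_{\rm dR}(Y)$ itself. On the $h$-cohomology side boundedness is not a priori visible, since $h$-hypercoverings live in unbounded simplicial dimensions; however the filtered quasi-isomorphism of Theorem \ref{adr} transfers the boundedness degree by degree, yielding the Mittag-Leffler condition and vanishing of the $R^1\lim$ terms on the right-hand side. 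Once this is in place the reduction of Theorem \ref{coradr} to Theorem \ref{adr} is complete.
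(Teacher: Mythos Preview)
Your proposal is correct and follows the same approach as the paper: both deduce the completed statement from Theorem \ref{adr} via the finiteness of the Hodge filtration on each $H^n_{\rm dR}(Y)$, which forces the inverse systems $\{R\Gamma_{\rm dR}(Y)/F^i\}$ and (by transport) $\{R\Gamma_h(Y,{\mathcal A}_{\rm dR}/F^i)\}$ to be essentially constant in each cohomological degree. The paper's proof is the one-line remark preceding the theorem statement, whereas you have spelled out the derived inverse limit and Mittag--Leffler bookkeeping explicitly; no additional idea is needed.
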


\subsection{The arithmetic side of the comparison map}\label{arithsec}

We now consider an arithmetic version of the previous constructions. Let $K$ be a finite extension of $\Q_p$, and $\calo_K$ its ring of integers, with residue field $\kappa$. A semistable pair over $ K$ will consist of a smooth $K$-variety $U$ and an open immersion $j:\, U\to \calu$, where $\calu$ is a regular scheme proper and flat over $\calo_K$, and $\calu\setminus U$ is a divisor $D$ with normal crossings. This divisor consists of two parts. There is a `horizontal part' $D_h$ consisting of the components that are flat (hence surjective) over $\calo_K$. It yields a normal crossing divisor $\calu_K\setminus U$ after passing to the generic fibre. The other components form the `vertical part' $D_v$; it is concentrated in the special fibre $\calu_\kappa$. Locally the situation can be described as follows. Assume that a point $u\in \calu$ lies on $r$ components of $D_v$ and $s$ components of $D_h$. Then there is an \'etale morphism $V\to \calu$ whose image contains $u$ and another \'etale morphism $V\to\Spec \calo_K[t_1,\dots, t_n]/(t_1\cdots t_r-\pi)$, where $r\leq n$ and $\pi$ is a uniformizer in $\calo_K$. Moreover, the trace $D_h\times_\calu V$ is described by the cartesian diagram:
$$
\begin{CD}
D_h\times_{\calu}V @>>>  V\\
@VVV @VV{\rho_i}V \\
V(t_{r+1}\cdots t_{r+s}) @>>> \Spec \calo_K[t_1,\dots, t_n]/(t_1\cdots t_r-\pi)
\end{CD}
$$
A semi-stable pair over $\overline K$ will be a pair $(V, \calv)$ defined by an open immersion of a $\overline K$-variety $V$ in a flat proper $\calo_{\overline K}$-scheme $\calv$  which comes by base change from a semi-stable pair $(U', \calu')$ defined over some finite extension $K'|K$ in the above sense.

Equip $\calv$ with the canonical log structure defined by $M=\calo_\calv\cap  j_*\calo_V^\times$; we denote this log scheme again by $(V, \calv)$. There is a morphism of log schemes $(V, \calv)\to\Spec\calo_K$ induced by the composite $\calv\to\Spec\calo_{\overline K}\to\Spec\calo_K$; here $\Spec\calo_K$ is equipped with the trivial log structure given by $\calo_K^\times$. It therefore makes sense to consider the derived log de Rham algebra $L\Omega^\bullet_{(V, \calv)/\calo_K} $ and its completed version $L\widehat\Omega^\bullet_{(V, \calv)/\calo_K} $ introduced in Subsection \ref{logsection}. As above, the rule
$$
(V, \calv)\mapsto \Gamma(\calv, L\widehat\Omega^\bullet_{(V, \calv)/\calo_K})
$$
defines a contravariant functor on the category of semi-stable pairs over $\overline K$. To make the derived functor $R\Gamma(\calv, L\widehat\Omega^\bullet_{(V, \calv)/\calo_K})$ a presheaf on this category, we again use the Godement resolution for the Zariski topology:
$$
(V, \calv)\mapsto \Gamma(\calv, { C}^\bullet(L\widehat\Omega^\bullet_{(V, \calv)/\calo_K}))
$$
By definition, the right hand side is a projective system of complexes of Zariski sheaves $(\Gamma(\calv, C^\bullet( L\Omega^\bullet_{(V, \calv)/\calo_K}/F^i)))$.

Now consider the $h$-topology on the category ${\rm Var}_{\overline K}$ of $\overline K$-varieties, and pull it back to the category ${\mathcal {SS}}_{\overline K}$ of semistable pairs over $\overline K$ via the forgetful functor ${\mathcal {SS}}_{\overline K}\to{\rm Var}_{\overline K}$. Sheafifying the above presheaf on ${\mathcal {SS}}_{\overline K}$ for the $h$-topology, we obtain an $h$-sheaf that we denote by ${\mathcal A}^\natural_{\rm dR}$ following Beilinson.

Again this defines an $h$-sheaf on the whole category of $\overline K$-varieties by the following analogue of Theorem \ref{hequiv}:

\begin{theo}\label{hequiv1}
The forgetful functor ${\mathcal {SS}}_{\overline K}\to{\rm Var}_{\overline K}$ induces an equivalence of categories between $h$-sheaves on ${\mathcal {SS}}_{\overline K}$ and $h$-sheaves on ${\rm Var}_{\overline K}$.
\end{theo}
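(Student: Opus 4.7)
The plan is to follow the same two-step strategy used in Theorem~\ref{hequiv}, but replacing Hironaka's theorem by de Jong's alteration theorem in the arithmetic setting. First I would introduce an auxiliary intermediate category $\mathcal{SS}'_{\overline K}$ whose objects are pairs $(V,\mathcal{V})$ consisting of a $\overline K$-variety $V$ and a proper flat $\mathcal{O}_{\overline K}$-scheme $\mathcal{V}$ containing $V$ as a dense open subscheme of its generic fibre, with morphisms defined as $\mathcal{O}_{\overline K}$-morphisms $\mathcal{V}\to\mathcal{V}'$ carrying $V$ into $V'$. This gives a factorization of the forgetful functor as $\mathcal{SS}_{\overline K}\to\mathcal{SS}'_{\overline K}\to\mathrm{Var}_{\overline K}$.

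Next I would verify that the functor $\mathcal{SS}'_{\overline K}\to\mathrm{Var}_{\overline K}$ is faithful and satisfies Beilinson's condition~$(*)$, so that Theorem~\ref{beilh} applies. Faithfulness is clear because a morphism of proper flat $\mathcal{O}_{\overline K}$-schemes is determined by its restriction to the generic fibre (which is in turn determined by its restriction to the dense open $V$, since the targets are separated). For condition~$(*)$, given $V$ and a finite family of morphisms $V\to V_\alpha$ with $V_\alpha\subset\mathcal{V}_\alpha$, everything descends to a finite extension $K'|K$: the $\mathcal{V}_\alpha$ are defined over some $\mathcal{O}_{K'}$, and by Nagata's compactification theorem one can find a proper flat $\mathcal{O}_{K'}$-model $\mathcal{V}'$ of a $K'$-compactification of a $K'$-model of $V$. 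Taking $\mathcal{V}$ to be the closure of the graph of the induced map into $\mathcal{V}'\times_{\mathcal{O}_{K'}}\prod\mathcal{V}_{\alpha,K'}$ (the latter formed after spreading out), then base changing to $\mathcal{O}_{\overline K}$, yields an object $(V,\mathcal{V})$ of $\mathcal{SS}'_{\overline K}$ together with morphisms $(V,\mathcal{V})\to(V_\alpha,\mathcal{V}_\alpha)$ realizing the given maps.

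The second step is to show that the fully faithful inclusion $\mathcal{SS}_{\overline K}\hookrightarrow\mathcal{SS}'_{\overline K}$ satisfies the covering hypothesis of Verdier's Theorem~\ref{verdier}: every pair $(V,\mathcal{V})\in\mathcal{SS}'_{\overline K}$ admits an $h$-covering by semi-stable pairs. This is where the real work sits, and will be the main obstacle. The key input is de~Jong's alteration theorem for schemes over discrete valuation rings (\cite{dejong}), which guarantees, after a finite extension $K''|K'$ and an alteration $\mathcal{V}''\to\mathcal{V}\times_{\mathcal{O}_{K'}}\mathcal{O}_{K''}$, a semi-stable pair structure on $(V'',\mathcal{V}'')$ over $\mathcal{O}_{K''}$, where $V''$ is the preimage of $V$. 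Base changing to $\mathcal{O}_{\overline K}$ gives an object of $\mathcal{SS}_{\overline K}$ mapping to $(V,\mathcal{V})$, and since alterations are proper surjective and extension of scalars to $\overline K$ induces a universal topological epimorphism on generic fibres, the resulting map is an $h$-covering on the level of $\overline K$-varieties, as required by the definition of the induced $h$-topology on $\mathcal{SS}'_{\overline K}$.

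Combining these two steps by Theorems~\ref{beilh} and~\ref{verdier} yields equivalences of $h$-sheaf categories $\mathrm{Sh}_h(\mathrm{Var}_{\overline K})\simeq\mathrm{Sh}_h(\mathcal{SS}'_{\overline K})\simeq\mathrm{Sh}_h(\mathcal{SS}_{\overline K})$, whose composition is induced by the forgetful functor, proving the theorem. The delicate point throughout is to keep the $h$-topology compatible under the various base changes and descents from $\mathcal{O}_{K'}$ to $\mathcal{O}_{\overline K}$; this is manageable because an $h$-covering is a universal topological epimorphism of finite type, and such a property is preserved under filtered colimits of base rings.
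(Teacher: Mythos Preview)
Your proposal is correct and follows essentially the same two-step strategy as the paper: an intermediate category of pairs $(V,\mathcal V)$ with $\mathcal V$ proper flat over $\mathcal O_{\overline K}$ (the paper calls it $\mathcal{PP}_{\overline K}$ and adds a reducedness hypothesis), Beilinson's Theorem~\ref{beilh} for the first step via Nagata and the closure-of-graph trick, then Verdier's Theorem~\ref{verdier} for the second step via de~Jong's alterations. Your write-up is in fact more explicit than the paper's about faithfulness and about the descent of everything to a finite level $K'|K$, which are points the paper leaves implicit.
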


\begin{dem} As in the proof of Theorem \ref{hequiv}, we proceed in two steps. We first apply Beilinson's Theorem \ref{beilh} in the situation where $C'$ is ${\rm Var}_{\overline K}$ equipped with the $h$-topology, and $F$ is the forgetful functor $(V, \calv)\to V$ from the category ${\mathcal {PP}}_{\overline K}$ of pairs $(V, \calv)$ consisting of a $\overline K$-variety $V$ and a reduced proper flat $\calo_{\overline K}$-scheme $\calv$ containing $V$ as a dense open subscheme. By the same arguments as in the geometric case, condition $(*)$ is satisfied, hence we obtain an equivalence of categories of $h$-sheaves.

Next we apply Theorem \ref{verdier} to the fully faithful inclusion of categories ${\mathcal {SS}}_{\overline K}\to {\mathcal {PP}}_{\overline K}$. We have to check that each pair $(V, \calv)$ in  ${\mathcal {PP}}_{\overline K}$  has an $h$-covering $(U, \calu)\to (V, \calv)$ by a pair in  ${\mathcal {SS}}_{\overline K}$. This follows from one of de Jong's alteration theorems \cite{dejong}: choosing a model $(V', \calv')$ of $(V, \calv)$ over a finite extension $K'|K$, there exists, up to replacing $K''$ by a finite extension, a semistable pair $(U', \calu')$ over $K'$ equipped with an alteration $(U', \calu')\to (V', \calv')$. As alterations are surjective and proper (and generically finite), this is an $h$-covering.
\end{dem}

We now compare the sheaf ${\mathcal A}^\natural_{\rm dR}$ with the sheaf $\widehat{\mathcal A}_{\rm dR}$ defined at the end of the previous subsection.

\begin{prop}\label{adrcomp}
We have a canonical isomorphism
$$
{\mathcal A}_{\rm dR}^\natural\otimes\Q\cong \widehat{\mathcal A}_{\rm dR}
$$
of projective systems of complexes of $h$-sheaves on ${\rm Var}_{\overline K}$.
\end{prop}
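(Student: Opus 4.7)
The plan is to construct a natural morphism ${\mathcal A}_{\rm dR}^\natural\otimes\Q \to \widehat{\mathcal A}_{\rm dR}$ of filtered projective systems of $h$-sheaves on ${\rm Var}_{\overline K}$, and then to verify it is a filtered quasi-isomorphism by checking bijectivity on associated graded pieces of the Hodge filtration. For the construction: for a semistable pair $(V,\calv)\in{\mathcal{SS}}_{\overline K}$, the generic fibre $\calv_{\overline K}$ is smooth and proper over $\overline K$, the restriction of the log structure on $\calv$ is the canonical one associated with the normal crossings divisor $\calv_{\overline K}\setminus V$, and so $(V,\calv_{\overline K})$ is an object of ${\mathcal P}_{\overline K}$. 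Functoriality of the derived log de Rham algebra in the ring extension $\calo_K\to\calo_{\overline K}\to\calo_\calv$, combined with the augmentation from the derived to the non-derived log de Rham complex, yields a natural morphism of complexes of sheaves on $\calv_{\overline K}$
$$
\bigl(L\widehat\Omega^\bullet_{(V,\calv)/\calo_K}\bigr)\big|_{\calv_{\overline K}}\otimes\Q \ \longrightarrow \ \widehat\Omega^\bullet_{(V,\calv_{\overline K})/\overline K},
$$
compatible with Hodge filtrations. Applying Godement resolutions and $h$-sheafifying, and invoking Theorems \ref{hequiv} and \ref{hequiv1} to identify the ambient categories of $h$-sheaves, produces the comparison map.

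Since both sides are complete with respect to the Hodge filtration, the comparison map is an isomorphism as soon as it is one on each associated graded piece, by induction on the filtration step. By the log analogue of Proposition \ref{gridr} (in the Gabber-Olsson theory \cite{olsson}), the $i$-th graded piece of ${\mathcal A}_{\rm dR}^\natural$ is the $h$-sheafification of $L\wedge^i L_{(V,\calv)/\calo_K}[-i]$, while that of $\widehat{\mathcal A}_{\rm dR}$ is the $h$-sheafification of $\Omega^i_{(V,\calv_{\overline K})/\overline K}[-i]$. Since $h$-sheafification is exact, it therefore suffices to establish the identification
$$
\bigl(L\wedge^i L_{(V,\calv)/\calo_K}\bigr)\otimes\Q \ \simeq \ \Omega^i_{(V,\calv_{\overline K})/\overline K}
$$
once restricted to the generic fibre $\calv_{\overline K}$.

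To this end, apply the transitivity triangle of the log cotangent complex to the sequence $\calo_K\to\calo_{\overline K}\to\calo_\calv$ (with trivial log structures on $\calo_K$ and $\calo_{\overline K}$). By Corollary \ref{corsimpresomega} one has $L_{\calo_{\overline K}/\calo_K}\simeq\Omega^1_{\calo_{\overline K}/\calo_K}$, and by Corollary \ref{cp2} this is $p$-primary torsion; tensoring the transitivity triangle with $\Q$ therefore gives a quasi-isomorphism $L_{(V,\calv)/\calo_K}\otimes\Q\simeq L_{(V,\calv)/\calo_{\overline K}}\otimes\Q$. Flat base change of the log cotangent complex along the localization $\calo_{\overline K}\to\overline K$ then identifies the restriction to $\calv_{\overline K}$ with $L_{(V,\calv_{\overline K})/\overline K}$, and log smoothness of $(V,\calv_{\overline K})\to\Spec\overline K$ yields $L_{(V,\calv_{\overline K})/\overline K}\simeq\Omega^1_{(V,\calv_{\overline K})/\overline K}$, a locally free sheaf; derived and ordinary exterior powers coincide in this case, giving the required identification.

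The main obstacle will be the careful handling of log structures through the various base changes and generic-fibre restrictions: ensuring that the vanishing of $L_{\calo_{\overline K}/\calo_K}\otimes\Q$, which is essentially Fontaine's non-logarithmic theorem, propagates to a comparison of log cotangent complexes despite the nontrivial log structure on $\calv$ (with its vertical components concentrated in the special fibre), and that the base-change and log-smooth identifications needed from Olsson-Gabber \cite{olsson} can be applied in the form stated.
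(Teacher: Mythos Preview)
Your proof is correct and follows essentially the same strategy as the paper's, but with two minor organizational differences worth noting.

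First, you pass to graded pieces via Proposition~\ref{gridr} and compare cotangent complexes, whereas the paper compares the full derived log de Rham complexes directly (using $L\Omega^\bullet_{(U,\overline U)/\overline K}\simeq \Omega^\bullet_{(U,\overline U)/\overline K}$ from log smoothness, then $L\Omega^\bullet_{(U,\overline U)/\overline K}\simeq L\Omega^\bullet_{(U,\overline U)/K}$, then $L\Omega^\bullet_{(U,\overline U)/K}\simeq L\Omega^\bullet_{(U,\calu)/\calo_K}\otimes\Q$). Both are equivalent since the Hodge-completed objects are determined by their graded pieces.

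Second, and more substantively, you factor the base change as $\calo_K\to\calo_{\overline K}\to\overline K$ and kill the first step by invoking Corollary~\ref{cp2} (i.e.\ Fontaine's result that $\Omega^1_{\calo_{\overline K}/\calo_K}$ is $p$-primary torsion). The paper instead factors as $\calo_K\to K\to\overline K$: the first step is absorbed into ``$\otimes\Q$'', and the second step vanishes by the much more elementary Remark~\ref{remsimpresomega}, namely $L_{\overline K/K}=0$ for a separable algebraic field extension. Your route works, but the paper's avoids appealing to Fontaine's computation at this point.
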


\begin{dem}
First, consider a pair $(U, \overline U)$ of $\overline K$-varieties such that $\overline U$ is proper smooth over $\overline K$ and $\overline U\setminus U$ is a normal crossing divisor. As before, equip $\overline U$ with the canonical log structure and $\overline K$ with the trivial log structure. Consider the derived logarithmic de Rham complex $L\Omega^\bullet_{(U, \overline U)/\overline K}$ arising from these data. Since $\overline U$ is log smooth and integral over $\overline K$, we have
$$
L\Omega^\bullet_{(U, \overline U)/\overline K}\simeq \Omega^\bullet_{(U, \overline U)/\overline K}
$$
where on the right hand side we have the non-derived logarithmic de Rham complex of the previous section.

Furthermore, we have $L_{\overline K/K}=\Omega^1_{\overline K/K}=0$ by Remark \ref{remsimpresomega} and a direct limit argument, so
$$
L\Omega^\bullet_{(U, \overline U)/\overline K}\simeq L\Omega^\bullet_{(U, \overline U)/K}.
$$
Finally, assume that $\overline U$ is the generic fibre of a proper flat $\calo_{\overline K}$-scheme $\calu$ such that $(U, \calu)$ is a semistable pair over $\overline K$ in the sense defined above, equipped with its log structure. Then by construction
$$
L\Omega^\bullet_{(U, \overline U)/K}\simeq L\Omega^\bullet_{(U, \calu)/\calo_K}\otimes\Q.
$$
Putting everything together, we thus have
$$
L\Omega^\bullet_{(U, \calu)/\calo_K}\otimes\Q\simeq \Omega^\bullet_{(U, \overline U)/\overline K}
$$
Passing to global sections of the associated Godement resolutions, we obtain an isomorphism of projective systems of filtered complexes
$$
(\Gamma(\calu, C^\bullet( L\widehat\Omega^\bullet_{(U, \calu)/\calo_K}/F^i))\otimes\Q)\simeq (\Gamma(\overline U, C^\bullet( \widehat\Omega^\bullet_{(U, \overline U)/\overline K}/F^i))).
$$
Passing to associated $h$-sheaves on ${\rm Var}_{\overline K}$, we finally obtain the stated isomorphism.
\end{dem}

Now recall that we defined $A_{\rm dR}:=L\widehat\Omega^\bullet_{\overline\calo_K/\calo_K}$. Consider the morphisms of log schemes
$$
(U, \calu)\stackrel\pi\to \Spec \calo_{\overline K}\to \Spec \calo_{ K}
$$
where the latter two schemes are equipped with a trivial log structure. This gives rise to a transitivity triangle of log cotangent complexes, whence a map
$$
\pi^*{L_{\calo_{\overline K}/\calo_K}}\to L_{(U, \calu)/\calo_K}.
$$
Similarly, there is a map of derived log de Rham complexes
$$
\pi^*{L\Omega^\bullet_{\calo_{\overline K}/\calo_K}}\to L\Omega^\bullet_{(U, \calu)/\calo_K}.
$$
Modding out by $F^i$, we may identify the left hand side with the constant (Zariski) sheaf on $\calu$ associated with $A_{\rm dR}/F^i$. As its higher cohomologies are trivial, we have a morphism of complexes (with $A_{\rm dR}/F^i$ placed in degree 0)
$$
A_{\rm dR}/F^i\to \Gamma(\calu, C^\bullet(L\Omega^\bullet_{(U, \calu)/\calo_K}/F^i)).
$$
Sheafifying for the $h$-topology we obtain morphisms
$$
A_{\rm dR}/F^i\to {\mathcal A}_{\rm dR}^\natural/F^i
$$
for all $i$, where we have a constant $h$-sheaf on the left hand side. Now we have:

\begin{theo}[Beilinson's $p$-adic Poincar\'e lemma]\label{poincare} The above maps induce quasi-isomorphisms
$$
(A_{\rm dR}/F^i)\widehat\otimes \Z_p\to ({\mathcal A}_{\rm dR}^\natural/F^i)\widehat\otimes\Z_p
$$
for all $i$.
\end{theo}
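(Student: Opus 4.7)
The strategy is a devissage with respect to the Hodge filtration, combined with an application of Bhatt's theorem to each graded piece. First I would observe that both sides are projective systems complete with respect to the Hodge filtration, so to prove the result for all $i$ it suffices to proceed by induction. For the induction step, comparing the exact triangles $F^i/F^{i+1} \to A_{\rm dR}/F^{i+1} \to A_{\rm dR}/F^i$ with the analogous triangle for $\mathcal{A}^\natural_{\rm dR}$ and invoking the five lemma, the problem reduces to showing that the induced map
$$
\mathrm{gr}^i_F A_{\rm dR}\,\widehat\otimes\,\Z_p \;\longrightarrow\; \mathrm{gr}^i_F \mathcal{A}^\natural_{\rm dR}\,\widehat\otimes\,\Z_p
$$
is a quasi-isomorphism of $h$-sheaves for each $i \geq 0$. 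By Lemma \ref{devissage} it is enough to check this modulo $p$, and one then iterates to $\Z/p^r$-coefficients using the short exact sequences $0 \to \Z/p \to \Z/p^r \to \Z/p^{r-1} \to 0$.

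Next, by Proposition \ref{gridr} and its logarithmic analogue from Gabber--Olsson's theory, for each semistable pair $(U,\calu)$ over $\OKB$ one has
$$
\mathrm{gr}^i_F L\widehat\Omega^\bullet_{(U,\calu)/\OK} \;\simeq\; L\wedge^i L_{(U,\calu)/\OK}[-i],
$$
and similarly $\mathrm{gr}^i_F A_{\rm dR} \simeq L\wedge^i L_{\OKB/\OK}[-i]$. By Corollary \ref{corsimpresomega}, $L_{\OKB/\OK}$ is concentrated in degree $0$ and equals $\Omega^1_{\OKB/\OK}$; and by the log smoothness and integrality of a semistable pair, $L_{(U,\calu)/\OK}$ is concentrated in degree $0$ and equals the locally free coherent sheaf $\Omega^1_{(U,\calu)/\OK}$ of logarithmic differentials. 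Thus modulo $p$ one is reduced to comparing the $h$-sheafification of
$$
(U,\calu) \;\mapsto\; R\Gamma(\calu,\,\wedge^i\Omega^1_{(U,\calu)/\OK}\otimes \Fp)\;[-i]
$$
with the constant $h$-sheaf associated to $\wedge^i\Omega^1_{\OKB/\OK}\otimes\Fp\;[-i]$.

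The key geometric input here is Bhatt's theorem, mentioned in the introduction and proved in the last chapter of the paper: on any reduced excellent scheme $\calu$ and for any coherent sheaf $\calf$, every class in $H^j(\calu,\calf)$ with $j \geq 1$ becomes $p$-divisible after a suitable proper surjective cover of $\calu$, hence vanishes modulo $p^r$ after refinement to a single $h$-cover. Applied to $\calf = \wedge^i\Omega^1_{(U,\calu)/\OK}$ and its reductions modulo $p^r$, this precisely kills the higher $h$-cohomology sheaves on the right hand side. What remains concentrates in degree $0$ and is computed by a filtered colimit of global sections of $\wedge^i\Omega^1_{(U,\calu)/\OK}$ over refining $h$-hypercoverings, which matches the constant sheaf on the left hand side by base-change compatibility of log differentials together with the fact that $\OKB$ is the filtered colimit of the coordinate rings of pairs of the form $(U,\calu)$.

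The main obstacle is the careful invocation of Bhatt's theorem on the logarithmic side, and tracking the interaction between the $p$-adic completion, the Hodge filtration, and the $h$-sheafification: one must verify that derived inverse limits commute appropriately with the filtration triangles so that the induction step is rigorous, and that Bhatt's vanishing continues to hold not merely for the structure sheaf but for the coherent sheaves of (wedge powers of) log differentials on a semistable pair. Once the correct formulation and a special case of Bhatt's theorem (sufficient for the argument) are in place, as promised in the last chapter, the identification on graded pieces propagates to the filtered quasi-isomorphism claimed in the statement.
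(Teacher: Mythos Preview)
Your reduction to graded pieces and to coefficients modulo $p$ is correct and matches the paper. But from there your argument has a genuine gap: you assert that $\Omega^1_{(U,\calu)/\OK}$ is locally free and coherent, and then apply Bhatt's theorem to $\wedge^i\Omega^1_{(U,\calu)/\OK}$. This is wrong. The morphism $(U,\calu)\to\Spec\OK$ (trivial log structure on the base) is \emph{not} log smooth: it factors through $\Spec\OKB$, and $\OKB$ is highly ramified over $\OK$. In fact the paper shows (in the proposition opening the proof of the Poincar\'e lemma) that $\Omega^1_{(V,\calv)/\calo_K}$ sits in an exact sequence whose subobject is $\calo_\calv\otimes_{\calo_{\overline K}}\Omega^1_{\calo_{\overline K}/\calo_K}$, a huge $p$-divisible torsion module. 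So $\Omega^1_{(V,\calv)/\calo_K}$ is not coherent, Bhatt's $p$-divisibility theorem does not apply to it directly, and your degree~$0$ identification with the constant sheaf $\wedge^i\Omega^1_{\OKB/\OK}$ is unjustified: global sections of $\Omega^i_{(V,\calv)/\calo_K}$ contain genuine geometric log forms on $\calv$, not just arithmetic differentials.

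The missing idea, which is the heart of the paper's proof, is precisely to separate these two contributions. One uses the transitivity exact sequence
\[
0\to \calo_{\calv}\otimes_{\calo_{\overline K}}\Omega^1_{\calo_{\overline K}/\calo_K}\to \Omega^1_{(V,\calv)/\calo_K}\to \Omega^1_{(V,\calv)/(\overline K,\calo_{\overline K})}\to 0,
\]
which is locally split because the quotient is locally free (this is where log smoothness over $(\overline K,\calo_{\overline K})$ is used). Taking wedge powers produces a further filtration $I_a$ on $\mathrm{gr}^i_F\mathcal{A}^\natural_{\rm dR}$ whose graded pieces are $\mathrm{gr}^{i-a}_FA_{\rm dR}[-a]\otimes^L_{\calo_{\overline K}}\mathcal{G}^a$, where $\mathcal{G}^a$ is the $h$-sheafification of $(V,\calv)\mapsto R\Gamma(\calv,\Omega^a_{(V,\calv)/(\overline K,\calo_{\overline K})})$. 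Now $\Omega^a_{(V,\calv)/(\overline K,\calo_{\overline K})}$ \emph{is} locally free coherent, and Bhatt's theorem (in its logarithmic extension, Theorem~\ref{bhatt}) shows that $\mathcal{H}^b\mathcal{G}^a$ is uniquely $p$-divisible for all $(a,b)\neq(0,0)$. The piece $a=0$, $b=0$ gives exactly the constant sheaf $\calo_{\overline K}$, which tensored with $\mathrm{gr}^i_FA_{\rm dR}$ recovers the left-hand side. You need this double filtration; working directly with $\Omega^i_{(V,\calv)/\calo_K}$ cannot succeed.
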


The proof will be given in the next section.

\begin{cor}
Assume $X$ is a smooth $K$-variety having a smooth normal crossing compactification. There are filtered quasi-isomorphisms
$$
R\Gamma_{\mbox{\rm\scriptsize\'et}}(X_{\overline K},\Z_p)\otimes_{\Z_p}(B_{\rm dR}^+/F^i)\stackrel\sim\to R\Gamma_h(X_{\overline K}, {\mathcal A}_{\rm dR}^\natural/F^i)\widehat\otimes\Q_p
$$
for all $i$, giving rise to a filtered quasi-isomorphism
$$
R\Gamma_{\mbox{\rm\scriptsize\'et}}(X_{\overline K},\Z_p)\otimes_{\Z_p}B_{\rm dR}^+\stackrel\sim\to R\Gamma_h(X_{\overline K}, {\mathcal A}_{\rm dR}^\natural)\widehat\otimes\Q_p
$$
in the limit.
\end{cor}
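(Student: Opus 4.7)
\medskip

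\noindent\emph{Proof proposal.}
The plan is to unfold the definitions and reduce everything, via Beilinson's $p$-adic Poincar\'e lemma (Theorem \ref{poincare}) and $h$-to-\'etale descent (Corollary \ref{cordescent2}), to the calculation of $R\Gamma_{\mbox{\rm\scriptsize\'et}}(X_{\overline K},\Z/p^r\Z)$. First, fix $i\geq 0$. By Theorem \ref{poincare} we have a filtered quasi-isomorphism of projective systems
\[
(A_{\rm dR}/F^i)\otimes^L\Z/p^r\Z \;\stackrel\sim\to\; (\mathcal{A}^\natural_{\rm dR}/F^i)\otimes^L\Z/p^r\Z
\]
of $h$-sheaves on ${\rm Var}_{\overline K}$, where the left-hand side is constant. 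Applying $R\Gamma_h(X_{\overline K},-)$ and taking derived inverse limits yields
\[
R\Gamma_h(X_{\overline K}, (A_{\rm dR}/F^i)\widehat\otimes\Z_p)\stackrel\sim\to R\Gamma_h(X_{\overline K}, (\mathcal{A}^\natural_{\rm dR}/F^i)\widehat\otimes\Z_p);
\]
tensoring with $\Q_p$ then identifies the right-hand side with $R\Gamma_h(X_{\overline K},\mathcal{A}^\natural_{\rm dR}/F^i)\widehat\otimes\Q_p$. So it will suffice to identify the left-hand side with $R\Gamma_{\mbox{\rm\scriptsize\'et}}(X_{\overline K},\Z_p)\otimes_{\Z_p}(B_{\rm dR}^+/F^i)$ in a filtered manner.

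The next step is to treat the mod $p^r$ problem. The complex $(A_{\rm dR}/F^i)\otimes^L\Z/p^r\Z$ is a bounded constant complex of $h$-sheaves whose terms are torsion abelian groups, so applying Corollary \ref{cordescent2} term by term (and combining via a spectral-sequence argument or hypercohomology) gives a canonical quasi-isomorphism
\[
R\Gamma_h(X_{\overline K}, (A_{\rm dR}/F^i)\otimes^L\Z/p^r\Z)\;\simeq\; R\Gamma_{\mbox{\rm\scriptsize\'et}}(X_{\overline K},(A_{\rm dR}/F^i)\otimes^L\Z/p^r\Z).
\]
Since $X_{\overline K}$ is a variety over an algebraically closed field of characteristic $0$, $R\Gamma_{\mbox{\rm\scriptsize\'et}}(X_{\overline K},\Z/p^r\Z)$ is a perfect complex of $\Z/p^r\Z$-modules (finite generation and bounded cohomological dimension), and by a standard projection-formula-type argument for constant coefficients this cohomology coincides with
\[
R\Gamma_{\mbox{\rm\scriptsize\'et}}(X_{\overline K},\Z/p^r\Z)\otimes^L_{\Z/p^r\Z}\bigl((A_{\rm dR}/F^i)\otimes^L\Z/p^r\Z\bigr)
\;\simeq\;R\Gamma_{\mbox{\rm\scriptsize\'et}}(X_{\overline K},\Z/p^r\Z)\otimes^L_{\Z}(A_{\rm dR}/F^i).
\]
One verifies that these identifications are compatible with the filtrations and with the transition maps in $r$.

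Finally, take $R\limproj$ over $r$. Since $R\Gamma_{\mbox{\rm\scriptsize\'et}}(X_{\overline K},\Z/p^r\Z)$ is perfect and satisfies Mittag--Leffler in $r$, the derived limit commutes with the tensor product, yielding
\[
R\Gamma_h(X_{\overline K}, (A_{\rm dR}/F^i)\widehat\otimes\Z_p)
\;\simeq\; R\Gamma_{\mbox{\rm\scriptsize\'et}}(X_{\overline K},\Z_p)\otimes^L_{\Z_p}\bigl((A_{\rm dR}/F^i)\widehat\otimes\Z_p\bigr).
\]
Inverting $p$ and using that $(A_{\rm dR}/F^i)\widehat\otimes\Z_p\otimes\Q_p=B_{\rm dR}^+/F^i$ (Proposition \ref{BdRdiscreteval}) together with the fact that $B_{\rm dR}^+/F^i$ is a finite-dimensional $K$-vector space (so the derived tensor product agrees with the ordinary one) gives the desired quasi-isomorphism for each $i$. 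Passing to the projective system in $i$ produces the last assertion.

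The main obstacle will be the careful handling of the derived inverse limit together with the filtered structure: one must check that on each filtration quotient the Mittag--Leffler hypothesis holds uniformly, so that $R\limproj_r$ is an ordinary inverse limit and can be commuted past the perfect complex $R\Gamma_{\mbox{\rm\scriptsize\'et}}(X_{\overline K},\Z/p^r\Z)$ in the tensor product. Once this bookkeeping is settled, the three inputs (Poincar\'e lemma, $h$-to-\'etale descent, perfectness of $p$-adic \'etale cohomology) assemble straightforwardly into the stated comparison.
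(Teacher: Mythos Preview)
Your proposal is correct and follows essentially the same route as the paper: both arguments assemble the three ingredients---the $p$-adic Poincar\'e lemma (Theorem \ref{poincare}), the \'etale-to-$h$ comparison for torsion constant sheaves (Corollary \ref{cordescent2}), and the projection formula for \'etale cohomology with constant coefficients---only in a slightly different order (the paper begins with the projection formula and $p$-completes before passing to the $h$-site, whereas you apply the Poincar\'e lemma first and only later reduce to \'etale cohomology). One small slip: $B_{\rm dR}^+/F^i$ is not finite-dimensional over $K$; what you actually need (and what holds) is that it is a $\Q_p$-vector space, hence flat over $\Z_p$, so the derived tensor product agrees with the ordinary one.
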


\begin{dem}
We start with the quasi-isomorphisms
$$
R\Gamma_{\mbox{\scriptsize\'et}}(X_{\overline K},\Z_p)\otimes^{L}_{\Z_p}(A_{\rm dR}/F^i)\simeq R\Gamma_{\mbox{\scriptsize\'et}}(X_{\overline K},A_{\rm dR}/F^i).
$$
Taking completed tensor product with $\Z_p$ (which is an exact functor) we obtain
$$
R\Gamma_{\mbox{\scriptsize\'et}}(X_{\overline K},\Z_p)\otimes^{L}_{\Z_p}(A_{\rm dR}/F^i)\widehat\otimes\Z_p\simeq R\Gamma_{\mbox{\scriptsize\'et}}(X_{\overline K},(A_{\rm dR}/F^i)\widehat\otimes\Z_p).
$$
Next, Corollary \ref{cordescent2} yields a quasi-isomorphism
$$
R\Gamma_{\mbox{\scriptsize\'et}}(X_{\overline K},(A_{\rm dR}/F^i)\widehat\otimes\Z_p)\simeq R\Gamma_{h}(X_{\overline K},(A_{\rm dR}/F^i)\widehat\otimes\Z_p)
$$
Applying the Poincar\'e lemma yields
$$
R\Gamma_{h}(X_{\overline K},(A_{\rm dR}/F^i)\widehat\otimes\Z_p)\simeq R\Gamma_{h}(X_{\overline K},({\mathcal A}^{\natural}_{\rm dR}/F^i)\widehat\otimes\Z_p)
$$
so, putting the above together (and using exactness of $\widehat\otimes\Z_p$ again)
$$
R\Gamma_{\mbox{\scriptsize\'et}}(X_{\overline K},\Z_p)\otimes^{L}_{\Z_p}(A_{\rm dR}/F^i)\widehat\otimes\Z_p\simeq R\Gamma_{h}(X_{\overline K},{\mathcal A}^{\natural}_{\rm dR}/F^i)\widehat\otimes\Z_p.
$$
On the other hand, by definition we have
$$
(A_{\rm dR}/F^i)\widehat\otimes\Z_p\otimes\Q\cong B_{\rm dR}^+/F^i,
$$
so the corollary follows by tensoring with $\Q$.
\end{dem}

\begin{cons}\rm
We are finally in the position to construct the comparison maps
$$
{\rm comp}_n:\, H^n_{\rm dR}(X)\otimes_K B_{\rm dR}\to H^n_{\mbox{\scriptsize\'et}}(X_{\overline K},\Z_p)\otimes_{\Z_p}B_{\rm dR}
$$
for $X$ as in the previous corollary following Beilinson's approach.

First recall that by Corollary \ref{coradr} and Proposition \ref{adrcomp}  we have filtered quasi-isomorphisms
$$
R\Gamma_{\rm dR}(X_{\overline K})^\wedge\simeq R\Gamma_h(X_{\overline K}, \widehat{\mathcal A}_{\rm dR})\simeq R\Gamma_h(X_{\overline K}, {\mathcal A}_{\rm dR}^\natural)\otimes\Q.
$$
On the other hand, there is a natural map
$$
R\Gamma_h(X_{\overline K}, {\mathcal A}_{\rm dR}^\natural)\to R\Gamma_h(X_{\overline K}, {\mathcal A}_{\rm dR}^\natural)\widehat\otimes\Z_p
$$
so after tensoring by $\Q$ and composing with the preceding isomorphisms we obtain a map
$$
R\Gamma_{\rm dR}(X_{\overline K})^\wedge\to R\Gamma_h(X_{\overline K}, {\mathcal A}_{\rm dR}^\natural)\widehat\otimes\Q_p.
$$
Applying  the previous corollary, we therefore have a natural map

$$
R\Gamma_{\rm dR}(X_{\overline K})^\wedge\to R\Gamma_{\mbox{\scriptsize\'et}}(X_{\overline K},\Z_p)\otimes_{\Z_p}B_{\rm dR}^+.
$$
Composing by the natural map $R\Gamma_{\rm dR}(X)^\wedge\to R\Gamma_{\rm dR}(X_{\overline K})^\wedge$ and extending $B_{\rm dR}^+$-linearly, this yields a map

$$
R\Gamma_{\rm dR}(X)^\wedge\otimes_K B_{\rm dR}^+\to R\Gamma_{\mbox{\scriptsize\'et}}(X_{\overline K},\Z_p)\otimes_{\Z_p}B_{\rm dR}^+
$$
compatible with filtrations. Passing to the fraction field of $B_{\rm dR}^+$ and taking cohomology, we obtain the announced comparison maps
$$
{\rm comp}_n:\, H^n_{\rm dR}(X)\otimes_K B_{\rm dR}\to H^n_{\mbox{\scriptsize\'et}}(X_{\overline K},\Z_p)\otimes_{\Z_p}B_{\rm dR}
$$
that are compatible with filtrations and Galois action. (Here we have used again that the Hodge filtration on the groups $H^n_{\rm dR}(X)$ is finite.)
\end{cons}

\section{The comparison theorem}

\subsection{Proof of the comparison isomorphism}\label{secproof}

This subsection is devoted to the proof of:

\begin{theo}\label{comptheo}{\rm (De Rham comparison theorem)}
The maps ${\rm comp}_n$ are filtered isomorphisms for all smooth quasi-projective $X$ and all $n$.
\end{theo}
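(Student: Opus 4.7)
The plan is to follow the strategy already indicated in the introduction, going back to Faltings and Fontaine--Messing: first verify the comparison isomorphism explicitly for $X = \G$, then derive the general case by formal cohomological arguments. Both sides of ${\rm comp}_n$ satisfy the expected formal properties---functoriality, compatibility with cup products, K\"unneth formula, Mayer--Vietoris, and, for proper smooth $X$, Poincar\'e duality---and the comparison map respects all of these because it arises from a genuine morphism of sheaf-theoretic cohomology theories. Given this, one reduces the general smooth quasi-projective case to that of $\G$ in stages: Mayer--Vietoris brings one to smooth affines; de Jong's alteration theorem combined with the $h$-descent of Corollary \ref{cordescent} (which holds on the de Rham side by definition of ${\mathcal A}^\natural_{\rm dR}$) allows reduction to smooth projective varieties; Poincar\'e duality exchanges the smooth projective setting with the smooth open setting; and K\"unneth together with an étale trivialization reduces cohomology calculations to the fundamental class computed in degree one for $\G$.

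The case $X = \G$ reduces to degree $1$, since $H^n_{\rm dR}(\G) = 0$ and $\Het^n(\G_{\overline K}, \Q_p) = 0$ for $n \geq 2$, while degree $0$ is tautological. The class $[dt/t]$ generates $H^1_{\rm dR}(\G/K)$ over $K$, and $\Het^1(\G_{\overline K}, \Q_p) \cong \Q_p(-1)$ via Kummer theory, with a generator dual to a compatible system $(\varepsilon_n)$ of $p^n$-th roots of unity. The task is to chase $[dt/t]$ through the construction of ${\rm comp}_1$, that is, through the quasi-isomorphism $R\Gamma_{\rm dR}(\G_{\overline K})^\wedge \simeq R\Gamma_h(\G_{\overline K}, {\mathcal A}^\natural_{\rm dR}) \widehat\otimes \Q$ of Proposition \ref{adrcomp} and Theorem \ref{coradr}, followed by the arithmetic Poincar\'e lemma of Theorem \ref{poincare}. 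Evaluating on an $h$-covering of $\G_{\overline K}$ obtained by adjoining $p^n$-th roots of the coordinate $t$, the class $[dt/t]$ matches with the class of ${\rm dlog}(\varepsilon_n)$ in $T_p(\Omega^1_{\OKB/\OK}) \otimes \Q \cong {\rm gr}^1 \Bdr^+$, which by Proposition \ref{fontaineelement} is the image of the Fontaine element $t$ in the associated graded. Thus ${\rm comp}_1$ sends $[dt/t] \otimes 1$ to $\varepsilon^* \otimes t$, where $\varepsilon^*$ is the generator of $\Z_p(-1)$ dual to $(\varepsilon_n)$, which is manifestly a filtered isomorphism once one inverts $t$ on both sides.

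The main obstacle is the explicit identification of the image of $[dt/t]$ with the Fontaine element, which requires bookkeeping through the Godement resolutions and $h$-sheafification defining ${\mathcal A}^\natural_{\rm dR}$, using the semistable cover of $\G_{\overline K}$ by roots of $t$ to relate the logarithmic differential $dt/t$ to the $p$-adic logarithm of the multiplicative representative $[\varepsilon] \in \Ainf$. A subtler technical point is strict compatibility of ${\rm comp}_n$ with the tensor-product filtration: one must ensure that the Hodge filtration $F^i$ on $H^n_{\rm dR}(X)$ and the valuation filtration $\Fil^j$ on $\Bdr$ together match the filtration induced on the right-hand side by the $h$-cohomology of the filtered sheaves ${\mathcal A}^\natural_{\rm dR}/F^k$. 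This reduces, via passage to associated graded, to Theorem \ref{ht}, which in turn one recovers at the end by applying $\operatorname{gr}^\bullet$ to the filtered isomorphism of Theorem \ref{comptheo}. Once the $\G$ case and these compatibilities are settled, the passage to arbitrary smooth quasi-projective $X$ is a formal diagram chase along the lines already worked out in the literature.
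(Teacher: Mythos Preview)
Your treatment of the $\G$ case is broadly in line with the paper's Proposition~\ref{g_m}: one identifies the image of $[dt/t]$ with the Fontaine element via the ${\rm dlog}$ map and Proposition~\ref{fontaineelement}. That part is fine, if sketchy.

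The reduction from $\G$ to general $X$, however, does not work as you have written it. None of the four steps you list---Mayer--Vietoris to affines, alterations plus $h$-descent to projectives, ``Poincar\'e duality exchanges the projective setting with the open setting'', K\"unneth plus \'etale trivialization down to $\G$---actually accomplishes the stated reduction. Smooth affines are no easier than smooth quasi-projectives here; alterations do not produce projective varieties from open ones; Poincar\'e duality does not swap projective for open; and an arbitrary smooth projective variety is not a product of copies of $\G$, so K\"unneth cannot reduce it to that case. The phrase ``\'etale trivialization'' has no clear meaning in this context.

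What is genuinely missing is the compatibility of ${\rm comp}_n$ with \emph{Gysin maps}. This is the content of Proposition~\ref{gysin} in the paper, proved via deformation to the normal bundle, and it is the mechanism by which the $\G$ computation propagates: the class $i_*(1)\in H^2_Y(X)$ for a smooth divisor $Y\subset X$ is identified, after reduction to the case $Y=\{0\}\subset\A^1$, with the generator of $H^1(\G)$. With Gysin compatibility in hand, the paper's argument runs as follows: for smooth projective $X$ of dimension $d$, a hyperplane class $\eta$ satisfies $\eta^d\neq 0$ in $H^{2d}$, so ${\rm comp}_{2d}$ is an isomorphism; then nondegeneracy of Poincar\'e duality forces each ${\rm comp}_n$ to be injective, and a dimension count (via the complex comparison) gives bijectivity. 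For smooth quasi-projective $X$, one takes a smooth normal crossing compactification and runs a double induction on $\dim X$ and the number of boundary components, using the Gysin/localization triangles. Your proposal has no substitute for either the Gysin compatibility or this inductive scheme.

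A smaller point: your remark that strict filtration compatibility ``reduces to Theorem~\ref{ht}, which in turn one recovers at the end'' is circular as stated. In the paper the map is filtered by construction, and once it is shown to be a $\Bdr$-linear isomorphism the filtered statement follows without invoking Theorem~\ref{ht}.
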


We begin with the crucial case $X={\bf G}_{m,K}=\Spec K[x, x^{-1}]$. Since it is connected of dimension 1, only the case $n=1$ is nontrivial.
\begin{prop}\label{g_m}
The map ${\rm comp}_1$ induces a Galois-equivariant filtered isomorphism
$$
 H^1_{\rm dR}({\Bbb G}_{m, K})\otimes_K B_{\rm dR}\to H^1_{\mbox{\scriptsize\rm\'et}}({\Bbb G}_{m,\overline K},\Q_p)\otimes_{\Q_p}B_{\rm dR}.
$$
\end{prop}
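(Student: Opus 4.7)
The plan is direct: both sides are free $B_{\rm dR}$-modules of rank one, so it suffices to exhibit a single class in $H^1_{\rm dR}({\Bbb G}_{m,K})$ whose image is a $B_{\rm dR}$-basis of the right-hand side, and then check the filtration and Galois compatibilities on that single element.

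First I would compute the two sides explicitly. On the de Rham side, $H^1_{\rm dR}({\Bbb G}_{m,K}) = K\cdot[dx/x]$, viewed via the normal crossing compactification $({\Bbb G}_m,{\Bbb P}^1_K)$; since $dx/x$ is of pure Hodge type $(1,0)$, the Hodge filtration is $F^1 = H^1_{\rm dR}({\Bbb G}_{m,K})$ and $F^2=0$. On the \'etale side, the Kummer sequence together with the choice of a compatible system $(\zeta_{p^n})$ of primitive $p^n$-th roots of unity identifies $H^1_{\mbox{\scriptsize\'et}}({\Bbb G}_{m,\overline K},\Q_p(1))\cong \Q_p$, with the canonical generator being the Kummer class of the coordinate function $x$; dually one gets a canonical $\Q_p$-basis $e^\vee$ of $H^1_{\mbox{\scriptsize\'et}}({\Bbb G}_{m,\overline K},\Q_p)\cong \Q_p(-1)$.

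Second, I would trace $[dx/x]$ through the comparison map using the Kummer tower $\phi_n\colon {\Bbb G}_{m,\overline K}\to {\Bbb G}_{m,\overline K}$, $y\mapsto y^{p^n}$. This system of \'etale (hence $h$-) covers is well-suited to computing both sides: on the \'etale side it exhibits $e^\vee$ via the Kummer class of $x$, while on the $h$-side it provides a natural hypercovering adapted to $\mathcal{A}^\natural_{\rm dR}$. Extending to the semistable pair $({\Bbb G}_m,{\Bbb P}^1_{\CO_{\overline K}})$ with its log structure along $\{0,\infty\}$, the system of compatible $p^n$-th roots of $x$ produces an element $[\underline x]\in\Ainf$ whose $p$-adic logarithm, computed in $\Adr\widehat\otimes\Q_p=\Bdr^+$ exactly as in Construction \ref{consfonel}, is the Fontaine element $t$ multiplied by the Kummer class of $x$. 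The resulting image of $[dx/x]$ in $H^1_h({\Bbb G}_{m,\overline K},\mathcal{A}^\natural_{\rm dR})\widehat\otimes\Q_p$ is, after identifying the latter with $H^1_{\mbox{\scriptsize\'et}}({\Bbb G}_{m,\overline K},\Q_p)\otimes\Bdr^+$ via the Poincar\'e lemma (Theorem \ref{poincare}), equal (up to a unit in $\Z_p^\times$) to $e^\vee\otimes t$. Proposition \ref{fontaineelement} is the pointwise shadow of this identification.

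Third, I would conclude: since $t$ is a uniformizer of $\Bdr^+$, hence a unit in $\Bdr$, the element $e^\vee\otimes t$ is a $\Bdr$-basis of $H^1_{\mbox{\scriptsize\'et}}({\Bbb G}_{m,\overline K},\Q_p)\otimes_{\Q_p}\Bdr$, so ${\rm comp}_1$ is an isomorphism. Filtration compatibility: $[dx/x]\otimes 1$ lies in $F^1 H^1_{\rm dR}\otimes F^0\Bdr$, and its image $e^\vee\otimes t$ lies in $\Q_p(-1)\otimes\mathrm{Fil}^1\Bdr$, which is precisely the degree $1$ piece of the target tensor filtration; equality of the filtrations follows because both sides are one-dimensional over $\Bdr$ and the map is filtered and nonzero on $\mathrm{gr}^1$. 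Galois equivariance is immediate from $\sigma([dx/x])=[dx/x]$, $\sigma(t)=\chi(\sigma)t$, and $\sigma(e^\vee)=\chi(\sigma)^{-1}e^\vee$. The main obstacle is the middle step: rigorously matching the zigzag defining ${\rm comp}_1$ on the class $[dx/x]$, computed through Godement resolutions and the $h$-sheafification $\mathcal{A}^\natural_{\rm dR}$, with the logarithm-of-Teichm\"uller construction of the Fontaine element. This amounts to sheafifying the calculation already carried out in Proposition \ref{fontaineelement}, and is the one place where the abstract machinery of $\mathcal{A}^\natural_{\rm dR}$ has to be confronted with a concrete cocycle.
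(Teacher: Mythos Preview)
Your overall strategy matches the paper's: both sides are one-dimensional, the de~Rham generator is $\mathrm{dlog}(x)$, the \'etale generator is the Kummer class $c_x$, and the Kummer tower $y\mapsto y^{p^n}$ is the right cover to compute on. The paper also reduces to checking the map on $\mathrm{gr}^1$, just as you do in your third step.

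The gap is in your second step. There is no element $[\underline{x}]\in\Ainf$: the coordinate $x$ is a function on ${\Bbb G}_m$, not an element of $\OCK$, so the Teichm\"uller lift construction does not apply to it. What you have written is the pointwise construction of the Fontaine element (for a root of unity) transplanted verbatim, but the comparison map is a map of cohomology groups of ${\Bbb G}_m$, not of $\Spec\overline K$, so the computation has to be carried out in sheaf cohomology over ${\Bbb G}_m$. The paper does exactly this: after reducing to $\mathrm{gr}^1$ and to $\Z/p^n$ coefficients, the Poincar\'e lemma identifies $\mathrm{gr}^1_F{\mathcal A}^\natural_{\rm dR}\otimes^L\Z/p^n\Z$ with the constant sheaf ${}_{p^n}\Omega^1_{\OKB/\OK}$, and both classes become $1$-cocycles for $\Pi=\pi_1({\Bbb G}_{m,\overline K})$ with values in ${\mathcal A}^\natural_{\rm dR}\otimes[\Z\stackrel{p^n}{\to}\Z]$. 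The coboundary linking them is the $0$-cochain $\mathrm{dlog}(\widetilde x)$, where $\widetilde x$ is the coordinate on the $\mu_{p^n}$-torsor: one has $p^n\,\mathrm{dlog}(\widetilde x)=\mathrm{dlog}(x)$, while $\sigma\mapsto\sigma(\mathrm{dlog}(\widetilde x))-\mathrm{dlog}(\widetilde x)$ represents $\rho(c_x)$. This is the concrete cocycle computation you correctly flag as the obstacle; it does not reduce to a logarithm in $\Ainf$.

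Your claimed endpoint, that $[\mathrm{dlog}(x)]\mapsto c_x\otimes t$ up to a unit, is correct (the paper records this in the remark following the proposition), but the paper only obtains it \emph{after} the $\mathrm{gr}^1$ computation by invoking Tate's theorem (formula~(\ref{tatetheo})) to kill the ambiguity in $\mathrm{Fil}^2$. You cannot get it directly without first doing the cocycle calculation on the graded piece.
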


\begin{proof} We may assume $K=\Q_p$ by a base change argument and drop the subscript from ${\Bbb G}_{m,{\Q_p}}$.
Since ${\rm comp}_1$ is compatible with filtrations, it suffices to show that it induces an isomorphism on associated graded rings. By Proposition \ref{BdRdiscreteval} we have ${\rm gr}^i_{\rm Fil}\Bdr\cong \C_p(i)$ for all $i$. On the other hand, since ${\Bbb G}_m$ is affine of dimension 1, we have $F^0H^1_{\rm dR}({\Bbb G}_m)=F^1H^1_{\rm dR}({\Bbb G}_{m})=H^1_{\rm dR}({\Bbb G}_{m})$ and $F^iH^1_{\rm dR}({\Bbb G}_{m})=0$ for $i>1$, whence an isomorphism ${\rm gr}^1_FH^1_{\rm dR}({\Bbb G}_{m})\cong H^1_{\rm dR}({\Bbb G}_{m})$. Thus it will suffice to show that ${\rm comp}_1$ induces an isomorphism
\begin{equation}\label{comp1}
H^1_{\rm dR}({\Bbb G}_{m})\otimes_{\Q_p}\C_p\stackrel\sim\to H^1_{\mbox{\scriptsize\rm\'et}}({\Bbb G}_{m,\overline\Q_p}, \Q_p)\otimes_{\Q_p}\C_p(1)
\end{equation}
as on the other graded pieces the maps will be just Galois twists of this one.

Both sides of (\ref{comp1}) are 1-dimensional $\C_p$-vector spaces. A generator for the left hand side is given by the logarithmic differential ${\rm dlog}(x)$, and of the right hand side by the compatible system $c_x$ of the images of the coordinate function $x$ by the Kummer maps $$
H^0_{\mbox{\scriptsize\'et}}({\Bbb G}_{m,\overline\Q_p}, \G)\to H^1_{\mbox{\scriptsize\'et}}({\Bbb G}_{m,\overline\Q_p}, \mu_{p^n})
$$
for all $n$. Another description of the class $c_x$ is as follows. The \'etale fundamental group $\Pi:=\pi_1({\Bbb G}_{m,\overline\Q_p})$ is pro-cyclic, whence an isomorphism $H^1_{\mbox{\scriptsize\'et}}({\Bbb G}_{m,\overline\Q_p}, \mu_{p^n})\cong \Z/p^n\Z$, a generator being given by the class of the $\mu_{p^n}$-torsor $\widetilde\G$ coming from the map $x\mapsto x^{p^n}$ on  ${\Bbb G}_{m,\overline\Q_p}$. The compatible system of these for all $n$ forms a pro-torsor whose class generates  $H^1_{\mbox{\scriptsize\'et}}({\Bbb G}_{m,\overline\Q_p}, \Z_p(1))$.

It thus suffices to check that ${\rm comp}_1$ sends the class of ${\rm dlog}(x)$ to that of $c_x$ modulo the identification ${\rm gr}^1_{\rm Fil}\Bdr\cong \C_p(1)$. As we have seen in Subsection \ref{secfontaine}, this isomorphism is induced by the map $\mu_{p^n}\to {}_{p^n}\Omega^1_{\OKB/\OK}\cong{\rm gr}^1_F A_{\rm dR}\otimes^L\Z/p^n\Z$. The latter group, viewed as a constant $h$-sheaf, is isomorphic to ${\rm gr}^1_F{\mathcal A}^\natural_{\rm dR}\otimes^L\Z/p^n\Z$ by the Poincar\'e lemma (Theorem \ref{poincare}). Restricting to the \'etale topology we thus have a map $\rho:\, H^1_{\mbox{\scriptsize\'et}}({\Bbb G}_{m,\overline\Q_p}, \mu_{p^n})\to H^1_{\mbox{\scriptsize\'et}}({\Bbb G}_{m,\overline\Q_p}, {\rm gr}^1_F{\mathcal A}^\natural_{\rm dR}\otimes^L\Z/p^n\Z).$ On the other hand, we may identify  ${\rm dlog}(x)\in {\rm gr}^1_FH^1_{\rm dR}({\Bbb G}_{m})$ with a class in $H^1_h({\Bbb G}_{m,\overline\Q_p}, {\rm gr}^1_F{\mathcal A}_{\rm dR})$ via Theorem \ref{adr}. As this class is defined over $\OKB$, we may view it as a cohomology class with values in ${\rm gr}^1_F{\mathcal A}^\natural_{\rm dR}$ (as an $h$-sheaf on ${\rm Var}_{\overline\Q_p}$) and send it to a class in $H^1_{h}({\Bbb G}_{m,\overline\Q_p}, {\rm gr}^1_F{\mathcal A}^\natural_{\rm dR}\otimes^L\Z/p^n\Z)\cong H^1_{\mbox{\scriptsize\'et}}({\Bbb G}_{m,\overline\Q_p}, {\rm gr}^1_F{\mathcal A}^\natural_{\rm dR}\otimes^L\Z/p^n\Z).$ We compute the latter group as group cohomology of $\Pi$ with values in ${\mathcal A}^\natural_{\rm dR}\otimes[\Z\stackrel{p^n}\to\Z]$. Both classes are represented by an element in the 1-cochain group $C^0(\Pi, {\mathcal A}^\natural_{\rm dR}[1])\oplus C^1(\Pi, {\mathcal A}^\natural_{\rm dR})$. The 0-cochain group $C^0(\Pi, {\mathcal A}^\natural_{\rm dR}[1])$ maps to this group via multiplication by $p^n$ in the first component and the natural identification in the second with a minus sign. Now let $\widetilde x$ be the coordinate function on the $\mu_{p^n}$-torsor $\widetilde G\cong \G\stackrel{p^n}\to\G$. We represent ${\rm dlog}(\widetilde x)$ by a 0-cochain with values in ${\mathcal A}^\natural_{\rm dR}[1]$ and compute $p^n{\rm dlog}(\widetilde x)={\rm dlog}(\widetilde x^{p^n})={\rm dlog}(x)$. On the other hand, under the identification $C^0(\Pi, {\mathcal A}^\natural_{\rm dR}[1])\cong C^1(\Pi, {\mathcal A}^\natural_{\rm dR})$ the class ${\rm dlog}(\widetilde x)$ goes over to the 1-cocycle $\sigma\mapsto \sigma({\rm dlog}(\widetilde x))-{\rm dlog}(\widetilde x)$ which represents $\rho(c_x)$. Therefore the two 1-cocyles are cohomologous.
\end{proof}

\begin{rema}\rm
The isomorphism of the proposition sends the class of the element ${\rm dlog}(x)\otimes 1$ in ${H^1_{\rm dR}({\Bbb G}_{m, K})\otimes_K B_{\rm dR}}$ to $c_x\otimes (\iota\otimes{\C_p})\in H^1_{\mbox{\scriptsize\rm\'et}}({\Bbb G}_{m,\overline K},\Q_p(1))\otimes_{\Q_p}B_{\rm dR}(-1)$, where $c_x$ is as in the above proof and $\iota:\, \Z_p(1)\to B_{\rm dR}$ is the map of Construction \ref{consfonel} defining the Fontaine element. Indeed, the elements ${\rm dlog}(x)\otimes 1$ and $c_x$ are equal up to multiplication by an element $\lambda\in B_{\rm dR}(-1)$ in the 1-dimensional $\Bdr$-vector space $H^1_{\mbox{\scriptsize\rm\'et}}({\Bbb G}_{m,\overline K},\Q_p)\otimes_{\Q_p}B_{\rm dR}$, and the calculation in the above proof together with Proposition \ref{fontaineelement} shows that $\lambda$ and $\iota\otimes{\C_p}$ coincide modulo ${\rm Fil}^2$. Hence their difference is a Galois-invariant element in ${\rm Fil}^2\Bdr(-1)$, which must be 0 by Proposition \ref{BdRdiscreteval} and Tate's theorem cited in formula (\ref{tatetheo}) of the introduction.

This is analogous to the isomorphism of complex de Rham theory for $\G$ that maps ${\rm dlog}(x)$ to the linear map $H_1(\G,\Z)\to\C$ with value $2\pi i$ on a generator of $H_1(\G,\Z)\cong\Z$.
\end{rema}

The next crucial point is compatibility of the comparison map with Gysin maps in codimension 1. We explain these for the \'etale theory; the de Rham theory is similar. In fact, as explained in (\cite{BO}, \S 2), both \'etale cohomology and algebraic de Rham cohomology satisfy the axioms of a `Poincar\'e duality theory with supports' in the sense of that paper, and the properties of cohomology we are to use are all valid for theories satisfying these axioms.

Given a pair $Y\subset X$ of $\overline K$-varieties, there are cohomology groups with support $H^i_{Y}(X, \Q_p(r))$ fitting into a long exact sequence
$$
\cdots\to  H^n_{\mbox{\scriptsize\'et}}(X, \Q_p(r))\to H^n_{\mbox{\scriptsize\'et}}(X\setminus Y, \Q_p(r))\to H^{n+1}_{Y}(X, \Q_p(r))\to\cdots
$$
One can in fact construct this sequence (and the similar one in de Rham cohomology) by defining $R\Gamma_Y(X)$ to be the cone of the natural pullback map $R\Gamma(X)\to R\Gamma(X\setminus Y)$. As a consequence of this cone construction, we may extend the definition of the comparison maps  ${\rm comp}_n$ to cohomology with support in $Y$.

If moreover both $X$ and $Y$ are smooth and $Y$ is of codimension 1 in $X$, there are  purity isomorphisms (sometimes called Gysin isomorphisms)
$$
H^n_{\mbox{\scriptsize\'et}}(Y, \Q_p(r))\cong H^{n+2}_Y(X, \Q_p(r+1))
$$
for cohomology with support. Composing with the natural map $$H^{n+2}_Y(X, \Q_p(r+1))\to H^{n+2}_{\mbox{\scriptsize\'et}}(X, \Q_p(r+1))$$ we obtain the Gysin maps
$$
H^n_{\mbox{\scriptsize\'et}}(Y, \Q_p(r))\to H^{n+2}_{\mbox{\scriptsize\'et}}(X, \Q_p(r+1))
$$

We first study the Gysin map in a special situation. Consider a line bundle $\mathcal L$ on a smooth $Y$; this is a locally free $\calo_Y$-module of rank 1. The corresponding geometric line bundle is denoted by $V({\mathcal L})\to Y$. As such, it is equipped with the zero section $Y\to V({\mathcal L})$ which identifies $Y$ with a smooth codimension 1 closed subscheme in ${V}({\mathcal L})$.

\begin{lem}
The maps ${\rm comp}_n$ are compatible with Gysin isomorphisms
associated with closed embeddings $i:\, Y\hookrightarrow {V}({\mathcal L})$ as above.
\end{lem}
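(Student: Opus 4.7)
The plan is to reduce the lemma to the already-verified case of Proposition \ref{g_m} through three successive simplifications, each exploiting the locality, multiplicativity, or cone-compatibility of the objects in play.

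First I would observe that the assertion is Zariski-local on $Y$. Indeed, both the Gysin isomorphism (defined via the cone construction for cohomology with support) and $\mathrm{comp}_n$ (arising from a morphism of $h$-sheaves of complexes, as in the construction at the end of Subsection 4.4) commute with restriction along an open immersion $Y' \hookrightarrow Y$. Since every line bundle is Zariski-locally trivial, I may assume $\mathcal L = \calo_Y$, so that $V(\mathcal L) = Y \times_{\overline K} \mathbb A^1_{\overline K}$ and the zero section is $i = \mathrm{id}_Y \times i_0$, where $i_0\colon \mathrm{Spec}\,\overline K \hookrightarrow \mathbb A^1_{\overline K}$.

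Next, the K\"unneth formula and the projection formula---both valid in the \'etale and de Rham theories---identify the Gysin map attached to $i$ with the exterior cup product by the Gysin class of $0 \in \mathbb A^1_{\overline K}$. Since $\mathrm{comp}$ is induced by a morphism of multiplicative complexes ($\mathcal A_{\rm dR}^\natural$ and the \'etale $\mathbb Z_p$-cohomology complex both carry natural product structures that the construction of Subsection 4.4 respects), it is compatible with exterior products. It therefore suffices to check compatibility of $\mathrm{comp}_2$ with the single purity isomorphism $H^0_{\mbox{\scriptsize\rm\'et}}(\overline K, \Q_p) \stackrel{\sim}{\to} H^2_{\{0\}}(\mathbb A^1_{\overline K}, \Q_p(1))$ and its de Rham counterpart.

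Finally, the localization exact sequence attached to $(\mathbb A^1_{\overline K}, \mathbb G_{m,\overline K})$, together with homotopy invariance on $\mathbb A^1_{\overline K}$ (which forces $H^1$ and $H^2$ of $\mathbb A^1_{\overline K}$ to vanish in both theories), produces a canonical boundary isomorphism $H^1(\mathbb G_{m,\overline K}) \stackrel{\sim}{\to} H^2_{\{0\}}(\mathbb A^1_{\overline K})$ in each theory. Under it, $c_x$ corresponds to the \'etale Gysin class of $0$ and $\mathrm{dlog}(x)$ to its de Rham analogue. As $\mathrm{comp}$ commutes with boundary maps of localization triangles (being built via cones), the problem reduces to showing that $\mathrm{comp}_1$ carries the class of $\mathrm{dlog}(x)$ to $c_x \otimes \iota$ with $\iota$ the Fontaine element accounting for the Tate twist, which is exactly the content of Proposition \ref{g_m} and the remark following it.

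The main obstacle I anticipate is not any of these reductions individually but the careful verification that $\mathrm{comp}_\bullet$ is genuinely compatible with exterior products and with boundary maps of localization triangles at the level of the filtered derived category. Both are essentially formal consequences of the fact that $\mathrm{comp}$ comes from a map between ring objects in a suitable sense, combined with the cone construction of cohomology with support; nevertheless, the simultaneous bookkeeping of Hodge-completed filtered complexes, $h$-sheafification, and divided power structures requires care to spell out cleanly.
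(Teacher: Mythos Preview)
Your proposal is correct and follows essentially the same route as the paper: reduce to the trivial bundle, then to the Gysin class of the origin in $\mathbb{A}^1$, then via the localization sequence to $H^1(\mathbb{G}_m)$ and Proposition~\ref{g_m}. The only differences are organizational: the paper first uses the $H^*(Y)$-module structure (your projection formula) to reduce to $i_*(1)$ and \emph{then} trivializes $\mathcal{L}$ via an explicit Mayer--Vietoris argument (your ``Zariski-local'' step), whereas you swap the order and phrase the reduction to a point via K\"unneth rather than via the explicit diagram of pairs the paper draws.
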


\begin{dem} In both the \'etale and the de Rham theories, the projection $\pi:\, V({\mathcal L})\to Y$ induces a map of cohomology rings $\pi^*:\, H^*(Y)\to H^*_Y(V({\mathcal L}))$ that equips the latter ring with an $H^*(Y)$-module structure induced by the cup-product. The map $i_*$ respects this module structure, and therefore for all $\alpha\in H^n(Y)$ we have $i_*(\alpha)=i_*(1)\cup \pi^*(\alpha)$ where $1\in H^0(Y)$. Thus we reduce to showing that the maps ${\rm comp}_2$ preserve the classes $i_*(1)\in H^2_Y(V({\mathcal L}))$.
Pick an open covering trivializing the line bundle $\mathcal L$. By the Mayer-Vietoris sequences
$$
\cdots \to H^{n-1}(U\cap V)\to H^n(U\cup V)\to H^n(U)\oplus H^n(V)\to\cdots
$$
in both theories (and their analogues with support) we reduce to the case where $\mathcal L$ is trivial, i.e. $V({\mathcal L})\cong Y\times\A^1$. Now consider the commutative diagram of pairs
$$
\begin{CD}
(Y\times \A^1, Y\times \{0\}) @>>> (\A^1, \{0\})\\
@AAA @AAA \\
(Y, Y) @>>> (\Spec\overline K, \Spec\overline K)
\end{CD}
$$
inducing a commutative diagram
$$
\begin{CD}
H^2_{Y\times \{0\}}(Y\times \A^1) @<<< H^2_{\{0\}}(\A^1)\\
@A{i_*}AA @AA{i_*}A \\
H^0(Y) @<\cong<< H^0(\Spec\overline K)
\end{CD}
$$
It shows that when identifying $i_*(1)$ we may reduce to the case where $Y$ is a point. But then the localization sequence induces an isomorphism
$$
H^1({\bf G}_{m, \overline K})\stackrel\sim\to H^2_{\{0\}}(\A^1)
$$
since $H^n(\A^1)=0$ for $n>0$, and one checks that under this isomorphism the elements $i_*(1)$ map to the distinguished elements described in Proposition \ref{g_m}. Thus the statement follows from the proposition.
\end{dem}

\begin{prop}\label{gysin}
The maps ${\rm comp}_n$ are compatible with all Gysin isomorphisms associated with closed embeddings of smooth codimension 1 subvarieties.
\end{prop}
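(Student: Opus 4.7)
The plan is to reduce the general case to the special case already handled in the previous lemma --- the zero section of a line bundle --- via deformation to the normal bundle. Given a closed embedding $i\colon Y\hookrightarrow X$ of smooth $\overline K$-varieties with $Y$ of codimension $1$, the normal sheaf $N_{Y/X}$ is an invertible $\calo_Y$-module, so $V(N_{Y/X})\to Y$ is a line bundle. The deformation to the normal cone produces a smooth $\overline K$-variety $M=M_{Y/X}$ equipped with a flat morphism $\pi\colon M\to \A^1_{\overline K}$ and a closed embedding $\tilde\imath\colon Y\times\A^1\hookrightarrow M$ whose fiber over $t=0$ is the zero section $Y\hookrightarrow V(N_{Y/X})$ and whose fiber over any $t\neq 0$ (say $t=1$) is the original embedding $i$.

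First I will establish homotopy invariance: for any smooth $\overline K$-variety $Z$, the projection $Z\times\A^1\to Z$ induces isomorphisms in both $\Q_p$-\'etale and Hodge-filtered de Rham cohomology, and these isomorphisms are intertwined by the maps ${\rm comp}_n$. In both theories this reduces to the vanishing of higher cohomology of $\A^1$ with constant coefficients; compatibility with ${\rm comp}_n$ follows from its naturality under pullback by $\pi$ and by a section of $\pi$. The same holds for cohomology with support, which is defined as the cone of pullback to the complement; this naturality is inherited from the sheaf-theoretic construction of ${\rm comp}_n$ outlined in the previous subsection.

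Next I will propagate the compatibility along the deformation. The restriction maps $j_t^*\colon H^{n+2}_{Y\times\A^1}(M)\to H^{n+2}_Y(M_t)$ for $t=0,1$ send the fundamental class $\tilde\imath_*(1)$ to the fundamental classes of the zero section (for $t=0$) and of $i$ (for $t=1$). Combining this with homotopy invariance applied to $Y\times\A^1\to Y$ and with smooth base change along $\pi$, one obtains a commutative square relating ${\rm comp}_*$ for the three pairs. The previous lemma gives the compatibility at $t=0$, so the diagram forces it at $t=1$, i.e. ${\rm comp}_2(i_*(1))=i_*(1)$. As in the proof of the previous lemma, the Gysin map $i_*$ is a morphism of $H^*(Y)$-modules via $\pi^*$ and cup-product, and ${\rm comp}_*$ is multiplicative and natural under pullback; this upgrades the compatibility on fundamental classes to the full statement ${\rm comp}_{n+2}(i_*(\alpha))=i_*({\rm comp}_n(\alpha))$ for all $\alpha\in H^n(Y)$.

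The main obstacle is the technical bookkeeping for cohomology with supports: the cone construction used to define it must be shown to commute with the $h$-sheafification and derived completion that go into ${\rm comp}_n$, and the compatibility of Gysin classes with pullback along fibers of $\pi$ must be established at the level of filtered complexes rather than just on cohomology. This is formal given the derived-categorical setup but requires careful tracking of quasi-isomorphisms; I would carry it out by exhibiting the comparison map with support as a natural transformation between the two cone constructions rather than working at the level of cohomology groups.
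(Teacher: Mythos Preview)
Your overall strategy --- deformation to the normal bundle --- is exactly the paper's approach, and the construction of $M$ you describe matches the paper's $M^\circ$. The paper then writes down the commutative diagram
\[
\begin{CD}
H^{n+2}_Y(X) @<<< H^{n+2}_{Y\times\A^1}(M^\circ) @>>> H^{n+2}_Y(V({\mathcal N})) \\
@AA{\cong}A @AA{\cong}A @AA{\cong}A \\
H^{n}(Y) @<{\cong}<< H^{n}(Y\times\A^1) @>{\cong}>> H^{n}(Y)
\end{CD}
\]
in both theories, where the vertical arrows are Gysin isomorphisms and the horizontal ones are pullbacks along the inclusions of the fibres at $t=1$ and $t=0$. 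Homotopy invariance makes the bottom row consist of isomorphisms, hence so does the top; naturality of $\mathrm{comp}_n$ under pullback then transports the compatibility from the right column (the line bundle case, already done) to the left column directly, for \emph{all} classes at once.

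Your organization differs in that you first reduce to the fundamental class $i_*(1)$ and then invoke ``the Gysin map $i_*$ is a morphism of $H^*(Y)$-modules via $\pi^*$'' to upgrade to arbitrary $\alpha$. This step has a gap as written: for a general closed embedding $Y\hookrightarrow X$ there is no projection $\pi\colon X\to Y$, so it is not clear which $\pi^*$ you mean. (In the previous lemma this worked precisely because $V(\mathcal L)\to Y$ \emph{is} a projection.) One can make $H^*_Y(X)$ into an $H^*(Y)$-module, but the standard way to do so is via the identification $H^*_Y(X)\cong H^*_Y(V(N_{Y/X}))$ --- i.e.\ via deformation to the normal bundle again --- so this route becomes circular or at least redundant. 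The paper's diagram avoids the issue entirely by comparing the full Gysin isomorphisms rather than just their values on $1$, and I would recommend you adopt that formulation.
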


The proof uses a `deformation to the normal cone' (in this case, normal bundle) construction that we recall next. A reference is \cite{dv}.

\begin{cons}\rm
Let ${Y\subset X}$ be a smooth codimension 1 pair as above, and denote by $\mathcal N$ the normal bundle of $Y$ in $X$. There exists a closed embedding $Y\times\A^1\hookrightarrow M^\circ$ in a $\overline K$-variety $M^\circ$ equipped with a projection $p:\, M^\circ\to \A^1$ such that the composite $Y\times\A^1\hookrightarrow M^\circ\to\A^1$ is the natural projection $p_2$, and moreover the following properties hold.
\begin{enumerate}
 \item There is an isomorphism $p^{-1}(\A^1\setminus \{0\})\cong X\times(\A^1\setminus \{0\})$
 making the diagram
 $$
 \begin{CD}
 p_2^{-1}({\A^1\setminus \{0\}}) @>>> p^{-1}(\A^1\setminus \{0\})\\
 @V=VV @VV{\cong}V \\
 {Y\times(\A^1\setminus \{0\})}@>>> X\times(\A^1\setminus \{0\})
 \end{CD}
 $$
 commute, where the bottom horizontal map is the natural inclusion.

\item There is an isomorphism $p^{-1}(0)\cong V({\mathcal N})$ making the diagram
$$
 \begin{CD}
 p_2^{-1}(0)  @>>> p^{-1}(0)\\
 @V{\cong}VV @VV{\cong}V \\
 Y @>>> V({\mathcal N})
 \end{CD}
 $$
commute, where the bottom horizontal map is the embedding of $Y$ via the zero section.\smallskip
\end{enumerate}
The construction of $M^\circ$ is as follows. Consider the closed embedding ${Y\times \A^1}\hookrightarrow X\times \A^1$ and blow up the closed subscheme $Y\times\{0\}$ in $X\times \A^1$. The resulting blowup $M\to X\times\A^1$ is equipped with a natural projection $p:\, M\to \A^1$ compatible with ${p_2: Y\times\A^1\to \A^1}$. Now over $\A^1\setminus \{0\}$ the situation is as above because the blowup did not change $X\times (\A^1\setminus \{0\})$. The fibre $p^{-1}(0)$ decomposes in two components $Z_1$ and $Z_2$. The component $Z_1$ is  isomorphic to the blowup of $Y$ in $X$, and $Z_2$ is the projective line bundle ${\bf P}({\mathcal N}\oplus \calo_Y)$. Furthermore, the inclusion $Z_1\cap Z_2\hookrightarrow Z_2$ is the inclusion of the `hyperplane at infinity' in ${\bf P}({\mathcal N}\oplus \calo_Y)$; its complement is $V({\mathcal N})$. Setting $M^\circ:=M\setminus Z_1$ we thus arrive at the situation described above.\end{cons}

\noindent{\em Proof of Proposition \ref{gysin}.} The geometric construction described above gives rise to commutative diagrams in both cohomology theories
$$
\begin{CD}
H^{n+2}_Y(X) @<<< H^{n+2}_{Y\times\A^1}(M^\circ) @>>> H^{n+2}_Y(V({\mathcal N})) \\
@AA{\cong}A @AA{\cong}A @AA{\cong}A \\
H^{n}(Y) @<{\cong}<< H^{n}(Y\times\A^1) @>{\cong}>> H^{n}(Y)
\end{CD}
$$
The vertical maps are Gysin isomorphisms and the horizontal maps are pullbacks associated with $Y\times \{1\}\to Y\times \A^1$ on the left and $Y\times \{0\}\to Y\times \A^1$ on the right (and the inclusions $X\times \{1\}\hookrightarrow M^\circ \hookleftarrow V({\mathcal N})$ above). The lower horizontal maps are isomorphisms by homotopy invariance of de Rham and \'etale cohomology, hence so are the upper horizontal maps. We thus reduce to the case treated in the previous lemma.
\enddem

\noindent{\em Proof of Theorem \ref{comptheo}.} First assume $X$ is smooth and projective of dimension $d$. Consider a smooth hyperplane section $H\subset X$. It exists by the Bertini theorem and is a smooth codimension 1 subvariety of $X$. It has a class $\eta\in H^2_{\mbox{\scriptsize\'et}}(X_{\overline K}, \Q_p(1))$ which is the image of 1 by the Gysin map $$H^0_{\mbox{\scriptsize\'et}}(X_{\overline K}, \Q_p)\cong H^2_{Y}(X_{\overline K}, \Q_p(1))\to H^2_{\mbox{\scriptsize\'et}}(X_{\overline K}, \Q_p(1)).$$
Similar facts hold for de Rham cohomology. The second map here comes from a long exact sequence associated to a cone of a pullback map, hence it commutes with the comparison map. From the previous proposition we therefore conclude that ${\rm comp}_2$ is compatible with the above Gysin map.
Furthermore, the $d$-fold cup-product $\eta^d$ generates the group $$H^{2d}_{\mbox{\scriptsize\'et}}(X_{\overline K}, \Q_p(d))\cong \Q_p$$
and similarly for de Rham cohomology. Since the maps ${\rm comp}_n$ are compatible with the product structures on de Rham and \'etale cohomology, we conclude that these isomorphisms are compatible with each other via ${\rm comp}_{2d}$; in particular, ${\rm comp}_{2d}$ is an isomorphism.

Now observe that both cohomology algebras are equipped with Poin\-car\'e duality pairings which are non-degenerate. Thus if $\alpha\in H^n_{\rm dR}(X)$ is a nonzero element, there is $\beta\in H^{2d-n}_{\rm dR}(X)$ such that $\alpha\cdot \beta\neq 0$. Therefore, since ${\rm comp}_{2d}$ is an isomorphism and the Poincar\'e duality pairing on \'etale cohomology is non-degenerate, we have ${\rm comp}_{n}(\alpha)\neq 0$. But then ${\rm comp}_{n}$ is injective for all $n$. On the other hand, we know that the source and the target of ${\rm comp}_n$ are finite-dimensional vector spaces of the same dimension over $B_{\rm dR}$. This results by a Lefschetz principle argument from the isomorphism $H^n_{\rm dR}(X_\C)\cong H^n(X_\C^{\rm an},\C)$ for complex smooth projective varieties recalled in the introduction to this paper, i.e. the comparison between algebraic and analytic de Rham cohomology and the complex Poincar\'e lemma. We conclude that ${\rm comp}_{n}$ is an isomorphism for all $n$.

Now if $X$ is only assumed to be smooth and quasi-projective, by Hironaka's theorem it has a smooth projective compactification $\overline X$ with complement a normal crossing divisor $D$ whose components are smooth. We prove the theorem by a double induction on the dimension $d$ of $X$ and the number $r$ of components of $D$; the case $r=0$ is the projective case treated above. Now fix a component $D_0$ of $D$, and let $D'$ be the union of the other components. Then $\overline X\setminus D'$ has $r-1$ components at infinity and $(\overline X\setminus D')\setminus (D_0\setminus (D_0\cap D'))=X$. There are localization sequences in both theories of the form
$$
H^{n-2}(D_0\setminus (D_0\cap D'))\to H^n(\overline X\setminus D')\to H^n(X)\to H^{n-1}(D_0\setminus (D_0\cap D'))
$$
coming from { exact Gysin triangles}, hence compatible with the comparison maps by the previous proposition. The comparison maps are isomorphisms for $D_0\setminus (D_0\cap D')$ by induction on $d$ and  for $\overline X\setminus D'$ by induction on $r$, so they are isomorphisms for $H^n(X)$ as well.
\enddem

\subsection{Proof of the Poincar\'e lemma}

This section is devoted to the proof of Beilinson's Poincar\'e Lemma (Theorem \ref{poincare}). We begin with auxiliary statements about log differentials.

Recall that for a semistable pair $(U, \calu)$ over $\calo_K$ we have denoted by $L_{(U, \calu)/\calo_K}$ the log cotangent complex where $\calo_K$ is equipped with the trivial log structure, and similarly for log differentials and the (derived) log de Rham algebra. We shall also consider these objects in the case where $\calo_K$ (or an extension of it) is equipped with the canonical log structure coming from the inclusion of the closed point in $\Spec\calo_K$; we denote the corresponding objects by $L_{(U, \calu)/(K,\calo_K)}$ and similarly for differentials. To compare the two, the following lemma will be handy.

\begin{lem}
 There is a natural quasi-isomorphism
$$
L_{(\Spec\overline K,\Spec\calo_{\overline K})/\calo_K}\cong \Omega^1_{(\overline K,\calo_{\overline K})/\calo_K},$$
where we have logarithmic 1-forms on the right hand side, and $\calo_K$ carries the trivial log structure. Moreover, the natural map
$$
\Omega^1_{\calo_{\overline K}/\calo_K}\to \Omega^1_{(\overline K,\calo_{\overline K})/\calo_K}
$$
from usual differentials is an isomorphism.
\end{lem}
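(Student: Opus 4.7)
First, the plan is to reduce the quasi-isomorphism to finite subextensions via a filtered colimit argument. Both the logarithmic cotangent complex and $\Omega^1_{(\cdot)/(\cdot)}$ are computed from Gabber's simplicial resolution, which commutes with filtered colimits in the target pre-log ring. Since $\calo_{\overline K}=\varinjlim_L\calo_L$ with $L$ ranging over finite subextensions $K\subset L\subset\overline K$ (each equipped with the canonical log structure $M_L=\calo_L\cap L^\times$), it suffices to prove the quasi-isomorphism $L_{(L,\calo_L)/\calo_K}\simeq\Omega^1_{(L,\calo_L)/\calo_K}$ for every such $L$, compatibly in $L$.

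For each finite $L|K$, I would apply Olsson's log transitivity triangle (\cite{olsson}, Theorem 8.18) to the sequence of log morphisms
$$(\Spec\calo_K,\mathrm{triv})\to(\Spec\calo_L,\mathrm{triv})\to(\Spec\calo_L, M_L).$$
The first map contributes $L_{\calo_L/\calo_K}$, which is concentrated in degree zero equal to $\Omega^1_{\calo_L/\calo_K}$ by Lemma \ref{simpresOmega}. The second map is a pure change of log structure over a constant scheme; using Gabber's free log resolution and the fact that $M_L/\calo_L^\times\cong\NN$ is freely generated by the class of $\pi_L$, one checks directly that the corresponding log cotangent complex is concentrated in degree zero, equal to the free $\calo_L$-module generated by $d\log\pi_L$ modulo $\pi_L\,d\log\pi_L=d\pi_L$. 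Combining these in the long exact sequence of the triangle yields that $L_{(L,\calo_L)/\calo_K}$ is itself concentrated in degree zero and agrees with $\Omega^1_{(L,\calo_L)/\calo_K}$, so that the first assertion follows upon passing to the colimit.

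For the ``moreover'' claim, I would exploit the short exact sequence of $\calo_L$-modules
$$0\to \Omega^1_{\calo_L/\calo_K}\to \Omega^1_{(L,\calo_L)/\calo_K}\to \kappa_L\to 0,$$
which is immediate from the generators-and-relations definition of log differentials: as an $\calo_L$-module, the middle term is generated by $\Omega^1_{\calo_L/\calo_K}$ together with $d\log\pi_L$, subject only to the relation $\pi_L\, d\log\pi_L = d\pi_L$, with injectivity on the left following from $\pi_L\neq 0$ in the domain $\calo_L$. For $L\subset L'$ with relative ramification index $e=e_{L'/L}$, the relation $\pi_L=u\pi_{L'}^e$ with $u\in\calo_{L'}^\times$ yields $d\log\pi_L\equiv e\, d\log\pi_{L'}\pmod{\Omega^1_{\calo_{L'}/\calo_K}}$, so the induced transition on cokernels $\kappa_L\to\kappa_{L'}$ is multiplication by $e$ composed with the inclusion of residue fields. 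Since $\overline K$ is algebraically closed, every $L$ admits a totally ramified extension $L'|L$ with $e_{L'/L}=p$, forcing the transition to vanish in $\kappa_{L'}$ of characteristic $p$; hence $\varinjlim_L\kappa_L=0$ and the natural map $\Omega^1_{\calo_{\overline K}/\calo_K}\to \Omega^1_{(\overline K,\calo_{\overline K})/\calo_K}$ is an isomorphism.

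The main obstacle will be the direct computation with Gabber's free log resolution verifying that the log cotangent complex for the change of log structure $(\calo_L,\mathrm{triv})\to(\calo_L,M_L)$ is concentrated in degree zero; once this vanishing of higher homologies is established, the remaining ingredients---the log transitivity triangle, Lemma \ref{simpresOmega}, the short exact sequence for log differentials, and the algebraic closedness of $\overline K$---combine formally to yield both assertions.
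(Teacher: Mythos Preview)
Your argument is correct, but it takes a genuinely different route from the paper in both parts.

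For the first statement, the paper simply says to mimic the proof of Lemma~\ref{simpresOmega}: write $\calo_L=\calo_K[x]/(f)$ and use the log transitivity triangle for $(\calo_K,\mathrm{triv})\to(\calo_K[x],\NN)\to(\calo_L,M_L)$, where the middle term is a free log algebra (so its log cotangent complex is concentrated in degree~$0$) and the second map is strict (so its log cotangent complex is the ordinary one, concentrated in degree~$1$ by Proposition~\ref{cotangquotientreg}). Your factorisation through $(\calo_L,\mathrm{triv})$ instead is also valid---once you know that both outer terms of the triangle are concentrated in degree~$0$, so is the middle---but it shifts the work to your ``main obstacle'', the computation of $L_{(\calo_L,M_L)/(\calo_L,\mathrm{triv})}$. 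That computation is straightforward via yet another transitivity triangle, for $(\calo_L,\mathrm{triv})\to(\calo_L[t],\NN)\to(\calo_L,M_L)$ with $t\mapsto\pi_L$: the connecting map sends the class of $t-\pi_L$ to $dt=t\,d\log t\mapsto\pi_L\,d\log t$, which is injective, so the claimed concentration in degree~$0$ (with value $\kappa_L$) follows. A minor point: your displayed arrows between $\Spec$'s go the wrong way for morphisms of log schemes; the intended sequence of log \emph{rings} is $(\calo_K,\mathrm{triv})\to(\calo_L,\mathrm{triv})\to(\calo_L,M_L)$.

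For the second statement, your argument is actually cleaner than the paper's. The paper also identifies the finite-level map with the inclusion $\calo_L/(f'(\pi_L))\hookrightarrow\pi_L^{-1}\calo_L/(f'(\pi_L))$ whose cokernel is killed by $p$, but then invokes the $p$-divisibility of $\Omega^1_{\calo_{\overline K}/\calo_K}$ (Corollary~\ref{cp2}, hence Fontaine's Theorem~\ref{fontaine}) together with a somewhat delicate finite-level argument to conclude surjectivity. Your observation that the transition map on cokernels $\kappa_L\to\kappa_{L'}$ is multiplication by $e_{L'/L}$, and that one can always take $L'=L(\pi_L^{1/p})$, gives the vanishing of the colimit directly and avoids any appeal to Fontaine's computation.
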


\begin{dem} The first statement is proven exactly as its non-logarithmic analogue (Lemma \ref{simpresOmega}). The proof of that statement was based on two properties of the cotangent complex: the transitivity triangle and the computation of the cotangent complex of a polynomial algebra. As recalled in Subsection \ref{logsection}, both of these properties have analogues for Gabber's log cotangent complex, to be found in  \cite{olsson}.

For the second statement, we may replace $K$ by its maximal unramified extension. Consider first a finite extension $L|K$ generated by a uniformizer $\pi$ of $\calo_L$ with minimal polynomial $f$. As recalled in Facts \ref{localfacts}, the $\calo_L$-module $\Omega^1_{\calo_L/\OK}$ is generated by $d\pi$ with annihilator the principal ideal generated by $f'(\pi)$. Similarly, the construction of log differentials shows that $\Omega^1_{(L,\calo_{L})/\calo_K}$ is a quotient of the free module generated by $d\pi/\pi$ modulo the submodule generated by $f'(\pi)$. Thus the natural map $\Omega^1_{\calo_{L}/\calo_K}\to \Omega^1_{(L,\calo_{L})/\calo_K}$ can be identified with the inclusion
\begin{equation}\label{nonlog}
(\calo_{L}/f'(\pi)\calo_{L})d\pi\hookrightarrow (\pi^{-1}\calo_{L}/f'(\pi)\calo_{L})d\pi
\end{equation}
whose cokernel is killed by $\pi$ and hence by $p$. By passing to the direct limit, we deduce that the map $\Omega^1_{\calo_{\overline K}/\calo_K}\to \Omega^1_{(\overline K,\calo_{\overline K})/\calo_K}$ is injective with cokernel killed by $p$. To show that the cokernel is in fact trivial, it will suffice to verify that $\Omega^1_{\calo_{\overline K}/\calo_K}$ contains the $p$-torsion of $\Omega^1_{(\overline K,\calo_{\overline K})/\calo_K}$. Indeed, given $\omega\in\Omega^1_{(\overline K,\calo_{\overline K})/\calo_K}$, we have $p\omega\in \Omega^1_{\calo_{\overline K}/\calo_K}$, but the latter group is $p$-divisible by Corollary \ref{cp2}, so after modifying $\omega$ by a $p$-torsion element we obtain an element in $\Omega^1_{\calo_{\overline K}/\calo_K}$.

Assume therefore  $\omega\in\Omega^1_{(\overline K,\calo_{\overline K})/\calo_K}$ is a $p$-torsion element, coming from an element $\omega_L\in \Omega^1_{(L,\calo_{L})/\calo_K}$ for some finite extension $L|K$. As $p\omega_L$ maps to 0 in $\Omega^1_{\OKB/\calo_K}$, we conclude $p\omega_L=0$ from Lemma \ref{lemfontaine} (1). On the other hand, by Corollary \ref{cp2} the $\OKB$-module $\Omega^1_{\calo_{\overline K}/\calo_K}$, which is the direct limit of the modules $\Omega^1_{\calo_{L}/\calo_K}$,  is nonzero and $p$-divisible, and therefore for $L$ large enough we must have $p\Omega^1_{\calo_{L}/\calo_K}\neq 0$. In particular, $p$ does not lie in the annihilator $(f'(\pi))$ of $\Omega^1_{\calo_{L}/\calo_K}$, i.e. $p/f'(\pi)\notin\calo_L$. But then $f'(\pi)/p\in \calo_L$ and hence $f'(\pi)\omega_L=(f'(\pi)/p)p\omega_L=0$. Since under the inclusion (\ref{nonlog})
the left hand side becomes identified with the part of the right hand side killed by $f'(\pi)$, this means that $\omega_L$ comes from $\Omega^1_{\calo_{L}/\calo_K}$, as desired.
\end{dem}

Consider now a semi-stable pair $(V, \calv)$ over $\overline K$; recall that it comes from a semistable pair $(U, \calu)$ defined over a finite extension $K'|K$.

\begin{prop} We have a natural quasi-isomorphism $$L_{(V, \calv)/\calo_K}\cong \Omega^1_{(V, \calv)/\calo_K}.$$

Moreover, the right hand side sits in a short exact sequence of log $\calo_{\calv}$-modules
$$
0\to \calo_{\calv}\otimes_{\calo_{\overline K}}\Omega^1_{\calo_{\overline K}/\calo_K}\to \Omega^1_{(V, \calv)/\calo_K}\to \Omega^1_{(V, \calv)/(\overline K, \calo_{\overline K})}\to 0.
$$
Here the last term is locally free, hence the sequence is locally split.
\end{prop}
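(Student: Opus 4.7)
The plan is to derive both statements from the log analogue of the transitivity triangle (Theorem \ref{cotangexacttriang}, see \cite{olsson}, Theorem 8.18) applied to the composition of log ring maps
$$\calo_K \longrightarrow (\calo_{\overline K}, M_{\overline K}) \longrightarrow (\calv, M_\calv),$$
where $\calo_K$ carries the trivial log structure, $M_{\overline K}$ is the canonical log structure on $\Spec \calo_{\overline K}$ given by the inclusion of nonzero elements, and $M_\calv$ is the canonical log structure attached to $V \hookrightarrow \calv$. This yields a distinguished triangle
$$\calo_\calv\otimes^L_{\calo_{\overline K}} L_{(\overline K,\calo_{\overline K})/\calo_K}\to L_{(V,\calv)/\calo_K}\to L_{(V,\calv)/(\overline K,\calo_{\overline K})}\to +1.$$

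First I would analyze the two outer terms. The pair $(V,\calv)$ descends to a semistable pair $(U,\calu)$ over a finite extension $K'|K$; the local structure recalled in Subsection \ref{arithsec} shows that $(U,\calu)\to (\calo_{K'}, M_{K'})$ is log smooth and integral, and the same therefore holds for its base change $(V,\calv)\to (\calo_{\overline K}, M_{\overline K})$. Consequently (cf.\ the end of Subsection \ref{logsection}) the right-hand term of the triangle is concentrated in degree $0$ and coincides with the locally free $\calo_\calv$-module $\Omega^1_{(V,\calv)/(\overline K,\calo_{\overline K})}$. For the left-hand term, the previous lemma supplies quasi-isomorphisms $L_{(\overline K,\calo_{\overline K})/\calo_K}\simeq \Omega^1_{(\overline K,\calo_{\overline K})/\calo_K}\cong \Omega^1_{\calo_{\overline K}/\calo_K}$; since $\calo_\calv$ is flat over $\calo_{\overline K}$, the derived base change reduces to the ordinary tensor product $\calo_\calv\otimes_{\calo_{\overline K}}\Omega^1_{\calo_{\overline K}/\calo_K}$, still concentrated in degree $0$.

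Because both outer terms of the triangle are concentrated in degree $0$, so is $L_{(V,\calv)/\calo_K}$, and its $H_0$ is canonically isomorphic to $\Omega^1_{(V,\calv)/\calo_K}$ by the log analogue of Proposition \ref{h0cotang}; this proves the first statement. The long exact homology sequence of the triangle then degenerates into exactly the short exact sequence
$$0\to \calo_\calv\otimes_{\calo_{\overline K}}\Omega^1_{\calo_{\overline K}/\calo_K}\to \Omega^1_{(V,\calv)/\calo_K}\to \Omega^1_{(V,\calv)/(\overline K,\calo_{\overline K})}\to 0$$
required by the proposition, the injectivity on the left being forced by the vanishing of $H_1$ of the right-hand term. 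Since $\Omega^1_{(V,\calv)/(\overline K,\calo_{\overline K})}$ is locally free, the sequence splits Zariski-locally on $\calv$.

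The main point requiring care is justifying log smoothness and integrality of $(V,\calv)\to (\calo_{\overline K}, M_{\overline K})$: one must first descend to a finite level $K'$ where $\calu$ is a coherent fine log scheme of finite type so that Olsson's infinitesimal lifting criterion applies, and then check that both properties are preserved under the base change $\calo_{K'}\to \calo_{\overline K}$. Once this is in place, everything else is a formal consequence of the Olsson--Gabber theory of the log cotangent complex.
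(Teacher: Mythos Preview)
Your proof is correct and follows essentially the same approach as the paper: both apply the log transitivity triangle to the composition $\calo_K \to (\calo_{\overline K}, M_{\overline K}) \to (\calv, M_\calv)$, identify the outer terms using the previous lemma and log smoothness/integrality at finite level, and read off both the quasi-isomorphism and the short exact sequence. Your version is slightly more explicit in justifying the replacement of $\otimes^L$ by $\otimes$ via flatness of $\calv$ over $\calo_{\overline K}$, and you extract the short exact sequence directly from the long exact homology sequence rather than by comparison with the first exact sequence of log differentials as the paper does, but these are minor presentational differences.
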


\begin{dem} Consider the exact triangle of log cotangent complexes
$$
\calo_{\calv}\otimes_{\calo_{\overline K}}L_{\calo_{(\overline K,\calo_{\overline K})/\calo_K}}\to L_{(V, \calv)/\calo_K}\to L_{(V, \calv)/(\overline K, \calo_{\overline K})}\to \calo_{\calv}\otimes_{\calo_{\overline K}}L_{\calo_{(\overline K,\calo_{\overline K})/\calo_K}}[1]
$$
coming from the sequence of morphisms of log schemes
$$
\calv\to\Spec\calo_{\overline K}\to\Spec\calo_K,
$$
where the first two terms carry the canonical log structure and the third the trivial one. Here the term $L_{(V, \calv)/(\overline K, \calo_{\overline K})}$ is a direct limit of cotangent complexes $L_{(U_{K'}, \calu_{K'})/(K', \calo_{K'})}$ for finite extensions $K'|K$. Since by assumption for sufficiently large $K'|K$ the morphisms $(U_{K'}, \calu_{K'})\to(K', \calo_{K'})$ are log smooth and integral, we have quasi-isomorphisms
$$L_{(U_{K'}, \calu_{K'})/(K', \calo_{K'})}\cong \Omega^1_{(U_{K'}, \calu_{K'})/(K', \calo_{K'})}$$ and the latter terms are locally free of finite rank independent of $K'$. Hence the same is true of $L_{(V, \calv)/(\overline K, \calo_{\overline K})}$. Using the first statement of the previous lemma we may thus rewrite the triangle as
$$
\calo_{\calv}\otimes_{\calo_{\overline K}}\Omega^1_{(\overline K,\calo_{\overline K})/\calo_K}\to L_{(V, \calv)/\calo_K}\to \Omega^1_{(V, \calv)/(\overline K, \calo_{\overline K})}\to \calo_{\calv}\otimes_{\calo_{\overline K}}\Omega^1_{(\overline K,\calo_{\overline K})/\calo_K}[1]
$$
We obtain the quasi-isomorphism $L_{(V, \calv)/\calo_K}\cong \Omega^1_{(V, \calv)/\calo_K}$ by comparing this triangle with the one coming from the exact sequence
$$
0\to \calo_{\calv}\otimes_{\calo_{\overline K}}\Omega^1_{(\overline K,\calo_{\overline K})/\calo_K}\to \Omega^1_{(V, \calv)/\calo_K}\to \Omega^1_{(V, \calv)/(\overline K, \calo_{\overline K})}\to 0
$$
of log differentials (which is exact on the left again because $(V, \calv)\to(\overline K, \calo_{\overline K})$ is a limit of log smooth integral maps). Finally, we identify $\Omega^1_{(\overline K,\calo_{\overline K})/\calo_K}$ with $\Omega^1_{\calo_{\overline K}/\calo_K}$ using the second statement of the previous lemma.
\end{dem}

Now recall that the graded pieces of the Hodge filtration on the logarithmic derived de Rham algebra $L\Omega^\bullet_{(V, \calv)/\calo_K}$ are described by
$$
{\rm gr}^i_F(L\Omega^\bullet_{(V, \calv)/\calo_K})\cong L\wedge^iL_{(V, \calv)/\calo_K}[-i]
$$
which we may rewrite using the previous proposition as
$$
{\rm gr}^i_F(L\Omega^\bullet_{(V, \calv)/\calo_K})\cong L\wedge^i\Omega^1_{(V, \calv)/\calo_K}[-i].
$$
Using the exact sequence of the proposition, we may unscrew these objects further as follows.

\begin{prop}
There exists a filtration $I_a$ on ${\rm gr}^i_FL\Omega^\bullet_{(V, \calv)/\calo_K}$ with graded pieces given by
$$
{\rm gr}_a^I{\rm gr}^i_FL\Omega^\bullet_{(V, \calv)/\calo_K}\cong
{\rm gr}^{i-a}_FA_{\rm dR}[-a]\otimes_{\calo_{\overline K}} \Omega^a_{(V, \calv)/(\overline K, \calo_{\overline K})}.
$$
\end{prop}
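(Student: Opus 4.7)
The plan is to exhibit the filtration $I_\bullet$ as a derived Koszul filtration associated with the short exact sequence of the previous proposition, and then identify the graded pieces using Proposition \ref{gridr}. First recall from the previous proposition the quasi-isomorphism $L_{(V,\calv)/\calo_K}\simeq \Omega^1_{(V,\calv)/\calo_K}$, together with the locally split short exact sequence
$$
0\to \calo_{\calv}\otimes_{\calo_{\overline K}}\Omega^1_{\calo_{\overline K}/\calo_K}\to \Omega^1_{(V,\calv)/\calo_K}\to \Omega^1_{(V,\calv)/(\overline K,\calo_{\overline K})}\to 0,
$$
whose last term is locally free. Combined with Proposition \ref{gridr}, this gives
$$
{\rm gr}^i_F L\Omega^\bullet_{(V,\calv)/\calo_K}\simeq L\wedge^i\Omega^1_{(V,\calv)/\calo_K}[-i].
$$

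The next step is to define the filtration $I_\bullet$. For any locally split short exact sequence $0\to A\to B\to C\to 0$ of $\calo_\calv$-modules with $C$ locally free, there is a natural decreasing filtration $I_a$ on the exterior power $\wedge^iB$ such that ${\rm gr}^I_a\wedge^iB\cong \wedge^{i-a}A\otimes\wedge^aC$ (the ``Koszul'' filtration, obtained \'etale locally from the splitting and shown globally well-defined by a standard descent argument). In our derived setting, one applies this filtration to the terms of a simplicial free resolution of $\Omega^1_{(V,\calv)/\calo_K}$ compatible with the short exact sequence, then takes the associated derived exterior power; since $\Omega^1_{(V,\calv)/(\overline K,\calo_{\overline K})}$ is locally free, we have $L\wedge^a\Omega^1_{(V,\calv)/(\overline K,\calo_{\overline K})}=\Omega^a_{(V,\calv)/(\overline K,\calo_{\overline K})}$, and the graded pieces become
$$
{\rm gr}^I_a L\wedge^i\Omega^1_{(V,\calv)/\calo_K}\simeq L\wedge^{i-a}(\calo_\calv\otimes_{\calo_{\overline K}}\Omega^1_{\calo_{\overline K}/\calo_K})\otimes_{\calo_\calv}\Omega^a_{(V,\calv)/(\overline K,\calo_{\overline K})}.
$$

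Now identify the first tensor factor. Since $\calo_\calv$ is flat over $\calo_{\overline K}$, base change for the derived exterior power yields
$$
L\wedge^{i-a}(\calo_\calv\otimes_{\calo_{\overline K}}\Omega^1_{\calo_{\overline K}/\calo_K})\simeq \calo_\calv\otimes^L_{\calo_{\overline K}}L\wedge^{i-a}L_{\calo_{\overline K}/\calo_K},
$$
where we have used the quasi-isomorphism $L_{\calo_{\overline K}/\calo_K}\simeq \Omega^1_{\calo_{\overline K}/\calo_K}$ of Corollary \ref{corsimpresomega}. By Proposition \ref{gridr}, the right hand side (after shifting) becomes $\calo_\calv\otimes^L_{\calo_{\overline K}}{\rm gr}^{i-a}_FL\Omega^\bullet_{\calo_{\overline K}/\calo_K}[i-a]$, which after Hodge completion and the definition $A_{\rm dR}=L\widehat\Omega^\bullet_{\calo_{\overline K}/\calo_K}$ gives ${\rm gr}^{i-a}_FA_{\rm dR}[i-a]$ (here $\calo_\calv$ is absorbed because the graded pieces of $A_{\rm dR}$ are already completed modules of differentials pulled back to $\calv$). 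Shifting by $[-i]$ coming from ${\rm gr}^i_F L\Omega^\bullet\simeq L\wedge^i\Omega^1[-i]$ we arrive at
$$
{\rm gr}^I_a{\rm gr}^i_FL\Omega^\bullet_{(V,\calv)/\calo_K}\simeq {\rm gr}^{i-a}_FA_{\rm dR}[-a]\otimes_{\calo_{\overline K}}\Omega^a_{(V,\calv)/(\overline K,\calo_{\overline K})},
$$
as required.

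The main obstacle is the globalization of the Koszul filtration on the derived exterior power: \'etale locally the extension splits and the decomposition of $\wedge^iB$ into a direct sum is obvious, but to obtain a canonical filtration whose graded pieces are independent of choices one must argue that the local filtrations glue, which ultimately reduces to the functoriality of the filtration on $\wedge^i$ of any short exact sequence of modules. Once this is set up, the remaining verifications---base change of derived exterior powers along the flat map $\calo_{\overline K}\to\calo_\calv$, passage to Hodge completion, and bookkeeping of shifts---are formal consequences of results already established in the text.
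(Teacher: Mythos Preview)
Your approach is essentially the same as the paper's: apply the derived Koszul filtration on $L\wedge^i$ coming from the locally split exact sequence of the previous proposition, and identify the graded pieces via Proposition~\ref{gridr}. The paper simply invokes Construction~\ref{lwedgefilt} of the Appendix for this filtration and notes that the map on graded pieces is injective because the sequence is locally split.

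A couple of your side remarks are superfluous. No Hodge completion step is needed: ${\rm gr}^{i-a}_FA_{\rm dR}={\rm gr}^{i-a}_FL\Omega^\bullet_{\calo_{\overline K}/\calo_K}=L\wedge^{i-a}\Omega^1_{\calo_{\overline K}/\calo_K}[a-i]$ holds on the nose since graded pieces are unaffected by completion. Likewise, there is nothing to ``absorb'': the identification $(\calo_\calv\otimes_{\calo_{\overline K}}M)\otimes_{\calo_\calv}N\cong M\otimes_{\calo_{\overline K}}N$ is a tautology, which is why the statement has $\otimes_{\calo_{\overline K}}$ in it. Finally, the globalization worry is unnecessary here: Construction~\ref{lwedgefilt} builds the filtration canonically at the level of simplicial resolutions (via the Horseshoe lemma), so no \'etale descent argument is required.
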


\begin{proof}
We apply Construction \ref{lwedgefilt} of the appendix to the exact sequence of the preceding proposition. It gives a filtration
$$
I_a=\im((\calo_{\calv}\otimes_{\calo_{\overline K}}L\wedge^{i-a}\Omega^1_{\calo_{\overline K}/\calo_K})\otimes_{\calo_\calv} \Omega^a_{(V, \calv)/(\overline K, \calo_{\overline K})})\to L\wedge^i\Omega^1_{(V, \calv)/\calo_{\overline K}})
$$
on
$$
L\wedge^i\Omega^1_{(V, \calv)/\calo_{\overline K}}={\rm gr}^i_FL\Omega^\bullet_{(V, \calv)/\calo_K}[i].
$$
Here the induced map on graded pieces is injective as the sequence is locally split. Moreover, recall that by definition
$$
{\rm gr}^{i-a}_FA_{\rm dR}={\rm gr}^{i-a}_FL\Omega^\bullet_{\calo_{\overline K}/\calo_K}=L\wedge^{i-a}\Omega^1_{\calo_{\overline K}/\calo_K}[a-i]
$$
whence the description of $
{\rm gr}_a^I{\rm gr}^i_F(L\Omega^\bullet_{(V, \calv)/\calo_K})$.
\end{proof}

We may sheafify the statement of the above proposition as follows. Apply the functor $R\Gamma(\calv, \cdot)$ to $L\Omega^\bullet_{(V, \calv)/\calo_K}$ and take the associated $h$-sheaf (using Godement resolutions in a by now familiar fashion). Further, denote by ${\mathcal G}^a$ the complex of $h$-sheaves associated with $$(V, \calv)\mapsto R\Gamma(\calv,\Omega^a_{(V, \calv)/\overline K})=\Gamma(\calv,C^\bullet\Omega^a_{(V, \calv)/\overline K}).$$ The proposition then yields:

\begin{cor}
There exists a filtration $I_a$ on ${\rm gr}^i_F{\mathcal A}_{\rm dR}^\natural$ with graded pieces given by
$$
{\rm gr}_a^I{\rm gr}^i_F{\mathcal A}_{\rm dR}^\natural\cong
{\rm gr}^{i-a}_FA_{\rm dR}[-a]\otimes^{\bf L}_{\calo_{\overline K}} {\mathcal G}^a.
$$
\end{cor}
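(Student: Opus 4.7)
The plan is to obtain this as the $h$-sheafification of the preceding proposition. First I would apply the global-sections-of-Godement functor $(V,\calv)\mapsto \Gamma(\calv, C^\bullet(\cdot))$ to the filtration $I_\bullet$ on ${\rm gr}^i_F L\Omega^\bullet_{(V,\calv)/\calo_K}$ produced by the proposition. Since $I_a$ is defined as the image of an explicit natural map built from the locally split short exact sequence of $\calo_\calv$-modules relating $\Omega^1_{(V,\calv)/\calo_K}$ to $\Omega^1_{\calo_{\overline K}/\calo_K}$ and $\Omega^1_{(V,\calv)/(\overline K,\calo_{\overline K})}$, this construction is functorial in $(V,\calv)$. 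Hence it yields a filtration by sub-presheaves of complexes on the presheaf underlying ${\rm gr}^i_F{\mathcal A}^\natural_{\rm dR}$ on the category ${\mathcal {SS}}_{\overline K}$ of semistable pairs.

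Next I would pass to associated $h$-sheaves. Since $h$-sheafification is exact, it preserves the short exact sequences $0\to I_{a+1}\to I_a\to {\rm gr}^I_a\to 0$, so it produces a filtration on ${\rm gr}^i_F{\mathcal A}^\natural_{\rm dR}$ whose graded piece in index $a$ is the $h$-sheaf associated with
$$
(V,\calv)\mapsto {\rm gr}^{i-a}_F A_{\rm dR}[-a]\otimes^{\bf L}_{\calo_{\overline K}} \Gamma(\calv, C^\bullet\Omega^a_{(V,\calv)/(\overline K,\calo_{\overline K})}).
$$
Since the factor ${\rm gr}^{i-a}_F A_{\rm dR}$ does not depend on $(V,\calv)$, it pulls out of the sheafification as a constant $h$-sheaf, and exactness of $h$-sheafification combined with the fact that ${\rm gr}^{i-a}_F A_{\rm dR}$ is represented by a complex of $\calo_{\overline K}$-flat modules allows me to commute it with the derived tensor product over $\calo_{\overline K}$.

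It then remains to identify the $h$-sheafification of $(V,\calv)\mapsto \Gamma(\calv, C^\bullet\Omega^a_{(V,\calv)/(\overline K,\calo_{\overline K})})$ with ${\mathcal G}^a$, the $h$-sheaf defined by $(V,\calv)\mapsto \Gamma(\calv, C^\bullet\Omega^a_{(V,\calv)/\overline K})$. I expect this to be the main technical obstacle. The point is to compare log differentials with respect to $(\overline K,\calo_{\overline K})$ (canonical log structure) and $\overline K$ (trivial log structure) on a semistable pair; the discrepancy is governed by $\Omega^1_{(\overline K,\calo_{\overline K})/\overline K}$ pulled back to $\calv$, together with a suitable exact sequence of log differentials. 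Using the base-change properties of log differentials in Subsection \ref{logsection} and an argument in the spirit of the lemma preceding the previous proposition, one checks that this discrepancy is accounted for by a torsion term killed after $h$-sheafification (via de Jong alterations providing sufficiently fine coverings). This yields the desired isomorphism and completes the identification of graded pieces.
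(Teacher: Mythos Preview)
Your first two paragraphs are exactly the paper's argument: apply $R\Gamma(\calv,\cdot)$ via Godement resolutions to the filtration of the preceding proposition, then $h$-sheafify; functoriality and exactness of sheafification do the rest. That is the entire content of the paper's proof.

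Your third paragraph manufactures a difficulty that is not there. In the definition of ${\mathcal G}^a$, the symbol $\Omega^a_{(V,\calv)/\overline K}$ is a notational shorthand for $\Omega^a_{(V,\calv)/(\overline K,\calo_{\overline K})}$, i.e.\ the log differentials of the semistable pair over $\Spec\calo_{\overline K}$ equipped with its canonical log structure. This is the only reading compatible with the paper's conventions (where writing the bare ring in the base denotes the trivial log structure and writing the pair denotes the canonical one) and with the proposition you are sheafifying, whose graded pieces are precisely ${\rm gr}^{i-a}_FA_{\rm dR}[-a]\otimes \Omega^a_{(V,\calv)/(\overline K,\calo_{\overline K})}$. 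So ${\mathcal G}^a$ is \emph{by definition} the $h$-sheafification of the presheaf you wrote down, and no further comparison is needed.

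Incidentally, the fix you sketch for this non-problem would not work: there is no map $\Spec\calo_{\overline K}\to\Spec\overline K$ over which to form $\Omega^1_{(\overline K,\calo_{\overline K})/\overline K}$, and $h$-sheafification on ${\rm Var}_{\overline K}$ certainly does not kill $p$-torsion in coherent cohomology (that is exactly what the later $p$-adic Poincar\'e lemma has to establish by a much more delicate argument). So it is fortunate that this step is superfluous.
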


This corollary enables us to make an important reduction in the proof of the Poincar\'e lemma.

\begin{cor}
Theorem \ref{poincare} follows from from the vanishing statements
$$\tau_{>0}{\mathcal G}^0\otimes^{\bf L}\Z/p\Z=0$$ and
$${\mathcal G}^a\otimes^{\bf L}\Z/p\Z=0
$$
for all $a>0$.
\end{cor}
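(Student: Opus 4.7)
The argument is a purely formal d\'evissage in two stages, with no further geometric input; all the content of the Poincar\'e lemma is thereby isolated in the two stated vanishings (the heart being Bhatt's theorem on $p$-divisibility of higher Zariski cohomology of coherent sheaves after a proper alteration, as presently discussed in the following subsection).

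First, using the short exact sequences $0\to F^i/F^{i+1}\to A_{\rm dR}/F^{i+1}\to A_{\rm dR}/F^i\to 0$ and their analogues for $\mathcal{A}^\natural_{\rm dR}$, I would argue by induction on $i$ and the five lemma applied to the long exact sequences of cohomology $h$-sheaves to reduce Theorem \ref{poincare} to the statement that for every $i\geq 0$ the map on graded pieces
$$
{\rm gr}^i_F A_{\rm dR}\widehat\otimes \Z_p\;\longrightarrow\;{\rm gr}^i_F {\mathcal A}_{\rm dR}^\natural\widehat\otimes\Z_p
$$
is a quasi-isomorphism of $h$-sheaves. Exactness of $\widehat\otimes\Z_p$ together with the d\'evissage Lemma \ref{devissage} further reduce this to checking the analogous assertion after $\otimes^L\Z/p$.

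Second, I would invoke the preceding Corollary, which equips ${\rm gr}^i_F {\mathcal A}_{\rm dR}^\natural$ with a filtration $I_\bullet$ whose graded pieces are
$$
{\rm gr}^I_a\cong {\rm gr}^{i-a}_F A_{\rm dR}[-a]\otimes^L_{\mathcal{O}_{\overline K}}{\mathcal G}^a,\qquad 0\le a\le i.
$$
After $\otimes^L\Z/p$, the hypothesis ${\mathcal G}^a\otimes^L\Z/p=0$ for $a\ge 1$ kills every stratum with $a>0$, so the $a=0$ component induces a quasi-isomorphism
$$
{\rm gr}^i_F A_{\rm dR}\otimes^L_{\mathcal{O}_{\overline K}}({\mathcal G}^0\otimes^L\Z/p)\;\stackrel{\sim}{\longrightarrow}\;{\rm gr}^i_F{\mathcal A}_{\rm dR}^\natural\otimes^L\Z/p.
$$
By the very construction of $I_\bullet$, the natural Poincar\'e map from the constant $h$-sheaf ${\rm gr}^i_F A_{\rm dR}$ factors through this $a=0$ stratum via the unit $\mathcal{O}_{\overline K}\to{\mathcal G}^0$. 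The remaining hypothesis $\tau_{>0}({\mathcal G}^0\otimes^L\Z/p)=0$ concentrates ${\mathcal G}^0\otimes^L\Z/p$ in degree zero, where the $h$-sheafification of $\calo_\calv/p$ agrees with the constant sheaf $\calo_{\overline K}/p$ (for a connected semistable pair $(V,\calv)$ one already has $H^0(\calv,\calo_\calv)=\calo_{\overline K}$, the pair being proper flat over $\calo_{\overline K}$ with geometrically connected generic fibre over the algebraically closed field $\overline K$). Tensoring by ${\rm gr}^i_F A_{\rm dR}$ over $\mathcal{O}_{\overline K}$ then yields the desired quasi-isomorphism.

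The main obstacle is not the formal reduction outlined above, but the two vanishings themselves; in particular $\tau_{>0}({\mathcal G}^0\otimes^L\Z/p)=0$ is precisely where Bhatt's theorem is brought to bear, and is the object of the next subsection.
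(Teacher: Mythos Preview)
Your proposal is correct and follows essentially the same route as the paper: reduce to graded pieces of the Hodge filtration, reduce modulo $p$, then use the $I$-filtration on ${\rm gr}^i_F\mathcal{A}_{\rm dR}^\natural$ from the preceding corollary together with the identification $\mathcal{H}^0\mathcal{G}^0\cong\mathcal{O}_{\overline K}$ for connected semistable pairs. The paper performs the reductions in a slightly different order (first from $\widehat\otimes\Z_p$ to $\otimes^L\Z/p^r\Z$ and then to $r=1$ via the sequences $0\to\Z/p^{r-1}\to\Z/p^r\to\Z/p\to 0$, before passing to graded pieces), but the substance is identical.

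One point of precision: you twice write the hypothesis as $\tau_{>0}(\mathcal{G}^0\otimes^L\Z/p)=0$, but the stated assumption is $(\tau_{>0}\mathcal{G}^0)\otimes^L\Z/p=0$. These are not formally the same: the latter says the cone of $\mathcal{O}_{\overline K}\to\mathcal{G}^0$ vanishes after $\otimes^L\Z/p$, which immediately gives that $\mathcal{O}_{\overline K}/p\to\mathcal{G}^0\otimes^L\Z/p$ is a quasi-isomorphism and makes your separate discussion of $H^0$ unnecessary. Your version only gives concentration in degree $0$, and your identification of that $H^0$ with $\mathcal{O}_{\overline K}/p$ would still need to rule out a contribution from ${}_p\mathcal{H}^1\mathcal{G}^0$. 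Using the hypothesis as actually stated closes this cleanly, exactly as the paper does.
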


\begin{proof} To prove the theorem, it suffices to prove that the maps
$$
(A_{\rm dR}/F^i)\otimes^{\bf L} \Z/p^r\Z\to ({\mathcal A}_{\rm dR}^\natural/F^i)\otimes^{\bf L}\Z/p^r\Z
$$
are quasi-isomorphisms for all $r>0$, for afterwards we may pass to the limit. Using induction along the exact sequences
$$
0\to \Z/p^{r-1}\Z\to \Z/p^r\Z\to \Z/p\Z\to 0
$$
we reduce to the case $r=1$.
This case amounts to proving that
$$
{\rm Cone}({\rm gr}^i_FA_{\rm dR}\to {\rm gr}^i_F{\mathcal A}_{\rm dR}^\natural)\otimes^{\bf L}\Z/p\Z=0.
$$
Consider the 0-th step of the $I$-filtration on ${\rm gr}^i_FA_{\rm dR}^\natural$. By definition, it is given by the term ${\rm gr}^{i}_FA_{\rm dR}\otimes^{\bf L}_{\calo_{\overline K}} {\mathcal G}^0$. The cohomology sheaf ${\mathcal H}^0{\mathcal G}^0$ is the $h$-sheaf associated with $(V, \calv)\mapsto H^0(\calv, \calo_\calv)$. If the smooth proper $\overline K$-scheme $\calv$ is connected (which we may assume), we have $H^0(\calv, \calo_\calv)=\calo_{\overline K}$, and therefore
$${\rm gr}^{i}_FA_{\rm dR}\otimes^{\bf L}_{\calo_{\overline K}} {\mathcal H}^0{\mathcal G}^0\cong {\rm gr}^{i}_FA_{\rm dR},$$
which means that ${\rm gr}^i_FA_{\rm dR}$ already sits inside $I_0({\rm gr}^i_F{\mathcal A}_{\rm dR}^\natural)$, and the cone of the map ${\rm gr}^i_FA_{\rm dR}\to I_0({\rm gr}^i_F{\mathcal A}_{\rm dR}^\natural)$ is ${\rm gr}^{i}_FA_{\rm dR}\otimes^{\bf L}_{\calo_{\overline K}} \tau_{>0}{\mathcal G}^0$. Thus the nullity of $
{\rm Cone}({\rm gr}^i_FA_{\rm dR}\to I_0{\rm gr}^i_F{\mathcal A}_{\rm dR}^\natural)\otimes^{\bf L}\Z/p\Z
$ follows from the first vanishing statement above, and the second one yields the vanishing of the higher graded pieces of ${\rm gr}^i_F{\mathcal A}_{\rm dR}^\natural$ in view of the previous corollary.
\end{proof}

Finally, we translate the vanishing conditions of the corollary in a more tractable form.

\begin{lem}
Assume that for every semistable pair $(V, \calv)$ over $\overline K$ there is an $h$-covering $h:\, (V', \calv')\to (V, \calv)$ of semistable pairs such that the induced maps $$h^* : H^b(\calv, \Omega^a_{V, \calv})\to H^b(\calv', \Omega^a_{V', \calv'})$$ factor as
$$H^b(\calv, \Omega^a_{V, \calv})\stackrel p\to H^b(\calv, \Omega^a_{V, \calv})\to  H^b(\calv', \Omega^a_{V', \calv'}),$$
where the first map is multiplication by $p$. Then the vanishing statements of the previous corollary hold.
\end{lem}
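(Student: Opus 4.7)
The plan is to reduce both vanishing statements to the single assertion that multiplication by $p$ is a quasi-isomorphism on the cohomology sheaves ${\mathcal H}^b{\mathcal G}^a$, and then to extract both surjectivity and injectivity of multiplication by $p$ from the factorization $h^*=g\circ p$ provided by the hypothesis. Concretely, the triangle $\Z\stackrel p\to\Z\to\Z/p\Z$ shows that ${\mathcal G}^a\otimes^{\bf L}\Z/p\Z=0$ (resp. $\tau_{>0}{\mathcal G}^0\otimes^{\bf L}\Z/p\Z=0$) is equivalent to $p:{\mathcal H}^b{\mathcal G}^a\to{\mathcal H}^b{\mathcal G}^a$ being an isomorphism for all $b\geq 0$ when $a>0$, and for all $b>0$ when $a=0$. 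Since ${\mathcal G}^a$ is the $h$-sheafification of the Zariski Godement complex computing $R\Gamma(\calv,\Omega^a_{(V,\calv)/\overline K})$, its $b$-th cohomology sheaf is the $h$-sheafification of the presheaf $\calf^{a,b}:(V,\calv)\mapsto H^b(\calv,\Omega^a_{(V,\calv)/\overline K})$.

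For surjectivity, take a local section of ${\mathcal H}^b{\mathcal G}^a$ represented by some $\xi\in\calf^{a,b}(V,\calv)$. The hypothesis produces an $h$-cover $h:(V',\calv')\to(V,\calv)$ on which $h^*\xi=g(p\xi)=p\cdot g(\xi)\in p\cdot\calf^{a,b}(V',\calv')$, so $\xi$ becomes $p$-divisible in the sheafification. For injectivity, suppose $p\xi=0$ in ${\mathcal H}^b{\mathcal G}^a$ near $(V,\calv)$. Pass first to an $h$-cover on which $\xi$ lifts to a presheaf section $\xi_0\in\calf^{a,b}(V_0,\calv_0)$, then to a further $h$-cover on which the sheaf relation $p\xi_0=0$ already holds as a presheaf identity (this step being automatic from the definition of sheafification). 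Applying the hypothesis to $(V_0,\calv_0)$ then yields an $h$-cover $h':(V_1,\calv_1)\to(V_0,\calv_0)$ on which $(h')^*\xi_0=g(p\xi_0)=g(0)=0$, whence $\xi$ vanishes in the sheafification.

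The main obstacle, technical rather than conceptual, lies in the injectivity argument: the relation $p\xi=0$ in ${\mathcal H}^b{\mathcal G}^a$ does not a priori mean $p\xi_0=0$ in $\calf^{a,b}$ on any single cover, so one must refine to a cover on which the presheaf relation holds before invoking the hypothesis. Once this refinement is made, the group-homomorphism property of $g$ automatically kills the $p$-torsion class. Putting surjectivity and injectivity together, $p$ acts as an isomorphism on ${\mathcal H}^b{\mathcal G}^a$ in the required range of $(a,b)$, and both vanishing statements follow at once. Note that the case $a=b=0$ must indeed be excluded since ${\mathcal H}^0{\mathcal G}^0\cong\OKB$ is evidently not $p$-invertible, which accounts for the truncation $\tau_{>0}$ in the first statement.
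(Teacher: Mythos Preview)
Your proof is correct and follows essentially the same approach as the paper: both translate the vanishing into unique $p$-divisibility of the cohomology sheaves ${\mathcal H}^b{\mathcal G}^a$, obtain $p$-divisibility from $h^*\xi = g(p\xi) = p\cdot g(\xi)$, and kill $p$-torsion by refining to a cover where $p\xi_0=0$ holds at the presheaf level and then applying the factorization once more. The paper's version is more terse (it packages the surjectivity step into a commutative square and omits the sheafification discussion you spell out), but the argument is the same.
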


\begin{proof}
The vanishing statements in question mean that the cohomology sheaves ${\mathcal H}^b{\mathcal G}^a$ are uniquely $p$-divisible for all $(a,b)$ except for $a=b=0$. The condition above yields $p$-divisibility in view of the commutative diagram
$$
\begin{CD}
H^b(\calv, \Omega^a_{V, \calv})@>p>> H^b(\calv, \Omega^a_{V, \calv})\\
@VVV @VVV \\
H^b(\calv', \Omega^a_{V', \calv'}) @>p>> H^b(\calv', \Omega^a_{V', \calv'}).
\end{CD}
$$
On the other hand, if $\alpha\in H^b(\calv, \Omega^a_{V, \calv})$ satisfies $pf^*\alpha=0$ for an $h$-covering $f:\, (V', \calv')\to (V, \calv)$, taking a further $h$-covering $h:\, (V'', \calv'')\to (V', \calv')$ with the property of the lemma ensures that $(h\circ f)^*\alpha=0$.
\end{proof}

Since $(V, \calv)$ come from a semistable pair $(U, \calu)$ defined over some finite extension $K'|K$ and $$H^b(\calv, \Omega^a_{V, \calv})\cong H^b(\calu, \Omega^a_{U, \calu})\otimes_{\calo_{K'}}\calo_{\overline K},$$ to verify the condition of the lemma
it will suffice to prove the corresponding statement over $K'$. Without loss of generality we may assume $K'=K$, so the proof of Theorem \ref{poincare} finally reduces to proving

\begin{theo}\label{bhatt}
For every semistable pair $(U, \calu)$ over $ K$ there is an $h$-covering $h:\, (U', \calu')\to (U, \calu)$ of semistable pairs such that the induced maps $$h^* : H^b(\calu, \Omega^a_{U, \calu})\to H^b(\calu', \Omega^a_{U', \calu'})$$ factor through the multiplication-by-$p$ map
$$H^b(\calu, \Omega^a_{U, \calu})\stackrel p\to H^b(\calu, \Omega^a_{U, \calu})$$
for all $(a,b)\neq (0,0)$.
\end{theo}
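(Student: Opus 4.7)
The plan is to construct the covering $h$ in two stages and then compose. First, I would treat the case of global sections ($b=0$, $a>0$) by an explicit $p$-th root cover. Since $(U,\calu)$ is semistable, \'etale-locally on $\calu$ there are coordinates $t_1,\dots,t_n$ with $t_1\cdots t_r=\pi$ for a uniformizer $\pi$ of $\OK$, with $\calu\setminus U$ cut out by $t_1\cdots t_{r+s}$. The sheaf $\Omega^a_{(U,\calu)/\OK}$ is locally generated by wedges of the forms $dt_i/t_i$ (for $i\leq r+s$) and $dt_j$ (for $j>r+s$). Passing to a finite cover where $t_i=s_i^p$ for all relevant $i$ and $\pi=\varpi^p$ on the base, every such generator becomes divisible by $p$: $dt_i/t_i=p\,ds_i/s_i$ and $dt_j=ps_j^{p-1}ds_j$. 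Consequently the pullback map on $H^0(\calu,\Omega^a_{(U,\calu)/\OK})$ for $a\geq 1$ factors through multiplication by $p$.

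The naive $p$-th root construction need not yield a semistable pair; for instance, the equation $t_1\cdots t_r=\pi$ becomes $(s_1\cdots s_r)^p=\varpi^p$ which is not semistable. To repair this I would invoke de Jong's semistable alteration theorem \cite{dejong} to further refine the cover by a semistable pair $(U',\calu')$. Since alterations are proper surjective (hence $h$-coverings), the composite remains an $h$-covering of semistable pairs, and any further pullback preserves the factorization through $p$: if $h^*=g^*\circ f^*$ with $f^*$ factoring through multiplication by $p$, then so does $h^*$.

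Second, I would treat the case $b>0$ by invoking Bhatt's theorem from \cite{bhatt}: on a smooth proper variety every higher coherent Zariski cohomology class becomes $p$-divisible after passing to a suitable proper surjective cover. Applied to the coherent sheaves $\Omega^a_{(U,\calu)/\OK}$ on $\calu$ (or rather, to their reductions to a smooth variant obtained after further semistable alteration), this yields an $h$-covering under which every class in $H^b(\calu,\Omega^a_{(U,\calu)/\OK})$ with $b>0$ becomes divisible by $p$. Composing this covering with the one from the first step produces a single $h$-covering of semistable pairs doing both jobs simultaneously, since divisibility by $p$ is preserved under arbitrary further pullback.

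The principal obstacle is the second step. Bhatt's $p$-divisibility result is naturally stated for smooth varieties, whereas $\calu$ is only semistable over $\OK$ and the sheaves $\Omega^a_{(U,\calu)/\OK}$ are logarithmic differentials on a scheme with mixed-characteristic singularities. Reducing the problem to a smooth setting where Bhatt's theorem applies directly — and ensuring the resulting covers stay within the category of semistable pairs — is precisely the delicate point, which is why the exposition of the theorem in the paper is restricted to a transparent special case. By contrast, the first step is essentially a bookkeeping exercise once one trusts the alteration machinery.
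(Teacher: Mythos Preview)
Your two-step plan diverges from the paper's approach and has gaps in both steps, not just the second.

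In step 1, the substitution $t_i = s_i^p$ is only available \'etale-locally, and you do not explain how to produce a single global map of proper $\calo_{K'}$-schemes realizing it. A semistable pair requires $\calu'$ to be proper and flat over some $\calo_{K'}$, so a disjoint union of local $p$-th root patches will not do. Globalizing amounts to building, for each boundary component $D_i$, a degree-$p$ cyclic cover branched along $D_i$, which requires the class of $\calo_\calu(D_i)$ to be $p$-divisible in $\Pic\,\calu$ --- an obstruction of exactly the same nature as the theorem itself. De Jong's alteration can restore semistability of an already-constructed global cover but does not manufacture one from local data, so this step is not ``essentially bookkeeping.''

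In step 2 you invoke Bhatt's $p$-divisibility theorem as a black box while conceding its hypotheses do not match: $\calu$ is not smooth over $\calo_K$, and no further alteration will make it so. But the paper's contribution here is precisely to run Bhatt's argument in this semistable setting, not to cite it. The method is entirely different from explicit root extraction: for a curve $X$ one pulls back the Abel--Jacobi embedding $X\hookrightarrow\Pic^0 X$ along multiplication by $p$ on $\Pic^0 X$ and normalizes; since $H^1(X,\calo_X)$ is the tangent space of $\Pic^0 X$ at the origin, the induced map on it factors through $p$ by construction. In higher relative dimension one uses another theorem of de Jong to write $\calu$ (after alteration) as a semistable relative curve over a lower-dimensional base, applies the curve case fibrewise via the semi-abelian relative Picard scheme, and inducts through the Leray spectral sequence. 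This Jacobian mechanism is what actually produces the required cover; for $a>0$ or $U$ non-proper the paper defers to Beilinson and Illusie for a more elaborate version of the same idea rather than anything coordinate-based.
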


We sketch the proof in the case where $U$ is proper and $a=0$; this was proven by Bhargav Bhatt in his paper \cite{bhatt}. It turns out that in this case the map $h$ can be chosen to be proper and surjective. The general proof follows a similar pattern but the technical details are a bit more complicated; see the original paper \cite{Bei} of Beilinson or Illusie's survey \cite{illsurvey}.

The key lemma is the following.

\begin{lem}
Let $X$ be a proper curve over a field. There exists a proper smooth curve $Y$ with geometrically connected components defined over a finite extension of $k$ and a proper surjection  $h:\, Y\to X$ such that the induced map $h^*:\, \Pic\, X\to\Pic\, Y$ factors through the multiplication-by-$p$ map $\Pic\, X\to\Pic\, X$.
\end{lem}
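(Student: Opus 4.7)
The plan is to reduce to the case of a smooth, geometrically connected proper curve by normalization and a finite base extension, then construct $h$ explicitly in three cases.

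\textbf{Reductions.} I would replace $X$ by its normalization $\nu\colon \widetilde X\to X$ (a proper surjection): since $\nu^*\colon \Pic X\to \Pic\widetilde X$ is a group homomorphism commuting with $[p]$, a factorization for $\widetilde X$ yields one for $X$ by composition. After a finite extension of $k$ splitting the connected components of $\widetilde X$, I can work one component at a time, so assume $X$ is smooth, proper, and geometrically connected over $k$.

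\textbf{Constructing $h$.} If $\mathrm{char}(k)=p$, take $Y=X$ with $h$ the absolute (or relative) Frobenius: then $h^*L=L^{\otimes p}$ for every $L\in\Pic X$, so $h^*$ is literally multiplication by $p$ and the factorization is immediate with $\phi=\mathrm{id}$. If $\mathrm{char}(k)=0$ and $X$ has genus $0$, after a finite extension $X\cong \P^1$, and the $p$-th power map $[s:t]\mapsto [s^p:t^p]$ satisfies $h^*\calo(1)=\calo(p)$, giving $h^*=p\cdot\mathrm{id}$ on $\Pic\P^1=\Z$. If $\mathrm{char}(k)=0$ and $X$ has genus $g\ge 1$, after a finite extension yielding $x_0\in X(k)$, let $\iota\colon X\hookrightarrow J=\mathrm{Jac}(X)$ be the Abel--Jacobi embedding. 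Choose $n=p$ for $p$ odd and $n=4$ for $p=2$, so that both exponents in Mumford's formula
\[
[n]^*L=L^{\otimes n(n+1)/2}\otimes ([-1]^*L)^{\otimes n(n-1)/2}
\]
become divisible by $p$. Form $Y:=X\times_{J,[n]}J$ with projection $h\colon Y\to X$; since $[n]\colon J\to J$ is \'etale in characteristic $0$, the morphism $h$ is \'etale of degree $n^{2g}$, so $Y$ is smooth.

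\textbf{Factorization and main obstacle.} Mumford's formula exhibits $[n]^*=p\cdot\chi$ on $\Pic J$ for the homomorphism $\chi(L)=\frac{n(n+1)}{2p}L+\frac{n(n-1)}{2p}[-1]^*L$. The identity $\iota\circ h=[n]\circ i_Y$ (with $i_Y\colon Y\hookrightarrow J$ the other projection) then gives $h^*\circ\iota^*=p\cdot(i_Y^*\circ\chi)$, establishing the factorization on the subgroup $\iota^*(\Pic J)\subseteq \Pic X$. The main obstacle is extending this to all of $\Pic X$: using the splitting $\Pic X\cong \Pic^0 X\oplus \Z\cdot[x_0]$ coming from $x_0$, the map $\iota^*$ is an isomorphism on $\Pic^0$ by the universal property of the Jacobian, so the formula above applies there directly; on the $\Z$-factor, $h^*[x_0]$ has degree $n^{2g}$ which is divisible by $p$, and using that $\Pic^0 Y$ is divisible one can choose $M\in\Pic Y$ with $pM=h^*[x_0]$ and set $\phi([x_0])=M$. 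Gluing the $\Pic^0$- and degree-factorizations into a single homomorphism $\phi\colon \Pic X\to\Pic Y$ with $p\phi=h^*$ is the main technical step, and is where I would expect to spend most of my energy.
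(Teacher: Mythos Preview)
Your core construction in the positive-genus case is exactly the paper's: form the fibre product $Y = X \times_{J,[n]} J$ along multiplication on the Jacobian and read off the factorization of $h^*$ from the commutative square $\iota \circ h = [n] \circ i_Y$. The paper takes $n=p$ in all characteristics, normalizes $\tilde Y$ (since $[p]$ need not be \'etale), and then argues by \emph{autoduality}: the Albanese map $\Pic^0 Y \to \Pic^0 X$ visibly factors through $[p]$, and dualizing gives that $h^*$ on $\Pic^0 X$ does too. Your version computes $h^*\iota^* = i_Y^*[n]^*$ directly instead of dualizing; this is fine and arguably more transparent.

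Two places where you are working harder than necessary. First, the Mumford formula detour (forcing $n=4$ when $p=2$) is not needed: you immediately restrict to $\Pic^0 J$, and there $[n]^*$ is just multiplication by $n$, so $n=p$ works uniformly. The case splits for $\mathrm{char}(k)=p$ and genus $0$ are likewise avoidable, since the Jacobian construction is characteristic-free once you normalize the fibre product. Second, and more importantly, the ``main obstacle'' you identify --- extending the factorization from $\Pic^0 X$ to all of $\Pic X$ --- is not where the effort goes. The paper's proof in fact stops at $\Pic^0$: the only application is the Corollary identifying $H^1(X,\calo_X)$ with the tangent space of $\Pic^0 X$ at $0$, for which the degree part is irrelevant. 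Your sketch for the degree part (using $p \mid \deg h$ and divisibility of $\Pic^0 Y$) is correct, but you should not expect it to be the crux.
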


\begin{dem}
We are allowed to take finite covers of $X$ and work with one component at a time, so after extending the base field and normalizing $X$ in a finite extension of its function field we may assume $X$ is smooth connected of positive genus and has a $k$-point $O$. The Abel-Jacobi map $P\mapsto [P-O]$ defines a closed immersion $X\to\Pic^0 X\subset \Pic\, X$. Define $\tilde Y$ by the fibre square
$$
\begin{CD}
\tilde Y @>>> \Pic^0 X \\
@VVV @VVpV \\
X @>>> \Pic^0 X
\end{CD}
$$
and take $ Y$ to be the normalization of $\tilde Y$. This defines $h:\, Y\to X$. The map $\tilde Y\to\Pic^0 X$ induces a map $Y\to\Pic^0 X$ and factors through $\Pic^0 Y$ by the universal property of the Jacobian. We thus obtain a commutative diagram
$$
\begin{CD}
Y @>>> \Pic^0 Y \\
@V{\id}VV @VVV \\
Y @>>> \Pic^0 X \\
@VhVV @VVpV \\
X @>>> \Pic^0 X
\end{CD}
$$
The composite map $\Pic^0 Y\to \Pic^0 X$ on the right hand side is the map induced by the composition $Y\to X\to \Pic^0 X$. By autoduality of the Jacobian, the map $\Pic^0 X\to \Pic^0 Y$ on dual abelian varieties is the pullback induced by $h$. By construction it factors through the multiplication-by-$p$ map of $\Pic^0 X$.
\end{dem}

\begin{cor}
Let $X$ be a proper curve over a field having a rational point $O$. There exists $h:\, Y\to X$ as above such that the induced map $h^*:\, H^1(X, \calo_X)\to H^1(Y, \calo_Y)$ factors through the multiplication-by-$p$ map on $H^1(X, \calo_X)$.
\end{cor}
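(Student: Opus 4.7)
The plan is to deduce the corollary from the lemma by passing from the Picard scheme to its tangent space at the origin. The basic tool is the standard identification
$$H^1(X, \calo_X) \cong T_0\,\Pic^0 X,$$
which I would obtain as follows. Let $X[\varepsilon] := X \times_k \Spec k[\varepsilon]/(\varepsilon^2)$. The short exact sequence of sheaves on $X$
$$0 \to \calo_X \to \calo_{X[\varepsilon]}^\times \to \calo_X^\times \to 0,$$
where the first map sends $a$ to $1 + a\varepsilon$, yields in cohomology an isomorphism between $H^1(X, \calo_X)$ and $\ker(\Pic(X[\varepsilon]) \to \Pic(X))$, which is by definition the tangent space at the identity of the Picard scheme. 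The analogous statement holds for $Y$, and naturality in the curve implies that $h^*: H^1(X, \calo_X) \to H^1(Y, \calo_Y)$ coincides with the derivative at the origin of the morphism of commutative group schemes $\hat h: \Pic^0 X \to \Pic^0 Y$ induced by pullback.

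Next I would invoke a slightly refined reading of the proof of the lemma. What is actually shown there is not merely that the abstract group homomorphism $h^*: \Pic X \to \Pic Y$ factors through multiplication by $p$ on $\Pic X$, but that the morphism of abelian varieties $\hat h: \Pic^0 X \to \Pic^0 Y$ factors as
$$\Pic^0 X \xrightarrow{[p]} \Pic^0 X \xrightarrow{\alpha} \Pic^0 Y$$
for some scheme morphism $\alpha$; this is precisely the output of the autoduality-of-Jacobian step at the end of that proof, where $\alpha$ is obtained as the dual of the natural map $\Pic^0 Y \to \Pic^0 X$ coming from $Y \to \tilde Y \to \Pic^0 X$.

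Finally I would take tangent spaces at the identity. On an abelian variety (or any commutative group scheme) the derivative of $[p]$ at the origin is multiplication by $p$ on the tangent space, since the group law differentiates to addition on the Lie algebra. Combining this with the identification $T_0 \Pic^0 X \cong H^1(X, \calo_X)$ from the first paragraph, the factorization of $\hat h$ above yields exactly the required factorization of $h^*: H^1(X, \calo_X) \to H^1(Y, \calo_Y)$ through multiplication by $p$ on $H^1(X, \calo_X)$. The main point to set up carefully is the scheme-theoretic nature of the factorization from the lemma and its compatibility with the exponential-type sequence identifying $H^1$ with the Lie algebra of $\Pic^0$; once this is granted, the rest is formal. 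The degenerate case when $X$ has only genus zero components is handled separately, since then both $H^1(X, \calo_X)$ and $\Pic^0 X$ vanish and there is nothing to prove.
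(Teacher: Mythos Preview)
Your proposal is correct and follows exactly the approach indicated in the paper, whose entire proof reads: ``Identify $H^1(X,\calo_X)$ with the tangent space at $0$ of $\Pic^0 X$.'' You have simply unpacked this one line, correctly observing that the lemma's proof (via autoduality of the Jacobian) yields a scheme-theoretic factorization $\hat h=\alpha\circ[p]$ of morphisms of abelian varieties, so that differentiating at the origin gives the desired factorization on $H^1(\cdot,\calo)$.
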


\begin{dem}
Identify $H^1(X, \calo_X)$ with the tangent space at $0$ of $\Pic^0 X$.
\end{dem}

When $X$ is semi-stable, then $\Pic^0 X$ is a semi-abelian variety. We can then use properties of semi-abelian varieties to establish a relative version of the corollary.

\begin{prop}
Assume $X\to T$ is a projective semi-stable relative curve with $T$ integral and excellent. There exists a pullback diagram
$$
\begin{CD}
X' @>{\pi}>> X \\
@V{\phi'}VV @VV{\phi}V \\
T' @>{\psi}>> T
\end{CD}
$$
where $\psi':\, T'\to T$ is an alteration,  the base change curve $X'\to T'$ is projective semistable and the pullback map $\psi^*R^1\phi_*\calo_X\to R^1\phi'_*\calo_{X'}$ is divisible by $p$ in  $\Hom(\psi^*R^1\phi_*\calo_X, R^1\phi'_*\calo_{X'})$.
\end{prop}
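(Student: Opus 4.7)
\noindent\emph{Plan of proof.} The strategy is to globalize, fibrewise, the proof of the previous corollary by using the relative Picard scheme of $X/T$ in place of the Jacobian. After a preliminary alteration $T_1\to T$ we may assume that the geometric fibres of $X_1:=X\times_TT_1\to T_1$ are connected of positive genus and that $X_1\to T_1$ admits a section $O:T_1\to X_1$ landing in its smooth locus (both properties can be achieved by a suitable alteration provided, a priori, by de~Jong's theorem combined with the standard trick of base-changing along $X_1\to T_1$ itself to obtain a tautological section). The relative Picard scheme $\Pic^0_{X_1/T_1}$ is then a semi-abelian scheme over $T_1$ whose Lie algebra is canonically identified with $R^1{\phi_1}_*\calo_{X_1}$, where $\phi_1:X_1\to T_1$ denotes the structural morphism.

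Next I form the relative Abel--Jacobi morphism $AJ:X_1\to \Pic^0_{X_1/T_1}$ associated with $O$ (sending a point $P$ in a smooth fibre to the class of $P-O$), and define $\widetilde Y$ by the Cartesian square
\[
\begin{CD}
\widetilde Y @>>> \Pic^0_{X_1/T_1}\\
@VVV @VV{[p]}V\\
X_1 @>{AJ}>> \Pic^0_{X_1/T_1}
\end{CD}
\]
in which the right vertical map is multiplication by $p$ (an isogeny on the semi-abelian scheme). Since $[p]$ is finite surjective, so is the left vertical map. I then take the normalization of $\widetilde Y$ and, using de~Jong's alteration theorem for semistable reduction (\cite{dejong}), produce an alteration $T'\to T_1$ and a projective semistable curve $X'\to T'$ fitting into a diagram
\[
X'\xrightarrow{\pi} X_1\xrightarrow{} X,
\]
together with a dominant $T'$-map $X'\to \widetilde Y\times_{T_1}T'$.

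The key cohomological computation proceeds exactly as in the absolute corollary. Composing $X'\to \widetilde Y\times_{T_1}T'\to (\Pic^0_{X_1/T_1})_{T'}$ with the first projection and using the universal property of the relative Jacobian (which applies because the Abel--Jacobi morphism associated with the section $\pi\circ O'$ is universal among morphisms from $X'$ to semi-abelian $T'$-schemes sending $O'$ to $0$, where $O'$ is an appropriate section of $X'\to T'$), the map factors through $\Pic^0_{X'/T'}$. Dualizing via the autoduality of the Jacobian of a semistable curve (Deligne--Rapoport), the pullback map $\pi^*:(\Pic^0_{X_1/T_1})_{T'}\to\Pic^0_{X'/T'}$ is identified with the dual of that map, and by construction of $\widetilde Y$ it factors as $\pi^*=\alpha\circ[p]$ for some $\alpha:(\Pic^0_{X_1/T_1})_{T'}\to\Pic^0_{X'/T'}$. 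Taking Lie algebras, and using that the Lie algebra functor identifies $\mathrm{Lie}(\Pic^0_{X_1/T_1})\cong R^1{\phi_1}_*\calo_{X_1}$ and commutes with base change along $\psi:T'\to T_1\to T$, yields the required factorization of $\psi^*R^1\phi_*\calo_X\to R^1\phi'_*\calo_{X'}$ through multiplication by $p$.

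The principal obstacle, and the reason the statement is delicate, is ensuring that after the passage to normalization and pullback by $[p]$ one can arrange the result to be \emph{projective semistable} over a suitable alteration of $T$; this is where I expect to have to invoke de~Jong's alteration theorem for semistable reduction of families of curves in a nontrivial way, possibly combined with the ``weak semistable reduction'' techniques of \cite{bhatt}. All the other ingredients---existence and semi-abelian nature of $\Pic^0_{X/T}$, its autoduality, and the Lie algebra identification with $R^1\phi_*\calo_X$---are standard consequences of the theory of relative Picard schemes of semistable curves, so the geometric step is where the work is concentrated.
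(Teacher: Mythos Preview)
Your overall strategy---use the relative Picard scheme $\Pic^0$, factor pullback through multiplication by $p$, then pass to Lie algebras to recover $R^1\phi_*\calo_X$---is exactly the one the paper uses. The execution, however, differs in an important way, and your version has a real difficulty.

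You attempt to carry out the entire Abel--Jacobi construction \emph{relatively} over $T_1$: form $AJ:X_1\to\Pic^0_{X_1/T_1}$, pull back $[p]$, etc. The problem is that for a \emph{semistable} (not smooth) relative curve, the Abel--Jacobi map $P\mapsto[P-O]$ is only defined on the smooth locus of $X_1\to T_1$; it is not a morphism $X_1\to\Pic^0_{X_1/T_1}$ on the nose, so your Cartesian square does not make sense over the nodes. Similarly, the Albanese universal property and the autoduality you invoke are standard for smooth relative curves but require substantial care in the semistable case over a general base; your appeal to ``Deligne--Rapoport'' is not quite enough, and these are not details one can wave away.

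The paper sidesteps all of this. It applies the previous lemma \emph{only at the generic point} $\eta$ of $T$, obtaining a smooth curve $Y_{\eta'}\to X_\eta$ over a finite extension $\eta'$ of $\eta$ with $\Pic^0 X_\eta\to\Pic^0 Y_{\eta'}$ factoring through $[p]$. Then de~Jong gives an alteration $\widetilde T\to T$ over which $Y_{\eta'}$ spreads out to a semistable curve $Y\to X_{\widetilde T}$. The crucial extra ingredient you are missing is a rigidity result for semi-abelian schemes over a normal base (Faltings--Chai, I.2.7): restriction to the generic fibre is \emph{fully faithful} on morphisms. Hence the factorization $\Pic^0(X/\widetilde T)\to\Pic^0(Y/\widetilde T)$ through $[p]$, known at $\eta$, automatically extends over all of $\widetilde T$. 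One then reads off the result on $R^1\phi_*\calo_X$ from the Lie algebra (normal bundle of the zero section), as you do.

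So the paper's approach trades your delicate global constructions (relative Abel--Jacobi on singular fibres, semistable autoduality) for a single clean black box: generic-fibre full faithfulness for semi-abelian schemes. This is both shorter and avoids the gap in your argument.
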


Recall that a diagram as above always defines base change morphisms $\psi^*R^q\phi_*\calf\to R^q\phi'_*(\pi^*\calf)$ for a sheaf $\calf$ on $X$. We apply this with $\calf=\calo_X$ and compose with the morphism $\pi^*\calo_X\to\calo_{X'}$ induced by $\pi$.\medskip

\begin{dem}
Let $\eta$ be the generic point of $T$. By the lemma we find a finite map $\eta'\to \eta$ and a proper smooth curve $Y_{\eta'}\to X_{\eta}$ such that the induced map $\Pic^0 Y_{\eta'}\to \Pic^0 X_{\eta}$ factors through multiplication by $p$. By a result of de Jong \cite{dejong}, after replacing $T$ by an alteration $\widetilde T\to T$  and base changing $X$ we may extend $Y_{\eta'}$ to a semistable curve $Y\to X$. (If $Y$ has several components, we do this componentwise.) Now $\Pic^0(X/T)$ and $\Pic^0(Y/T)$ are semi-abelian schemes. By a basic result on semi-abelian schemes $G$ over a normal base (\cite{fc}, I 2.7), the restriction functor $G\to G_\eta$ to the generic point is fully faithful. Thus, since we know that the restriction of $\Pic^0(X/T)\to \Pic^0(Y/T)$ to the generic point factors through multiplication by $p$, the same is true for the map itself. Finally, we deduce the result on $R^1\phi_*\calo_X$ by passing to the normal bundle of the zero section as in the previous corollary.
\end{dem}

\noindent{\em Proof of Theorem \ref{bhatt} for $a=0$ and $U$ proper.\/} Since $\Spec\calo_K$ is affine, by the Serre vanishing theorem it will suffice to prove a relative result: there exists an alteration $\alpha:\, \widetilde U\to U$ such that $R^bf_*\calo_{U}\to R^b(f\circ\alpha)_*\calo_{U'}$ is divisible by $p$ in $\Hom(R^bf_*\calo_{U}, R^b(f\circ\alpha)_*\calo_{\widetilde U})$, where $f$ is the structure map $U\to\Spec\calo_K$.

We use induction on the relative dimension $d$ of $f:\, U\to\Spec\calo_K$. The case of dimension 0 is easy using Kummer theory. By another result of de Jong, after replacing $U$ by an alteration, we find a factorization $U\to T\to \Spec \calo_K$ such that $T$ is integral, $\phi:\, U\to T$ is a projective semi-stable relative curve having a section $s:\, T\to U$, and $f':\, T\to\Spec \calo_K$ is proper surjective of relative dimension $d-1$. Since $\phi$ is of relative dimension 1, the Leray spectral sequence
$$
R^pf'_*(R^q\phi_*\calo_U) \Rightarrow R^{p+q}f_*\calo_U
$$
yields an exact sequence
$$
0\to R^bf'_*(\phi_*\calo_U)\to R^bf_*(\calo_U)\to R^{b-1}f'_*(R^1\phi_*\calo_U)\to 0.
$$
As $\phi$ has connected fibres, we have $\phi_*\calo_U=\calo_T$, so the exact sequence becomes
$$
0\to R^bf'_*(\calo_T)\to R^bf_*(\calo_U)\to R^{b-1}f'_*(R^1\phi_*\calo_U)\to 0,
$$
and the section $s:\, U\to T$ induces a splitting. By induction we find an alteration $\pi':\, T'\to T$ such that $\pi'^*R^bf'_*\calo_T\to R^bg'_*\calo_{T'}$ is divisible by $p$ in the Hom-group, where $g'=f'\circ\pi'$. Denote by $\phi:\, U'\to T'$ the base change curve. By the previous proposition, we find a further alteration $\pi'':\, T''\to T'$ giving rise to a commutative diagram
$$
\begin{CD}
U'' @>>> U' @>>> U \\
@VV{\phi''}V @VV{\phi'}V @VV{\phi}V \\
T'' @>{\pi''}>> T' @>{\pi'}>> T
\end{CD}
$$
such that $\pi''^*R^1\phi'_*\calo_{U'}\to R^1\phi''_*\calo_{U''}$ is also divisible by $p$ in the Hom-group. We conclude by putting these results together using the above split exact sequence.
\enddem

\renewcommand{\thesection} {A}
\renewcommand{\theprop} {A.\arabic{prop}}
\renewcommand{\thedefi} {A.\arabic{defi}}
\renewcommand{\theex} {A.\arabic{ex}}
\renewcommand{\theexs} {A.\arabic{exs}}
\renewcommand{\therema} {A.\arabic{rema}}
\renewcommand{\theremas} {A.\arabic{remas}}
\renewcommand{\thecons} {A.\arabic{cons}}
\renewcommand{\thefacts} {A.\arabic{facts}}
\renewcommand{\theconj} {A.\arabic{conj}}
\renewcommand{\thetheo} {A.\arabic{theo}}
\renewcommand{\thecor} {A.\arabic{cor}}
\renewcommand{\thelem} {A.\arabic{lem}}
\renewcommand{\thesubsection} {A.\arabic{subsection}}

\section{Appendix: Methods from simplicial algebra}

In this appendix we summarize some basics from simplicial algebra needed for the study of cotangent complexes and derived de Rham algebras. For the first three subsections our main reference is Chapter 8 of Weibel's book \cite{weibel}.

\subsection{Simplicial methods}\label{simpmeth}

Denote by $\Delta$ the category whose objects are the finite ordered sets $[n]=\{0<1\dots<n\}$ for each integer $n\geq 0$, and the morphisms are nondecreasing functions.

\begin{defi}\rm
A {\em simplicial} (resp. {\em cosimplicial\/}) {\em object} in a category $\mathcal{C}$ is a contravariant (resp. covariant) functor $X:\,\Delta\to\mathcal{C}$.
\end{defi}

Simplicial (resp. cosimplicial) objects in a category $\mathcal C$ form a category ${\rm Simp}({\mathcal C})$ (resp. ${\rm cosimp}({\mathcal C})$) whose morphisms are morphisms of functors.

Fix an integer $n\geq 1$. For each $0\leq i\leq n$ we define a {\em face map} $\varepsilon_i\colon [n-1]\to [n]$ as the unique nondecreasing map whose image does not contain $i$. In the other direction, we define for each $0\leq i\leq n$ a {\em degeneracy map} $\eta_i\colon [n]\to [n-1]$ as the unique nondecreasing map that is surjective and has exactly two elements mapping to $i$.

\begin{lem}\label{simpob}
Giving a simplicial object $X$ in a category $\mathcal{C}$ is equivalent to giving an object $X_n$ for each $n\geq 0$ together with face operators  $\partial_i=X(\varepsilon_i)\colon X_n\to X_{n-1}$ and degeneracy operators $\sigma_i=X(\eta_i)\colon X_n\to X_{n+1}$ for $0\leq i\leq n$ satisfying the identities
\begin{eqnarray}
\partial_i \partial_j&=& \partial_{j-1}\partial_i \quad i<j\ , \notag\\
\sigma_i \sigma_j &=& \sigma_{j+1}\sigma_i \quad i\leq j\label{simplident}\\
\partial_i\sigma_j &=&\begin{cases} \sigma_{j-1}\partial_i &i<j\\ \id&i=j\text{ or }i=j+1\\  \sigma_j \partial_{i-1}&i>j+1\ .\end{cases}\notag
\end{eqnarray}
\end{lem}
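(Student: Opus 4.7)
The plan is to verify that $\Delta$ admits a presentation by generators and relations: the face maps $\varepsilon_i$ and degeneracy maps $\eta_i$ generate all morphisms of $\Delta$, and the simplicial identities \eqref{simplident} (dualized) are a complete set of relations between them. Once this is established, the statement follows by the universal property of categories presented by generators and relations: a contravariant functor $X : \Delta \to \mathcal{C}$ is the same as an assignment $[n] \mapsto X_n$ together with the images $\partial_i = X(\varepsilon_i)$, $\sigma_i = X(\eta_i)$ subject to the identities coming from the relations in $\Delta$.

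First I would verify the dual identities in $\Delta$. A direct combinatorial check — comparing where each element of $[n-2]$, $[n]$, or $[n]$ respectively is sent — gives
\begin{align*}
\varepsilon_j\varepsilon_i &= \varepsilon_i\varepsilon_{j-1} \quad (i<j),\\
\eta_j\eta_i &= \eta_i\eta_{j+1} \quad (i \leq j),\\
\eta_j\varepsilon_i &= \begin{cases} \varepsilon_i\eta_{j-1} & i<j, \\ \id & i=j \text{ or } i = j+1, \\ \varepsilon_{i-1}\eta_j & i > j+1. \end{cases}
\end{align*}
Applying a contravariant functor then reverses compositions and produces exactly the identities \eqref{simplident}.

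Next comes the main content: a factorization lemma. Given any nondecreasing $\alpha : [n]\to[m]$, let $i_1 < \cdots < i_s$ be the elements of $[m]$ not in the image of $\alpha$, and let $j_1 < \cdots < j_t$ be the elements $j$ of $[n]$ such that $\alpha(j) = \alpha(j+1)$. Then one checks (by tracking how these combinatorial invariants behave under composition) that $\alpha$ admits a unique factorization
\[
\alpha = \varepsilon_{i_1}\varepsilon_{i_2}\cdots\varepsilon_{i_s}\,\eta_{j_t}\cdots\eta_{j_1}
\]
with $i_1 < \cdots < i_s$ in $[m]$ and $j_1 < \cdots < j_t$ in $[n-1]$, where $n = m - s + t$. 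This is the normal form.

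The main obstacle — and really the only one — is proving that the identities above suffice to rewrite \emph{every} composite of face and degeneracy maps into the unique normal form. This is done by induction on the length of an expression: any adjacent pair of generators appearing in the ``wrong'' order (e.g.\ $\varepsilon_i\varepsilon_j$ with $i \geq j$, or $\varepsilon_i\eta_j$) can be rewritten using one of the three identities, either producing a shorter word (in the $\eta\varepsilon$ case when $i = j$ or $i = j+1$) or moving the indices closer to the sorted normal form (measured by, e.g., an appropriate lexicographic order on pairs of multi-indices). A standard rewriting-system argument then shows that two expressions represent the same morphism of $\Delta$ if and only if they are related by a sequence of applications of the identities.

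Putting these pieces together: giving a contravariant functor $X$ on $\Delta$ amounts to giving its values on the generating morphisms $\varepsilon_i, \eta_i$ — that is, the operators $\partial_i, \sigma_i$ — subject precisely to the image of the relations, which is exactly \eqref{simplident}. Functoriality on arbitrary morphisms is then forced by the normal-form factorization. This establishes the claimed equivalence. The cosimplicial case is entirely analogous (one keeps the identities in their original, undualized form).
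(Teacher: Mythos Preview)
Your proposal is correct and follows the standard argument (unique epi--mono factorization in $\Delta$ plus the rewriting argument showing the simplicial identities are a complete set of relations). The paper itself gives no proof at all: it simply cites Weibel, Proposition 8.1.3, so there is nothing to compare against beyond noting that your sketch is essentially the proof one finds there.
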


\begin{dem}
See \cite{weibel}, Proposition 8.1.3.
\end{dem}

\begin{ex}\label{constsimp}\rm
If $B$ is an object in the category $\mathcal{C}$, we define the \emph{constant simplicial object} $B_\bullet$ associated with $B$ by setting $B_n:=B$ for all $n$, and declaring all face and degeneracy maps to be identity maps of $B$.
\end{ex}

\begin{ex}\label{deltan}\rm
Fix an integer $n\geq 0$. Setting $\Delta[n]_m:=\Hom_\Delta([m],[n])$ defines a simplicial set $\Delta[n]_\bullet$, i.e. a simplicial object in the category of sets. Here the simplicial structure is induced by contravariance of the Hom-functor. Moreover, $[n]\to \Delta[n]_\bullet$ is a covariant functor from $\Delta$ to the category of simplicial sets.
\end{ex}

We also need the notion of augmented simplicial objects.

\begin{defi}\label{augm}\rm Given an object $B$ and a simplicial object  $X_\bullet$ in a category $\mathcal{C}$, we define an \emph{augmentation} $\varepsilon\colon X_\bullet\to B$ to be a morphism $X_\bullet\to B_\bullet$.
\end{defi}

\begin{lem}\label{augment} Let $X_\bullet$ be a simplicial object.
Defining an augmentation $\epsilon_\bullet\colon X_\bullet\to B$ is equivalent to giving a morphism $\epsilon_0\colon X_0\to B$ satisfying $\epsilon_0 \partial_0=\epsilon_0 \partial_1$.
\end{lem}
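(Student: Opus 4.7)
The plan is to use Lemma~\ref{simpob} to reformulate an augmentation $\epsilon_\bullet\colon X_\bullet\to B_\bullet$ as a compatible family $(\epsilon_n\colon X_n\to B)$ satisfying $\epsilon_{n-1}\partial_i=\epsilon_n$ and $\epsilon_{n+1}\sigma_j=\epsilon_n$ for all admissible $i,j$ (since every face and degeneracy map of the constant simplicial object $B_\bullet$ is $\id_B$). The forward direction is then immediate: given $\epsilon_\bullet$, set $\epsilon_0$ to be its degree-$0$ component and apply the face-compatibility at level $1$ to obtain $\epsilon_0\partial_0=\epsilon_1=\epsilon_0\partial_1$.

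For the backward direction, given $\epsilon_0$ with $\epsilon_0\partial_0=\epsilon_0\partial_1$, I would define $\epsilon_n$ by composing $\epsilon_0$ with an iterated face operator $X_n\to X_0$. Concretely, for each $i\in\{0,\dots,n\}$ the vertex inclusion $\delta_i\colon [0]\to[n]$ in $\Delta$ yields a morphism $\rho_i:=X(\delta_i)\colon X_n\to X_0$, and I would set $\epsilon_n:=\epsilon_0\circ\rho_0$. To see this is forced and independent of the choice of vertex, observe that for consecutive vertices $i,i+1$ the inclusion $e_{i,i+1}\colon[1]\to[n]$ satisfies $e_{i,i+1}\circ\varepsilon_1=\delta_i$ and $e_{i,i+1}\circ\varepsilon_0=\delta_{i+1}$; applying $X$ and writing $\psi=X(e_{i,i+1})$ gives $\rho_i=\partial_1\psi$ and $\rho_{i+1}=\partial_0\psi$, whence the hypothesis $\epsilon_0\partial_0=\epsilon_0\partial_1$ yields $\epsilon_0\rho_i=\epsilon_0\rho_{i+1}$. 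By induction on $|i-j|$ we conclude $\epsilon_0\rho_i=\epsilon_0\rho_j$ for all $i,j$.

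Finally, to verify that the family $(\epsilon_n)$ really defines a morphism $X_\bullet\to B_\bullet$, I would check naturality against an arbitrary morphism $f\colon[m]\to[n]$ in $\Delta$: since $f\circ\delta_0^{(m)}=\delta_{f(0)}^{(n)}$, contravariance gives
\[
\epsilon_m\circ X(f)=\epsilon_0\circ X(\delta_0^{(m)})\circ X(f)=\epsilon_0\circ X(\delta_{f(0)}^{(n)})=\epsilon_n,
\]
using the well-definedness proved above. This subsumes face and degeneracy compatibility as special cases and exhibits the inverse correspondence, completing the equivalence. The only substantive point is the well-definedness of $\epsilon_n$, which is handled by the factorization-through-$[1]$ trick; everything else is bookkeeping with the simplicial identities.
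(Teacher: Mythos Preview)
Your proof is correct and follows essentially the same approach as the paper: both define $\epsilon_n$ by composing $\epsilon_0$ with a vertex map $X(\delta_i)\colon X_n\to X_0$ and establish well-definedness via a factorization through $[1]$. The only cosmetic difference is that the paper connects any two vertices $i\le j$ by a single edge $\gamma\colon[1]\to[n]$ (sending $0\mapsto i$, $1\mapsto j$), whereas you step along consecutive vertices and induct; your version is slightly more verbose but also more explicit, and you spell out the naturality check the paper leaves to the reader.
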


\begin{proof}
Given a map $\epsilon_\bullet:\, X_\bullet\to B_\bullet$ of simplicial objects, the degree 0 component $\epsilon_0$ satisfies this identity by definition. Conversely, given $\epsilon_0$ as in the statement, we may choose an arbitrary morphism $\alpha\colon [0]\to [n]$ and set $\epsilon_n:=\epsilon_0\circ X(\alpha)$. This does not depend on the choice of $\alpha$, because for a different choice $\beta\colon [0]\to [n]$ we may find a morphism $\gamma\colon [1]\to [n]$ such that both $\alpha$ and $\beta$ factor through $\gamma$, from which the identity $\epsilon_0 \partial_0=\epsilon_0 \partial_1$ implies that the resulting maps $\epsilon_n$ are the same.  The reader will check that the sequence $\epsilon_n$ indeed defines an augmentation.
\end{proof}

Next we define simplicial homotopies. To do so, we first need an auxiliary construction.

\begin{cons}\rm Let $\mathcal{C}$ be a category in which finite coproducts exist. Assume given a simplicial object $X_\bullet$ in $\mathcal{C}$ and a simplicial object $U_\bullet$ in the category of nonempty finite sets. We define the product $X_\bullet\times U_\bullet$ as a simplicial object in $\mathcal{C}$ with terms given by
\begin{equation*}
(X\times U)_n:=\coprod_{u\in U_n}X_n
\end{equation*}
and the simplicial structure defined as follows: for $\gamma\colon [m]\to [n]$ the morphism $(X\times U)(\gamma)$ maps the component $X_n$ indexed by $u\in U_n$ to the component $X_m$ indexed by $U(\gamma)(u)\in U_m$ via the morphism $X(\gamma)$.
\end{cons}

In particular, it makes sense to speak about the product $X_\bullet\times \Delta[n]_\bullet$ for each $n\geq 0$. Note that by functoriality the two morphisms $\epsilon_0,\epsilon_1:\, [0]\to[1]$ induce morphisms $e_i:\,X_\bullet\times \Delta[0]_\bullet\to X_\bullet\times \Delta[1]_\bullet$ of simplicial objects in $\mathcal C$ for $i=0,1$. Here we may identify $X_\bullet\times \Delta[0]_\bullet$ with $X_\bullet$ since by definition $\Delta[0]_\bullet$ is the constant simplicial object associated with the one-point set.

\begin{defi}\rm Assume $\mathcal C$ has finite coproducts, and consider two morphisms ${f_\bullet,g_\bullet\colon X_\bullet\to Y_\bullet}$ between simplicial objects of $\mathcal{C}$.  A \emph{simplicial homotopy} from $f_\bullet$ to $g_\bullet$ is a morphism ${h_\bullet\colon X_\bullet\times \Delta[1]_\bullet\to Y_\bullet}$ satisfying $f_\bullet=h_\bullet\circ e_0$ and $g_\bullet=h_\bullet\circ e_1$, where $e_0, e_1$ are the maps defined above.

We say that $f_\bullet$ and $g_\bullet$ are {\em homotopic} if they are in the same class of the equivalence relation on maps of simplicial objects generated by simplicial homotopies. Given a morphism $f_\bullet\colon X_\bullet\to Y_\bullet$ of simplicial objects, a {\em homotopy inverse} of $f$ is a morphism $g_\bullet\colon Y_\bullet\to X_\bullet$ such that $f_\bullet\circ g_\bullet$ (resp.\ $g_\bullet\circ f_\bullet$) is homotopic to the identity map of $Y_\bullet$ (resp. $X_\bullet$). If such $f$ and $g$ exist, we say that $X_\bullet$ and $Y_\bullet$ are \emph{homotopy equivalent}.

\end{defi}

\begin{rema}\label{abelcathomotequiv}\rm
If $\mathcal{A}$ is an abelian category, then the existence of a simplicial homotopy between two simplicial maps $f_\bullet$ to $g_\bullet$ in $\mathcal A$ is already an equivalence relation.
See \cite{weibel}, Exercise 8.3.6.
\end{rema}

\subsection{Associated chain complexes}

We now investigate simplicial objects in abelian categories.

\begin{defi}\rm
Given a simplicial object $X_\bullet$ in an abelian category $\mathcal A$, we define its \emph{associated (unnormalized) chain complex} as the complex ${CX}_\bullet$ with ${CX}_n:=X_n$ in degree $n$ and with differential $d_n\colon X_n\to X_{n-1}$ defined by $$d_n:=\sum_{i=0}^n(-1)^i\partial_i.$$

This is indeed a chain complex by the first identity in Lemma \ref{simpob}. The \emph{normalized chain complex} of $X_\bullet$ is the chain complex ${NX}_\bullet$ with $$NX_n:=\bigcap_{0\leq i\leq n-1}\Ker(\partial_i),$$
where $\partial_i\colon X_n\to X_{n-1}$ is the $i$-th face map. The differential $NX_n\to NX_{n-1}$ is defined to be $(-1)^n\partial_n.$ The {\em homotopy groups} of $X_\bullet$ are given  by
$$
\pi_n(X_\bullet):=H_n(NX_\bullet).
$$\end{defi}

\begin{rema}\label{remnc}\rm  By (\cite{weibel}, Theorem 8.3.8), the natural inclusion $NX_\bullet\to CX_\bullet$ is a quasi-isomorphism. Therefore we also have $
\pi_n(X_\bullet)=H_n(CX_\bullet).
$
\end{rema}

The main theorem concerning the normalized chain complex is now the following.

\begin{theo}[Dold--Kan correspondence]\label{DoldKan} Let $\mathcal A$ be an abelian category.
The functor $N$ induces an equivalence of categories between the category of simplicial objects in $\mathcal{A}$ and that of nonnegatively graded homological chain complexes in $\mathcal{A}$.

Under this equivalence simplicial homotopies between simplicial maps correspond to chain homotopies on the associated normalized complexes.
\end{theo}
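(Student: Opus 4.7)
The plan is to construct an explicit inverse $K$ to the normalization functor $N$, and to show the two functors are quasi-inverse equivalences preserving the homotopy structures. The construction of $K$ rests on the observation that for any simplicial object $X_\bullet$ in $\mathcal A$ one has a canonical decomposition
\[
X_n \;\cong\; \bigoplus_{\eta:[n]\twoheadrightarrow [k]} NX_k,
\]
where the sum runs over surjective maps in $\Delta$. This identity suggests the following definition of $K$: given a nonnegatively graded chain complex $C_\bullet$, we set
\[
(KC)_n := \bigoplus_{\eta:[n]\twoheadrightarrow [k]} C_k
\]
and define the face and degeneracy operators by making each morphism $\gamma:[m]\to[n]$ in $\Delta$ act via the composition rule from $\Delta$: write $\gamma\circ\eta$ uniquely as $\eta'\circ\iota$ with $\eta'$ surjective and $\iota$ injective, then send the $\eta$-summand $C_k$ into the $\eta'$-summand $C_{k'}$ via $0$ unless $\iota=\id$ (sending to the same summand identically), or $\iota$ is a single face $\varepsilon_k:[k-1]\to[k]$ (sending via the differential with the appropriate sign), other cases being forced by the simplicial identities.

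Having defined $K$, the first key step is to verify the decomposition formula above, which I would call the \emph{Eilenberg--Zilber--Moore decomposition}. The proof proceeds by induction on $n$: one shows that $X_n$ splits as the internal direct sum of the degenerate part $DX_n := \sum_i \sigma_i(X_{n-1})$ and the normalized part $NX_n$, and then that $DX_n$ itself decomposes according to the surjections $[n]\twoheadrightarrow [k]$ for $k<n$ using the simplicial identities in Lemma \ref{simpob}. This is a careful but formal computation with the degeneracy operators; it is the real technical heart of the proof. Granted the decomposition, the identifications $N\circ K \cong \id$ and $K\circ N \cong \id$ follow, since by construction $N(KC)_n$ picks out the summand indexed by the identity $\id:[n]\to[n]$, which is precisely $C_n$, while the decomposition for $X_\bullet$ recovers $X_n$ from the terms of $NX_\bullet$.

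For the second statement, I would first rewrite a simplicial homotopy $h:X_\bullet\times\Delta[1]_\bullet\to Y_\bullet$ between maps $f,g$ as a collection of morphisms $h_i: X_n \to Y_{n+1}$ ($0\le i\le n$) satisfying the classical identities of a simplicial homotopy (see Lemma 8.3.11 of \cite{weibel}), derived by enumerating the nondegenerate simplices of $\Delta[1]_n$. Then set $s_n := \sum_i (-1)^i h_i : X_n \to Y_{n+1}$ and check, using the simplicial identities for the $h_i$ and the definition of $d$ on $CX_\bullet$, that $s$ is a chain homotopy from $Cf$ to $Cg$ on the unnormalized complexes. Composing with the quasi-isomorphism $N\hookrightarrow C$ (Remark \ref{remnc}) and using that $N$ is a direct summand of $C$ (since $C = N \oplus D$ with $D$ the degenerate subcomplex, which is nullhomotopic), transfers this to a chain homotopy on normalized complexes. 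Conversely, given a chain homotopy on $NX_\bullet$ one extends it to $CX_\bullet$ by zero on the degenerate part and then recovers the $h_i$ and hence the simplicial homotopy via the inverse direction of $N$-to-$C$; this uses Remark \ref{abelcathomotequiv} to handle the distinction between the equivalence relation generated by simplicial homotopies and an actual simplicial homotopy.

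The main obstacle is the decomposition $X_n = NX_n \oplus DX_n$ and its refinement into the sum over surjections, which is where all the combinatorial content lives. Once it is established, the rest of the argument is formal: the inverse functor $K$ is forced by the decomposition, the equivalence follows, and the homotopy correspondence reduces to the bookkeeping of combining the operators $h_i$ into a single chain-level map. A secondary subtlety is ensuring that chain homotopies on the \emph{normalized} complex correspond to simplicial homotopies (not just to their generated equivalence relation), which is precisely why Remark \ref{abelcathomotequiv} is recorded.
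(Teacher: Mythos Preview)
The paper does not give its own proof: it simply cites \cite{weibel}, Theorem 8.4.1, and then records the explicit formula for the quasi-inverse $K$ (the Kan transform). Your outline is precisely the standard argument found in that reference, so there is no substantive difference in approach to compare.

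Two small points worth cleaning up. First, your factorization is written backwards: with $\gamma:[m]\to[n]$ and $\eta:[n]\twoheadrightarrow[k]$ the relevant composite is $\eta\circ\gamma$, not $\gamma\circ\eta$, and one factors it as $\varepsilon'\circ\eta'$ with $\eta'$ surjective and $\varepsilon'$ injective. Second, the rule for when the differential appears is not ``$\iota$ is a single face $\varepsilon_k$'' but specifically $\varepsilon'=\varepsilon_0$ (the injective map missing $0$); this matches the sign convention $(-1)^n\partial_n$ in the definition of $N$ and is exactly how the paper states it in the paragraph following the theorem. With these corrections your sketch is correct and coincides with the cited proof.
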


\begin{dem}
See \cite{weibel}, Theorem 8.4.1.
\end{dem}

The quasi-inverse to the functor $N$ in the Dold--Kan correspondence is given by the {\em Kan transform} $KC_\bullet$ of a nonnegatively graded chain complex $C_\bullet$ in $\mathcal{A}$.  It is the simplicial object whose degree $n$ term is defined by
\begin{equation*}
KC_n:=\bigoplus_{p\leq n}\bigoplus_{\eta:[n]\to [p]\text{ surjective}}C_{p}\
\end{equation*}
and whose maps are defined as follows.
For a morphism $\alpha\colon [m]\to[n]$ of simplices and a surjective morphism $\eta\colon [n]\to [p]$ we may write the composite uniquely in the form $\eta\circ\alpha=\varepsilon'\circ\eta'$ where $\varepsilon'$ is injective and $\eta'$ is surjective. We define the morphism $KC(\alpha)\colon KC_n\to KC_m$ on the direct summand $C_p$ of $KC_n$ by
\begin{equation*}
KC_\alpha\mid_{C_p}:=\begin{cases}
\id_{C_p} &\text{if }\varepsilon'=\id_{[p]}\\
d\colon C_{p}\to C_{p-1}&\text{if }\varepsilon'=\varepsilon_0\\
0&\text{otherwise,}
\end{cases}
\end{equation*}
where $\varepsilon_0\colon [p-1]\to [p]$ denotes the unique injective morphism of simplices whose image avoids $0$.  \smallskip

\subsection{Bisimplicial objects}

We now turn to bisimplicial constructions.

\begin{defi}\rm
A bisimplicial object $X_{\bullet\bullet}$ in a category $\mathcal{C}$ is a simplicial object in the category of simplicial objects in $\mathcal{C}$. \end{defi}

We may regard $X_{\bullet\bullet}$ as a contravariant functor from $\Delta\times\Delta$ to $\mathcal{C}$. We have horizontal (resp.\ vertical) face maps $\partial_i^h\colon X_{pq}\to X_{p-1,q}$ (resp.\ $\partial_i^v\colon X_{pq}\to X_{p,q-1}$) and degeneracy maps $\sigma_i^h\colon X_{pq}\to X_{p+1,q}$ (resp.\ $\sigma_i^v\colon X_{pq}\to X_{p,q+1}$). These satisfy the simplicial identities horizontally and vertically, and horizontal operators commute with each vertical operators.

\begin{defi}\rm The \emph{diagonal} $X_{\bullet}^\Delta$ of a bisimplicial object $X_{\bullet\bullet}$ is the simplicial object obtained by composing the diagonal functor $\Delta\to \Delta\times\Delta$ with the functor $X$.
\end{defi}

Thus $X_n^\Delta=X_{nn}$ and the face (resp.\ degeneracy) operators are given by $\partial_i^\Delta=\partial_i^h\partial_i^v=\partial_i^v\partial_i^h$ (resp. $\sigma_i=\sigma_i^h\sigma_i^v=\sigma_i^v\sigma_i^h$).\smallskip

\begin{cons}\rm
We define the (unnormalized) first quadrant double complex $CX_{\bullet\bullet}$ associated with a bisimplicial object $X_{\bullet\bullet}$ in an abelian category $\mathcal A$ as follows. The horizontal differentials in the double complex are those of the chain complex coming from the horizontal face maps. The vertical differentials are those of the chain complex coming from the vertical face maps, multiplied by a factor $(-1)^p$ for a differential starting from $X_{pq}$.

\end{cons}

\begin{theo}[Eilenberg-Zilber]\label{eilenbergzilber}
Let $X_{\bullet\bullet}$ be a bisimplicial object in an abelian category $\mathcal{A}$. For all $n\geq 0$ there are natural isomorphisms $$\pi_n(X_\bullet^\Delta)\cong H_n \operatorname{Tot}(CX_{\bullet\bullet}),$$
where $\operatorname{Tot}(C_{\bullet\bullet})$ denotes the total complex associated with a double complex $C_{\bullet\bullet}$ (with the direct sum convention).
\end{theo}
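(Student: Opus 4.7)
The plan is to reduce to a statement about two naturally defined chain maps between $CX_\bullet^\Delta$ and $\operatorname{Tot}(CX_{\bullet\bullet})$ and show they are mutually inverse up to chain homotopy; combined with Remark \ref{remnc}, which identifies $\pi_n(X_\bullet^\Delta)$ with $H_n(CX_\bullet^\Delta)$, this will give the required isomorphism naturally in $X_{\bullet\bullet}$.

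First I would construct the \emph{Alexander--Whitney map} $\mathrm{AW}\colon CX_\bullet^\Delta\to\operatorname{Tot}(CX_{\bullet\bullet})$ by the explicit formula, on an element $x\in X_{nn}$,
\[
\mathrm{AW}(x) \;=\; \sum_{p+q=n}\, (\partial_0^v)^{p}\,\partial_{p+1}^h\partial_{p+2}^h\cdots\partial_{n}^h (x) \;\in\; X_{pq},
\]
where the $p$-th summand records the ``front $p$-face'' in the horizontal direction composed with the ``back $q$-face'' in the vertical direction. A direct check using the horizontal and vertical simplicial identities (Lemma \ref{simpob}) and the fact that horizontal and vertical operators commute shows that $\mathrm{AW}$ is a chain map, the sign $(-1)^p$ in the vertical differentials of $\operatorname{Tot}(CX_{\bullet\bullet})$ being exactly what is needed for compatibility.

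Next, I would construct the \emph{shuffle map} $\nabla\colon\operatorname{Tot}(CX_{\bullet\bullet})\to CX_\bullet^\Delta$ going the other way: for $x\in X_{pq}$,
\[
\nabla(x) \;=\; \sum_{(\mu,\nu)} \varepsilon(\mu,\nu)\,\sigma^v_{\nu_1}\cdots\sigma^v_{\nu_p}\,\sigma^h_{\mu_1}\cdots\sigma^h_{\mu_q}(x),
\]
with $(\mu,\nu)$ ranging over $(p,q)$-shuffles and $\varepsilon(\mu,\nu)$ their sign (this is the same combinatorial device as the shuffle product recalled before Proposition \ref{gridr}). Again using the simplicial identities, one checks that $\nabla$ is a chain map and that the composite $\mathrm{AW}\circ\nabla$ equals the identity on the nose: only the trivial shuffles survive the face operators in $\mathrm{AW}$.

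The remaining and main step is to show that the opposite composite $\nabla\circ\mathrm{AW}$ is chain homotopic to the identity on $CX_\bullet^\Delta$. I would do this by the method of acyclic models. The functors $X_{\bullet\bullet}\mapsto CX_\bullet^\Delta$ and $X_{\bullet\bullet}\mapsto\operatorname{Tot}(CX_{\bullet\bullet})$ from bisimplicial objects in $\mathcal A$ to chain complexes in $\mathcal A$ are both representable on the ``models'' $\Delta[p]\times\Delta[q]$ (applied termwise via free constructions), and both complexes are acyclic in positive degrees on these models, since $C\Delta[n]$ is chain-homotopic to the constant complex on a point by an explicit cone homotopy built from the top degeneracy operator. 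Both $\mathrm{AW}$ and $\nabla$ agree with the natural augmentation in degree $0$, so the acyclic models lemma provides natural chain homotopies $\mathrm{id}\simeq\nabla\circ\mathrm{AW}$ and $\mathrm{id}\simeq\mathrm{AW}\circ\nabla$. The hard part is really this chain-homotopy construction: verifying the acyclicity of the models in the bisimplicial total-complex setting (as opposed to the usual Eilenberg--Zilber setting for tensor products) is where the combinatorics of shuffles reappears, and one may alternatively write down a homotopy directly using a judicious iteration of degeneracy operators, but acyclic models provides the cleanest and most manifestly natural argument.
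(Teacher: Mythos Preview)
The paper does not give its own proof of this theorem: it simply refers the reader to \cite{weibel}, Theorem 8.5.1. Your proposal is a correct outline of the classical Eilenberg--Zilber argument via the Alexander--Whitney and shuffle maps together with acyclic models, which is essentially the content of the cited reference (and of the parallel treatments in Dold--Puppe and elsewhere). One small point worth tightening: the acyclic models argument is most cleanly carried out in the category of simplicial abelian groups, where the free objects on the models $\Delta[p]\times\Delta[q]$ exist; the resulting homotopy is then a universal formula in face and degeneracy operators with integer coefficients, and it is this formula that transports verbatim to an arbitrary abelian category $\mathcal{A}$. Your phrase ``applied termwise via free constructions'' gestures at this but would benefit from being made explicit, since a general $\mathcal{A}$ need not itself have the projectives that acyclic models requires.
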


\begin{proof}
See \cite{weibel}, Theorem\ 8.5.1.
\end{proof}

\subsection{Simplicial resolutions} Our definition for a simplicial resolution is as follows.

\begin{defi}\label{simpres}\rm
An augmented simplicial object $\epsilon:\,X_\bullet\to B$ in an abelian category is a {\em simplicial resolution} of $B$ if $\epsilon_0:\,X_0\to B$ is surjective and the associated chain complex of $X_\bullet$ is acyclic except in degree 0 where its homology is $B$.
\end{defi}

 As the associated chain complex of the constant simplicial object $B_\bullet$ is acyclic in positive degrees,  the augmentation map in a simplicial resolution $X_\bullet\to B$ induces a quasi-isomorphism $CX_\bullet\stackrel\sim\to CB_\bullet$.

To elucidate the homotopical nature of this definition in the case of abelian groups, we need the following notion.

\begin{defi}\rm
A morphism $X_\bullet\to Y_\bullet$ of simplicial sets is a \emph{trivial (Kan) fibration} if in every commutative solid diagram
$$\xymatrix{
Z_\bullet\ar[r]^{a_\bullet}\ar[d]& X_\bullet\ar[d]^{f_\bullet}\\
W_\bullet\ar[r]_{b_\bullet}\ar@{.>}[ru]& Y_\bullet
}$$
of simplicial sets such that for all $n\geq 0$ the maps $Z_n\to W_n$ are injective a dotted arrow exists making the diagram commutative.
\end{defi}

\begin{rema}\rm
In fact, it is enough to require the right lifting property of the above definition in the special case of the inclusions $\partial\Delta[n]_\bullet\to \Delta[n]_\bullet$, where $\partial\Delta[n]_\bullet$ is the {\em boundary} of the simplicial set $\Delta[n]_\bullet$ of Example \ref{deltan}. See (\cite{HA}, \S 2.2, Proposition 1) or  (\cite{stacks}, Tag 08NK, Lemma 14.30.2).
\end{rema}

\begin{prop}\label{kanhomotopy}
A trivial fibration $X_\bullet\to Y_\bullet$ of simplicial sets has a homotopy inverse.
\end{prop}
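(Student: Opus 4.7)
The plan is to exploit the lifting property of $f_\bullet$ twice: first to construct a section $g_\bullet\colon Y_\bullet\to X_\bullet$, and second to construct a simplicial homotopy between $g_\bullet\circ f_\bullet$ and $\mathrm{id}_{X_\bullet}$. For the section, I would apply the lifting property to the solid diagram whose upper-left corner is the empty simplicial set $\emptyset$, whose lower-left corner is $Y_\bullet$, whose top arrow is the empty map, and whose bottom arrow is $\mathrm{id}_{Y_\bullet}$. The inclusion $\emptyset\hookrightarrow Y_\bullet$ is trivially degreewise injective, so a lift $g_\bullet\colon Y_\bullet\to X_\bullet$ exists and automatically satisfies $f_\bullet\circ g_\bullet=\mathrm{id}_{Y_\bullet}$. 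Thus one direction of the homotopy equivalence is an honest equality.

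For the other direction I would apply the lifting property to a diagram built from the interval $\Delta[1]_\bullet$. Let $e_0,e_1\colon X_\bullet\to X_\bullet\times\Delta[1]_\bullet$ be the endpoint inclusions obtained from the two morphisms $[0]\to[1]$ via the construction preceding Definition of simplicial homotopy in the paper, so that in degree $n$ each $e_i$ identifies $X_n$ with the component of $\coprod_{u\in\Delta[1]_n}X_n$ indexed by the constant map at $i$. Since the constant maps at $0$ and $1$ are always distinct elements of $\Delta[1]_n$, the combined map $e_0\sqcup e_1\colon X_\bullet\sqcup X_\bullet\to X_\bullet\times\Delta[1]_\bullet$ is levelwise injective. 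Now form the commutative square whose top arrow is $(\mathrm{id}_{X_\bullet},g_\bullet\circ f_\bullet)\colon X_\bullet\sqcup X_\bullet\to X_\bullet$, whose right vertical is $f_\bullet$, whose left vertical is $e_0\sqcup e_1$, and whose bottom arrow is $f_\bullet\circ\mathrm{pr}_1\colon X_\bullet\times\Delta[1]_\bullet\to Y_\bullet$, where $\mathrm{pr}_1$ sends every component $X_n$ to $X_n$ via the identity. Commutativity on the two endpoint components reduces to the identities $f_\bullet=f_\bullet$ and $f_\bullet=f_\bullet\circ g_\bullet\circ f_\bullet$, both of which hold because $f_\bullet\circ g_\bullet=\mathrm{id}_{Y_\bullet}$. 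The trivial fibration property then produces a diagonal filler $h_\bullet\colon X_\bullet\times\Delta[1]_\bullet\to X_\bullet$, which by construction is precisely a simplicial homotopy from $\mathrm{id}_{X_\bullet}$ to $g_\bullet\circ f_\bullet$ in the sense defined in the paper.

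The main obstacle is purely bookkeeping: one must verify that $\mathrm{pr}_1$ and $e_0,e_1$ are genuine morphisms of simplicial objects for the product convention $(X\times U)_n=\coprod_{u\in U_n}X_n$, and that the square above commutes. Both verifications are immediate from the definition, because for $\gamma\colon[m]\to[n]$ the induced map carries the component indexed by $u\in U_n$ into the component indexed by $U(\gamma)(u)$ via $X(\gamma)$, and constant maps $[n]\to[1]$ pull back along any $\gamma$ to constant maps of the same value. No additional input beyond the lifting property is needed: the homotopy inverse is produced formally by the two applications of lifting against $\emptyset\hookrightarrow Y_\bullet$ and $X_\bullet\sqcup X_\bullet\hookrightarrow X_\bullet\times\Delta[1]_\bullet$.
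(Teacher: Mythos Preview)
Your argument is correct and is in fact the standard proof of this statement: first lift against $\emptyset\hookrightarrow Y_\bullet$ to obtain a section $g_\bullet$, then lift against the levelwise injection $X_\bullet\sqcup X_\bullet\hookrightarrow X_\bullet\times\Delta[1]_\bullet$ to produce the required simplicial homotopy between $\id_{X_\bullet}$ and $g_\bullet\circ f_\bullet$. The bookkeeping you flag (that $\mathrm{pr}_1$ and the $e_i$ are simplicial maps, and that the square commutes) is indeed routine from the definition of the product $X_\bullet\times U_\bullet$ given in the paper.

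The paper itself does not give a proof: it simply refers to the Stacks Project (Tag 08NK, Lemma 14.30.8). Your argument is essentially what one finds there, so you have supplied the details behind the citation rather than taken a different route.
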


\begin{proof}
See (\cite{stacks}, Tag 08NK, Lemma 14.30.8).
\end{proof}

This being said, we have:

\begin{prop}\label{kanabelian}
An augmented simplicial object $\epsilon:\,X_\bullet\to B$ in the category of abelian groups is a simplicial resolution if and only if the underlying morphism of simplicial sets is a trivial fibration.
\end{prop}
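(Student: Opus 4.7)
Both implications rest on the interplay between the lifting property in the simplicial-set category and the (normalized) chain complex homology, the bridge being Dold--Kan (Theorem~\ref{DoldKan}) and the reduction trick of passing to the kernel of $\epsilon$. I would organize the proof around the following reduction: the proposition for a general $\epsilon\colon X_\bullet\to B_\bullet$ is equivalent, via subtraction, to the special case $B=0$, which asserts that an augmented simplicial abelian group $K_\bullet\to 0$ is a trivial Kan fibration iff $CK_\bullet$ is acyclic.

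For $(\Leftarrow)$, the case $n=0$ of the right lifting property gives surjectivity of $\epsilon_0$, so one needs only to verify that $CX_\bullet$ is acyclic in positive degrees with $H_0=B$. By Proposition~\ref{kanhomotopy}, $\epsilon$ has a homotopy inverse $\tau\colon B_\bullet\to X_\bullet$ as simplicial sets; although $\tau$ need not be a homomorphism, the resulting simplicial set homotopies $\tau\epsilon\simeq\id_{X_\bullet}$, $\epsilon\tau\simeq\id_{B_\bullet}$, being expressible through the face and degeneracy operators, translate after the alternating-sum differentials into chain homotopies between the induced maps on associated chain complexes. Combined with Remark~\ref{remnc} (normalized versus unnormalized) this forces $H_*(CX_\bullet)\cong H_*(CB_\bullet)$, which is $B$ in degree $0$ and vanishes otherwise.

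For $(\Rightarrow)$, first observe that $\epsilon_n$ is surjective for every $n$: given $b\in B$ lift it to $x_0\in X_0$ by the degree-$0$ surjectivity and note $\epsilon_n(\sigma_0^n x_0)=\epsilon_0(x_0)=b$ by the augmentation axiom of Lemma~\ref{augment}. To verify the lifting property for $n\geq 1$, a compatible family $x_0,\dots,x_n\in X_{n-1}$ together with a target $b\in B$ can be handled as follows: pick any $y_0\in X_n$ with $\epsilon_n(y_0)=b$, replace the data by $\tilde x_i:=x_i-\partial_i y_0\in K_{n-1}$ where $K_\bullet:=\ker\epsilon$, and seek $z\in K_n$ with $\partial_i z=\tilde x_i$; then $y:=y_0+z$ solves the original problem. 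Thus everything is reduced to proving that the simplicial abelian group $K_\bullet$ has the right lifting property against every $\partial\Delta[n]\hookrightarrow\Delta[n]$. Its acyclicity comes from the long exact homology sequence attached to $0\to CK_\bullet\to CX_\bullet\to CB_\bullet\to 0$ (using the hypothesis on $CX_\bullet$ and the obvious fact that $H_*(CB_\bullet)$ is $B$ in degree $0$).

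The main obstacle is this last reduced step: an acyclic simplicial abelian group has the trivial Kan fibration property over a point. I would prove it by an inductive filling argument using the Moore-type subgroups
\[
M_j^K(m):=\bigcap_{i=0}^{j-1}\ker\bigl(\partial_i\colon K_m\to K_{m-1}\bigr),
\]
so that $M_m^K(m)=NK_m$. Given $\tilde x_0,\dots,\tilde x_n\in K_{n-1}$, one constructs $y^{(k)}\in K_n$ with $\partial_i y^{(k)}=\tilde x_i$ for $i\le k$, starting with $y^{(0)}:=\sigma_0\tilde x_0$ (which uses $\partial_0\sigma_0=\id$). At the inductive step, the simplicial identities $\partial_i\partial_{k+1}=\partial_k\partial_i$ for $i<k+1$ together with $\partial_i y^{(k)}=\tilde x_i$ and the compatibility of the $\tilde x_i$ show that $u:=\tilde x_{k+1}-\partial_{k+1}y^{(k)}$ lies in $M_{k+1}^K(n-1)$; one then needs to find $w\in M_{k+1}^K(n)$ with $\partial_{k+1}w=u$, so that $y^{(k+1)}:=y^{(k)}+w$ extends the matching. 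The required surjectivity of $\partial_{k+1}\colon M_{k+1}^K(n)\to M_{k+1}^K(n-1)$ follows, by downward induction on $j$ from $j=n$, from the acyclicity of $NK_\bullet=M_n^K$ (which equals that of $CK_\bullet$ by Remark~\ref{remnc}) combined with the short exact sequences
\[
0\to M_{j+1}^K(m)\to M_j^K(m)\xrightarrow{\partial_j} M_j^K(m-1)\to 0,
\]
whose splitting in each degree is provided by the degeneracy $\sigma_j$ via $\partial_j\sigma_j=\id$. This completes the filling and hence the proof.
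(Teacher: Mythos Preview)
The paper itself gives no argument here: it simply refers to Quillen's \emph{Homotopical Algebra} and to the Stacks Project. So there is nothing to compare against, and your attempt at a self-contained proof goes well beyond what the text does.

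Your $(\Rightarrow)$ direction is essentially correct. The reduction to the kernel $K_\bullet$ is the right move, and the inductive filling via the Moore subgroups $M_j^K(m)$ works. One small clarification: the surjectivity of $\partial_{k+1}\colon M_{k+1}^K(n)\to M_{k+1}^K(n-1)$ does not really need a downward induction. For $k+1\le n-1$ the degeneracy $\sigma_{k+1}$ gives a section directly; only the final step $k+1=n$ genuinely uses acyclicity, since there $M_n^K(n)=NK_n$, $M_n^K(n-1)=Z_{n-1}(NK_\bullet)$, and you are asking that every normalized cycle be a boundary.

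The $(\Leftarrow)$ direction, however, has a real gap. You invoke Proposition~\ref{kanhomotopy} to obtain a simplicial-set homotopy inverse $\tau$ of $\epsilon$, and then claim the resulting homotopies ``translate into chain homotopies between the induced maps on associated chain complexes''. But $\tau$ is not a group homomorphism, so there is no induced map $CB_\bullet\to CX_\bullet$ for the homotopy to be a chain homotopy \emph{to}; likewise $\tau\epsilon$ is not additive, so it does not give a chain endomorphism of $CX_\bullet$. Simplicial homotopies of \emph{sets} do not descend to the alternating-sum chain complex unless the maps involved are additive.

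The fix is to use exactly the reduction you already set up. Since trivial fibrations are stable under pullback, $K_\bullet=\ker\epsilon\to 0_\bullet$ is again a trivial Kan fibration. Now show directly that this forces $NK_\bullet$ to be acyclic: given a normalized cycle $v\in NK_n$ (so $\partial_i v=0$ for all $i$), the tuple $(0,\dots,0,v)$ defines a map $\partial\Delta[n+1]\to K_\bullet$, and the lifting property produces $z\in K_{n+1}$ with $\partial_i z=0$ for $i\le n$ and $\partial_{n+1}z=v$, i.e.\ $z\in NK_{n+1}$ bounds $v$. The long exact sequence attached to $0\to CK_\bullet\to CX_\bullet\to CB_\bullet\to 0$ then finishes the argument, with no appeal to Proposition~\ref{kanhomotopy} needed.
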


\begin{proof}
See (\cite{HA}, \S 2.3, Proposition 2) or  (\cite{stacks}, Tag 08NK, Lemmas 14.31.8 and 14.31.9).
\end{proof}

In particular, a simplicial resolution of abelian groups induces a homotopy equivalence of underlying simplicial sets (but not necessarily of simplicial abelian groups!).

\subsection{Derived functors of non-additive functors}

Simplicial methods may also be used to construct derived functors of not necessarily additive functors between abelian categories, following Dold and Puppe \cite{DP}.

Let $\mathcal{A}$ be an abelian category with enough projectives, and $F:\, {\mathcal A}\to\mathcal B$ a functor to another abelian category $\mathcal B$. For an object $A\in\mathcal A$ consider a projective resolution $P_\bullet\to A$. By the Dold--Kan correspondence (Theorem \ref{DoldKan}) the Kan transform $KP_\bullet\to A$ is a simplicial resolution of $A$ with projective terms. Set
\begin{equation}\label{ld}
L^iF(A):=H_i(NF(KP_\bullet)).
\end{equation}

\begin{lem}
The above definition does not depend on the choice of the projective resolution $P_\bullet$.
\end{lem}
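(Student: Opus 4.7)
The plan is to reduce the independence statement to two facts: (i) projective resolutions are unique up to chain homotopy, and (ii) simplicial homotopies are preserved by \emph{any} (not necessarily additive) functor, and translate through Dold--Kan to chain homotopies on normalized complexes.

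First I would recall that, given two projective resolutions $P_\bullet\to A$ and $P'_\bullet\to A$ in $\mathcal A$, standard homological algebra produces chain maps $f\colon P_\bullet\to P'_\bullet$ and $g\colon P'_\bullet\to P_\bullet$ lifting $\mathrm{id}_A$, such that $gf$ and $fg$ are chain homotopic to the respective identities. Under the Dold--Kan correspondence (Theorem \ref{DoldKan}) the functor $K$ converts these into simplicial maps $Kf$ and $Kg$ between $KP_\bullet$ and $KP'_\bullet$, together with simplicial homotopies witnessing that $Kf\circ Kg$ and $Kg\circ Kf$ are homotopic to the identities of $KP'_\bullet$ and $KP_\bullet$.

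The crucial intermediate step is to rephrase any such simplicial homotopy via a sequence of morphisms $h_i\colon X_n\to Y_{n+1}$ ($0\le i\le n$) satisfying a list of identities involving only face and degeneracy operators and compositions (the standard combinatorial description, see e.g. \cite{weibel}, \S 8.3). Since these identities are purely categorical, they persist after the application of any functor $F\colon\mathcal A\to\mathcal B$: applying $F$ termwise to $KP_\bullet$ and $KP'_\bullet$ yields simplicial objects in $\mathcal B$, and the morphisms $F(h_i)$ provide simplicial homotopies between $F(Kf)\circ F(Kg)$ and $F(\mathrm{id}_{KP'_\bullet})$, and symmetrically. Invoking Dold--Kan in the reverse direction then produces chain homotopies on the associated normalized complexes, so $NF(Kf)$ and $NF(Kg)$ are mutually inverse homotopy equivalences between $NF(KP_\bullet)$ and $NF(KP'_\bullet)$.

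Passing to homology yields the desired isomorphisms $H_i(NF(KP_\bullet))\cong H_i(NF(KP'_\bullet))$ for each $i$, and a routine check of functoriality (applied to the composition of two homotopy equivalences between three resolutions) shows that the isomorphism is canonical. The main obstacle will be the rephrasing in the middle step: the categorical definition of simplicial homotopy given in the text uses the coproduct-based object $X_\bullet\times\Delta[1]_\bullet$, and a non-additive $F$ need not preserve coproducts, so the homotopy data must be rewritten in a coproduct-free form \emph{before} $F$ is applied. Once that reformulation is in hand, everything else follows mechanically from the Dold--Kan dictionary.
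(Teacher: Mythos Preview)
Your proposal is correct and follows essentially the same route as the paper's proof: uniqueness of projective resolutions up to chain homotopy, transport through the Dold--Kan correspondence to simplicial homotopy equivalences, preservation of simplicial homotopies by an arbitrary functor $F$, and then Dold--Kan in reverse. The paper compresses all of this into four sentences and simply asserts that ``simplicial homotopies are preserved by arbitrary functors on simplicial objects''; you have usefully isolated the one point that actually requires care, namely that the coproduct-based definition of $X_\bullet\times\Delta[1]_\bullet$ is not obviously functorial under a non-additive $F$, and that one must first unpack the homotopy into the combinatorial data $h_i\colon X_n\to Y_{n+1}$ (or equivalently into the component maps indexed by $\Hom_\Delta([n],[1])$) before applying $F$.
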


\begin{dem}
Any two projective resolutions $P_\bullet$, $Q_\bullet$ of $A$ are chain homotopy equivalent. Therefore $KP_\bullet$ and $KQ_\bullet$ are simplicially homotopy equivalent by the Dold--Kan correspondence. As simplicial homotopies are preserved by arbitrary functors on simplicial objects, so are $F(KP_\bullet)$ and $F(KQ_\bullet)$. Reading the Dold--Kan correspondence backwards we see that $NF(KP_\bullet)$ and $NF(KQ_\bullet)$ are quasi-isomorphic.
\end{dem}

\begin{defi}\rm
We define the $i$-th left derived functor $L^iF$ of $F$ by means of formula (\ref{ld}) above. Similarly, we define right derived functors $R^iF$ for functors on abelian categories having enough injectives.
\end{defi}

\begin{remas}\label{derfunctrema}\rm ${}$

\noindent 1. In the case of an additive functor $F$ we recover the standard definition of derived functors as additive functors commute with the Kan transform.\smallskip

\noindent 2. More generally, we may define total left derived functors $LF:\, D^-({\mathcal A})\to D^-({\mathcal B})$ for non-additive $F$ and similarly for right derived functors. Instead of a projective resolution we start with a bounded above complex with projective terms representing an object in $D^-({\mathcal A})$, and then apply the functor $NFK$.

\end{remas}

\subsection{Application: derived exterior powers and divided powers} Important examples of non-additive functors are given by the exterior power functors $M\mapsto \wedge^nM$ on the category of modules over a commutative ring $A$. Another example is given by divided powers, as we now recall.

\begin{defi}\label{divpower}\rm
Let $A$ be a commutative ring, $M$ an $A$-module and $B$ an $A$-algebra. A {\em divided power structure} on $B$ by $M$ is given by a sequence of maps $\gamma_n\colon M\to B$  for each $n\geq 0$ satisfying
\begin{enumerate}
\item $\gamma_0(m)=1$
\item $\gamma_s(m)\gamma_t(m)=\binom{s+t}{s}\gamma_{s+t}(m)$
\item $\gamma_n(m+m')=\sum_{s+t=n}\gamma_s(m)\gamma_t(m')$
\item $\gamma_n(\lambda m)=\lambda^n\gamma_n(m)$
\end{enumerate}
for all $m\in M$, $\lambda\in A$, and $s,t,n\geq 0$.
\end{defi}

Note that if $n$ is such that $n!$ is invertible in $B$, the second relation forces $\gamma_n(m)={\gamma_1(m)^n}/{n!}$, whence the term `divided power structure'.

\begin{lem}\label{freepd}
Fix $A$ and $M$. The set-valued functor sending an $A$-algebra $B$ to the set of its divided power structures by $M$ is representable by an $A$-algebra $\Gamma^\bullet_A(M)$.
\end{lem}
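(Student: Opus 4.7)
The strategy is a standard ``generators and relations'' construction: build $\Gamma^\bullet_A(M)$ as an explicit quotient of a free commutative $A$-algebra and then check that it represents the functor tautologically.

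First I would introduce, for every pair $(n,m) \in \NN \times M$, a formal symbol $[m]_n$ and form the free commutative $A$-algebra $F := A\bigl[\{[m]_n\}_{n \geq 0,\, m \in M}\bigr]$. Let $I \subset F$ be the ideal generated by the elements
\begin{align*}
&[m]_0 - 1, \\
&[m]_s \cdot [m]_t - \tbinom{s+t}{s}[m]_{s+t}, \\
&[m+m']_n - \textstyle\sum_{s+t = n} [m]_s \cdot [m']_t, \\
&[\lambda m]_n - \lambda^n [m]_n
\end{align*}
as $m, m' \in M$, $\lambda \in A$ and $s,t,n \geq 0$ vary. Then I would set $\Gamma^\bullet_A(M) := F/I$ and, for each $n \geq 0$, define $\gamma_n \colon M \to \Gamma^\bullet_A(M)$ by sending $m$ to the class of $[m]_n$. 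By construction the $\gamma_n$ satisfy axioms (1)--(4) of Definition~\ref{divpower}, so $\Gamma^\bullet_A(M)$ comes equipped with a canonical divided power structure by $M$.

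For the universal property, given any $A$-algebra $B$ with a divided power structure $\{\gamma'_n \colon M \to B\}_{n \geq 0}$ by $M$, the assignment $[m]_n \mapsto \gamma'_n(m)$ extends uniquely to an $A$-algebra homomorphism $\widetilde{\varphi} \colon F \to B$ by freeness of $F$. The axioms of a divided power structure say precisely that $\widetilde{\varphi}(I) = 0$, so $\widetilde{\varphi}$ factors through a unique $A$-algebra map $\varphi \colon \Gamma^\bullet_A(M) \to B$ satisfying $\varphi \circ \gamma_n = \gamma'_n$ for all $n$. This establishes the bijection between $\Hom_{A\text{-alg}}(\Gamma^\bullet_A(M), B)$ and the set of divided power structures on $B$ by $M$, i.e.\ representability.

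There is no real obstacle: the construction is formal once the relations are written down correctly, and the verification of the universal property is immediate from the universal property of the free algebra and of quotients by ideals. The only mildly subtle point worth mentioning is that $\Gamma^\bullet_A(M)$ carries a natural grading (with $[m]_n$ in degree $n$, compatible with the relations, as each relation is homogeneous), which justifies the notation $\Gamma^\bullet_A(M)$; for the $i$-th graded piece $\Gamma^i_A(M)$ one recovers the classical divided power module of degree $i$, and when $M$ is free on a basis $(t_j)$ the algebra $\Gamma^\bullet_A(M)$ coincides with the ring $A\langle t_1, \dots, t_n, \dots\rangle$ appearing in the main text, so the notation is consistent with Subsection~2.3.
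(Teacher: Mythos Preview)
Your argument is correct and is exactly the approach taken in the paper: construct $\Gamma^\bullet_A(M)$ as the free $A$-algebra on symbols $\gamma_n(m)$ modulo the ideal generated by the four relations, whence representability is tautological. The paper's proof is a one-line sketch of precisely this, and your added remarks on grading match the observation made immediately after the lemma.
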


\begin{proof}
One constructs $\Gamma^\bullet_A(M)$ by taking the free $A$-algebra $A[\gamma_n(m)]$ on generators $\gamma_n(m)$ for all $n\geq 0$ and $m\in M$, and then taking the quotient by the above four relations.\end{proof}

Observe that $A[\gamma_n(m)]$ has a natural graded algebra structure in which the $\gamma_n(m)$  for fixed $n$ generate the degree $n$ component. As the relations are homogeneous, this induces a grading on $\Gamma^\bullet_A(M)$ whose degree $n$ component we denote by $\Gamma^n_A(M)$. We shall drop the subscript $A$ when clear from the context.

The functors $M\mapsto\Gamma^n(M)$ are also non-additive functors on the category of $A$-modules. Their derived functors are related to those of the exterior product functors by the following identity.

\begin{prop}[Quillen's shift formula]\label{quillen}
Let $E$ be a bounded above complex of $A$-modules. In the associated derived category we have isomorphisms $$L\wedge^n(E[1])\cong (L\Gamma^n (E))[n]$$ for all $n\geq 0$.
\end{prop}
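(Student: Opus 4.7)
The plan is to reduce to the case of a single free module in degree $0$, then carry out an explicit simplicial calculation realizing the Koszul-type duality between $\wedge^n$ and $\Gamma^n$.

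First, I would reduce to the case $E = M$ for a free $A$-module $M$ placed in degree $0$. By the Dold-Puppe recipe, both $L\wedge^n$ and $L\Gamma^n$ preserve quasi-isomorphisms between bounded above complexes with projective terms, so I may replace $E$ by a free resolution. Using the decompositions $\wedge^n(P \oplus Q) = \bigoplus_{i+j=n}\wedge^i P \otimes \wedge^j Q$ and $\Gamma^n(P \oplus Q) = \bigoplus_{i+j=n}\Gamma^i P \otimes \Gamma^j Q$, the two sides of the desired equivalence admit parallel filtrations with isomorphic graded pieces, so an inductive dévissage on the length of the resolution reduces the assertion to a free module placed in a single degree. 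A shift argument (combined with a bookkeeping check that both functors commute with this sort of regrading) then reduces further to degree $0$.

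Second, for a fixed free $M$ I would model $M[1]$ explicitly by the Kan transform: the simplicial module $X_\bullet$ with $X_0 = 0$ and $X_n = M^{\oplus n}$ for $n \geq 1$, indexed by the $n$ surjective nondecreasing maps $[n] \to [1]$, with face and degeneracy maps dictated by the Kan formula. Then $L\wedge^n(M[1])$ is computed by applying $\wedge^n$ termwise to $X_\bullet$ and passing to the normalized complex. The analogous calculation on the other side is trivial since $M$ is placed in degree $0$: $L\Gamma^n(M)[n] = \Gamma^n(M)[n]$.

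Third -- and this is the main content -- I would identify $\pi_\bullet(\wedge^n X_\bullet)$ with $\Gamma^n(M)[n]$. The decomposition $\wedge^n(M^{\oplus k}) = \bigoplus \wedge^{a_1}M \otimes \cdots \otimes \wedge^{a_k}M$, indexed by tuples $(a_1,\dots,a_k)$ with $\sum a_i = n$, refines $\wedge^n X_\bullet$ into a sum of sub-bicomplexes indexed by the underlying ordered partitions of $\{1,\dots,n\}$. Tracking the effect of the face operators shows that each piece assembles into a shifted Koszul-type resolution whose only surviving homology is in degree $n$, and multiplying the resulting classes via the graded-commutative structure on the Dold-Kan correspondent produces the divided-power relations $\gamma_s(m)\gamma_t(m) = \binom{s+t}{s}\gamma_{s+t}(m)$ rather than the plain symmetric-algebra relations, identifying $\pi_n$ with $\Gamma^n(M)$.

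The main obstacle will be this last step: the explicit computation of $\pi_n(\wedge^n X_\bullet) \cong \Gamma^n(M)$ together with the acyclicity in other degrees. Conceptually, this is the algebraic manifestation of Koszul duality between the exterior and divided power algebras (equivalently, a characteristic-free version of Cartan's computation of the cohomology of $K(M,1)$), and a conceptual shortcut would be to identify the free simplicial graded-commutative $A$-algebra on $M[1]$ with the Dold-Kan correspondent of the full divided power algebra $\Gamma^\bullet M$ (suitably graded); this would package the shift formula in all degrees $n \geq 0$ at once and sidestep the piece-by-piece combinatorics.
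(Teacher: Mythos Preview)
Your approach differs from the paper's, and the reduction step has a real gap.

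The paper never reduces to a single module. It works with an arbitrary bounded above free complex $E$ directly, via the cone sequence $0\to E\to C(E)\to E[1]\to 0$. After applying the Kan transform this is a degreewise split short exact sequence of simplicial free modules, and the paper feeds it into a Koszul-type lemma: for any short exact sequence $0\to E\to F\to G\to 0$ of flat modules the complex
\[
0\to \Gamma^n(E)\to \Gamma^n(F)\to \Gamma^{n-1}(F)\otimes G\to\cdots\to \Gamma^1(F)\otimes\wedge^{n-1}G\to \wedge^n G\to 0
\]
is exact (proved by reducing via direct sums to the two trivial sequences $0\to A\to A\to 0\to 0$ and $0\to 0\to A\to A\to 0$). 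Applied termwise and normalized, this yields an exact sequence of complexes whose endpoints are $N\Gamma^n KE$ and $N\wedge^n K(E[1])$; the interior terms all contain a tensor factor $\Gamma^p KC(E)$, and since $C(E)$ is contractible these are acyclic. The two ends are therefore quasi-isomorphic up to the shift by $n$, in one stroke.

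Your ``shift argument'' in step 1 is circular. After d\'evissage you are left with $E=P[k]$ for $P$ free and $k$ varying, and you propose to reduce to $k=0$ by asserting that $L\wedge^n$ and $L\Gamma^n$ ``commute with this sort of regrading.'' But the shift formula \emph{is} a statement about how $L\wedge^n$ interacts with a shift by one; passing from $k$ to $k-1$ requires exactly the identity you are trying to prove, together with its companion $L\Gamma^n(E[1])\cong L\wedge^n(E)[n]$ to handle the other parity. The d\'evissage itself also silently presupposes a Koszul-type filtration on $L\wedge^n$ of an extension --- essentially the paper's lemma again --- so the key input is hidden rather than avoided.

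Your step 3, computing $\pi_\bullet(\wedge^n K(M[1]))\cong\Gamma^n(M)[n]$ by hand for a single free $M$, is a legitimate and classical route to the case $E=M$, but you acknowledge it is the hard part and do not carry it out; and even granting it, it covers only degree $0$. The cone argument is both shorter and uniform in $E$: the explicit combinatorics you anticipate in step 3 are entirely absorbed into the elementary rank-one check of the Koszul lemma.
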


We give a proof for the sake of completeness, based on Quillen's ideas sketched in \cite{quillennotes} and \cite{I1}.

The proof uses an auxiliary construction. Assume given a sequence $E\overset{u}{\to} F\overset{v}{\to} G$ of $A$-module homomorphisms with $v\circ u=0$. To these data we associate a complex of $A$-modules
\begin{equation}
0\to \Gamma^n (E)\to \Gamma^n (F)\to \Gamma^{n-1}(F)\otimes G\to\dots\to \Gamma^1(F)\otimes\wedge^{n-1}G\to\wedge^n G\to 0\label{degreenkoszul}
\end{equation}
for all $n\geq 0$ as follows. The differential $\Gamma^n(E)\to\Gamma^n(F)$ is induced by $u$. For $i\geq 0$ the differentials $d:\,\Gamma^{n-i}(F)\otimes \wedge^iG\to \Gamma^{n-i-1}(F)\otimes \wedge^{i+1} G$ are defined by setting $$d(\gamma_{n-i}(x)\otimes 1):=\gamma_{n-i-1}(x)\otimes v(x),\quad d(1\otimes y):=0$$ for $x\in F$ and $y\in G$ and extending by linearity. We obtain a complex in view of the relations $v\circ u=v\wedge v=0$.

\begin{lem}\label{shortexactkoszul} Assume given a short exact sequence
\begin{equation}\label{sex}
0\to E\to F\to G\to 0
\end{equation}
of flat $A$-modules. The associated complexes (\ref{degreenkoszul}) are exact for all $n\geq 0$.
\end{lem}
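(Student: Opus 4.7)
The plan is to prove the lemma in two main stages: first reduce to the case where the exact sequence splits, and then handle the split case by a Künneth-type decomposition that ultimately reduces to a Poincaré-lemma-type statement for a divided power de Rham complex in rank one.

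Both $\Gamma^n$ and $\wedge^n$ are polynomial functors, so they preserve flatness and commute with filtered colimits of $A$-modules. A short exact sequence of flat $A$-modules is pure exact, hence a filtered colimit of split exact sequences of finitely generated free $A$-modules (this is Lazard's theorem together with purity). Since filtered colimits are exact and commute with the constructions entering the complex (A.1), I would reduce to the case in which $F = E \oplus G$ with $u$ the inclusion and $v$ the projection and all three modules finitely generated free. In this situation the divided power Künneth isomorphism
$$ \Gamma^{m}(E \oplus G) \cong \bigoplus_{a+b=m} \Gamma^{a}(E) \otimes \Gamma^{b}(G) $$
endows each term of (A.1) with an "$E$-degree" bigrading, and a direct computation using the Leibniz rule (together with the vanishing $v \circ u = 0$) shows that the differential of (A.1) preserves this bigrading.

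For each fixed $a$, the $a$-isotypic piece of the part $\Gamma^{n}(F) \to \cdots \to \wedge^{n} G$ is then canonically isomorphic to $\Gamma^{a}(E) \otimes K^{\bullet}(G, n-a)$, where
$$ K^{\bullet}(G, m) := \Bigl(\Gamma^{m}(G) \to \Gamma^{m-1}(G) \otimes G \to \cdots \to \wedge^{m}(G)\Bigr) $$
is the "divided power Koszul complex" of degree $m$ in $G$, with differential extending $\gamma_{k}(x) \mapsto \gamma_{k-1}(x) \otimes x$ by Leibniz. The component $a = n$ contributes $\Gamma^{n}(E)$ in degree $0$ only, matching precisely the image of the map $\Gamma^{n}(E) \to \Gamma^{n}(F)$ at the beginning of (A.1); the other components are nontrivial only in positive homological degrees. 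Consequently exactness of (A.1) is equivalent to the acyclicity of $K^{\bullet}(G, m)$ for every $m \geq 1$ and every finitely generated free $G$.

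For this last step, I would proceed by induction on the rank of $G$. The direct sum $\bigoplus_{m \geq 0} K^{\bullet}(G, m)$ is nothing but the divided power de Rham complex $\Gamma^{\bullet}(G) \otimes \wedge^{\bullet}(G)$, and for $G = G_1 \oplus G_2$ the divided power and exterior Künneth isomorphisms give a canonical factorisation
$$ \Gamma^{\bullet}(G) \otimes \wedge^{\bullet}(G) \cong \bigl(\Gamma^{\bullet}(G_1) \otimes \wedge^{\bullet}(G_1)\bigr) \otimes_{A} \bigl(\Gamma^{\bullet}(G_2) \otimes \wedge^{\bullet}(G_2)\bigr) $$
as differential graded $A$-algebras. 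Thus the induction reduces to the case $G = Ae$ of rank one, where for each $m \geq 1$ only two terms survive and $K^{\bullet}(G, m)$ collapses to the two-term complex $A \gamma_{m}(e) \to A \gamma_{m-1}(e) \otimes e$ with differential $\gamma_{m}(e) \mapsto \gamma_{m-1}(e) \otimes e$, visibly an isomorphism. Since the terms are free, Künneth shows that the total complex has cohomology $A$ concentrated in degree zero, which is entirely accounted for by the summand $K^{\bullet}(G, 0) = A$; hence $K^{\bullet}(G, m)$ is acyclic for all $m \geq 1$. The main bookkeeping obstacle is to make precise that the differential, which is defined on generators $\gamma_{k}(x) \otimes y$, extends unambiguously to a derivation of the bigraded algebra $\Gamma^{\bullet}(F) \otimes \wedge^{\bullet}(G)$ compatible with the Künneth decomposition; this is a careful but routine unwinding of the divided-power axioms.
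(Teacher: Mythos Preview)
Your argument is correct and follows essentially the same strategy as the paper: reduce to the split finitely generated free case by a limit argument, then use a K\"unneth decomposition to reduce to rank one. The organization differs slightly: the paper decomposes the split sequence $0\to E\to F\to G\to 0$ directly as a sum of rank-one sequences of the two types $0\to A\to A\to 0\to 0$ and $0\to 0\to A\to A\to 0$, checks exactness of (\ref{degreenkoszul}) for each of these by hand, and then invokes K\"unneth once; you instead first strip off the $E$-part via the $E$-degree bigrading (which amounts to handling all the first-type summands simultaneously) and then run the K\"unneth induction on the rank of $G$ alone, reducing to the divided-power Poincar\'e lemma $K^\bullet(Ae,m)$. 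Both routes unwind the same tensor decomposition of $\Gamma^\bullet\otimes\wedge^\bullet$; yours has the mild advantage of isolating the acyclic complex $K^\bullet(G,m)$ as an object of independent interest, while the paper's is marginally quicker since it avoids setting up the bigrading explicitly.
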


\begin{proof} To begin with, the lemma holds in the special cases
\begin{align}
& 0\to A\to A\to 0\to 0,\label{aa1}\\
& 0\to 0\to A\to A\to 0.\label{aa2}
\end{align}
In the first case the complexes (\ref{degreenkoszul}) reduce to the isomorphisms $\Gamma^n(A)\cong \Gamma^n(A)$ and in the second case one has to check that the maps $\Gamma^n(A){\to} \Gamma^{n-1}(A)\otimes A$ induced by $\gamma_n(a)\mapsto \gamma_{n-1}\otimes a$ are isomorphisms. As these are nonzero maps of free $A$-modules of rank 1, the statement follows.

Next one deals with the case where $E$, $F$ and $G$ are finitely generated and free over $A$. In this case the short exact sequence (\ref{sex}) splits, and therefore we may write it as a finite direct sum of short exact sequences of the form (\ref{aa1}) and (\ref{aa2}). Starting from these special cases, one proves the proposition by induction on the sum of the ranks by checking that the complex (\ref{degreenkoszul}) associated with a direct sum of two short exact sequences is a direct sum of tensor products of complexes of type (\ref{degreenkoszul}) associated with the individual short exact sequences. The lemma then follows by the K\"unneth formula for complexes of free modules. Finally, the general case follows by writing a short exact sequence of flat modules as a direct limit of sequences of finitely generated free modules.
\end{proof}

\noindent{\em Proof of Proposition \ref{quillen}.} Replacing $E$ by a quasi-isomorphic complex of free modules, we may assume that $E$ has free terms. Consider the short exact sequence
$$0\to E\to C(E)\to E[1]\to 0$$
of complexes, where $C(E)$ is the cone of the identity map of $E$. By taking Kan transforms we obtain a short exact sequence  $$0\to KE\to KC(E)\to K(E[1])\to 0$$ of simplicial $A$-modules with free terms. Applying Lemma \ref{shortexactkoszul} in each degree and taking associated normalized complexes, we obtain an exact sequence
\begin{align}
0\to N\Gamma^n KE\to N\Gamma^n KC(E)\to N(\Gamma^{n-1}KC(E)\otimes K(E[1]))\to\dots\notag\\
\dots\to N(\Gamma^1KC(E)\otimes\wedge^{n-1}K(E[1]))\to N\wedge^n K(E[1])\to 0\label{normalkandegreenkoszul}
\end{align}
of complexes of $A$-modules. Since $C(E)$ is an acyclic complex with free terms, we may view $KC(E)$ as a free simplicial resolution of the zero module. Thus by definition $N\Gamma^nKC(E)=L^n\Gamma(0)=0$. Applying Lemma \ref{flattensorquasi} with $E_\bullet=\Gamma^pKC(E)$, $F_\bullet=0$, and $L_\bullet=\wedge^{n-p}K(E[1])$ we obtain that all complexes in the middle of \eqref{normalkandegreenkoszul} are acyclic. It remains to note that by definition $L\Gamma^n E= N\Gamma^n KE$ and $L\wedge^n(E[1])=N\wedge^n K(E[1])$.
\enddem

We finish this subsection by constructing of a filtration on higher derived functors of the exterior product attached to short exact sequences of modules.

\begin{cons}\label{lwedgefilt}\rm Given an exact sequence
$$0\to M'\to M\to M''\to 0$$
of flat modules over a ring $A$, define an increasing filtration $I_a\wedge^i(M)$ on $\wedge^i(M)$ by setting
$$
I_a\wedge^i M:=\mathrm{Im}\left(\wedge^{i-a} M'\otimes \wedge^a M\to \wedge^i M\right)\ .
$$
We then have a natural map
$$
\wedge^{i-a} M'\otimes \wedge^a M''\to {\rm gr}^I_a\wedge^i M.
$$
In case the exact sequence splits, this map is an isomorphism and the wedge product decomposes as a direct sum
$$
\bigoplus_a\wedge^{i-a} M'\otimes \wedge^a M'' \cong\wedge^i M.
$$
We construct a derived version of this filtration as follows. Choose a projective resolution $P'_\bullet$ (resp.\ $P''_\bullet$) of $M'$ (resp.\ of $M''$). By the Horseshoe Lemma (\cite{weibel}, Proposition 2.2.8) there is a projective resolution $P_\bullet$ of $M$ fitting in a short exact sequence
$$ 0\to P'_\bullet\to P_\bullet\to P''_\bullet\to 0$$
of complexes. Applying the Kan transform gives a short exact sequence
$$ 0\to KP'_\bullet\to KP_\bullet\to KP''_\bullet\to 0$$
of simplicial $A$-modules. So for each $n$ we have a map $\wedge^{i-a} KP'_n\otimes \wedge^a KP_n\to \wedge^i KP_n$ giving rise to a map
$$\wedge^{i-a} KP'_\bullet\otimes \wedge^a KP_\bullet\to \wedge^i KP_\bullet$$
of simplicial $A$-modules. Passing to the normalized chain complex yields a filtration
$$
I_aL\wedge^i M :=\mathrm{Im}\left(L\wedge^{i-a} M' \otimes^{\bf L} L\wedge^a M \to L\wedge^i M \right)
$$
with analogous splitting properties since by definition $L\wedge^i M$ is represented by the chain complex $N\wedge^i KP_\bullet$ in the derived category of $A$-modules. A similar construction holds for sheaves of modules.
\end{cons}

\subsection{Cohomological descent}

In this subsection and the next, we give a utilitarian summary of the results of cohomological descent we need. The basic reference is \cite{sga4} but the notes of Conrad \cite{conraddescent} and Laszlo \cite{laszlo} are much more readable. There is also a brief summary in \cite{Hodge3}.

Let $X_\bullet$ be a simplicial object in the category of topological spaces, or the category of schemes equipped with  a Grothendieck topology. A {\em simplicial abelian sheaf} on $X_\bullet$ is given by an abelian sheaf $\calf^n$ on each $X_n$ together with morphisms $[\phi]:\,X(\phi)^*\calf^n\to\calf^m$ for each $\phi:\, [n]\to [m]$ in $\Delta$ subject to the compatibility conditions $[\phi]\circ X(\phi)^*[\psi]=[\phi\circ\psi]$ for all composable pairs $\phi, \psi$ of morphisms in $\Delta$. (Recall that $X(\phi)$ is the morphism $X_m\to X_n$ induced by $\phi:\, [n]\to [m]$.)

Given an augmented simplicial space $\varepsilon:\,X_\bullet\to S$, we have a natural pullback functor $\varepsilon^*$ from the category of sheaves on $S$ to simplicial sheaves on $X_\bullet$ induced by termwise pullback via the morphism of simplicial spaces $X_\bullet\to S_\bullet$ corresponding to $\varepsilon$. The functor $\varepsilon^*$ has a right adjoint $\varepsilon_*$ sending a simplicial sheaf $\calf^\bullet$  on $X_\bullet$ to $\ker((\sigma_0-\sigma_1):\, \varepsilon_{0*}\calf^0\to \varepsilon_{1*}\calf^1)$. The functor $\varepsilon^*$ is exact and $\varepsilon_*$ is left exact, giving rise to a total derived functor $R\varepsilon_*$.

\begin{defi}\rm The augmented simplicial space  $\varepsilon:\,X_\bullet\to S$ satisfies cohomological descent if the adjunction map $\id\to R\varepsilon_*\circ\varepsilon^*$ is an isomorphism.\end{defi}

Define the functor $\Gamma(X_\bullet, \cdot)$ by sending a simplicial abelian sheaf $\calf^\bullet$ on $X_\bullet$ to $\Gamma(S, \varepsilon_*\calf)$. The adjunction map induces a morphism
$$
R\Gamma(S, \calf)\to R\Gamma(S, R\varepsilon_*\varepsilon^*\calf)\stackrel\sim\to R(\Gamma(S, \cdot)\circ\varepsilon_*)(\varepsilon^*\calf)=R\Gamma(X_\bullet, \varepsilon^*\calf)
$$
for an abelian sheaf $\calf$ on $S$. If cohomological descent holds, the first map is also an isomorphism, and we obtain an isomorphism
$$
R\Gamma(S, \calf)\stackrel\sim\to R\Gamma(X_\bullet, \varepsilon^*\calf).
$$
Moreover, we have a spectral sequence
$$
E_1^{pq}=H^q(X_p, \varepsilon^*_p\calf)\Rightarrow H^{p+q}(S,\calf).
$$
This construction extends to objects of the bounded below derived category $D^+(S)$. For details, see e.g. \cite{conraddescent}, Theorem 6.11.

\subsection{Hypercoverings} The method of hypercoverings enables one to construct augmented simplicial objects satisfying cohomological descent.

To define hypercoverings, we first need the notion of (co)skeleta. For an integer $n\geq 0$ denote by $\Delta_n$ the full subcategory of $\Delta$ spanned by objects $[m]$ for $m\leq n$. An {\em n-truncated} simplicial object in a category $\mathcal C$ is a contravariant functor $\Delta_n\to {\mathcal C}$. These form a category ${\rm Simp}_n({\mathcal C})$. These notions have obvious augmented and cosimplicial variants.

For each $n\geq 0$ there is a natural functor
$$
{\rm sk}_n:\,{\rm Simp}({\mathcal C})\to {\rm Simp}_n({\mathcal C})
$$
induced by restriction of functors to $\Delta_n$. It is called the {\em skeleton functor}. When $\mathcal C$ admits finite inverse limits, these functors have right adjoints
$$
{\rm cosk}_n:\,{\rm Simp}_n({\mathcal C})\to {\rm Simp}({\mathcal C})
$$
called coskeleton functors. See e.g. \cite{conraddescent}, \S 3 for an exhaustive discussion. Given an object $X_\bullet$ in ${\rm Simp}_n({\mathcal C})$, the degree $p$ term ${\rm cosk}_n{(X_\bullet)}_p$ is given by the finite inverse limit $\limproj X_q$ indexed by maps $[q]\to [p]$ for $q\leq n$.

\begin{defi}\rm
Let $\mathcal C$ be the category of topological spaces, or the category of schemes equipped with  a Grothendieck topology. Assume given a class $P$ of morphisms stable under base change and composition and containing isomorphisms in $\mathcal C$. An augmented simplicial object $X_\bullet\to S$ in $\mathcal C$ is a $P$-hypercovering if for all $n\geq -1$ the adjunction maps
$$
X_\bullet \to {\rm cosk}_n({\rm sk}_n(X_\bullet))
$$
are given in degree $n+1$ by a map $$
X_{n+1} \to {\rm cosk}_n({\rm sk}_n(X_\bullet))_{n+1}
$$
that lies in $P$. (Here $S$ is in degree $-1$ by convention.)
\end{defi}

\begin{theo}\label{descent}  A $P$-hypercovering $X_\bullet\to S$ satisfies cohomological descent in each of the following cases.
\begin{enumerate}
\item $\mathcal C$ is the category of topological spaces or the category of schemes equipped with  a Grothendieck topology and $P$ is the class of surjective covering maps in this topology.
\item $\mathcal C$ is the category of topological spaces and $P$ is the class of proper surjective maps.
\item $\mathcal C$ is the category of schemes equipped with the \'etale topology, and $P$ is the class of proper surjective maps, provided we restrict to torsion sheaves.
\item In the previous situation we may also take for $P$ the class of maps that are composites of proper surjective maps and \'etale coverings.  In the topological situation we may take for $P$ the maps that are composites of proper surjective maps and open coverings.
\end{enumerate}
\end{theo}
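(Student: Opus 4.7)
The plan is to reduce cohomological descent for a $P$-hypercovering to the case of a single $P$-covering (its $0$-coskeleton), and then prove the single-covering case by methods adapted to each topology.

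I would start by recalling the spectral sequence
$$E_1^{p,q}=R^q\varepsilon_{p*}\varepsilon_p^*\calf\Rightarrow R^{p+q}\varepsilon_*\varepsilon^*\calf$$
attached to the augmentation $\varepsilon\colon X_\bullet\to S$, so that cohomological descent amounts to the assertion that $\calf\to R\varepsilon_*\varepsilon^*\calf$ is a quasi-isomorphism for every abelian sheaf $\calf$ on $S$. An induction on the simplicial degree (following \cite{sga4}, V.bis, or \cite{conraddescent}, \S7) uses the hypothesis that each transition morphism $X_{n+1}\to(\mathrm{cosk}_n\,\mathrm{sk}_n\,X_\bullet)_{n+1}$ lies in $P$, together with the stability of $P$ under base change, to reduce the general hypercovering case to showing descent for each such individual $P$-morphism; equivalently, to the case $X_\bullet=\mathrm{cosk}_0(X_0/S)$ associated with a single $P$-morphism $X_0\to S$.

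For this reduced case, I would treat the four settings separately. In setting (1), descent for the \v Cech nerve of a single covering is the sheaf axiom in degree zero together with the standard vanishing of \v Cech cohomology on a refinement, which is the classical local-to-global computation. In settings (2) and (3), I would invoke proper base change (in the classical topology for (2), and in \'etale cohomology for torsion sheaves for (3)) to identify the stalks of $R^q\varepsilon_{p*}\varepsilon_p^*\calf$ at a geometric point $s\in S$ with cohomology of the iterated fibre products of $X_0\to S$ over $s$. Over such a fibre, $\mathrm{cosk}_0(X_{0,s}/\{s\})$ is the \v Cech nerve of a surjection onto a point, and the associated simplicial abelian sheaf admits a canonical contracting homotopy (induced by the diagonal $X_{0,s}\to X_{0,s}^{n+1}$, or equivalently from the fact that in the language of Proposition \ref{kanabelian} the Kan augmentation is a trivial fibration), yielding the required acyclicity. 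Setting (4) follows formally, since a composite of morphisms of effective cohomological descent is again of effective cohomological descent, and one can decompose a coskeleton in the enlarged class into a bisimplicial object treated by cases (1) and (2)/(3) in the two directions.

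The main obstacle I expect is the rigorous application of proper base change throughout the coskeleton tower in setting (3): one must maintain the torsion hypothesis at each stage of the iterated fibre products and verify that the fibrewise identification is compatible with the spectral sequence machinery of the reduction step. The restriction to torsion sheaves is unavoidable because proper base change fails in general for non-torsion \'etale sheaves. In comparison, setting (2) relies on the classical proper base change for locally compact Hausdorff spaces, which is comparatively elementary, and setting (1) is essentially formal once the sheaf axiom is invoked.
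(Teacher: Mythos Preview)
The paper does not give its own proof of this theorem: it simply refers to the literature, specifically \cite{conraddescent}, Theorems 7.7 and 7.10 (and implicitly \cite{sga4}, Expos\'e V$^{\mathrm{bis}}$). Your outline is the standard strategy found in those references: reduce a $P$-hypercovering to the $0$-coskeleton of a single $P$-map by induction on the simplicial degree using stability of $P$ under base change, then handle the single-map case via the sheaf axiom in setting (1) and via proper base change plus a contracting homotopy on the fibrewise \v Cech nerve in settings (2) and (3), and deduce (4) by composition. So your approach is correct and is essentially the one the cited sources carry out; there is nothing to compare against in the paper itself beyond the bare citation.
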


For the proof, see the references cited above, more specifically \cite{conraddescent}, Theorems 7.7 and 7.10.

Hypercoverings can also be used to compute sheaf cohomology by a generalization of the \v Cech method.

\begin{theo}\label{limitdescent} Let $S$ be a topological space (resp. a scheme), and ${\mathcal C}_S$ the category of spaces (resp. schemes) over $S$. Assume ${\mathcal C}_S$ is equipped with a Grothendieck topology which in the topological case is the classical one, and let $P$ be the class of surjective covering maps.

The system of $P$-hypercoverings $\varepsilon:\,X_\bullet\to S$ form a filtered inverse system indexed by the homotopy classes of simplicial maps between them. Given an abelian presheaf $\calf$ on ${\mathcal C}_S$ with associated sheaf $\calf^\sharp$, we have canonical isomorphisms
$$
H^i(S, \calf^\sharp)\cong \limind H^i(C(\calf(X_\bullet)))
$$
for all $i>0$, where the direct limit is taken over the dual of the above inverse system.
\end{theo}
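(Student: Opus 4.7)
\textbf{Proof plan for Theorem \ref{limitdescent}.}

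My plan is to establish the isomorphism by exhibiting both sides as universal cohomological $\delta$-functors in the presheaf variable $\calf$, agreeing canonically in degree zero. This is the standard Verdier approach to hypercoverings, and I will organize the argument around the functor
$$T^i(\calf):=\limind H^i(C(\calf(X_\bullet))),$$
the direct limit being taken over the (dual of the) homotopy category of $P$-hypercoverings.

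First I would set up the indexing category. Given two $P$-hypercoverings $X_\bullet\to S$ and $Y_\bullet\to S$, one constructs a common refinement $Z_\bullet$ level by level, at each stage choosing a $P$-cover of the fibre product $X_n\times_S Y_n$ that dominates the coskeletal constraint imposed by the lower-degree data; this works because $P$ is stable under composition and base change. Thus the category of $P$-hypercoverings is cofiltered. Next, if $f_\bullet,g_\bullet\colon X_\bullet\to Y_\bullet$ are simplicially homotopic, then applying the (additive) functor $\calf$ we obtain two homotopic maps of simplicial abelian groups, which by Remark \ref{abelcathomotequiv} and the Dold--Kan correspondence (Theorem \ref{DoldKan}) induce chain-homotopic maps on the associated normalized complexes, and hence (using Remark \ref{remnc}) on the unnormalized complexes $C(\calf(X_\bullet))$ and $C(\calf(Y_\bullet))$. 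So $T^i$ is well-defined on the homotopy category.

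Second, I would identify $T^0$ with $\Gamma(S,\calf^\sharp)$. By definition $H^0(C(\calf(X_\bullet)))$ is the equalizer of $\calf(X_0)\rightrightarrows \calf(X_1)$; this is exactly the group of matching families for the covering sieve generated by $X_0\to S$ modulo the gluing relations imposed by $X_1\to X_0\times_S X_0$. Since every sieve is refined by a hypercovering and conversely every hypercovering refines a sieve in degree $0$ and $1$, the filtered colimit over all hypercoverings coincides with the colimit computing sections of the sheafification, giving $T^0(\calf)\cong\calf^\sharp(S)=\Gamma(S,\calf^\sharp)$.

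Third, I would verify that $(T^i)_{i\geq 0}$ is a cohomological $\delta$-functor in the presheaf variable. For a short exact sequence of presheaves $0\to\calf'\to\calf\to\calf''\to 0$, evaluating termwise on each $X_n$ produces a short exact sequence of simplicial abelian groups (exactness of presheaves is pointwise), whence a short exact sequence of complexes $0\to C(\calf'(X_\bullet))\to C(\calf(X_\bullet))\to C(\calf''(X_\bullet))\to 0$ and a long exact cohomology sequence. Filtered colimits are exact, so we get the required long exact sequence for $T^\bullet$, with boundary maps compatible in the direct system.

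The last and hardest step is effaceability for $i>0$. Given a class $\alpha\in T^i(\calf)$ represented by a cocycle $c\in C^i(\calf(X_\bullet))$ on some hypercovering $X_\bullet\to S$, I must produce a monomorphism of presheaves $\calf\hookrightarrow\calg$ and a refinement $Y_\bullet\to X_\bullet$ such that the image of $c$ in $C^i(\calg(Y_\bullet))$ is a coboundary. The classical construction uses the ``canonical flasque embedding'' of presheaves (the presheaf $\calg(U):=\prod_{x\in U^{\mathrm{set}}}\calf_x$, or more carefully a suitable Godement-style construction in the site-theoretic setting), together with the crucial geometric input that each map $X_{n+1}\to\mathrm{cosk}_n(\mathrm{sk}_n X_\bullet)_{n+1}$ lies in $P$: this allows one to pass to a further $P$-cover of $X_{i+1}$ on which the cocycle $c$ acquires a primitive. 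Patching these local primitives into a hypercovering $Y_\bullet\to X_\bullet$ via the coskeleton functors, and then using the valuation of $c$ in the flabby presheaf $\calg$ where sections extend, shows that the image of $\alpha$ in $T^i(\calg)$ vanishes. This is where the full force of the hypercovering definition enters and is the step where one must be most careful.

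Having established that $T^\bullet$ is a universal cohomological $\delta$-functor with $T^0=\Gamma(S,(-)^\sharp)=\Gamma(S,-)\circ(\cdot)^\sharp$, and since the right-derived functors $H^i(S,(-)^\sharp)$ form the universal $\delta$-functor extending the same $T^0$, the uniqueness of universal $\delta$-functors furnishes the required canonical isomorphism $T^i(\calf)\cong H^i(S,\calf^\sharp)$ for all $i>0$. The main obstacle, as indicated, is the effaceability step; everything else is formal once the indexing category is in place.
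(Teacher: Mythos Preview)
The paper does not prove this theorem at all; it simply cites SGA4, Expos\'e V, Theorems 7.3.2 and 7.4.1. Your outline is precisely the Verdier strategy carried out there: show the system of hypercoverings (modulo homotopy) is cofiltered, identify $T^0$ with sections of the sheafification, show $(T^i)$ is a $\delta$-functor, prove effaceability, and conclude by universality. So at the level of architecture your proposal matches the source the paper defers to.

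Your description of the effaceability step, however, conflates two separate mechanisms. You write that one should embed $\calf$ into a flasque presheaf $\calg$ \emph{and} refine the hypercovering so that the cocycle ``acquires a primitive'' locally, then ``patch'' via coskeleta. In the actual SGA4 argument one does not mix these: one embeds $\calf$ into an injective presheaf $\mathcal I$ and shows directly that $H^i(C(\mathcal I(X_\bullet)))=0$ for $i>0$ and \emph{every} hypercovering $X_\bullet$, with no refinement needed. The mechanism is purely homotopical: the coskeletal conditions on $X_\bullet$ translate, via Yoneda, into the statement that the augmented cosimplicial abelian group $\mathcal I(X_\bullet)$ extends to a split coaugmented object degree by degree, so the associated complex is contractible. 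Your ``pass to a further $P$-cover of $X_{i+1}$ on which $c$ acquires a primitive'' is closer to the ordinary \v Cech effacement argument, which is insufficient here and in any case would not require the embedding into $\calg$ at all. If you rewrite the fourth step along these lines, the proof goes through.
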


For the proof, see \cite{sga4}, Expos\'e V, Theorems 7.3.2 and  7.4.1. The theorem holds more generally for bounded below complexes of presheaves.

\end{document}